\documentclass[preprint]{imsart}
\usepackage[a4paper,margin=1.15in]{geometry}

\usepackage{amsmath,amssymb,amsthm,mathtools}
\usepackage{natbib}
\usepackage{url}
\usepackage{enumitem}
\usepackage[utf8]{inputenc}
\usepackage{microtype}
\usepackage{graphicx}
\usepackage[colorlinks,citecolor=blue,urlcolor=blue]{hyperref}

\makeatletter
\@ifundefined{bdoi}
  {}
  {}

\@ifundefined{arxiv}
  {\newcommand{\arxiv}[1]{\href{https://arxiv.org/abs/#1}{arXiv:#1}}}
  {\renewcommand{\arxiv}[1]{\href{https://arxiv.org/abs/#1}{arXiv:#1}}}

\@ifundefined{doi}
  {\newcommand{\doi}[1]{\href{https://doi.org/#1}{doi:\ #1}}}
  {}
\makeatother

\startlocaldefs

\newcommand{\N}{\mathbb{N}}
\newcommand{\Normal}{\mathcal{N}}
\newcommand{\Z}{\mathbb{Z}}
\newcommand{\R}{\mathbb{R}}
\newcommand{\X}{\mathsf{X}}
\newcommand{\calX}{\mathcal{X}}
\newcommand{\dd}{\mathrm{d}}
\newcommand{\zerod}{0_d}
\newcommand{\one}{\mathbf{1}}
\newcommand{\Pp}{\mathbb{P}}
\newcommand{\Ee}{\mathbb{E}}

\newcommand{\Vv}{\mathbb{V}}
\newcommand{\leqd}{\leq_d}
\newcommand{\notleqd}{\not\leq_d}

\theoremstyle{plain}

\newtheorem{theorem}{Theorem}[section]
\newtheorem{lemma}[theorem]{Lemma}
\newtheorem{proposition}[theorem]{Proposition}
\newtheorem{assumption}[theorem]{Assumption}
\newtheorem{remark}[theorem]{Remark}
\newtheorem{corollary}[theorem]{Corollary}

\theoremstyle{definition}
\newtheorem{definition}[theorem]{Definition}

\endlocaldefs

\begin{document}

\begin{frontmatter}
\title{Multi-time Markov renewal chains and stratified renewal theorems} 
\runtitle{Multi-time Markov renewal chains}
\runauthor{L. Kordalis and S. Trevezas}

\begin{aug}
\author[A]{\fnms{Leonidas}~\snm{Kordalis}
\ead[label=e1]{lkordali@math.uoa.gr}
\orcid{0009-0009-1033-7700}}

\author[A]{\fnms{Samis}~\snm{Trevezas}\thanksref{corr}
\ead[label=e2]{strevezas@math.uoa.gr}
\orcid{0000-0003-2262-8299}}

\address[A]{Department of Mathematics, National and Kapodistrian University of Athens, Athens 15784, Greece
\printead[presep={,\ }]{e1}
\printead[presep={,\ }]{e2}}

\thankstext{corr}{Corresponding author.}
\end{aug}

\begin{abstract}
We develop a discrete Markov renewal theory on a standard Borel state space, with vector-valued sojourn times and lower-rectangle observation on $\N^d$. The Markov renewal potential is a kernel-valued convolution resolvent and yields unified representations for semi-Markov transitions, first-passage laws, occupation measures and rewards. The semi-Markov field observed on the partially ordered lattice is generally not Markov. We identify its canonical Markovian augmentation through the backward recurrence vector and give a lumpability criterion for the exceptional cases in which the augmentation can be projected back to the original state space. The lower-rectangle order leads to a stratified inverse-renewal theory: the direction simplex is decomposed into rate-determining cells, with Gaussian limits on cells having a unique active coordinate and minima of correlated Gaussian fields on their interfaces. We establish functional inverse limits, critical-interface limits and logarithmic estimates for inverse deviations. Exact-time potentials are obtained from an operator-theoretic local theorem for Fourier--Laplace perturbations of Markov-additive kernels, while a regenerative theorem gives the corresponding arithmetic lattice-class form. The results connect Markov renewal equations, multiparameter Markov structure and the local asymptotic geometry induced by rectangular observation.
\end{abstract}

\begin{keyword}[class=MSC]
\kwd[Primary ]{60K15}
\kwd{60K05}
\kwd[Secondary ]{60F05}
\kwd{60F17}
\kwd{60J10}
\kwd{60J35}
\end{keyword}

\begin{keyword}
\kwd{Markov renewal chain}
\kwd{multiparameter Markov chain}
\kwd{semi-Markov chain}
\kwd{semi-Markov field}
\kwd{regenerative process}
\kwd{renewal theorem}
\kwd{Markov additive process}
\end{keyword}

\end{frontmatter}
\section{Introduction}

Markov renewal theory gives a state-dependent renewal structure in which the successive states form an embedded Markov chain and the renewal epochs are generated by an additive component attached to the transitions. In the classical setting this additive component is scalar, so that each transition of the embedded chain carries a one-dimensional sojourn time. The resulting joint state--time process is the natural object behind semi-Markov transition probabilities, first-passage laws, occupation measures and reward functionals, all of which are governed by Markov renewal equations. The finite-state foundations of the theory are due to \citet{Pyke1961,PykeSchaufele1964} and \citet{Cinlar1969}. General-state Markov renewal theory was subsequently developed through Harris recurrence, regeneration and Markov random walks; see \citet{Kesten1974,AthreyaMcDonaldNey1978,Nummelin1978,Alsmeyer1994}. Semi-Markov and reliability-oriented accounts may be found in \citet{LimniosOprisan2001} and \citet{BarbuLimnios2008}. For the general state-space Markov-chain notation, small sets and splitting arguments used below, we also refer to \citet{MeynTweedie1993} and \citet{DoucMoulinesPriouretSoulier2018}. 

The scalar additive coordinate of classical Markov renewal theory is not essential to the renewal mechanism. Many models attach a vector of additive quantities to each transition. In risk models these coordinates may include time, cumulative claims and capital consumption; in queueing or network models they may represent several resources consumed before the next routing decision; in biological or environmental models they may record chronological time together with exposure, dose or thermal accumulation; and in warranty models they may correspond to calendar age and accumulated usage. The embedded transition mechanism is still Markovian at renewal epochs. What changes is the observation geometry: before a multidimensional horizon, one must check whether all accumulated coordinates remain below their corresponding thresholds. Thus the relevant observation set is a lower rectangle in a partially ordered lattice, rather than an interval on the line.

 We develop the Markov renewal theory determined by this observation geometry. The state space is a standard Borel space $\X$, and the basic object is a family $q=(q_k)_{k\in\N^d}$ of kernels on $\X$. Conditionally on the present state $x$, $q_k(x,A)$ is the probability that the next state belongs to $A$ and that the additive increment is $k$. Thus the partial order acts on the additive coordinate, while the Markov modulation is carried by the state variable. For $d=1$ one recovers the usual discrete Markov renewal kernel. If the state variable is suppressed, the same order geometry gives the scalar multi-time renewal model. This unmodulated renewal direction, including multi-index renewal equations, fixed-horizon censoring and exact likelihood formulae, is developed in \citet{KordalisTrevezasRenewal2026}. The present paper introduces the state-modulated theory. If $\X$ is denumerable, the kernel is written as $q_{ij}(k)=q_k(i,\{j\})$; if $\X=\{1,\ldots,s\}$ is finite, these coordinates form the matrix-valued sequence $q(k)=(q_{ij}(k))_{i,j\in\X}$, as in \citet{BarbuLimnios2008}. The denumerable and finite-state formulations are therefore coordinate representations of the same general-state kernel.

The paper is also related to two algebraic developments of the same programme. In \citet{KordalisTrevezasMRCAlg2025} the finite-state matrix-valued multi-index convolution formalism was introduced and the corresponding renewal equations were written in coordinates. That work gives the algebraic antecedent of the finite-state specialization. The present article develops the probabilistic Markov renewal theory on a standard Borel state space and establishes the regenerative construction, the stratified inverse-renewal limits, the exact-time local theorem, and the Markovian augmentation and lumpability results for the associated multiparameter semi-Markov field. The separate work \citet{KordalisTrevezasFFT2026} concerns one-clock matrix convolution equations and finite-horizon coefficient identities for Markov renewal and semi-Markov equations. It is complementary to the present multiparameter renewal-limit theory.

The relation with Markov additive processes is natural, but the boundary geometry is different.
In Markov additive and Markov random-walk theory the additive component is typically tested against a scalar boundary, and the corresponding questions concern first passage, overshoots, ladder epochs, ladder heights and ruin probabilities; see \citet{FuhLai2001,Fuh2004,Miyazawa2004,DoringTrottner2023}. In the present model the additive component is tested against lower rectangles. The inverse problem is therefore indexed by the direction simplex. Along a proportional direction, the limiting renewal rate is the minimum of the coordinatewise admissible rates. On a cell on which one coordinate alone attains this minimum, the inverse fluctuation is Gaussian. On an interface on which several coordinates attain it, the limit is the minimum of the corresponding correlated Gaussian coordinates. This dichotomy is already present in the two-dimensional renewal theory of \citet{Hunter1974a,Hunter1974b}, where minima of correlated normal variables enter the analysis. Here it is extended to Markov-modulated cycles and to functional convergence over sets of directions. Related multidimensional renewal results include \citet{NeyWainger1972,MaejimaMori1984,Klesov1991}.

Two complementary mechanisms are used for the local part of the theory. One is intrinsic and operator-theoretic. Fourier perturbations of the Markov-additive kernel lead, under a spectral gap and aperiodicity condition, to a local theorem for exact-time potentials. This is the Nagaev--Guivarc'h line of argument, as developed for Markov chains and Markov additive functionals by \citet{GuivarchHardy1988,HennionHerve2001,HervePene2010}; the perturbation stability results of \citet{KellerLiverani1999} provide a standard approach to the required spectral hypotheses. The other is regenerative. We impose a minorization on the joint state-increment kernel and use Nummelin splitting. This differs from a minorization of the embedded state chain alone: the atom must regenerate the next state and the following vector increment. A Foster--Lyapunov condition is then given which implies the exponential moment assumptions on the regenerative cycle. The regenerative argument verifies the periodic lattice form in SM and makes the arithmetic constants explicit.

There is also a multiparameter Markov issue. A process indexed by $\N^d$ is not a Markov chain indexed by a line. Multiparameter probability admits several non-equivalent Markov and martingale notions; see \citet{CairoliWalsh1975,Khoshnevisan2002}. We use the semigroup formulation associated with the additive monoid $\N^d$. The state field obtained by observing the last renewal state in a lower rectangle is generally not Markov for this semigroup. The Markov property is restored by adjoining the backward recurrence vector. A strong lumpability theorem then identifies exactly when this vector may be suppressed. This is the multi-time analogue of the classical semi-Markov construction in which the state and the backward recurrence time form the associated Markov chain; this associated chain is also the basis of the exact likelihood parametrization in \citet{TrevezasLimnios2011}.

A further difference from the one-clock theory concerns memorylessness. In one dimension the no-renewal event at age $u$ is an upper-tail event. In several coordinates the no-renewal event is rectangular: the next increment has not entered the lower rectangle with upper corner $u$. Usual multivariate geometric and limited-memory laws are formulated through upper-orthant survival functions \citep{MarshallOlkin1995,NairNair1988,Roy2002,MaiSchererShenkman2013,Shenkman2023}. These two survival structures coincide only when $d=1$. We establish that a direct rectangular lack-of-memory property is too restrictive for a non-degenerate bivariate law. The correct residual description is an active-set mixture: conditional on survival at age $u$, the coordinates still preventing the next renewal form a latent subset of $\{1,\ldots,d\}$. This active-set mechanism explains why the backward recurrence vector is structural.

The renewal potential is the object through which the different parts of the theory intersect. In general state space it is a kernel-valued convolution potential; in finite state space it becomes a matrix-valued multi-index convolution inverse. It is the minimal non-negative solution operator for Markov renewal equations. Transition kernels of the semi-Markov field, killed potentials, first-entrance distributions, renewal functions, occupation measures and reward functionals are obtained from it. A Dynkin martingale gives the corresponding probabilistic compensation identity.

The principal asymptotic result is a functional inverse theorem on the cells of the rate-determining partition. It gives convergence in $\ell^\infty$ of the centered lower-rectangle renewal count, and simultaneously of transition rewards, as the direction varies in a compact cell. We also prove two refinements. The logarithmic estimate gives Cram\'er bounds for rare deviations of the inverse count in terms of the regenerative cycle increments. The critical-interface theorem treats directions approaching a boundary between rate-determining cells on the scale $t^{-1/2}$; at this scale the limiting inverse field is a drifted minimum of correlated Gaussian coordinates. Thus the partial order changes the inverse problem in a concrete way. A lower rectangle may be constrained by several coordinates at the same time, and the active set changes with the direction. The inverse renewal field is therefore stratified, with Gaussian limits on one-coordinate strata and non-Gaussian limits on their intersections. The exact-time counterpart is stated in two forms. The main form is an operator-theoretic local theorem for Markov-additive Fourier kernels. The regenerative form, including periodic lattice classes, is retained in the supplementary material as the corresponding arithmetic verification.

Auxiliary algebraic proofs, measurable-construction details, technical comparison lemmas, the regenerative proof of the periodic local theorem and finite-state coefficient identities are collected in the supplementary material \citep{KordalisTrevezasSM2026}, denoted by SM\@. The functional inverse theorem, the critical-interface theorem, the inverse large-deviation estimates, the Markovian augmentation and the operator-theoretic local theorem are proved in the main text.

The paper is organized as follows. Section~2 defines kernel-valued convolution, the multi-time Markov renewal chain, the renewal potential, clock projections and instantaneous transitions. Section~3 treats recurrence, arithmeticity, regeneration and a drift--minorization criterion for the cycle moment assumptions. Section~4 develops Markov renewal equations, killed potentials and martingale representations. Section~5 contains the inverse large-deviation estimates, the functional inverse theorem and the critical-interface theorem. Section~6 gives the operator-theoretic exact-time local theorem and states its regenerative periodic form. Section~7 gives the associated multiparameter Markov chain and the lumpability criterion. Section~8 specializes the results to denumerable and finite state spaces. Section~9 records reliability and warranty rewards as finite-state consequences of the general potential theory.

\section{Kernel convolution and construction of the multi-time Markov renewal chain}

We fix the measurable objects used throughout the paper. The state space is a standard Borel space and the time parameter is the partially ordered lattice $\N^d$. Section~8 records the denumerable and finite versions obtained from the same definitions by using counting kernels and, in the finite case, matrix-valued coefficient sequences.

\subsection{Kernel sequences and convolution}

Let $(\X,\calX)$ be a standard Borel space. A non-negative kernel from $\X$ to $\X$ is denoted by $K(x,\dd y)$. Its action on a non-negative measurable function $f$ is
\begin{equation*}
Kf(x)=\int_{\X}f(y)K(x,\dd y),
\end{equation*}
and its action on a set $A\in\calX$ is $K(x,A)$. If $K$ and $L$ are two such kernels, their composition is the kernel $KL$ defined by
\begin{equation*}
(KL)(x,A)=\int_{\X}K(x,\dd y)L(y,A),\qquad A\in\calX.
\end{equation*}
The identity kernel is denoted by $I$.

A kernel sequence on $\N^d$ is a family $A=(A_k)_{k\in\N^d}$ of non-negative kernels from $\X$ to $\X$. We shall write $\mathcal K_{\X}^{\N^d}$ for the class of such sequences. We use subscripts for coefficient kernels, for instance $q_k(x,A)$; in denumerable coordinates we keep the traditional notation $q_{ij}(k)$. Addition and scalar multiplication are understood coefficientwise whenever they are meaningful. If $M$ is a kernel, $MA$ and $AM$ denote coefficientwise composition:
\begin{equation*}
(MA)_k=MA_k,\qquad (AM)_k=A_kM.
\end{equation*}
This is not a new convolution operation. It is ordinary kernel composition at each coefficient.

The partial order on $\N^d$ is denoted by $\leqd$. Thus $r\leqd k$ means $r_j\leq k_j$ for all $j=1,\ldots,d$. We write $|k|_1=k_1+\cdots+k_d$. The lower rectangle with upper corner $k$ is
\begin{equation*}
[0,k]_d=\{r\in\N^d:r\leqd k\}.
\end{equation*}
All sums over such rectangles are finite.

\smallskip
\noindent\textit{Convolution.}
\smallskip

\begin{definition}
For $A,B\in\mathcal K_{\X}^{\N^d}$, their multi-time convolution is the kernel sequence $A*B$ defined by
\begin{equation*}
(A*B)_{k}(x,C)=\sum_{r\leqd k}\int_{\X}A_r(x,\dd y)B_{k-r}(y,C),
\end{equation*}
for $k\in\N^d$, $x\in\X$ and $C\in\calX$.
\end{definition}

The convolution is well defined because the summation set is finite. Let $e$ be the sequence concentrated at $\zerod$ with value $I$:
\begin{equation*}
e_{\zerod}=I,\qquad e_k=0\quad\hbox{if }k\neq\zerod.
\end{equation*}
If $M$ is a kernel, let $\delta_M$ denote the sequence concentrated at $\zerod$ with value $M$.

\begin{proposition}
The class $\mathcal K_{\X}^{\N^d}$, equipped with addition and convolution, is an associative semiring with identity $e$. Moreover,
\begin{equation*}
\delta_M*A=MA,\qquad A*\delta_M=AM.
\end{equation*}
\end{proposition}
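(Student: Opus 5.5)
We verify the semiring axioms coefficientwise, working with kernels valued in $[0,\infty]$ so that all compositions and sums of non-negative kernels are defined. This avoids any finiteness questions: every convolution coefficient is a finite sum of compositions of non-negative kernels, hence again a non-negative kernel. Measurability in $x$ of $x\mapsto\int A_r(x,\dd y)B_{k-r}(y,C)$ and countable additivity in $C$ are standard properties of kernel composition, obtained by monotone convergence. It follows that $A*B\in\mathcal K_{\X}^{\N^d}$.

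The additive structure is immediate. Addition is coefficientwise, so it is associative and commutative with neutral element the zero sequence. Distributivity $A*(B+B')=A*B+A*B'$ and $(A+A')*B=A*B+A'*B$ follows from linearity of kernel composition in each argument together with finiteness of the sums over $[0,k]_d$. The zero sequence is absorbing, since $0\cdot K=K\cdot 0=0$ for kernels. Convolution need not be commutative, because kernel composition is not, so we only claim a (non-commutative) semiring.

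For the identity and the $\delta_M$ formulas, note that
\begin{equation*}
(\delta_M*A)_k=\sum_{r\leqd k}(\delta_M)_rA_{k-r}.
\end{equation*}
Only $r=\zerod$ contributes, so $(\delta_M*A)_k=MA_k$. Symmetrically, $(A*\delta_M)_k=A_kM$, since only $k-r=\zerod$ contributes. Taking $M=I$ gives $e*A=A*e=A$.

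The main step is associativity. We expand
\begin{equation*}
((A*B)*C)_k(x,D)=\sum_{s\leqd k}\sum_{r\leqd s}\int\!\!\int A_r(x,\dd y)B_{s-r}(y,\dd z)C_{k-s}(z,D).
\end{equation*}
Here we use associativity of kernel composition, $(KL)M=K(LM)$, which is Fubini--Tonelli for non-negative integrands. We then reindex by $(r,m,n)=(r,s-r,k-s)$, which ranges over all triples in $\N^d$ with $r+m+n=k$. This is a bijection between $\{(r,s):r\leqd s\leqd k\}$ and that triple set, because $\N^d$ is a cancellative monoid whose order is given by $r\leqd s$ if and only if $s-r\in\N^d$. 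Expanding $(A*(B*C))_k$ produces the same triple sum, with $r$ first and $(m,n)$ splitting $k-r$. The two expressions therefore agree. The only care needed is that all sums are finite and all integrands are non-negative, so interchanging sums and integrals is justified by Tonelli.
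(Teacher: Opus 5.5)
Your proof is correct and follows the same route as the paper: you get associativity by expanding both bracketings into the single finite triple sum over $r+s+t=k$, with Tonelli justifying the interchange; distributivity comes from linearity of kernel composition; and the identities for $e$ and $\delta_M$ hold because only the index $0_d$ contributes. Your extra remarks on $[0,\infty]$-valued kernels, measurability and the absorbing zero sequence are harmless refinements of that same argument.
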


\smallskip
\noindent The proof is given in SM\@.
\smallskip

Define convolution powers by $A^{(0)}=e$ and $A^{(n)}=A*A^{(n-1)}$ for $n\geq1$.

\begin{lemma}
If $A_{\zerod}=0$, then for every $k\in\N^d$,
\begin{equation*}
A^{(n)}_k=0\quad\hbox{for all }n>|k|_1.
\end{equation*}
\end{lemma}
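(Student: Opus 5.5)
The plan is to prove by induction on $n$ the stronger, more convenient statement that $A^{(n)}_k=0$ whenever $|k|_1<n$. This gives the lemma, since $n>|k|_1$ is exactly $|k|_1<n$. The case $n=0$ is vacuous, because $|k|_1\geq 0$. For $n=1$ we have $A^{(1)}=A*e=A$ by the preceding Proposition, and $|k|_1<1$ forces $k=\zerod$, where $A_{\zerod}=0$ by hypothesis.

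For the inductive step, assume the claim for $n-1\geq 1$ and fix $k$ with $|k|_1<n$. By definition,
\begin{equation*}
A^{(n)}_k(x,C)=\sum_{r\leqd k}\int_{\X}A_r(x,\dd y)\,A^{(n-1)}_{k-r}(y,C).
\end{equation*}
We split the finite sum according to whether $r=\zerod$. If $r=\zerod$, the term vanishes because $A_{\zerod}=0$ is the zero kernel. If $r\neq\zerod$, then $|r|_1\geq1$, and since $r\leqd k$ implies $|k-r|_1=|k|_1-|r|_1$, we get $|k-r|_1\leq |k|_1-1<n-1$. The induction hypothesis then gives $A^{(n-1)}_{k-r}=0$, so the integrand vanishes identically in $y$. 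Every term is therefore zero, and hence $A^{(n)}_k=0$.

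No step is a real obstacle. The only points needing care are additivity of $|\cdot|_1$ under coordinatewise subtraction when $r\leqd k$, and the fact that integrating the zero kernel, or against the zero kernel, gives zero even for non-negative kernels that may be infinite elsewhere. The latter uses the convention $0\cdot\infty=0$, and it is immediate because the relevant kernel is identically zero.
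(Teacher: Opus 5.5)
Your proof is correct and uses the same idea as the paper: every non-zero index consumes at least one unit of $|k|_1$, so when $n>|k|_1$ each contributing term involves the zero kernel $A_{\zerod}$. The paper writes $A^{(n)}_k$ at once as a sum over all decompositions $k=r_1+\cdots+r_n$ and concludes by counting. Your induction on the recursive definition $A^{(n)}=A*A^{(n-1)}$ reaches the same conclusion without first having to establish that $n$-fold expansion.
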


\smallskip
\noindent The proof is given in SM\@.
\smallskip

The same notation is used when a kernel sequence acts on a sequence of non-negative functions. If $F=(F_k)_{k\in\N^d}$ and each $F_k$ is non-negative and measurable on $\X$, define
\begin{equation*}
(A*F)_{k}(x)=\sum_{r\leqd k}\int_{\X}A_r(x,\dd y)F_{k-r}(y).
\end{equation*}
This convention is used in the Markov renewal equations below.

\subsection{Construction, potential and observation}

\begin{definition}
A kernel sequence $q=(q_k)_{k\in\N^d}$ is called a discrete multi-time semi-Markov kernel if
\begin{equation*}
\sum_{k\in\N^d}q_k(x,\X)=1,\qquad x\in\X.
\end{equation*}
It is called strict when $q_{\zerod}=0$.
\end{definition}

Equivalently, $q$ defines a transition kernel $Q$ from $\X$ to $\X\times\N^d$ by
\begin{equation*}
Q(x,A\times\{k\})=q_k(x,A),\qquad A\in\calX,
\quad k\in\N^d.
\end{equation*}
The kernel of the embedded state chain is
\begin{equation*}
P(x,A)=\sum_{k\in\N^d}q_k(x,A).
\end{equation*}

\begin{definition}
Let $\alpha$ be a probability measure on $(\X,\calX)$. A process $(J_n,S_n)_{n\geq0}$ with values in $\X\times\N^d$ is a discrete multi-time Markov renewal chain with initial law $\alpha$ and semi-Markov kernel $q$ if $J_0$ has law $\alpha$, $S_0=\zerod$, and
\begin{equation*}
\Pp\left(J_{n+1}\in A,\ S_{n+1}-S_n=k\mid J_0,S_0,\ldots,J_n,S_n\right)=q_k(J_n,A)
\end{equation*}
for all $A\in\calX$ and $k\in\N^d$.
\end{definition}

The Ionescu--Tulcea extension theorem gives existence and uniqueness of the law of the chain on the path space; see, for example, \citet[Chapter~6]{Kallenberg2002}. We denote by $\Pp_\alpha$ the corresponding law, and by $\Pp_x$ the law with $J_0=x$.

\begin{proposition}
The process $(J_n,S_n)_{n\geq0}$ is a Markov chain on $\X\times\N^d$ with transition kernel
\begin{equation*}
\Pp\left(J_{n+1}\in A,\ S_{n+1}=s+k\mid J_n=x,S_n=s\right)=q_k(x,A).
\end{equation*}
The process $J=(J_n)_{n\geq0}$ is a Markov chain on $\X$ with transition kernel $P$.
\end{proposition}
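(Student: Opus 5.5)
The plan is to derive both Markov properties directly from the defining conditional law in the definition of the multi-time Markov renewal chain, using that $S_n$ is a measurable function of the past and that $\N^d$ is countable.

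\emph{Joint chain.} Fix $n$, $A\in\calX$ and $s'\in\N^d$. Let $\mathcal F_n=\sigma(J_0,S_0,\ldots,J_n,S_n)$. On the event $\{S_n=s\}$ the event $\{S_{n+1}=s'\}$ coincides with $\{S_{n+1}-S_n=s'-s\}$. Here $s'-s\in\N^d$ if $s\leqd s'$, and the event is empty otherwise. Summing over the countably many values $s$ of the $\mathcal F_n$-measurable variable $S_n$ gives
\begin{equation*}
\Pp\left(J_{n+1}\in A,\ S_{n+1}=s'\mid\mathcal F_n\right)=\sum_{s\leqd s'}\one_{\{S_n=s\}}\,q_{s'-s}(J_n,A)=\widetilde Q\big((J_n,S_n),A\times\{s'\}\big),
\end{equation*}
where $\widetilde Q((x,s),A\times\{s+k\})=q_k(x,A)$. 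This $\widetilde Q$ is a transition kernel on $\X\times\N^d$: it is measurable in $(x,s)$ because each $q_k(\cdot,A)$ is measurable and $s$ ranges over a countable discrete set. It is a probability kernel by the normalization $\sum_k q_k(x,\X)=1$. Rectangles $A\times\{s'\}$ form a $\pi$-system generating the product $\sigma$-field, so a monotone class argument extends the identity to all measurable sets. Since the right-hand side depends only on $(J_n,S_n)$, conditioning further on $\sigma(J_n,S_n)\subset\mathcal F_n$ yields the displayed transition formula and the Markov property.

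\emph{Embedded chain.} Let $\mathcal G_n=\sigma(J_0,\ldots,J_n)\subset\mathcal F_n$. Taking the union over $k\in\N^d$ in the defining relation, with monotone convergence for conditional expectations, gives
\begin{equation*}
\Pp(J_{n+1}\in A\mid\mathcal F_n)=\sum_k q_k(J_n,A)=P(J_n,A).
\end{equation*}
This is $\mathcal G_n$-measurable, so the tower property gives $\Pp(J_{n+1}\in A\mid\mathcal G_n)=P(J_n,A)$. Here $P$ is a probability kernel, since it is a countable sum of measurable functions with total mass one.

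The only delicate point is the measure-theoretic bookkeeping: measurability of $\widetilde Q$ and the extension from rectangles to the full product $\sigma$-field. Both are routine because the time component lives in a countable set, so I do not expect a genuine obstacle.
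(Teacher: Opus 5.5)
Your proposal is correct and follows essentially the paper's route. Like the paper, you derive the joint Markov property from the defining one-step law via $S_{n+1}=S_n+X_{n+1}$, and the embedded-chain property by summing over $k\in\N^d$. You add bookkeeping that the paper leaves implicit: measurability of the lifted kernel on $\X\times\N^d$, the $\pi$--$\lambda$ extension from rectangles, and the tower step down to $\sigma(J_0,\ldots,J_n)$.
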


\smallskip
\noindent The proof is given in SM\@.
\smallskip

\begin{proposition}
For every $n\geq0$, $k\in\N^d$ and $A\in\calX$,
\begin{equation*}
\Pp_x\left(J_n\in A,S_n=k\right)=q^{(n)}_k(x,A).
\end{equation*}
\end{proposition}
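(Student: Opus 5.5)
We argue by induction on $n$, using the Markov property at the first step (equivalently, the transition structure in the preceding proposition), so that the distributional identity turns into the recursive definition $q^{(n)}=q*q^{(n-1)}$.

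\emph{Base case.} For $n=0$ we have $J_0=x$ and $S_0=\zerod$ under $\Pp_x$. Hence
\[
\Pp_x(J_0\in A,S_0=k)=\one_A(x)\one_{\{k=\zerod\}}=e_k(x,A)=q^{(0)}_k(x,A).
\]

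\emph{Inductive step.} Assume the identity holds at rank $n-1$ for every starting point $y\in\X$, every $k$ and every $A$. Condition on $(J_1,S_1)$. By the preceding proposition, $(J_m,S_m)_{m\ge0}$ is a time-homogeneous Markov chain on $\X\times\N^d$ whose transitions are translation invariant in the $\N^d$ coordinate. Therefore, given $J_1=y$ and $S_1=r$, the shifted process $(J_{1+m},S_{1+m}-r)_{m\geq0}$ has law $\Pp_y$. This gives
\[
\Pp_x(J_n\in A,S_n=k)=\sum_{r\in\N^d}\int_{\X}q_r(x,\dd y)\,\Pp_y(J_{n-1}\in A,S_{n-1}=k-r),
\]
where the summand vanishes unless $r\leqd k$, because $S_{n-1}\in\N^d$. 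Applying the induction hypothesis turns the right-hand side into
\[
\sum_{r\leqd k}\int_{\X} q_r(x,\dd y)\,q^{(n-1)}_{k-r}(y,A)=(q*q^{(n-1)})_k(x,A)=q^{(n)}_k(x,A).
\]

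\emph{Main obstacle.} The only delicate point is justifying the first-step decomposition rigorously. This requires two facts:
\begin{enumerate}[label=(\roman*)]
\item the map $y\mapsto \Pp_y(J_{n-1}\in A,S_{n-1}=j)$ is measurable;
\item the shift identity holds, namely that the conditional law of the post-$1$ process given $\mathcal F_1$ is $\Pp_{J_1}$ translated by $S_1$.
\end{enumerate}
To avoid invoking the full Markov property on path space, we can prove the finite-dimensional statement directly instead. Iterating the defining conditional probabilities gives, by the tower property, the joint law of $(J_1,S_1-S_0,\dots,J_n,S_n-S_{n-1})$ under $\Pp_x$ as the iterated integral
\[
q_{k_1}(x,\dd y_1)\,q_{k_2}(y_1,\dd y_2)\cdots q_{k_n}(y_{n-1},\dd y_n).
\]
Summing over all $(k_1,\dots,k_n)$ with $k_1+\cdots+k_n=k$ and integrating $y_n$ over $A$ gives exactly the $n$-fold convolution $q^{(n)}_k(x,A)$, once we unfold $q^{(n)}=q*q^{(n-1)}$ and use associativity from the semiring proposition. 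Measurability in $x$ of each iterated kernel integral is standard, and all sums are finite or have non-negative terms, so interchanging sums and integrals is justified by Tonelli.
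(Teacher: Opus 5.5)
Your proof is correct, but it decomposes at the first step, whereas the paper decomposes at the last step.

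The paper conditions on $(J_n,S_n)$ and applies the defining one-step conditional law directly. This needs nothing beyond the tower property. It gives $\Pp_x(J_{n+1}\in A,S_{n+1}=k)=(q^{(n)}*q)_k(x,A)$. The identification with $q^{(n+1)}=q*q^{(n)}$, as convolution powers are defined, then tacitly uses associativity (Proposition 2.2), which shows $q^{(n)}*q=q*q^{(n)}$ by induction.

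Your first-step decomposition produces $q*q^{(n-1)}$, which matches the recursive definition exactly with no associativity needed. The price is the shift (Markov) property at time $1$ and the measurability of $y\mapsto\Pp_y(J_{n-1}\in A,S_{n-1}=j)$, both of which you correctly flag.

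Your fallback via the explicit finite-dimensional law $q_{k_1}(x,\dd y_1)\cdots q_{k_n}(y_{n-1},\dd y_n)$ removes both concerns and is arguably the cleanest route. Unfolding the right-nested definition $q*(q*(\cdots))$ gives exactly that iterated sum-integral over $k_1+\cdots+k_n=k$, so even the appeal to associativity there is unnecessary.
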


\smallskip
\noindent The proof is given in SM\@.
\smallskip

If $q$ is strict, then $S_n\leqd k$ implies $n\leq |k|_1$. Hence only finitely many renewal epochs can lie in a fixed lower rectangle.

\smallskip
\noindent\textit{The Markov renewal potential.}
\smallskip

\begin{definition}
The Markov renewal potential associated with $q$ is the kernel sequence $\psi$ defined by
\begin{equation*}
\psi_k(x,A)=\sum_{n\geq0}q^{(n)}_k(x,A).
\end{equation*}
When $q$ is strict, this sum is finite for each fixed $k$. In general it is interpreted as a non-negative kernel, possibly sigma-finite.
\end{definition}

For a non-negative function sequence $G=(G_k)_{k\in\N^d}$ define
\begin{equation*}
(\psi*G)_{k}(x)=\sum_{r\leqd k}\int_{\X}\psi_r(x,\dd y)G_{k-r}(y).
\end{equation*}
This is the potential transform of $G$.

\begin{proposition}
Let $G$ be a non-negative function sequence and put $L=\psi*G$. Then $L$ is the minimal non-negative solution of the Markov renewal equation
\begin{equation*}
L=G+q*L.
\end{equation*}
Equivalently,
\begin{equation*}
L_k(x)=\Ee_x\left[\sum_{n\geq0}\one_{\{S_n\leqd k\}}G_{k-S_n}(J_n)\right].
\end{equation*}
\end{proposition}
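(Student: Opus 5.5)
The plan is the standard Neumann-series argument, carried out in the semiring of kernel sequences.

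\emph{Step 1: probabilistic representation.} By the proposition giving $\Pp_x(J_n\in A,S_n=k)=q^{(n)}_k(x,A)$, for each $n$ and each non-negative measurable $G_j$,
\begin{equation*}
\int_{\X} q^{(n)}_r(x,\dd y)\,G_{k-r}(y)=\Ee_x\left[\one_{\{S_n=r\}}G_{k-S_n}(J_n)\right].
\end{equation*}
We sum over $r\leqd k$ and then over $n\geq0$. Tonelli's theorem lets us exchange the sum defining $\psi_r=\sum_n q^{(n)}_r$ with the integral and the expectation, and all terms are non-negative. This gives the expectation formula for $L_k(x)=(\psi*G)_k(x)$, with values in $[0,\infty]$ and no finiteness assumption.

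\emph{Step 2: $L$ solves the equation.} Write $\psi=e+\sum_{n\geq1}q^{(n)}=e+q*\sum_{n\geq0}q^{(n)}$. This uses $q^{(n)}=q*q^{(n-1)}$ and the fact that convolution with $q$ commutes with countable sums of non-negative kernel sequences, which again is monotone convergence. Associativity of $*$ when acting on function sequences, that is $(q*\psi)*G=q*(\psi*G)$, follows from the same Fubini computation as in the semiring proposition. Hence $L=G+q*L$. Alternatively one can use the first-step decomposition: split the expectation into the $n=0$ term and the terms with $n\geq1$, condition on $(J_1,S_1)$, and apply the Markov property of $(J_n,S_n)$ from the earlier proposition together with the shift $S_n=S_1+(S_n-S_1)$.

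\emph{Step 3: minimality.} Let $\tilde L\geq0$ satisfy $\tilde L=G+q*\tilde L$. Iterating $N$ times and using associativity and distributivity gives
\begin{equation*}
\tilde L=\sum_{n=0}^{N}q^{(n)}*G+q^{(N+1)}*\tilde L\geq \sum_{n=0}^{N}q^{(n)}*G.
\end{equation*}
Letting $N\to\infty$ and applying monotone convergence yields $\tilde L\geq\psi*G=L$ coefficientwise and pointwise. In the strict case the remainder $q^{(N+1)}_k$ even vanishes for $N\geq|k|_1$ by the lemma on convolution powers, so in that case $L$ is the unique solution. That observation is not needed for the general statement.

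\emph{Main obstacle.} The only delicate point is bookkeeping with possibly infinite values. One must justify associativity and the interchange of the infinite sum in $\psi$ with convolution for $[0,\infty]$-valued functions and sigma-finite kernels. I would handle this entirely through Tonelli's theorem for non-negative integrands. Every identity is an equality of sums of non-negative terms, so no integrability hypothesis is required, and the expressions $\infty=\infty$ are admissible.
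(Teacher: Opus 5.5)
Your proposal is correct and follows essentially the same route as the paper. Both arguments get the expectation formula from Proposition~2.7 and Tonelli, obtain the renewal equation by splitting off the $n=0$ term and writing $\sum_{n\geq1}q^{(n)}*G=q*L$, and prove minimality by $m$-fold iteration and monotone convergence. The only difference is in the side remark on uniqueness: the paper argues via the convolution inverse $(e-q)^{(-1)}$, while you use the vanishing of $q^{(N+1)}_k$ for $N\geq|k|_1$ in the strict case, which is equally valid.
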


\smallskip
\noindent The proof is given in SM\@.
\smallskip

The Markov renewal function is the accumulated potential over lower rectangles. Let $s_0$ denote the kernel sequence $s_0(k)=I$ for all $k\in\N^d$. Define
\begin{equation*}
\Psi=s_0*\psi.
\end{equation*}
Thus
\begin{equation*}
\Psi_k(x,A)=\Ee_x\left[\sum_{n\geq0}\one_{\{S_n\leqd k\}}\one_{\{J_n\in A\}}\right].
\end{equation*}
The sequence $\psi$ is an exact-time potential; the sequence $\Psi$ is a rectangular potential.

\smallskip
\noindent\textit{The observation field.}
\smallskip

For $k\in\N^d$, define
\begin{equation*}
N(k)=\sup\{n\geq0:S_n\leqd k\}.
\end{equation*}
When $q$ is strict, $N(k)\leq |k|_1$ almost surely. The associated multi-time semi-Markov field is
\begin{equation*}
Z_k=J_{N(k)},\qquad k\in\N^d.
\end{equation*}
The backward recurrence vector is
\begin{equation*}
U_k=k-S_{N(k)}.
\end{equation*}
The event that the next renewal epoch has not yet entered the rectangle with upper corner $k$ is
\begin{equation*}
X_{N(k)+1}\notleqd U_k.
\end{equation*}
This is the basic rectangular survival event of the theory.

If $q$ is strict, define
\begin{equation*}
\widetilde H_k(x,A)=\one_A(x)\left(1-\sum_{r\leqd k}q_r(x,\X)\right).
\end{equation*}
The transition kernel of the field $Z$ at the multi-time point $k$ is
\begin{equation*}
P_Z(k)(x,A)=\Pp_x\left(Z_k\in A\right).
\end{equation*}
Conditioning on the first renewal epoch gives
\begin{equation*}
P_Z=\widetilde H+q*P_Z.
\end{equation*}
Consequently, by minimality,
\begin{equation*}
P_Z=\psi*\widetilde H.
\end{equation*}
This identity is the general state-space form of the semi-Markov transition equation.

\subsection{Clock projections and instantaneous transitions}

Let $D_0$ be a non-empty subset of $\{1,\ldots,d\}$. Write $D_0^c$ for its complement and write $k=(k_{D_0},k_{D_0^c})$. The projected kernel sequence is
\begin{equation*}
(\Pi_{D_0}A)(m)(x,C)=\sum_{r\in\N^{d-|D_0|}}A(m,r)(x,C),\qquad m\in\N^{|D_0|}.
\end{equation*}
This operation is marginalization over the unused clocks. It is not the restriction obtained by putting the unused coordinates equal to zero.

\begin{proposition}
For non-negative kernel sequences $A$ and $B$,
\begin{equation*}
\Pi_{D_0}(A*B)=(\Pi_{D_0}A)*(\Pi_{D_0}B).
\end{equation*}
Consequently, if $\psi$ is the potential of $q$, then $\Pi_{D_0}\psi$ is the potential of the projected kernel $\Pi_{D_0}q$.
\end{proposition}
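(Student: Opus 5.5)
The plan is to prove the identity by a direct computation with non-negative sums, using Tonelli's theorem for all rearrangements, and then to obtain the statement about potentials by applying the identity to convolution powers.

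\textbf{Step 1: the convolution identity.} Fix $m\in\N^{|D_0|}$, $x\in\X$ and $C\in\calX$. By definition,
\begin{equation*}
\Pi_{D_0}(A*B)(m)(x,C)=\sum_{r\in\N^{d-|D_0|}}\ \sum_{u\leqd (m,r)}\int_{\X}A_u(x,\dd y)\,B_{(m,r)-u}(y,C).
\end{equation*}
I will write $u=(a,b)$ with $a\in\N^{|D_0|}$ and $b\in\N^{d-|D_0|}$. The constraint $u\leqd(m,r)$ splits into the two independent conditions $a\leq m$ and $b\leq r$. Setting $c=r-b$, the map $(r,b)\mapsto(b,c)$ is a bijection from $\{(r,b):b\leq r\}$ onto $\N^{d-|D_0|}\times\N^{d-|D_0|}$.

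All terms are non-negative, so Tonelli's theorem lets me interchange the sums over $b$ and $c$ with the sum over $a$ and with the integral. The expression then becomes
\begin{equation*}
\sum_{a\leq m}\int_{\X}\Big(\sum_{b}A_{(a,b)}(x,\dd y)\Big)\Big(\sum_{c}B_{(m-a,c)}(y,C)\Big),
\end{equation*}
which is $\big((\Pi_{D_0}A)*(\Pi_{D_0}B)\big)(m)(x,C)$. One point needs care here: the sum over $b$ sits inside the measure, as a countable sum of measures. Integrating against a countable sum of measures equals the sum of the integrals, by monotone convergence, so the interchange is justified. Values equal to $+\infty$ are allowed throughout, and this causes no difficulty.

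\textbf{Step 2: powers and the identity element.} I will check that $\Pi_{D_0}e=e$ in the lower dimension. This holds because $e$ is concentrated at $0_d$, so summing over the unused clocks leaves $I$ at the origin and $0$ elsewhere. Induction on $n$ using Step 1 then gives
\begin{equation*}
\Pi_{D_0}(q^{(n)})=(\Pi_{D_0}q)^{(n)}\qquad\text{for all } n\geq0.
\end{equation*}

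\textbf{Step 3: the potential.} Summing over $n$ and exchanging $\sum_n$ with the marginal sum over $r$, again by Tonelli, gives
\begin{equation*}
\Pi_{D_0}\psi=\sum_{n\geq0}(\Pi_{D_0}q)^{(n)},
\end{equation*}
which is by definition the potential of $\Pi_{D_0}q$. I will also record that $\Pi_{D_0}q$ is again a semi-Markov kernel, since its total mass $\sum_m(\Pi_{D_0}q)(m)(x,\X)=\sum_k q_k(x,\X)=1$. It need not be strict, however, because $q$ may put mass on increments $k$ with $k_{D_0}=0$ but $k\neq 0_d$.

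\textbf{Main obstacle.} The only real subtlety is this possible non-strictness of $\Pi_{D_0}q$. In that case the projected potential may be infinite, or merely sigma-finite, at some coordinates. The paper's definition of the potential explicitly allows such sigma-finite values. Because every identity above is an equality of non-negative extended-valued sums, obtained by monotone rearrangement without any cancellation, the equalities remain valid even when both sides are infinite.
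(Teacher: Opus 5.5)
Your proof is correct and follows the paper's argument essentially verbatim. You split the index into retained and marginalized coordinates, re-index so that the two unused-clock indices run independently, interchange by Tonelli, and then pass to convolution powers and sum over $n$. Your explicit check that $\Pi_{D_0}e=e$, and your remark that all identities hold as equalities in $[0,\infty]$, slightly sharpen the paper, which asserts the potential identity only when the potential is finite coefficientwise.
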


\smallskip
\noindent The proof is given in SM\@.
\smallskip

A strict kernel may become non-strict after projection. A transition whose increment is non-zero in the original set of clocks may have zero increment in the retained coordinates. Thus the projected model may require the instantaneous-transition reduction described below.

For the scalar counting process one has a different, pathwise identity. If $N^{[r]}(m)$ is the renewal count associated with the $r$-th coordinate process $S^{[r]}$, then
\begin{equation*}
N(k)=\min\{N^{[r]}(k_r):1\leq r\leq d\}.
\end{equation*}
This identity concerns only the number of completed transitions. It is not a valid identity for state-specific occupation counts.

\smallskip
\noindent\textit{Instantaneous transitions.}
\smallskip

Let $q_0=q_{\zerod}$. This kernel describes transitions which do not advance any clock. Put
\begin{equation*}
q_+=\sum_{k\neq\zerod}q_k.
\end{equation*}
The following assumption excludes infinite clusters of zero-time transitions.

\begin{assumption}
The substochastic kernel $q_0$ is transient in the potential sense that
\begin{equation*}
R_0\one(x)=\sum_{n\geq0}q_0^n\one(x)<\infty,
\qquad x\in\X.
\end{equation*}
Here
\begin{equation*}
R_0=\sum_{n\geq0}q_0^n
\end{equation*}
is the zero-time potential kernel. In particular, $q_0^n\one(x)\downarrow0$, and zero-time clusters are finite almost surely.
\end{assumption}

Under this assumption,
\begin{equation*}
R_0q_+\one=\one.
\end{equation*}
Indeed, $q_+\one=\one-q_0\one$ and the preceding identity follows by telescoping the series for $R_0$.

Define the batch-free kernel $q^\sharp$ by
\begin{equation*}
q^\sharp_{\zerod}=0,
\qquad
q^\sharp_k=R_0q_k,\quad k\neq\zerod.
\end{equation*}
Then $q^\sharp$ is a strict semi-Markov kernel. It is the law of the first transition with non-zero time increment after a finite cluster of instantaneous transitions.

Let $\psi$ be the potential of $q$ and let $\psi^\sharp$ be the potential of $q^\sharp$.

\begin{proposition}
Under the transience assumption on $q_0$,
\begin{equation*}
\psi=\psi^\sharp*\delta_{R_0}.
\end{equation*}
In particular, for $k\in\N^d$,
\begin{equation*}
\psi_k=\psi^\sharp_{k}R_0.
\end{equation*}
\end{proposition}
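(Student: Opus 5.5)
The plan is to work entirely in the semiring of kernel sequences and exploit the decomposition $q=\delta_{q_0}+q_+$. Here $q_+$ denotes the kernel sequence with $(q_+)_{\zerod}=0$ and $(q_+)_k=q_k$ for $k\neq\zerod$. By the proposition on $\delta_M$, we then have $q^\sharp=\delta_{R_0}*q_+=R_0q_+$ coefficientwise. The first step is to expand $q^{(n)}=(\delta_{q_0}+q_+)^{(n)}$ as a sum over words in the two letters. All terms are non-negative, so every rearrangement below is legitimate by monotone convergence, and we may sum over $n$ freely.

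Every word splits uniquely as
\[
\delta_{q_0}^{(a_0)}*q_+*\delta_{q_0}^{(a_1)}*q_+*\cdots*q_+*\delta_{q_0}^{(a_m)},
\]
where $m$ is the number of letters $q_+$. Since $\delta_{q_0}^{(a)}=\delta_{q_0^a}$, summing over all $n$ (equivalently over all $m$ and all $a_0,\ldots,a_m\geq0$) gives
\[
\psi=\sum_{m\geq0}\bigl(\delta_{R_0}*q_+\bigr)^{(m)}*\delta_{R_0}=\sum_{m\geq0}(q^\sharp)^{(m)}*\delta_{R_0}=\psi^\sharp*\delta_{R_0}.
\]
This uses $\sum_a\delta_{q_0^a}=\delta_{R_0}$ and right distributivity of convolution over countable non-negative sums. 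The second identity, $\psi_k=\psi^\sharp_kR_0$, is then just the rule $A*\delta_M=AM$ read at coefficient $k$.

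An alternative route, which I would use as a cross-check, goes through minimality. Set $L=\psi^\sharp*\delta_{R_0}$. Using $R_0=I+q_0R_0$ and $\psi^\sharp=e+q^\sharp*\psi^\sharp$, one verifies that $L=e+q*L$. Minimality of $\psi$ (the proposition on Markov renewal equations, applied columnwise to indicator functions) then gives $\psi\leq L$. The reverse inequality would need a separate argument, which is why the combinatorial expansion is preferable.

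The main obstacle is the justification of the infinite rearrangement. In the non-strict case a fixed coefficient $q^{(n)}_k$ is nonzero for infinitely many $n$, and the coefficients may be only $\sigma$-finite kernels. I would handle this by evaluating every identity on a non-negative measurable $f$ at a point $x$, and by noting that convolution at a fixed $k$ is a finite sum over $r\leqd k$ of compositions of non-negative kernels. Each composition is countably additive in each argument by monotone convergence, so Tonelli-type interchanges are valid termwise.

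The transience assumption is not needed for the algebraic identity itself, beyond making $R_0$ a well-defined kernel. Its role is to guarantee that $R_0$ is finite and that $q^\sharp$ is a genuine strict semi-Markov kernel, via $R_0q_+\one=\one$. This in turn makes $\psi^\sharp_k$ a finite sum, by the lemma on vanishing convolution powers.
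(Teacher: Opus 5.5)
Your argument is correct and is essentially the paper's proof. The paper decomposes each path contributing to $\psi_k$ into zero-time clusters separated by the non-zero increments $v_1,\ldots,v_m$, so that a fixed increment sequence contributes $R_0q_{v_1}R_0q_{v_2}\cdots R_0q_{v_m}R_0$; this is exactly your word expansion of $(\delta_{q_0}+q_+)^{(n)}$, regrouped by the number of letters $q_+$ and summed over cluster lengths by Tonelli. The only difference is in the cross-check: where you use a one-sided minimality bound, the paper verifies the factorization $e-q=\delta_{I-q_0}*(e-q^\sharp)$ and the two-sided identities $(e-q)*(\psi^\sharp*\delta_{R_0})=e=(\psi^\sharp*\delta_{R_0})*(e-q)$.
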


\smallskip
\noindent The proof is given in SM\@.
\smallskip

After this reduction, most structural and asymptotic arguments may be stated for strict kernels. When instantaneous transitions are present, the corresponding results are obtained by applying them to $q^\sharp$ and then multiplying the exact-time potential on the right by $R_0$.

\section{Recurrence, arithmeticity and regeneration}

The renewal equations of Section 2 are valid without irreducibility. Limit theorems and local renewal asymptotics require recurrence assumptions. The appropriate recurrence condition is imposed on the embedded state chain, whereas regeneration must be imposed on the joint state-increment mechanism. This distinction is important. A small set for the embedded chain regenerates the state, but a local theorem for the Markov renewal potential requires simultaneous regeneration of the state and of the following multi-time displacement.

Throughout this section the kernel is assumed strict. If instantaneous transitions are present, the batch-free kernel $q^\sharp$ is used first. The potential of the original model is then recovered by the identity $\psi=\psi^\sharp*\delta_{R_0}$.

\subsection{Harris recurrence and the stationary transition measure}

Let
\begin{equation*}
P(x,A)=\sum_{k\in\N^d}q_k(x,A),
\qquad x\in\X,
\quad A\in\calX,
\end{equation*}
be the kernel of the embedded chain. The joint state-increment kernel is
\begin{equation*}
Q(x,A\times B)=\sum_{k\in B}q_k(x,A),
\qquad A\in\calX,
\quad B\subseteq\N^d.
\end{equation*}
Thus $P$ is the marginal of $Q$ on the state coordinate.

The recurrence assumption is stated in the usual Harris sense.

\begin{assumption}\label{ass:harris-positive}
The kernel $P$ is positive Harris recurrent and has invariant probability measure $\pi$. Moreover, for every coordinate $r=1,\ldots,d$,
\begin{equation*}
0<\mu_r=\int_{\X}\sum_{k\in\N^d}k_rq_k(x,\X)\pi(\dd x)<\infty.
\end{equation*}
\end{assumption}

The stationary transition measure of the Markov renewal chain is the measure on $\X\times\X\times\N^d$ defined by
\begin{equation*}
\Pi_Q(\dd x,\dd y,k)=\pi(\dd x)q_k(x,\dd y).
\end{equation*}
If $g$ is integrable with respect to $\Pi_Q$, write
\begin{equation*}
\bar g=\int_{\X}\sum_{k\in\N^d}\int_{\X}g(x,y,k)q_k(x,\dd y)\pi(\dd x).
\end{equation*}

\begin{proposition}
Let $g$ be integrable with respect to $\Pi_Q$. Then there is a full Harris set $H\in\calX$ such that, for every $x\in H$,
\begin{equation*}
\frac{1}{n}\sum_{m=0}^{n-1}g(J_m,J_{m+1},X_{m+1})
\xrightarrow[n\to\infty]{} \bar g,
\qquad \Pp_x\hbox{-a.s.}
\end{equation*}
In particular,
\begin{equation*}
\frac{S_n}{n}\xrightarrow[n\to\infty]{} \mu=(\mu_1,\ldots,\mu_d),
\qquad \Pp_x\hbox{-a.s.}
\end{equation*}
coordinatewise, for every $x\in H$.
\end{proposition}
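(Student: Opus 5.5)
The natural route is to reduce the statement to the strong law of large numbers for positive Harris chains, applied to the pair chain built from consecutive renewal steps. Define the process $W_m=(J_m,J_{m+1},X_{m+1})$ on $\X\times\X\times\N^d$, where $X_{m+1}=S_{m+1}-S_m$. By the Markov renewal property stated in the definition of the chain, $W$ is a Markov chain. Its transition kernel is
\begin{equation*}
\widetilde Q\bigl((x,y,k),\dd x'\,\dd y'\,\{k'\}\bigr)=\delta_y(\dd x')\,q_{k'}(y,\dd y'),
\end{equation*}
which depends only on the middle coordinate $y$.

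The first step is to check that $\widetilde Q$ inherits positive Harris recurrence from $P$ and has invariant probability $\Pi_Q$.

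Invariance is a one-line computation. Integrating $\widetilde Q$ against $\Pi_Q$ gives $\int\pi(\dd x)P(x,\dd y)\,q_{k'}(y,\dd y')=\pi(\dd y)q_{k'}(y,\dd y')$, by $\pi P=\pi$.

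For Harris recurrence, take a set $C\subseteq\X\times\X\times\N^d$ with $\Pi_Q(C)>0$. Put $h(y)=Q(y,\{(y',k'):(y,y',k')\in C\})$, so that $\int h\,\dd\pi=\Pi_Q(C)>0$. Since $J$ is positive Harris, it visits $\{h>\varepsilon\}$ infinitely often $\Pp_x$-a.s. for every $x\in\X$, for some $\varepsilon>0$. Each such visit at time $m$ produces $W_m\in C$ with conditional probability at least $\varepsilon$, given the past up to $J_m$. The conditional Borel--Cantelli lemma (Lévy's extension) therefore gives $W_m\in C$ infinitely often.

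Hence $W$ is Harris recurrent with respect to $\Pi_Q$, started from any point $(x,\cdot,\cdot)$. If one prefers, the first coordinate can be dropped and one can work with the chain $(J_m,J_{m+1},X_{m+1})$ started at $W_0$ with law $\delta_x\otimes Q(x,\cdot)$. This avoids worrying about the initial $x$-coordinate.

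Next, invoke the ergodic theorem for positive Harris chains \citep[Thm.~17.1.7]{MeynTweedie1993}. For every initial law, and for every $g\in L^1(\Pi_Q)$,
\begin{equation*}
\frac1n\sum_{m<n}g(W_m)\to\bar g\quad\hbox{a.s.}
\end{equation*}
The initial law of $W_0$ under $\Pp_x$ is $\delta_x\otimes Q(x,\cdot)$, so the convergence holds $\Pp_x$-a.s. for every $x$ in the full Harris set. If the cited version only gives convergence for $\Pi_Q$-a.e. start, I would use the standard fact that the set of $x$ for which convergence holds is absorbing and of full $\pi$-measure. This yields the full Harris set $H$ of the statement: an absorbing set with $\pi(H)=1$, or all of $\X$ when $P$ is genuinely Harris.

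The \emph{in particular} clause follows by taking $g(x,y,k)=k_r$. This function is in $L^1(\Pi_Q)$ precisely by the moment condition $\mu_r<\infty$ in Assumption~\ref{ass:harris-positive}. Since $S_n^{[r]}=\sum_{m<n}X_{m+1,r}$, we get $S_n^{[r]}/n\to\mu_r$ $\Pp_x$-a.s. for each $r$, and hence coordinatewise convergence.

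The main obstacle is the transfer of Harris recurrence and of the "every starting point" conclusion from $J$ to the pair chain $W$. The Borel--Cantelli argument above is where care is needed, together with handling the measurability of $h$, which follows since $\X$ is standard Borel and $Q$ is a kernel.

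An alternative that sidesteps this step is to write
\begin{equation*}
\sum_{m<n}g(W_m)=\sum_{m<n}\tilde g(J_m)+M_n,
\end{equation*}
where $\tilde g(x)=\int\!\sum_k g(x,y,k)\,q_k(x,\dd y)$ and $M_n$ is a martingale. The first sum is handled by the ratio ergodic theorem for $J$. The martingale term satisfies $M_n/n\to0$ by a truncation argument, using integrability under $\Pi_Q$. I would fall back on this if the direct Harris argument becomes messy.
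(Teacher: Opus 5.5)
Your proposal is correct and follows essentially the same route as the paper: the chain $(J_m,J_{m+1},X_{m+1})$ with kernel depending only on the middle coordinate, invariance of $\Pi_Q$ from $\pi P=\pi$, Harris recurrence via the function $h$ and the conditional Borel--Cantelli lemma, the Harris ergodic theorem, and finally $g(x,y,k)=k_r$. The only cosmetic difference is in how $H$ is produced. The paper pulls a $\mathsf K$-full convergence set back to an absorbing, $\pi$-full subset of $\X$, whereas you appeal directly to the every-initial-law form of the ergodic theorem, keeping the same absorbing-set construction as a fallback.
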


\smallskip
\noindent The proof is given in SM\@.
\smallskip

The strict positivity of the coordinates of $\mu$ implies that each coordinate of $S_n$ tends to infinity along the recurrent class. Hence $N(k)$ is finite almost surely for every fixed $k\in\N^d$. The pathwise identity
\begin{equation*}
N(k)=\min\{N^{[r]}(k_r):1\leq r\leq d\}
\end{equation*}
will be used later only for the scalar renewal count. State-specific occupation quantities must be treated through the Markov renewal potential.

\subsection{Joint minorization and regeneration}

Positive Harris recurrence gives long-run stability, but it does not give independent cycles. For this we use a Nummelin splitting. The minorization is imposed on $Q$, not only on $P$.

\begin{assumption}\label{ass:joint-minorization}
There exist a set $C\in\calX$ with $\pi(C)>0$, a constant $\varepsilon\in(0,1]$, and a probability measure $\vartheta$ on $\X\times\N^d$ such that
\begin{equation*}
Q(x,\cdot)\geq \varepsilon\vartheta(\cdot),
\qquad x\in C.
\end{equation*}
\end{assumption}

The one-step form is only a notational convention. If the minorization holds for the $m_0$-step joint kernel associated with $q^{(m_0)}$, the construction is applied to the $m_0$-skeleton. The finitely many increments inside a block are then retained as part of the cycle reward. This is the usual block reduction in the splitting method; see, for instance, the construction of split chains in \citet{Nummelin1984}.

For $x\in C$ define the residual kernel
\begin{equation*}
Q^0(x,\cdot)=\frac{Q(x,\cdot)-\varepsilon\vartheta(\cdot)}{1-\varepsilon},
\qquad \varepsilon<1.
\end{equation*}
If $\varepsilon=1$, the residual kernel on $C$ is not used. Outside $C$, put $Q^0(x,\cdot)=Q(x,\cdot)$.

The split state space is
\begin{equation*}
\widehat\X=(\X\times\{0\})\cup(C\times\{1\}).
\end{equation*}
When the current split state is $(x,1)$, the next pair $(J_{n+1},X_{n+1})$ is sampled from $\vartheta$. When the current split state is $(x,0)$ with $x\in C$, it is sampled from $Q^0(x,\cdot)$. Outside $C$, it is sampled from $Q(x,\cdot)$. After a transition into a state $y$, the split indicator of the next state is chosen by the Bernoulli splitting device if $y\in C$; with probability $\varepsilon$ the indicator is $1$, and otherwise it is $0$.

The set
\begin{equation*}
\mathfrak a=C\times\{1\}
\end{equation*}
is an atom. Indeed, all points of $\mathfrak a$ have the same outgoing joint law $\vartheta$ for the next state and the next multi-time increment.

Let $(\widehat J_n,S_n)_{n\geq0}$ denote the split Markov renewal chain. Its projection on the first coordinate has the original law. We write $\Pp_{\mathfrak a}$ for the split-chain law whose initial distribution is the normalized invariant measure on the atom, $\widehat\pi(\cdot\mid\mathfrak a)$. Equivalently, after identifying $\mathfrak a$ with a single artificial state, this is the law started from that state. This convention fixes the value of cycle rewards at the first transition of a cycle when the reward depends on the current state inside the small set.

\begin{proposition}
The projected process $(J_n,S_n)_{n\geq0}$ under the split construction is the original Markov renewal chain. The split embedded chain is positive Harris recurrent and its invariant probability $\widehat\pi$ satisfies
\begin{equation*}
\widehat\pi(\mathfrak a)=\varepsilon\pi(C).
\end{equation*}
Consequently, if the split chain starts from the atom, the mean return time to $\mathfrak a$ is
\begin{equation*}
\Ee_{\mathfrak a}[\tau_{\mathfrak a}]=\frac{1}{\varepsilon\pi(C)},
\end{equation*}
where $\tau_{\mathfrak a}$ denotes the first return time to $\mathfrak a$ after time zero.
\end{proposition}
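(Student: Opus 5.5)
The argument follows the standard Nummelin splitting, now done for the joint kernel $Q$. The plan has three steps: check that the split chain projects to the original chain, construct an invariant probability for the split chain and read off its mass on the atom, and apply Kac's formula at the atom.

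\emph{Step 1: projection.} Write the split transition as a kernel $\widehat Q$ on $\widehat\X\times\N^d$. From $(x,i)$, draw $(y,k)$ from $\vartheta$, $Q^0(x,\cdot)$ or $Q(x,\cdot)$ according to the case. Then append the split indicator $1$ with probability $\varepsilon\one_C(y)$ and $0$ otherwise. Let $\delta$ be the split indicator of the current state. Conditionally on $J_n=x$ and on the past, $\delta=1$ has probability $\varepsilon\one_C(x)$, independently of the earlier increments. Averaging over $\delta$ gives
\begin{equation*}
\varepsilon\vartheta+(1-\varepsilon)Q^0(x,\cdot)=Q(x,\cdot),\qquad x\in C,
\end{equation*}
and this identity is trivial for $x\notin C$. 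An induction on $n$ then shows that the finite-dimensional laws of $(J_n,S_n)$ under the split construction are those prescribed by $q$. By the uniqueness part of Ionescu--Tulcea, the projected process is therefore the original Markov renewal chain. The case $\varepsilon=1$ needs only the convention that $Q^0$ is never used on $C$.

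\emph{Step 2: invariance and the mass on the atom.} Define the split measure
\begin{equation*}
\widehat\pi(\dd x\times\{1\})=\varepsilon\one_C(x)\pi(\dd x),\qquad
\widehat\pi(\dd x\times\{0\})=(1-\varepsilon\one_C(x))\pi(\dd x).
\end{equation*}
Its marginal on $\X$ is $\pi$. One step of the split chain from $\widehat\pi$ first moves the state coordinate according to the averaged kernel of Step 1, which equals $P$. This leaves $\pi$ invariant. The step then applies the same Bernoulli splitting, which reproduces $\widehat\pi$. Hence $\widehat\pi$ is an invariant probability for the split chain, and $\widehat\pi(\mathfrak a)=\varepsilon\pi(C)$ can be read off directly.

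\emph{Step 3: Harris recurrence and the mean return time.} Let $A\subseteq\widehat\X$ have positive $\widehat\pi$-measure, and let $A_X$ be its projection onto $\X$, so $\pi(A_X)>0$. By Harris recurrence of $P$, the state chain visits every set of positive $\pi$-measure infinitely often from every starting point. In particular it visits $C$ infinitely often, since $\pi(C)>0$. Each visit to $C$ receives split indicator $1$ with probability $\varepsilon$, independently of everything else given the state sequence. A conditional Borel--Cantelli argument therefore shows that $\mathfrak a$ is hit infinitely often almost surely. Because $\vartheta$ regenerates the chain, the same argument applied to $A$ shows that $A$ is visited infinitely often. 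The split chain is thus Harris recurrent with invariant probability $\widehat\pi$, hence positive Harris recurrent. Finally, $\mathfrak a$ is an accessible atom, so Kac's formula (the atomic regenerative representation of $\widehat\pi$) gives $\widehat\pi(\mathfrak a)=1/\Ee_{\mathfrak a}[\tau_{\mathfrak a}]$, which yields $\Ee_{\mathfrak a}[\tau_{\mathfrak a}]=1/(\varepsilon\pi(C))$.

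The main obstacle is Step 3, specifically the transfer of Harris recurrence to split sets that are not of product form. I would handle it as in Nummelin and in Meyn--Tweedie, Chapter~5. First prove that $\mathfrak a$ is recurrent. Then, for general $A$, use the regenerative cycle decomposition from $\mathfrak a$: $\widehat\pi(A)>0$ forces the cycle to hit $A$ with positive probability, and the independence of successive cycles gives infinitely many visits to $A$.
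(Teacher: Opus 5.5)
Your proof is correct. Steps 1 and 2 and the final appeal to Kac's formula coincide with the paper's proof: the fresh coin $b_x$ is averaged against $\varepsilon\vartheta+(1-\varepsilon)Q^0(x,\cdot)=Q(x,\cdot)$, and $\widehat\pi(\dd x,r)=\pi(\dd x)b_x(\{r\})$ is invariant because the state moves by $P$ and the coin is redrawn.

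The routes differ in the Harris step. The paper never passes through the atom there. It writes $A=(A_0\times\{0\})\cup(A_1\times\{1\})$ and observes that $\widehat\pi(A)>0$ forces one of $A_0\cap C^c$, $A_0\cap C$ (if $\varepsilon<1$) or $A_1$ to have positive $\pi$-measure. The projected Harris chain visits that base set infinitely often, and at each visit the fresh coin lands in the required fibre with conditional probability $1$, $1-\varepsilon$ or $\varepsilon$. So the ``non-product'' obstacle you flag disappears after a fibre decomposition, and your projection $A_X$ becomes usable once it is split into these three pieces.

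Your route is the general Nummelin--Meyn--Tweedie template: recurrence of $\mathfrak a$ via coins at visits to $C$, then transfer to $A$ through i.i.d.\ excursions. It rests on the same excursion structure as Kac's formula. However, it needs one step you only assert: $\widehat\pi(A)>0$ implies $\Pp_{\mathfrak a}(\widehat J_n\in A\text{ for some }1\le n\le\tau_{\mathfrak a})>0$. This is elementary. If it failed, then, since $\mathfrak a$ is reached almost surely from $\widehat\pi$-a.e.\ point, paths would visit $A$ only finitely often. Hence $n^{-1}\sum_{j<n}\widehat P^j(z,A)\to0$ for $\widehat\pi$-a.e.\ $z$, and invariance with dominated convergence gives $\widehat\pi(A)=0$. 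Your sentence ``the same argument applied to $A$'' does not by itself supply either this step or the fibre decomposition.

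One precision affects both routes, and the paper's wording has the same looseness. The coin at a visit to $C$ is not independent of everything else given the whole state sequence. Given $(J_{n+1},X_{n+1})$, its success probability is $\varepsilon\,\dd\vartheta/\dd Q(J_n,\cdot)$ evaluated at $(J_{n+1},X_{n+1})$; this is well defined since $\varepsilon\vartheta\le Q(x,\cdot)$ on $C$. What is true, and all that L\'evy's conditional Borel--Cantelli lemma needs, is $\Pp(\delta_n=1\mid\text{past},J_n)=\varepsilon\one_C(J_n)$, using the filtration that reveals $J_n$ before the split coordinate $\delta_n$. The sum of these conditional probabilities diverges on the event of infinitely many visits to $C$.
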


\smallskip
\noindent The proof is given in SM\@.
\smallskip

The atom is the point at which the future state and the next displacement regenerate together. This is the feature which is absent if one minorizes only the embedded state kernel.

\smallskip
\noindent\textit{Regenerative cycles.}
\smallskip

Assume in this subsection that the split chain starts from the atom. Put
\begin{equation*}
T_0=0,
\qquad
T_{m+1}=\inf\{n>T_m:\widehat J_n\in\mathfrak a\},
\qquad m\geq0.
\end{equation*}
For a general initial law, the first cycle before the first entrance into $\mathfrak a$ is disregarded; all subsequent cycles have the same law as those defined above.

The $m$-th cycle length and multi-time cycle displacement are
\begin{equation*}
L_m=T_m-T_{m-1},
\qquad
Y_m=S_{T_m}-S_{T_{m-1}},
\qquad m\geq1.
\end{equation*}
If $g$ is a measurable one-step reward on $\X\times\X\times\N^d$, define the $m$-th cycle reward and its absolute counterpart by
\begin{equation*}
R_m(g)=\sum_{n=T_{m-1}}^{T_m-1}g(J_n,J_{n+1},X_{n+1}),
\qquad
R_m^*(g)=\sum_{n=T_{m-1}}^{T_m-1}|g(J_n,J_{n+1},X_{n+1})|.
\end{equation*}

\begin{theorem}
Under $\Pp_{\mathfrak a}$, the sequence
\begin{equation*}
\bigl(L_m,Y_m,R_m(g)\bigr)_{m\geq1}
\end{equation*}
is independent and identically distributed, for every non-negative reward $g$ and, by decomposition into positive and negative parts, for every integrable reward $g$. Under an arbitrary initial law, the cycles after the first entrance into $\mathfrak a$ are independent and identically distributed with the same law.
\end{theorem}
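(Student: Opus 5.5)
The plan is to apply the strong Markov property of the split chain at the successive hitting times $T_m$ of the atom $\mathfrak a$. The key structural fact is that the split process $(\widehat J_n, S_n)$ is a Markov chain on $\widehat\X\times\N^d$. Its transition mechanism does not depend on the current value of $S_n$; only increments are added. From any point of $\mathfrak a$, the next pair $(J_{n+1},X_{n+1})$ has law $\vartheta$, whatever the location $x\in C$ inside the small set. After that, the split indicator is drawn by the Bernoulli device independently of the past.

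First I would check that each $T_m$ is a stopping time for the natural filtration $\mathcal F_n$ of the split chain, and that $T_m<\infty$ a.s. For finiteness I use the earlier proposition: the split chain is positive Harris recurrent with $\widehat\pi(\mathfrak a)=\varepsilon\pi(C)>0$, so $\mathfrak a$ is hit infinitely often from the atom. Next, the cycle triple $(L_m,Y_m,R_m(g))$ is a fixed measurable functional $\Phi$ of the shifted increment path
\[
\bigl(\widehat J_{T_{m-1}+n},\,X_{T_{m-1}+n+1}\bigr)_{n\geq0}.
\]
Here $L_m$ is the first $n\geq1$ with $\widehat J_{T_{m-1}+n}\in\mathfrak a$, $Y_m$ is the sum of the first $L_m$ increments, and $R_m(g)$ is the sum of $g$ over those transitions. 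The functional $\Phi$ is the same for all $m$, and it involves increments only, not absolute positions $S$.

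The central step is the following. Conditionally on $\mathcal F_{T_{m-1}}$, the shifted path has the law of the split chain started from the atom. One subtlety is that this law must not depend on the exact point $\widehat J_{T_{m-1}}=(x,1)$ with $x\in C$. For the increments and all future states this holds because the next pair is drawn from $\vartheta$. However, the first summand of $R_m(g)$ is $g(J_{T_{m-1}},J_{T_{m-1}+1},X_{T_{m-1}+1})$, which involves the current state $x$. Here I adopt the paper's convention of identifying $\mathfrak a$ with a single artificial state, so that the value of the first reward term is fixed. Equivalently, I take the starting point's law to be $\widehat\pi(\cdot\mid\mathfrak a)$ and show that, at each regeneration, the position inside $C$ has this same law independently of the past. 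I expect this justification to be the main obstacle: one must verify that the entrance distribution into $\mathfrak a$ does not depend on the cycle index. It may be cleanest to restrict to rewards whose first term does not depend on the current state, or to invoke the convention explicitly.

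Given this, the strong Markov property yields
\[
\Ee_{\mathfrak a}\bigl[h(L_m,Y_m,R_m(g))\mid\mathcal F_{T_{m-1}}\bigr]=\Ee_{\mathfrak a}[h(L_1,Y_1,R_1(g))]
\]
for bounded measurable $h$. Iterating this identity over $m$ gives independence and identical distribution of the cycles. For a non-negative reward $g$ the functionals may take the value $+\infty$, which is harmless with $[0,\infty]$-valued maps. For integrable $g$, I write $g=g^+-g^-$ and apply the result to the joint vector $(L_m,Y_m,R_m(g^+),R_m(g^-))$, which is one functional of the same path. The finiteness of $R^*_m(g)$ a.s. follows from Kac's formula together with $\Ee_{\mathfrak a}R_1^*(g)=\bar{|g|}/(\varepsilon\pi(C))<\infty$, so $R_m(g)$ is well defined. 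For an arbitrary initial law, I apply the strong Markov property at the first entrance time $T_0'$ into $\mathfrak a$, which is a.s. finite by Harris recurrence. The post-$T_0'$ process then has law $\Pp_{\mathfrak a}$, and the previous argument applies to it.
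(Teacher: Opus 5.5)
Your proposal is correct and follows the paper's proof essentially step by step: the strong Markov property at $T_{m-1}$ combined with the atom identity for the excursion functional (with $\mathfrak a$ collapsed to a single state, which is exactly the paper's convention), induction over cycles, the pair $(g^+,g^-)$ for signed rewards, and the strong Markov property at the first entrance for a general initial law. Drop only the alternative you call equivalent: without collapsing the atom, the entrance point $J_{T_m}$ does have law $\widehat\pi(\cdot\mid\mathfrak a)$ for every $m$, but it is the endpoint of cycle $m$ and hence not independent of it, so a reward depending on the current state would give identically distributed but only one-dependent cycles --- which is precisely why the convention is needed.
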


\smallskip
\noindent The proof is given in SM\@.
\smallskip

Let
\begin{equation*}
\gamma=\widehat\pi(\mathfrak a)=\varepsilon\pi(C).
\end{equation*}
Kac's formula gives, for every $g$ integrable with respect to $\Pi_Q$,
\begin{equation*}
\Ee_{\mathfrak a}[R_1(g)]=\frac{\bar g}{\gamma}.
\end{equation*}
Taking $g\equiv1$ and then $g(x,y,k)=k$ gives
\begin{equation*}
\Ee_{\mathfrak a}[L_1]=\frac{1}{\gamma},
\qquad
\Ee_{\mathfrak a}[Y_1]=\frac{\mu}{\gamma}.
\end{equation*}
Consequently,
\begin{equation*}
\frac{\Ee_{\mathfrak a}[Y_1]}{\Ee_{\mathfrak a}[L_1]}=\mu,
\qquad
\frac{\Ee_{\mathfrak a}[R_1(g)]}{\Ee_{\mathfrak a}[L_1]}=\bar g.
\end{equation*}
Thus the stationary formulation and the regenerative formulation give the same mean displacement and the same mean reward per embedded transition.

Let
\begin{equation*}
m=\Ee_{\mathfrak a}[Y_1],
\qquad
\ell=\Ee_{\mathfrak a}[L_1].
\end{equation*}
Then $m/\ell=\mu$. The vector $m$ is the mean displacement of one regenerative cycle, while $\ell$ is its mean length in embedded transitions.

\subsection{Directional constraints, arithmeticity and moments}

Let
\begin{equation*}
\Delta_d=\left\{\lambda\in(0,1)^d:\lambda_1+\cdots+\lambda_d=1\right\}.
\end{equation*}
For $\lambda\in\Delta_d$, write $k\to_\lambda\infty$ when
\begin{equation*}
|k|_1\to\infty,
\qquad
\frac{k}{|k|_1}\to\lambda.
\end{equation*}
The deterministic directional rate is
\begin{equation*}
\rho_\lambda=\min\{\frac{\lambda_r}{\mu_r}:1\leq r\leq d\}.
\end{equation*}
The rate-determining coordinate set is
\begin{equation*}
\mathcal I(\lambda)=\left\{r:\frac{\lambda_r}{\mu_r}=\rho_\lambda\right\}.
\end{equation*}
The terminology reflects the lower-rectangle constraint. If $r\in \mathcal I(\lambda)$, then the $r$-th coordinate is one of the coordinates which first prevents the next renewal from being observed inside the rectangle.

\begin{proposition}
There is a full Harris set $H\in\calX$ such that, for every $x\in H$ and every deterministic sequence $k\to_\lambda\infty$,
\begin{equation*}
\frac{N(k)}{|k|_1}\xrightarrow[k\to_\lambda\infty]{} \rho_\lambda
\end{equation*}
$\Pp_x$-almost surely.
\end{proposition}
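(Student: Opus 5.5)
Taking $H$ to be the full Harris set of the ergodic proposition of Section~3.1, applied with $g(x,y,k)=k_r$ for each $r$ (intersecting finitely many full Harris sets if needed), we have $S_n/n\to\mu$ coordinatewise $\Pp_x$-a.s. for $x\in H$. The proof will then be a deterministic, pathwise argument on this almost-sure event, so no separate exceptional null set arises for each deterministic sequence $k$. The idea is to first prove the scalar statement for each coordinate renewal count $N^{[r]}(m)=\sup\{n:S^{[r]}_n\le m\}$, namely $N^{[r]}(m)/m\to 1/\mu_r$ as $m\to\infty$. We then combine the coordinates through the pathwise identity $N(k)=\min_r N^{[r]}(k_r)$.

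\emph{Scalar step.} Fix a path with $S^{[r]}_n/n\to\mu_r\in(0,\infty)$. Then $S^{[r]}_n\to\infty$, so $N^{[r]}(m)<\infty$ for every $m$, and $N^{[r]}(m)\to\infty$ as $m\to\infty$ because each $S^{[r]}_n$ is finite. By definition of $N^{[r]}$,
\begin{equation*}
S^{[r]}_{N^{[r]}(m)}\le m< S^{[r]}_{N^{[r]}(m)+1}.
\end{equation*}
Dividing by $N^{[r]}(m)$ and letting $m\to\infty$, both $S^{[r]}_{N^{[r]}(m)}/N^{[r]}(m)$ and $S^{[r]}_{N^{[r]}(m)+1}/N^{[r]}(m)$ tend to $\mu_r$. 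Hence $m/N^{[r]}(m)\to\mu_r$, that is, $N^{[r]}(m)/m\to1/\mu_r$. Strictness of the kernel is not even needed here; the only inputs are monotonicity of $S^{[r]}_n$ and the almost-sure ratio limit.

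\emph{Directional step.} Let $k\to_\lambda\infty$ with $\lambda\in\Delta_d$. Since every $\lambda_r>0$, we have $k_r=|k|_1(\lambda_r+o(1))\to\infty$ for each $r$. Therefore
\begin{equation*}
\frac{N^{[r]}(k_r)}{|k|_1}=\frac{N^{[r]}(k_r)}{k_r}\cdot\frac{k_r}{|k|_1}\longrightarrow\frac{\lambda_r}{\mu_r}.
\end{equation*}
The minimum of finitely many functions is continuous, so $N(k)/|k|_1=\min_r N^{[r]}(k_r)/|k|_1\to\min_r\lambda_r/\mu_r=\rho_\lambda$ on the same path. Because the almost-sure event was fixed before the sequence was chosen, the convergence holds simultaneously for all deterministic sequences $k\to_\lambda\infty$ and all $\lambda$, which is slightly stronger than stated.

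The main obstacle is only the bookkeeping of the null set and of $H$. The ergodic proposition gives a full Harris set on which the law of large numbers holds for each reward. We must check that the reward $g(x,y,k)=k_r$ is $\Pi_Q$-integrable, which is exactly $\mu_r<\infty$ in Assumption~\ref{ass:harris-positive}, and that the event $\{S_n/n\to\mu\}$ does not depend on $k$. We must also make sure that $N(k)<\infty$ on this event, which follows from $\mu_r>0$ as remarked after the ergodic proposition. Everything else is deterministic.
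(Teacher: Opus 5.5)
Your proof is correct, but it is organized differently from the paper's.

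\textbf{The paper's route.} The paper never passes through the scalar counts. It works directly with the vector partial sums and sandwiches $N(k)$ between $\lfloor a|k|_1\rfloor$ and $\lceil a|k|_1\rceil$:
\begin{itemize}
\item For $a<\rho_\lambda$, every coordinate satisfies $a\mu_r<\lambda_r$, so the law of large numbers gives $S_{\lfloor a|k|_1\rfloor}\leqd k$ eventually.
\item For $a>\rho_\lambda$, a single coordinate with $a\mu_r>\lambda_r$ forces $S_{\lceil a|k|_1\rceil}\notleqd k$ eventually. Monotonicity of $S_n$ then gives $N(k)<\lceil a|k|_1\rceil$.
\item Finally one lets $a\to\rho_\lambda$.
\end{itemize}

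\textbf{Your route.} You invert each coordinate separately by the classical one-clock argument $S^{[r]}_{N^{[r]}(m)}\le m<S^{[r]}_{N^{[r]}(m)+1}$. You then combine the coordinates through $N(k)=\min_r N^{[r]}(k_r)$. The minimum in $\rho_\lambda$ thus appears as continuity of $\min$, rather than through the asymmetry between ``all coordinates below'' and ``one coordinate above''.

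\textbf{Comparison.} Your route is more modular and reuses the one-dimensional renewal inversion verbatim. Its only extra input is the pathwise min identity, which the paper states in Section~2.3 without proof. It follows at once because each set $\{n:S^{[r]}_n\le m\}$ is an initial segment of $\N$, since $S^{[r]}_n$ is nondecreasing. Add that one line to your write-up.

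The paper's direct sandwich, in turn, is the template reused in Section~5: the $n_t^\pm$ comparisons with an active/inactive coordinate split in the uniform and fluctuation results. There, handling the lower-rectangle event directly is more convenient than working through the scalar inverses.

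Both arguments are pathwise on the single event $\{S_n/n\to\mu\}$. So your remark about simultaneity over all sequences and all $\lambda$ applies equally to the paper's proof.
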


\smallskip
\noindent The proof is given in SM\@.
\smallskip

This proposition is the first place where the partial order changes the form of the renewal theorem. In the central limit theorem, a unique rate-determining coordinate produces a Gaussian inverse fluctuation, whereas several rate-determining coordinates produce the minimum of correlated Gaussian components.

\smallskip
\noindent\textit{Arithmeticity.}
\smallskip

The regenerative increments are integer-valued. Exact-time asymptotics therefore require an arithmetic classification. Let
\begin{equation*}
\mathcal S=\{y\in\N^d:\Pp_{\mathfrak a}(Y_1=y)>0\}.
\end{equation*}
Assume that $\mathcal S$ is non-empty and fix $y_0\in\mathcal S$. Define the subgroup
\begin{equation*}
\mathcal L=\left\langle y-y':y,y'\in\mathcal S\right\rangle_{\Z}
\subseteq\Z^d.
\end{equation*}
Equivalently, $\mathcal L$ is generated by the differences $y-y_0$, $y\in\mathcal S$. The support of $Y_1$ is contained in $y_0+\mathcal L$, and the sum of $n$ complete regenerative increments is contained in $ny_0+\mathcal L$.

\begin{definition}
The regenerative increment is called full-rank if $\mathcal L$ has rank $d$. If $\mathcal L$ is full-rank, its covolume is
\begin{equation*}
h_{\mathcal L}=|\Z^d/\mathcal L|.
\end{equation*}
The renewal lattice generated by complete cycles is
\begin{equation*}
\mathcal L_* = \mathcal L+\Z y_0,
\end{equation*}
and, when $\mathcal L$ is full-rank, its covolume is
\begin{equation*}
h_* = |\Z^d/\mathcal L_*|.
\end{equation*}
The regenerative increment is called renewal-aperiodic if $\mathcal L_*=\Z^d$.
\end{definition}

For $d=1$, $h_*$ is the ordinary span of the renewal epochs. Thus increments supported on odd integers have $\mathcal L=2\Z$ but $\mathcal L_*=\Z$, and the exact renewal mass is asymptotically non-zero on every sufficiently large integer. For $d>1$, the period is a subgroup of the lattice. Local renewal asymptotics are stated on cosets of $\mathcal L_*$, whereas rectangular cumulative quantities are sums of exact-time masses over lower rectangles and are therefore not confined to a single exact-time class.

\begin{proposition}
Let $C_n=Y_1+\cdots+Y_n$. Then
\begin{equation*}
\Pp_{\mathfrak a}\left(C_n\in ny_0+\mathcal L\right)=1,
\qquad n\geq1.
\end{equation*}
Consequently, the regenerative exact-time renewal mass is supported by the renewal lattice $\mathcal L_*$. If $\mathcal L_*=\Z^d$, there is no renewal-periodic obstruction.
\end{proposition}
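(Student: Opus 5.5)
The first claim follows from elementary arithmetic with the i.i.d.\ structure of the cycles given by the regeneration theorem above. Under $\Pp_{\mathfrak a}$ the increments $Y_1,Y_2,\ldots$ are independent with common law, and $\mathcal S$ is the set of atoms of this law on the countable set $\N^d$. Hence $\Pp_{\mathfrak a}(Y_j\in\mathcal S)=1$ for each $j$, and a countable intersection gives $\Pp_{\mathfrak a}(Y_1,\ldots,Y_n\in\mathcal S)=1$. For any $y\in\mathcal S$ we have $y-y_0\in\mathcal L$ by definition, so $\mathcal S\subseteq y_0+\mathcal L$. Since $\mathcal L$ is a group, on this full-measure event
\begin{equation*}
C_n=ny_0+\sum_{j=1}^n(Y_j-y_0)\in ny_0+\mathcal L .
\end{equation*}
This gives the first display. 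Independence is not actually needed here; only the marginal support of each $Y_j$ is used.

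For the second claim I would interpret the regenerative exact-time renewal mass as the measure $\sum_{n\geq0}\Pp_{\mathfrak a}(C_n\in\cdot)$ on $\N^d$, with $C_0=\zerod$. Equivalently, it is the mass placed by the potential $\psi$ at the regeneration epochs $S_{T_n}=C_n$. Each point $ny_0+\ell$ with $\ell\in\mathcal L$ lies in $\mathcal L+\Z y_0=\mathcal L_*$, and $C_0=\zerod\in\mathcal L_*$. Summing over $n$ therefore shows that this measure is carried by $\mathcal L_*$. If the chain starts from a general law, the same conclusion holds after translation by the first-entrance displacement $S_{T_1}$, which by the regeneration theorem is independent of the subsequent cycles. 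The mass is then supported on $S_{T_1}+\mathcal L_*$, consistent with the statement that local asymptotics are formulated on cosets of $\mathcal L_*$.

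For the final sentence, if $\mathcal L_*=\Z^d$ then the support constraint $C_n\in\mathcal L_*$ is vacuous. So the arithmetic structure imposes no restriction on which sites can carry exact-time mass, which is exactly the absence of a renewal-periodic obstruction.

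The only delicate point is interpretive: which measure "the regenerative exact-time renewal mass" denotes, and whether intra-cycle epochs are included. I would restrict to cycle endpoints $S_{T_n}$, since those are what the arithmetic classification governs. Intra-cycle epochs are absorbed into cycle rewards and need not lie in $\mathcal L_*$. Beyond fixing that convention, the proof is routine.
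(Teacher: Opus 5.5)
Your proof is correct and follows the paper's own argument: every $Y_j$ has the law of $Y_1$, whose support $\mathcal S$ lies in $y_0+\mathcal L$, so $C_n\in ny_0+\mathcal L\subseteq\mathcal L_*$ almost surely, and summing over $n$ with $C_0=\zerod$ shows that $U_C(k)=\sum_{n\ge0}\Pp_{\mathfrak a}(C_n=k)$ vanishes off $\mathcal L_*$. Your aside on general initial laws is not needed for the statement. If you keep it, note that the relevant shift there is the first-entrance displacement into $\mathfrak a$, not $S_{T_1}$, and that since this displacement is random the mass is carried by a union of cosets of $\mathcal L_*$ rather than a single one.
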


\smallskip
\noindent The proof is given in SM\@.
\smallskip

The atom used in the split chain determines a convenient regenerative lattice. In a finite irreducible model, the same group is obtained from closed state paths. Fix a state $i$. Consider all paths
\begin{equation*}
i=i_0\to i_1\to\cdots\to i_m=i
\end{equation*}
with positive transition probability and increments belonging to the support of the corresponding transition kernels. Differences of the total increments along such closed paths generate the same periodic structure inside the irreducible class.

\smallskip
\noindent\textit{Cycle moment assumptions.}
\smallskip

The following assumptions will be used for Gaussian and local renewal results.

\begin{assumption}
Under $\Pp_{\mathfrak a}$,
\begin{equation*}
\Ee_{\mathfrak a}[|Y_1|_1^2]<\infty,
\qquad
\Ee_{\mathfrak a}[L_1^2]<\infty,
\end{equation*}
and every coordinate of $m=\Ee_{\mathfrak a}[Y_1]$ is strictly positive.
\end{assumption}

Let
\begin{equation*}
\Sigma=\Vv_{\mathfrak a}(Y_1).
\end{equation*}
Let $V_{\mathcal L}$ be the linear span of $\mathcal L$ in $\R^d$.

\begin{assumption}
For every non-zero vector $\theta\in V_{\mathcal L}$ satisfying $\theta\cdot m=0$,
\begin{equation*}
\theta^T\Sigma\theta>0.
\end{equation*}
\end{assumption}

This condition allows degeneracy in the direction of the mean ray but excludes degeneracy in the transverse directions where the local Gaussian profile is formed. In dimension one the transverse space is trivial and the condition is void.

For a reward $g$ with finite second regenerative moment, the variance which appears in the regenerative central limit theorem is
\begin{equation*}
\sigma_g^2=\frac{\Vv_{\mathfrak a}\left[R_1(g)-\bar g L_1\right]}{\Ee_{\mathfrak a}[L_1]}.
\end{equation*}
This is the classical regenerative variance. In the multi-time setting the additional difficulty is the random number of cycles completed before a lower rectangle; its fluctuations are governed by the rate-determining coordinate set $\mathcal I(\lambda)$.

\smallskip
\noindent\textit{Special state spaces.}
\smallskip

If $\X$ is denumerable, the preceding construction becomes an infinite matrix theory. Positive Harris recurrence is ordinary irreducible positive recurrence on the recurrent class. A singleton state may be an atom for the embedded chain, but not necessarily for the joint state-increment law. If the joint law of the return cycle is used, the regenerative increments are the increments accumulated between successive returns to that state.

If $\X$ is finite and the embedded transition matrix is irreducible, positive recurrence is automatic. Choosing a recurrent state $i_0$ gives return times
\begin{equation*}
\tau_0=0,
\qquad
\tau_{m+1}=\inf\{n>\tau_m:J_n=i_0\}.
\end{equation*}
When the chain starts from $i_0$, the return-cycle increments
\begin{equation*}
Y_m=S_{\tau_m}-S_{\tau_{m-1}},
\qquad m\geq1,
\end{equation*}
are independent and identically distributed. In this case the regenerative structure is obtained from ordinary returns, and all kernel convolutions become matrix-valued convolutions on $\N^d$.

\smallskip
\noindent\textit{A drift criterion for the regenerative moment assumptions.}
\smallskip

The exact-time local theorem of Section~6 and the regenerative periodic proof require exponential moments of the regenerative cycle. The following condition verifies this requirement directly from the joint state-increment kernel. It is the multiplicative Foster--Lyapunov form of the drift--minorization method for Harris chains; see \citet{MeynTweedie1993} and the splitting construction of \citet{Nummelin1984}.

Let $C$ be the small set appearing in the joint minorization assumption. Suppose that there exist a measurable function $V:\X\to[1,\infty)$, constants $\theta>0$, $b<\infty$, $\eta\in(0,1)$, and the small set $C$ satisfies $\sup_{x\in C}V(x)<\infty$, such that
\begin{equation}
\label{eq:regenerative-drift}
\int_{\X}\sum_{k\in\N^d}
\exp\{\theta(1+|k|_1)\}V(y)q_k(x,\dd y)
\leq \eta V(x)+b\one_C(x),
\qquad x\in\X.
\end{equation}

\begin{theorem}
Assume that Assumption~\ref{ass:joint-minorization} holds and that the multiplicative drift condition \eqref{eq:regenerative-drift} is satisfied. Then there exists $\theta_1>0$ such that, for the split chain started from the atom,
\begin{equation*}
\Ee_{\mathfrak a}\left[\exp\{\theta_1(L_1+|Y_1|_1)\}\right]<\infty.
\end{equation*}
Consequently all polynomial moment assumptions imposed on $L_1$ and $Y_1$ in this paper are satisfied. If a transition reward $g$ satisfies
\begin{equation*}
|g(x,y,k)|\leq c_g(1+|k|_1)
\end{equation*}
for some finite constant $c_g$, then
\begin{equation*}
\Ee_{\mathfrak a}\left[\exp\{\theta_2 R_1^*(g)\}\right]<\infty
\end{equation*}
for some $\theta_2>0$. In particular, the reward moment assumptions used in the functional inverse theorem hold, and the one-cycle reward measures used in the regenerative lattice proof are exponentially summable.
\end{theorem}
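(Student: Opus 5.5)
The plan is to turn the drift inequality \eqref{eq:regenerative-drift} into a supermartingale bound for the exponentially weighted additive functional, and then use the minorization to control how many visits to $C$ occur before the atom is hit.

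\textbf{Step 1: the weighted supermartingale.} Write $W_n=\exp\{\theta(n+|S_n|_1)\}V(J_n)$. Since each transition contributes $\theta(1+|X_{n+1}|_1)$ to the exponent, \eqref{eq:regenerative-drift} is exactly
\begin{equation*}
\Ee[W_{n+1}\mid \mathcal F_n]\leq \eta W_n+b\,e^{\theta(n+|S_n|_1)}\one_C(J_n).
\end{equation*}
Hence $W_n$ is a positive supermartingale off $C$. Let $\sigma_C$ be the hitting time of $C$ after time zero. Optional stopping gives, for $x\notin C$,
\begin{equation*}
\Ee_x\bigl[e^{\theta(\sigma_C+|S_{\sigma_C}|_1)}\bigr]\leq V(x).
\end{equation*}
For $x\in C$, one step followed by this bound gives
\begin{equation*}
\Ee_x\bigl[e^{\theta(\sigma_C+|S_{\sigma_C}|_1)}\bigr]\leq \eta V(x)+b\leq M:=\eta\sup_C V+b<\infty .
\end{equation*}

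\textbf{Step 2: transfer to the split chain.} The split chain differs from the original one only on $C$, where the next pair is drawn from $\vartheta$ or from $Q^0$. For any non-negative $f$,
\begin{equation*}
Q^0f\leq \frac{Qf}{1-\varepsilon},\qquad \varepsilon\vartheta f\leq Qf .
\end{equation*}
So every one-step exponential bound issued from $C$ is inflated by at most the factor $\max\{(1-\varepsilon)^{-1},\varepsilon^{-1}\}$. Off $C$ the two chains coincide, so the Step 1 bound holds for the split chain up to this constant.

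\textbf{Step 3: geometric number of excursions.} Each time the chain sits in $C$ with split indicator $0$, the next visit to $C$ carries an independent Bernoulli$(\varepsilon)$ chance of being the atom. Between two successive visits to $C$ the weighted excursion has conditional exponential moment at most $M'$, where $M'$ is $M$ up to the Step 2 constant. The atom is therefore reached after a number $G$ of $C$-excursions. Conditionally on $G$, the successive excursion weights are dominated by the strong Markov property at each visit to $C$. The cycle $(L_1,Y_1)$ is the sum of the first step from $\vartheta$ and these excursions.

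\textbf{Main obstacle: $M'$ need not be small.} The quantity $\Ee[\prod_{j\leq G}e^{\theta E_j}]$ is finite only if, roughly, $(1-\varepsilon)M'<1$. I will handle this by Hölder interpolation. Replace $\theta$ by $\theta_1=\theta/p$ for large $p$. Jensen gives the excursion moment $M'^{1/p}$, which tends to $1$ as $p\to\infty$. Choose $p$ with $(1-\varepsilon)M'^{1/p}<1$. Summing the geometric series, after conditioning on the Bernoulli splits, then gives
\begin{equation*}
\Ee_{\mathfrak a}\bigl[e^{\theta_1(L_1+|Y_1|_1)}\bigr]<\infty .
\end{equation*}
The first $\vartheta$-step also needs a finite exponential moment. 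This follows because $\varepsilon\vartheta\leq Q(x,\cdot)$ for $x\in C$, so the moment is at most $\varepsilon^{-1}M$ after multiplying by $V\geq1$.

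\textbf{Consequences.} Polynomial moments follow from $t^j\leq j!\,\theta_1^{-j}e^{\theta_1 t}$. For the reward bound, $R_1^*(g)\leq c_g(L_1+|Y_1|_1)$, so $\theta_2=\theta_1/c_g$ works.
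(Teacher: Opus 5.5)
Your proposal is correct and follows essentially the same route as the paper's proof. Both use a stopped exponential supermartingale built from the drift to bound the excursion to $C$, the inflation factors $\varepsilon^{-1}$ and $(1-\varepsilon)^{-1}$ for the $\vartheta$- and $Q^0$-steps, a geometric number of failed excursions, and Jensen's inequality $M_0(a)\le M_0(\theta)^{a/\theta}$ to force $(1-\varepsilon)M_0(\theta_1)<1$. The only piece you leave implicit is the final clause on exponential summability of the one-cycle reward measures, which the paper gets in one line from $\sum_u e^{a|u|_1}|r_g|(u)\le c_g\Ee_{\mathfrak a}[(L_1+|Y_1|_1)e^{a(L_1+|Y_1|_1)}]<\infty$ for $a<\theta_1$.
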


\smallskip
\noindent The proof is given in SM\@.
\smallskip

\medskip
\noindent The invariant measure $\pi$, the split atom, the regenerative cycles, the subgroup $\mathcal L$, and the covariance matrix $\Sigma$ will be used without further redefinition. Stationary averages are written under $\pi$; cycle averages are written under $\Pp_{\mathfrak a}$. Exact-time limits are restricted to the periodic classes generated by $\mathcal L_*$. Rectangular stopping is governed by the set $\mathcal I(\lambda)$ of rate-determining coordinates.

The next section returns to Markov renewal equations, killed potentials and martingales. The asymptotic section will then apply the regenerative structure developed here.

\section{Markov renewal equations, killed potentials and martingales}

We now turn to the renewal equations generated by the potential. The identities below will be used for passage probabilities, occupation quantities and reward functionals. All statements are formulated for a standard Borel state space. In the finite case the same identities become matrix convolution identities.

Throughout the section the kernel is assumed strict. If instantaneous transitions are present, the batch-free kernel of Section 2 is used first and the original potential is recovered by the factor $\psi=\psi^\sharp*\delta_{R_0}$.

\subsection{Potentials, killing and first entrance}

The convolution notation extends without change to rectangular kernels. If $A=(A_k)_{k\in\N^d}$ is a kernel sequence from a measurable space $E_1$ to $E_2$, and $B=(B_k)_{k\in\N^d}$ is a kernel sequence from $E_2$ to $E_3$, then $A*B$ is the sequence from $E_1$ to $E_3$ defined by
\begin{equation*}
(A*B)_{k}(x,C)=\sum_{r\leqd k}\int_{E_2}A_r(x,\dd y)B_{k-r}(y,C).
\end{equation*}
The same convention is used when the second factor is a sequence of non-negative measurable functions.

Let $G=(G_k)_{k\in\N^d}$ be a sequence of non-negative measurable functions on $\X$. The Markov renewal equation with forcing term $G$ is
\begin{equation*}
L=G+q*L.
\end{equation*}
Its potential solution is
\begin{equation*}
L=\psi*G.
\end{equation*}
Equivalently, for $x\in\X$ and $k\in\N^d$,
\begin{equation*}
L_k(x)=\Ee_x\left[\sum_{n\geq0}\one_{\{S_n\leqd k\}}G_{k-S_n}(J_n)\right].
\end{equation*}
This representation is the probabilistic meaning of the inverse potential. It will be used repeatedly, with $G$ chosen to describe a terminal event, a transition reward or a state occupation.

If $B\in\calX$, the exact-time Markov renewal measure of $B$ is
\begin{equation*}
\psi_B(k,x)=\psi_k(x,B),
\end{equation*}
whereas the rectangular Markov renewal measure is
\begin{equation*}
\Psi_B(k,x)=\Psi_k(x,B)=\sum_{r\leqd k}\psi_r(x,B).
\end{equation*}
Thus $\psi_B(k,x)$ is the expected number of renewal visits to $B$ exactly at time $k$, while $\Psi_B(k,x)$ is the expected number of renewal visits to $B$ inside the lower rectangle with upper corner $k$.

\smallskip
\noindent\textit{Killed kernels.}
\smallskip

Let $D\in\calX$ be a target set and put $C=\X\setminus D$. We shall use $C$ and $D$ also for the corresponding measurable spaces endowed with the restricted sigma-fields. Define the killed kernel on $C$ and the crossing kernel from $C$ to $D$ by
\begin{equation*}
q^C_k(x,A)=q_k(x,A\cap C),
\qquad
q^{C,D}_k(x,A)=q_k(x,A\cap D),
\qquad x\in C,
\quad A\in\calX.
\end{equation*}
Both kernels are regarded as kernels with initial state in $C$; the first has terminal state in $C$, while the second has terminal state in $D$.
Let
\begin{equation*}
\psi^C=\sum_{n\geq0}(q^C)^{(n)}
\end{equation*}
be the killed potential. Since the kernel is strict, the exact-time coefficient $\psi^C_k$ is a finite sum for every fixed $k$.

Define the first entrance index into $D$ by
\begin{equation*}
\tau_D=\inf\{n\geq0:J_n\in D\}.
\end{equation*}
For $x\in C$, $B\in\calX,\quad B\subseteq D$ and $k\in\N^d$, define the first-entrance kernel
\begin{equation*}
g_D(k)(x,B)=\Pp_x\left(\tau_D<\infty,\ S_{\tau_D}=k,\ J_{\tau_D}\in B\right).
\end{equation*}
For $x\in D$ one has the trivial initial entrance at time $\zerod$; below we state the non-trivial formula for $x\in C$.

\begin{proposition}
For $x\in C$,
\begin{equation*}
g_D=\psi^C*q^{C,D}.
\end{equation*}
Equivalently, $g_D$ is the minimal non-negative solution of
\begin{equation*}
g_D=q^{C,D}+q^C*g_D.
\end{equation*}
\end{proposition}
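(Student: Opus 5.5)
The plan is to decompose the first-entrance event according to the value of $\tau_D$, using the Markov renewal chain representation $\Pp_x(J_n\in A,S_n=k)=q^{(n)}_k(x,A)$ for the killed kernel. Fix $x\in C$, $B\subseteq D$ and $k\in\N^d$. Since $x\in C$, we have $\tau_D\geq1$ on $\{\tau_D<\infty\}$. Hence
\begin{equation*}
g_D(k)(x,B)=\sum_{n\geq1}\Pp_x\left(J_0,\ldots,J_{n-1}\in C,\ J_n\in B,\ S_n=k\right).
\end{equation*}
The first step is to show by induction on $n$ that
\begin{equation*}
\Pp_x\left(J_1,\ldots,J_{n}\in C,\ J_n\in A,\ S_n=k\right)=(q^C)^{(n)}_k(x,A),\qquad A\subseteq C.
\end{equation*}
This is the same conditioning argument that identifies $q^{(n)}_k$, now with the indicator of $C$ inserted at each step. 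The inductive step conditions on $(J_0,S_0,\ldots,J_{n-1},S_{n-1})$, applies the defining Markov property, and uses $q_r(y,A\cap C)=q^C_r(y,A)$. The killed process is substochastic, but the convolution identity still holds because all sums over $[0,k]_d$ are finite.

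The second step conditions the $n$-th term on $\mathcal F_{n-1}$. The last transition contributes $q_{k-S_{n-1}}(J_{n-1},B)=q^{C,D}_{k-S_{n-1}}(J_{n-1},B)$, because $B\subseteq D$. Combined with the first step, this gives
\begin{equation*}
\Pp_x\left(\tau_D=n,\ S_n=k,\ J_n\in B\right)=\bigl((q^C)^{(n-1)}*q^{C,D}\bigr)_k(x,B).
\end{equation*}
Summing over $n\geq1$, and interchanging sum and convolution by monotone convergence (or by finiteness, since $q$ is strict), yields $g_D=\psi^C*q^{C,D}$.

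For the second assertion, I would imitate the earlier proposition that $\psi*G$ is the minimal non-negative solution of $L=G+q*L$. The identity $\psi^C=e+q^C*\psi^C$ follows from $(q^C)^{(n)}=q^C*(q^C)^{(n-1)}$ together with associativity of the semiring, in its rectangular-kernel version. Convolving on the right with $q^{C,D}$ shows that $g_D$ solves $g_D=q^{C,D}+q^C*g_D$.

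For minimality, take any non-negative solution $h$ and iterate:
\begin{equation*}
h=\sum_{j=0}^{n-1}(q^C)^{(j)}*q^{C,D}+(q^C)^{(n)}*h\geq\sum_{j=0}^{n-1}(q^C)^{(j)}*q^{C,D}.
\end{equation*}
Letting $n\to\infty$ gives $h\geq g_D$. For strict $q$, the remainder $(q^C)^{(n)}_k$ even vanishes once $n>|k|_1$, by the earlier lemma on powers of kernels with vanishing zero coefficient. In that case the solution is unique, not just minimal.

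The main obstacle is bookkeeping rather than an idea. One must check that associativity and the convolution calculus from the proposition on $\mathcal K_{\X}^{\N^d}$ carry over to kernels between the different spaces $C$, $D$ and $\X$. One must also handle the substochastic killed kernel carefully, in particular the measurability of conditional probabilities in the induction. I would treat both points by regarding all kernels as kernels on $\X$ vanishing outside the relevant sets, so that the earlier semiring results apply verbatim.
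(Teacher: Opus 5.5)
Your proposal is correct and follows the paper's proof essentially step for step. The paper likewise identifies $(q^C)^{(m)}$ with the killed path probabilities by induction and decomposes according to the value of $\tau_D$ into terms $(q^C)^{(m)}*q^{C,D}$. It then sums using strictness, derives the renewal equation from $\psi^C=e^C+q^C*\psi^C$, and proves minimality by iterating and discarding the non-negative remainder. Your additional observation that strictness makes the remainder vanish, and hence gives uniqueness, is also valid; the paper uses the same argument in the proof of Proposition 4.3.
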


\smallskip
\noindent The proof is given in SM\@.
\smallskip

The cumulative first-entrance kernel is
\begin{equation*}
G_D(k)(x,B)=\sum_{r\leqd k}g_D(r)(x,B).
\end{equation*}
For $x\in C$, the reliability, or survival before the first entrance into $D$, is
\begin{equation*}
R_D(k)(x)=\Pp_x\left(S_{\tau_D}\notleqd k\right)=1-G_D(k)(x,D).
\end{equation*}
This is a rectangular survival probability. It is not an upper-orthant survival probability of the vector $S_{\tau_D}$.

\smallskip
\noindent\textit{Entrance decomposition.}
\smallskip

The killed potential, the first-entrance kernel and the full potential are connected by a decomposition at $\tau_D$. It separates the contribution before the first entrance into $D$ from the contribution after the entrance.

\begin{proposition}
For $x\in C$, $A\in\calX$ and $k\in\N^d$,
\begin{equation*}
\psi_k(x,A)=\psi^C_k(x,A\cap C)+\sum_{r\leqd k}\int_D g_D(r)(x,\dd y)\psi_{k-r}(y,A).
\end{equation*}
Consequently,
\begin{equation*}
\Psi_k(x,A)=\Psi^C_k(x,A\cap C)+\sum_{r\leqd k}\int_D g_D(r)(x,\dd y)\Psi_{k-r}(y,A),
\end{equation*}
where $\Psi^C=s_0*\psi^C$.
\end{proposition}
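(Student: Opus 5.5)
The plan is a pathwise first-entrance decomposition of the counting sum, after which the strong Markov property at $\tau_D$ is applied. By the potential representation,
\begin{equation*}
\psi_k(x,A)=\Ee_x\Bigl[\sum_{n\geq0}\one_{\{S_n=k,\ J_n\in A\}}\Bigr].
\end{equation*}
For $x\in C$ we split the sum into the indices $n<\tau_D$ and the indices $n\geq\tau_D$ (the latter being empty when $\tau_D=\infty$). Each term is non-negative, so all manipulations are justified by monotone convergence and no integrability issues arise.

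\textbf{Pre-entrance part.} For $n<\tau_D$ the states $J_0,\ldots,J_n$ all lie in $C$, so the event $\{J_n\in A\}$ coincides with $\{J_n\in A\cap C\}$. Iterating the transition law of the chain shows that
\begin{equation*}
\Pp_x(S_n=k,\ J_n\in A\cap C,\ n<\tau_D)=(q^C)^{(n)}_k(x,A\cap C).
\end{equation*}
This is the killed analogue of the earlier proposition $\Pp_x(J_n\in A,S_n=k)=q^{(n)}_k(x,A)$, and it is proved by the same induction using $q^C$ in place of $q$. Summing over $n$ gives $\psi^C_k(x,A\cap C)$.

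\textbf{Post-entrance part.} On $\{\tau_D<\infty\}$ we write $n=\tau_D+m$ and $S_n=S_{\tau_D}+(S_{\tau_D+m}-S_{\tau_D})$. The index $\tau_D$ is a stopping time for the Markov chain $(J_n,S_n)$. By the strong Markov property, conditionally on $\mathcal F_{\tau_D}$ the shifted process $(J_{\tau_D+m},S_{\tau_D+m}-S_{\tau_D})_{m\geq0}$ is a Markov renewal chain started at $J_{\tau_D}$. This uses the translation invariance of the transition kernel in the $s$-coordinate, which was established in the proposition on the Markov property of $(J_n,S_n)$. Conditioning on $S_{\tau_D}=r$ and $J_{\tau_D}\in\dd y$ then gives the contribution $\psi_{k-r}(y,A)$ when $r\leqd k$ and zero otherwise. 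Integrating against the law of $(S_{\tau_D},J_{\tau_D})$, which is $g_D(r)(x,\dd y)$ with $y\in D$, produces the convolution term.

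\textbf{Cumulative identity.} We sum the exact-time identity over $k'\leqd k$. The first term gives $\Psi^C_k(x,A\cap C)$ by definition of $\Psi^C=s_0*\psi^C$. For the second term we exchange the order of summation:
\begin{equation*}
\sum_{k'\leqd k}\ \sum_{r\leqd k'}=\sum_{r\leqd k}\ \sum_{k'\,:\,r\leqd k'\leqd k}.
\end{equation*}
Writing $j=k'-r$, the inner range becomes $j\leqd k-r$, so the inner sum equals $\Psi_{k-r}(y,A)$. All sums here are finite.

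The main obstacle is only the careful statement of the strong Markov property for the joint chain with the additive shift. An alternative is purely algebraic: it would show that $\psi=\psi^C\oplus(\psi^C*q^{C,D})*\psi$ as a decomposition of $\psi=e+q*\psi$ by paths, using the proposition $g_D=\psi^C*q^{C,D}$ together with associativity of convolution. I would present the probabilistic argument and remark that the algebraic path decomposition gives the same result.
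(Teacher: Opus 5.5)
Your proposal is correct and follows the paper's own proof essentially step by step. The paper also splits the counting sum at $\tau_D$, identifies the pre-entrance part with $\psi^C_k(x,A\cap C)$ via the killed-path interpretation of $(q^C)^{(n)}$, and applies the strong Markov property at $\tau_D$ to produce the convolution with $g_D$. It then obtains the rectangular identity by summing over the lower rectangle with the same change of index $v=u-r$ and Tonelli's theorem.
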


\smallskip
\noindent The proof is given in SM\@.
\smallskip

In particular, if $D$ is absorbing for the embedded chain, then the second term describes only the evolution within $D$ after the first hit. If $D$ is not absorbing, the same identity remains valid and separates reliability-type quantities from availability-type quantities.

\subsection{Transition kernels, rewards and martingales}

Recall the rectangular survival kernel
\begin{equation*}
\widetilde H_k(x,A)=\one_A(x)\left(1-\sum_{r\leqd k}q_r(x,\X)\right).
\end{equation*}
The transition kernel of the multi-time semi-Markov field is
\begin{equation*}
P_Z(k)(x,A)=\Pp_x\left(Z_k\in A\right).
\end{equation*}
As stated in Section 2,
\begin{equation*}
P_Z=\widetilde H+q*P_Z,
\qquad
P_Z=\psi*\widetilde H.
\end{equation*}
The killed analogue is obtained by replacing $q$ by $q^C$. For $x\in C$ and $A\in\calX,\quad A\subseteq C$, put
\begin{equation*}
\widetilde H^C_k(x,A)=\one_A(x)\left(1-\sum_{r\leqd k}q_r(x,\X)\right).
\end{equation*}
The survival term uses the full transition kernel $q$, not only $q^C$, because no renewal at all occurs inside the rectangle. Define
\begin{equation*}
P_Z^C(k)(x,A)=\Pp_x\left(Z_k\in A,\ S_{\tau_D}\notleqd k\right),
\qquad x\in C,
\quad A\subseteq C.
\end{equation*}

\begin{proposition}
For $x\in C$,
\begin{equation*}
P_Z^C=\widetilde H^C+q^C*P_Z^C,
\qquad
P_Z^C=\psi^C*\widetilde H^C.
\end{equation*}
Moreover,
\begin{equation*}
P_Z^C(k)(x,C)=R_D(k)(x).
\end{equation*}
\end{proposition}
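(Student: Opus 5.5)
We follow the proof of $P_Z=\widetilde H+q*P_Z$ from Section~2: condition on the first renewal epoch, and then use minimality of the potential transform from the Markov renewal equation proposition, now for the killed kernel $q^C$.

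Fix $x\in C$, $A\subseteq C$ and $k\in\N^d$. Since $x\in C$, we have $\tau_D\geq1$. The event $\{S_{\tau_D}\notleqd k\}$ says that no renewal epoch with state in $D$ lies in $[0,k]_d$. Because $S_n$ is non-decreasing for $\leqd$, this is the same as requiring $J_n\in C$ for every $n\leq N(k)$.

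We split according to the first increment $X_1=S_1$.

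\emph{Case $S_1\notleqd k$.} Then $N(k)=0$ and $Z_k=x$. The event $S_{\tau_D}\notleqd k$ holds automatically, because $S_{\tau_D}\geq_d S_1$ and $\notleqd$ is preserved when moving upward in the order. This case therefore contributes $\one_A(x)\Pp_x(S_1\notleqd k)=\one_A(x)(1-\sum_{r\leqd k}q_r(x,\X))=\widetilde H^C_k(x,A)$. This is where the full kernel $q$, not $q^C$, enters the survival term.

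\emph{Case $S_1=r\leqd k$.} The constraint forces $J_1\in C$; otherwise $\tau_D=1$ and $S_{\tau_D}=r\leqd k$. By the Markov property of $(J_n,S_n)$ at time $1$ (the Markov chain proposition of Section~2), the shifted chain started at $(J_1,r)$ satisfies
\begin{equation*}
Z_k=Z'_{k-r},\qquad \{S_{\tau_D}\notleqd k\}=\{S'_{\tau'_D}\notleqd k-r\}.
\end{equation*}
Summing over $r\leqd k$ and integrating $J_1$ over $C$ gives $\sum_{r\leqd k}\int_C q_r(x,\dd y)P_Z^C(k-r)(y,A)=(q^C*P_Z^C)_k(x,A)$. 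Adding the two cases yields the equation $P_Z^C=\widetilde H^C+q^C*P_Z^C$.

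To identify the solution, we apply the minimal-solution proposition of Section~2 to the kernel $q^C$ on the state space $C$. The proof in SM only uses the structure of the convolution semiring, so it applies verbatim to substochastic kernels. It shows that $\psi^C*\widetilde H^C$ is the minimal non-negative solution. Since $q^C$ is strict, the Lemma on convolution powers gives $(q^C)^{(n)}_k=0$ for $n>|k|_1$. Iterating the equation $|k|_1+1$ times then shows that every solution coincides with $\psi^C*\widetilde H^C$ at index $k$. So the solution is unique, and in particular $P_Z^C=\psi^C*\widetilde H^C$.

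For the final identity, take $A=C$:
\begin{equation*}
P_Z^C(k)(x,C)=\Pp_x\bigl(Z_k\in C,\ S_{\tau_D}\notleqd k\bigr).
\end{equation*}
On $\{S_{\tau_D}\notleqd k\}$ we showed that every $J_n$ with $n\leq N(k)$ lies in $C$, so $Z_k=J_{N(k)}\in C$ automatically. Hence this probability equals $\Pp_x(S_{\tau_D}\notleqd k)=R_D(k)(x)$. On $\{\tau_D=\infty\}$ we interpret $S_{\tau_D}\notleqd k$ as true, which matches $1-G_D(k)(x,D)$.

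The main point needing care is the equivalence $\{S_{\tau_D}\notleqd k\}=\{J_n\in C \text{ for all } n\leq N(k)\}$. It rests on monotonicity of $S_n$ in the partial order: if $\tau_D\leq N(k)$ then $S_{\tau_D}\leqd S_{N(k)}\leqd k$, and conversely. This is also what makes the shift step in the second case legitimate.
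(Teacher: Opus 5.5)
Your proof is correct and follows the paper's own argument: you use the same first-renewal decomposition (the full kernel $q$ in the survival term, first transitions into $D$ excluded), the same iteration of the equation in which strictness kills the remainder at index $k$ once $m+1>|k|_1$, and the same monotonicity step $S_{\tau_D}\leqd S_{N(k)}\leqd k$ for the reliability identity. The appeal to minimality is redundant, since the iteration alone already gives uniqueness, but it does no harm.
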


\smallskip
\noindent The proof is given in SM\@.
\smallskip

For a measurable set $B\in\calX$, the point availability of $B$ is
\begin{equation*}
A_B(k,x)=\Pp_x\left(Z_k\in B\right)=P_Z(k)(x,B).
\end{equation*}
If $B$ is the set of operating states, $A_B$ is an availability surface. If $D$ is the set of failed states, $R_D$ is a first-entrance reliability surface. These are different objects unless failed states are absorbing and no repair is allowed.

\smallskip
\noindent\textit{Rewards and occupation.}
\smallskip

Let $c:\X\times\X\times\N^d\to[0,\infty)$ be measurable. Define the transition-reward sequence
\begin{equation*}
b_c(k)(x)=\int_{\X}c(x,y,k)q_k(x,\dd y),
\qquad k\in\N^d.
\end{equation*}
The expected reward accumulated by transitions whose terminal renewal epoch lies in the lower rectangle with upper corner $k$ is
\begin{equation*}
W_c(k)(x)=\Ee_x\left[\sum_{n\geq0}\one_{\{S_{n+1}\leqd k\}}c(J_n,J_{n+1},X_{n+1})\right].
\end{equation*}
Then
\begin{equation*}
W_c=s_0*\psi*b_c.
\end{equation*}
Indeed, $\psi*b_c$ gives the expected reward of transitions ending exactly at a specified multi-time point, and convolution by $s_0$ accumulates over the lower rectangle.

If rewards are accumulated only before first entrance into $D$, define
\begin{equation*}
b^C_{c,k}(x)=\int_C c(x,y,k)q_k(x,\dd y),
\qquad x\in C.
\end{equation*}
Then
\begin{equation*}
W_c^D(k)(x)=\Ee_x\left[\sum_{n\geq0}\one_{\{n+1<\tau_D\}}\one_{\{S_{n+1}\leqd k\}}c(J_n,J_{n+1},X_{n+1})\right]
=(s_0*\psi^C*b^C_c)_{k}(x).
\end{equation*}
A reward paid at the hitting transition itself is obtained by replacing $b^C_c$ by the corresponding crossing reward
\begin{equation*}
b^{C,D}_{c,k}(x)=\int_D c(x,y,k)q_k(x,\dd y).
\end{equation*}
The expected cumulative hitting reward in the rectangle is then
\begin{equation*}
(s_0*\psi^C*b^{C,D}_c)_{k}(x).
\end{equation*}
This formula includes expected warranty costs when $D$ is a failure set and $c$ is the cost attached to a covered failure transition.

State occupation of the observed semi-Markov field is expressed through $P_Z$. For a finite set $W\subset\N^d$ and $B\in\calX$,
\begin{equation*}
O_B(W)(x)=\Ee_x\left[\sum_{m\in W}\one_{\{Z_m\in B\}}\right]
=\sum_{m\in W}P_Z(m)(x,B).
\end{equation*}
For the lower rectangle $[0,k]_d$ this becomes
\begin{equation*}
O_B(k,x)=\sum_{m\leqd k}P_Z(m)(x,B)=(s_0*P_Z)_{k}(x,B).
\end{equation*}
Using $P_Z=\psi*\widetilde H$, one obtains
\begin{equation*}
O_B(k,x)=(s_0*\psi*\widetilde H_B)_{k}(x),
\end{equation*}
where $\widetilde H_{B,k}(x)=\widetilde H_k(x,B)$. Thus renewal-epoch occupation and lattice-time occupation are connected, but they are not the same quantity.

\smallskip
\noindent\textit{The Dynkin martingale.}
\smallskip

We now record the martingale form of the renewal equation. It is the analogue, for the Markov additive chain $(J_n,S_n)$, of the usual Dynkin formula for Markov chains. It identifies the renewal equation with a compensation identity before exiting a lower rectangle.

Let $G=(G_k)_{k\in\N^d}$ and $L=(L(k))_{k\in\N^d}$ be real-valued measurable functions on $\X$ such that, on the rectangle $[0,m]_d$,
\begin{equation*}
L=G+q*L.
\end{equation*}
We extend $L(r)$ and $G(r)$ by zero when $r\notin\N^d$. Define the first exit index from the lower rectangle by
\begin{equation*}
\sigma_m=\inf\{n\geq0:S_n\notleqd m\}.
\end{equation*}
Since the kernel is strict, $\sigma_m\leq |m|_1+1$ almost surely. Let $\mathcal F_n=\sigma(J_0,S_0,\ldots,J_n,S_n)$.

\begin{theorem}
Assume that $L(r)$ and $G(r)$ are integrable on the finite set of points $r\in[0,m]_d$ visited before $\sigma_m$. Then
\begin{equation*}
M_n=L_{m-S_{n\wedge\sigma_m}}(J_{n\wedge\sigma_m})+
\sum_{\ell=0}^{n\wedge\sigma_m-1}G_{m-S_\ell}(J_\ell),
\qquad n\geq0,
\end{equation*}
is an $(\mathcal F_n)$-martingale.
\end{theorem}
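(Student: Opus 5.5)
The plan is to verify the martingale property directly from the one-step Markov property of $(J_n,S_n)$ (Proposition in Section~2), using the renewal equation $L=G+q*L$ on $[0,m]_d$ to compute the conditional increment. It suffices to show $\Ee[M_{n+1}-M_n\mid\mathcal F_n]=0$. Integrability of each $M_n$ is immediate: $n\wedge\sigma_m\leq|m|_1+1$, so $M_n$ involves finitely many terms of $L$ and $G$ at points of $[0,m]_d$ (or the zero extension outside), and these are integrable by hypothesis.

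The first step splits on the $\mathcal F_n$-measurable event $\{\sigma_m\leq n\}$. On this event the stopped indices do not move, so $M_{n+1}=M_n$. On the complement $\{\sigma_m>n\}$ we have $S_n\leqd m$, and $n\wedge\sigma_m=n$, $(n+1)\wedge\sigma_m=n+1$. Hence
\begin{equation*}
M_{n+1}-M_n=L_{m-S_{n+1}}(J_{n+1})-L_{m-S_n}(J_n)+G_{m-S_n}(J_n).
\end{equation*}
Put $r=m-S_n\in\N^d$. By the Markov property and the definition of the kernel,
\begin{equation*}
\Ee\left[L_{m-S_{n+1}}(J_{n+1})\mid\mathcal F_n\right]
=\sum_{k\in\N^d}\int_{\X}q_k(J_n,\dd y)L_{r-k}(y).
\end{equation*}
The summand vanishes unless $k\leqd r$, because $L$ is extended by zero off $\N^d$. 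The sum is therefore $(q*L)_r(J_n)$. This is the key point: the zero extension makes the exit step, where $S_{n+1}\notleqd m$, contribute nothing, which is consistent with the renewal equation.

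The next step applies $L=G+q*L$ at the point $r\in[0,m]_d$. This gives $(q*L)_r(J_n)=L_r(J_n)-G_r(J_n)$, so the conditional increment vanishes on $\{\sigma_m>n\}$. Combining the two events yields the martingale property.

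I expect the main obstacle to be justifying the conditional expectation step for real-valued, possibly signed $L$. One needs $\int q_k(x,\dd y)|L_{r-k}(y)|<\infty$ along the path, so that $q*L$ is well defined and the conditional expectation can be split term by term. I would handle this by reading the integrability hypothesis as integrability of $L_{m-S_{n+1}}(J_{n+1})\one_{\{\sigma_m>n\}}$. I would then apply the Markov property separately to the positive and negative parts, $L^\pm$, where the identity holds in $[0,\infty]$, and subtract, using finiteness of both parts almost surely. The finite sum over $k\leqd r$ needs no further justification.
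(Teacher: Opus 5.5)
Your proof is correct and follows the paper's own argument. Like the paper, you split on the stopping event, compute $\Ee[L_{m-S_{n+1}}(J_{n+1})\mid\mathcal F_n]$ on $\{n<\sigma_m\}$ by the Markov renewal property, use the zero extension to reduce the sum to $(q*L)_r(J_n)$, and then apply $L=G+q*L$ at $r=m-S_n$. Your handling of signed $L$ through $L^{\pm}$ tightens an integrability point that the paper leaves implicit.
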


\begin{proof}
On the event $\{n\geq\sigma_m\}$, the process is already stopped. On $\{n<\sigma_m\}$, put $r=m-S_n\in\N^d$. By the Markov renewal property,
\begin{equation*}
\Ee\left[L(m-S_{n+1})(J_{n+1})\mid\mathcal F_n\right]
=\sum_{a\leqd r}\int_{\X}q_{a}(J_n,\dd y)L_{r-a}(y).
\end{equation*}
The terms with $a\notleqd r$ vanish because $L$ is extended by zero outside $\N^d$. Since $L=G+q*L$ on the rectangle,
\begin{equation*}
\sum_{a\leqd r}\int_{\X}q_{a}(J_n,\dd y)L_{r-a}(y)=L_r(J_n)-G_r(J_n).
\end{equation*}
This is exactly the martingale increment identity.
\end{proof}

Taking $L=\psi*G$ in the preceding theorem and stopping at $\sigma_m$ gives
\begin{equation*}
(\psi*G)_{m}(x)=\Ee_x\left[\sum_{n=0}^{\sigma_m-1}G(m-S_n)(J_n)\right],
\end{equation*}
which is the potential representation again. The theorem shows that the same identity is available for any solution of a Markov renewal equation, including killed and boundary-value equations.

For the killed chain on $C$ one obtains the corresponding stopped martingale by replacing $q$ by $q^C$ and by stopping at
\begin{equation*}
\sigma_m^C=\sigma_m\wedge\tau_D.
\end{equation*}
If $L^C=\psi^C*G$ on $C$, then, for $x\in C$,
\begin{equation*}
L^C(m)(x)=\Ee_x\left[\sum_{n=0}^{\sigma_m^C-1}G(m-S_n)(J_n)\right].
\end{equation*}
This identity is the killed-potential form of the optional stopping formula.

\medskip
\noindent The identities in this section connect the main objects of the paper. The potential $\psi$ gives exact-time renewal visits. The accumulated potential $\Psi$ gives rectangular renewal visits. The killed potential $\psi^C$ gives paths before entrance into a target set. The crossing convolution $\psi^C*q^{C,D}$ gives first entrances. The semi-Markov transition kernel is $P_Z=\psi*\widetilde H$, while its killed version gives reliability before the target is reached. Reward functionals are obtained by replacing the crossing kernel by a reward kernel. Finally, the martingale representation shows that the same equations are Dynkin equations for the Markov additive chain stopped at a lower rectangle.

These identities will be combined with the regeneration of Section 3 in order to obtain laws of large numbers, central limit theorems and local renewal asymptotics.

\section{Functional inverse limits and directional large deviations}

The renewal equations of Section~4 reduce many quantities to sums over renewal epochs in lower rectangles. The present section studies the asymptotic form of these sums. The essential feature is that the inverse map from a vector boundary to a renewal count is not one-dimensional. On each region of directions where the same coordinates determine the boundary, the inverse fluctuations form a random field indexed by the direction. If one coordinate determines the boundary this field is Gaussian; if several coordinates determine it, the limit contains a minimum of correlated Gaussian processes.

We recall that the split chain has atom $\mathfrak a$, return times $T_n$, cycle lengths $L_n$, cycle increments $Y_n$, and cycle rewards $R_n(g)$. We also write $R_n^*(g)$ for the sum of the absolute values of the transition rewards during the same cycle. Put
\begin{equation*}
C_n=Y_1+\cdots+Y_n,
\qquad
D_n=L_1+\cdots+L_n,
\qquad n\geq1.
\end{equation*}
Thus $C_n$ is the multi-time displacement accumulated during the first $n$ regenerative cycles, while $D_n$ is the number of embedded transitions accumulated during those cycles. Let
\begin{equation*}
\mathfrak N(k)=\sup\{n\geq0:C_n\leqd k\},
\qquad k\in\N^d,
\end{equation*}
be the number of complete regenerative cycles whose terminal renewal epoch lies in the lower rectangle with upper corner $k$. Let
\begin{equation*}
m=\Ee_{\mathfrak a}[Y_1],
\qquad
\ell=\Ee_{\mathfrak a}[L_1].
\end{equation*}
Then $m/\ell=\mu$, where $\mu$ is the stationary mean displacement per embedded transition. For $\lambda\in\Delta_d$, set
\begin{equation*}
\kappa(\lambda)=\min\left\{\frac{\lambda_r}{m_r}:1\leq r\leq d\right\},
\qquad
\rho(\lambda)=\ell\kappa(\lambda).
\end{equation*}
The set of rate-determining coordinates is
\begin{equation*}
\mathcal I(\lambda)=\left\{r:\frac{\lambda_r}{m_r}=\kappa(\lambda)\right\}.
\end{equation*}
The notation $\rho(\lambda)$ is used in this section for the deterministic embedded-transition rate. Since $m/\ell=\mu$, it agrees with the directional rate $\rho_\lambda$ of Section~3.

For a non-empty set $I\subset\{1,\ldots,d\}$ define the corresponding directional cell by
\begin{equation*}
\Delta_I=\left\{\lambda\in\Delta_d:\mathcal I(\lambda)=I\right\}.
\end{equation*}
When $|I|>1$, the set $\Delta_I$ lies in a lower-dimensional face of the simplex. Compact subsets of $\Delta_I$ are understood in the relative topology of that face. If $I$ is fixed and $\Lambda$ is such a compact set, the inequalities which correspond to coordinates outside $I$ have a uniform deterministic margin. This observation is the deterministic input needed for the functional inverse theorem on $\Lambda$.

\begin{lemma}
Let $\Lambda$ be a compact subset of $\Delta_I$ and let $k_t:\Lambda\to\N^d$ be Borel and satisfy
\begin{equation*}
\sup_{\lambda\in\Lambda}|k_t(\lambda)-t\lambda|_1=o(\sqrt t).
\end{equation*}
If $\Ee_{\mathfrak a}[L_1^2]<\infty$, then
\begin{equation*}
\sup_{\lambda\in\Lambda}
\frac{|N(k_t(\lambda))-D_{\mathfrak N(k_t(\lambda))}|}{\sqrt t}
\xrightarrow[t\to\infty]{\mathbb P}0.
\end{equation*}
If, in addition, $\Ee_{\mathfrak a}[(R_1^*(g))^2]<\infty$, then the reward accumulated in the incomplete cycle which intersects the boundary is $o_{\Pp}(\sqrt t)$ uniformly over $\lambda\in\Lambda$.
\end{lemma}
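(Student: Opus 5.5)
The plan is a sandwich argument. I first compare $N(k)$ with the embedded-transition clock of complete cycles, and then bound the discrepancy by a maximum of cycle lengths over a range of cycle indices of order $t$.

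Fix $\lambda$ and write $k=k_t(\lambda)$ and $n=\mathfrak N(k)$. Under $\Pp_{\mathfrak a}$ (for a general initial law, first add the delay before the first entrance into $\mathfrak a$, which is a.s. finite and independent of $t$, hence $o(\sqrt t)$ after division), the renewal epochs $S_j$ are nondecreasing in $\leqd$. Cycle $n$ ends inside $[0,k]_d$ and cycle $n+1$ does not. So the times satisfy $D_n\le N(k)$. Since $S_{D_{n+1}}=C_{n+1}\notleqd k$ and $S$ is monotone, every $j\ge D_{n+1}$ has $S_j\notleqd k$. Hence
\begin{equation*}
0\le N(k)-D_{\mathfrak N(k)}\le L_{\mathfrak N(k)+1}.
\end{equation*}
In the same way, the reward of the incomplete cycle is bounded by $R^*_{\mathfrak N(k)+1}(g)$. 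It therefore suffices to show that
\begin{equation*}
\sup_{\lambda\in\Lambda}L_{\mathfrak N(k_t(\lambda))+1}=o_{\Pp}(\sqrt t)
\end{equation*}
and the analogous statement for $R^*$.

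Next I control the range of indices. Since $\Lambda\subset\Delta_d$ is compact, $|k_t(\lambda)|_1\le t+o(\sqrt t)\le 2t$ for large $t$, uniformly in $\lambda$. The count $\mathfrak N(k)$ is monotone in $k$, because $C_n$ is nondecreasing. So $\mathfrak N(k_t(\lambda))\le\mathfrak N(\lceil 2t\rceil\mathbf 1)$, where $\mathbf 1=(1,\ldots,1)$. By the strong law for the i.i.d.\ increments $Y_j$, whose mean $m$ has strictly positive coordinates, $\mathfrak N(\lceil2t\rceil\mathbf 1)/t$ converges a.s.\ to a finite constant $c$. Hence with probability tending to one,
\begin{equation*}
\sup_{\lambda}\,\mathfrak N(k_t(\lambda))+1\le Mt
\end{equation*}
for any constant $M>c$. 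Notably, the compactness of $\Lambda$ inside $\Delta_I$ (the deterministic margin for coordinates outside $I$) is not needed here, because only an upper bound on the cycle index enters. This yields the reduction
\begin{equation*}
\sup_{\lambda\in\Lambda}\frac{L_{\mathfrak N(k_t(\lambda))+1}}{\sqrt t}\le\frac{\max_{j\le Mt}L_j}{\sqrt t}\quad\hbox{on an event of probability }\to1.
\end{equation*}

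The remaining step, which is the only real point, is the classical fact that $\max_{j\le n}L_j=o(\sqrt n)$ a.s.\ whenever $\Ee L_1^2<\infty$. For every $\varepsilon>0$,
\begin{equation*}
\sum_j\Pp(L_j^2>\varepsilon^2 j)<\infty,
\end{equation*}
so Borel--Cantelli gives $L_j=o(\sqrt j)$ a.s., and the running maximum inherits this bound. This holds because the $L_j$ are i.i.d.\ by the regeneration theorem of Section~3. The reward statement follows verbatim, using $R^*_j(g)$, which are i.i.d.\ by the same theorem and have finite second moment. The measurability of $\sup_\lambda$ is harmless, since the bound is through an explicit random variable. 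I expect no serious obstacle. The main care is the monotonicity argument showing that no renewal after cycle $\mathfrak N(k)+1$ can lie in the rectangle, which relies on strictness and the coordinatewise nondecrease of $S_n$.
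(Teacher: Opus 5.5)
Your argument is correct and follows the paper's proof. It uses the same three steps: the sandwich $0\leq N(k)-D_{\mathfrak N(k)}\leq L_{\mathfrak N(k)+1}$ and its reward analogue, an $O(t)$ bound on the relevant cycle indices, and negligibility at scale $\sqrt t$ of the maximum of $O(t)$ i.i.d.\ cycle lengths or absolute cycle rewards. The differences are only technical: the paper bounds the index deterministically through strictness ($|Y_j|_1\geq1$ gives $\mathfrak N(k)\leq|k|_1\leq c_\Lambda t$) and controls the maximum by a union bound with $t\Pp_{\mathfrak a}(L_1>\varepsilon\sqrt t)\to0$, whereas you use monotonicity of $\mathfrak N$ with the strong law and a Borel--Cantelli almost-sure bound on the running maximum.
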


\smallskip
\noindent The proof is given in SM\@.
\smallskip

\begin{proposition}
Let $\Lambda$ be a compact subset of $\Delta_I$ and let $k_t:\Lambda\to\N^d$ satisfy the preceding approximation condition. Then
\begin{equation*}
\sup_{\lambda\in\Lambda}
\left|\frac{\mathfrak N(k_t(\lambda))}{t}-\kappa(\lambda)\right|
\xrightarrow[t\to\infty]{\mathrm{a.s.}}0.
\end{equation*}
If $\Ee_{\mathfrak a}[L_1]<\infty$, then
\begin{equation*}
\sup_{\lambda\in\Lambda}
\left|\frac{N(k_t(\lambda))}{t}-\rho(\lambda)\right|
\xrightarrow[t\to\infty]{\mathrm{a.s.}}0.
\end{equation*}
The same convergences hold in probability under every initial law for which the first entrance time into the split atom and the corresponding entrance displacement are finite almost surely.
\end{proposition}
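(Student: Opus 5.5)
The proof reduces everything to the strong law of large numbers for the i.i.d.\ cycle sequence. By the regeneration theorem of Section~3, under $\Pp_{\mathfrak a}$ the pairs $(L_n,Y_n)$ are i.i.d.\ with $\Ee_{\mathfrak a}[Y_1]=m$, all of whose coordinates are positive and finite, and with $\Ee_{\mathfrak a}[L_1]=\ell<\infty$. The strong law then gives $C_n/n\to m$ and $D_n/n\to\ell$ almost surely. The first task is to upgrade this to uniform-in-$\lambda$ control of the inverse $\mathfrak N$. Because $\mathfrak N(k)=\min_r\mathfrak N^{[r]}(k_r)$, where $\mathfrak N^{[r]}(s)=\sup\{n:C^{[r]}_n\le s\}$ is a scalar renewal count, each scalar count is monotone in $s$ and satisfies $\mathfrak N^{[r]}(s)/s\to 1/m_r$ almost surely. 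Monotonicity turns this into the uniform statement $\sup_{s\le T}|\mathfrak N^{[r]}(s)-s/m_r|=o(T)$ almost surely as $T\to\infty$; this is the usual Pólya/Dini argument for monotone functions converging to a continuous limit.

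Next comes the uniformity over directions. Write $k_t(\lambda)=t\lambda+e_t(\lambda)$ with $\sup_\lambda|e_t(\lambda)|_1=o(\sqrt t)$, which is in particular $o(t)$. All coordinates $k_t(\lambda)_r$ are at most $t+o(t)$, so the uniform scalar estimate with $T=2t$ gives
\begin{equation*}
\sup_{\lambda\in\Lambda}\Bigl|\mathfrak N^{[r]}(k_t(\lambda)_r)-\frac{t\lambda_r}{m_r}\Bigr|\le o(t)+\frac{\sup_\lambda|e_t(\lambda)|_1}{m_r}=o(t).
\end{equation*}
The map $(a_1,\dots,a_d)\mapsto\min_r a_r$ is $1$-Lipschitz in the sup norm, so taking the minimum over $r$ yields $\sup_\lambda|\mathfrak N(k_t(\lambda))/t-\kappa(\lambda)|\to0$ almost surely. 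This step does not actually use compactness of $\Lambda$ inside the cell $\Delta_I$; it is uniform on all of $\Delta_d$. Borel measurability of $k_t$ is needed only so that the supremum is a random variable, or we read the statement as an outer almost-sure one.

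For $N$, I sandwich it between cycle counts. By the definition of $\mathfrak N$ and strict monotonicity of $S$, we have $T_{\mathfrak N(k)}\le N(k)<T_{\mathfrak N(k)+1}$, so
\begin{equation*}
D_{\mathfrak N(k)}\le N(k)\le D_{\mathfrak N(k)+1}.
\end{equation*}
The strong law $D_n/n\to\ell$ makes $D_n=n\ell+o(n)$ with an almost-sure $o(n)$. By the first part, $\mathfrak N(k_t(\lambda))\le Ct$ uniformly in $\lambda$, so $\sup_\lambda|D_{\mathfrak N(k_t(\lambda))+j}-\ell\mathfrak N(k_t(\lambda))|=o(t)$ for $j=0,1$. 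Combining this with the first part and $\rho=\ell\kappa$ gives the uniform convergence of $N(k_t(\lambda))/t$. The only delicate point is making the $o(n)$ from the strong law uniform over the random indices $n\le Ct$. This follows from $\max_{n\le Ct}|D_n-n\ell|/t\to0$ almost surely, which is again a consequence of the strong law.

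For a general initial law, the process decomposes into a delay, namely the first entrance into $\mathfrak a$, with finite length $L_0$ and displacement $Y_0$, followed by cycles with the $\Pp_{\mathfrak a}$ law, independent of the delay. The counts become $\mathfrak N(k)$ evaluated at $k-Y_0$ (or zero if $Y_0\not\le_d k$), and $N(k)$ is shifted by $L_0$. Since $Y_0$ and $L_0$ are almost surely finite, they perturb $k_t$ by $O(1)=o(\sqrt t)$ and $N$ by $O(1)$, so the preceding argument applies pathwise, conditionally on the delay. This gives almost-sure convergence, and hence the stated convergence in probability. I expect the main obstacle to be the bookkeeping for the delay and the boundary cases, for instance ensuring that $k_t(\lambda)-Y_0\in\N^d$ eventually holds uniformly, which follows because $\lambda_r$ is bounded below on compact $\Lambda$. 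Beyond that, the argument is routine.
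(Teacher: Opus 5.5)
Your proof is correct, and it takes a different route from the paper.

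The paper argues directly in $d$ dimensions:
\begin{itemize}
\item From the uniform strong law $\max_{n\leq Mt}|C_n-nm|_1/t\to0$ it tests the indices $n_t^{\pm}(\lambda)=\lfloor t(\kappa(\lambda)\pm\varepsilon)\rfloor$. It checks that an active coordinate of $C_{n_t^+(\lambda)}$ overshoots $k_t(\lambda)$, while every coordinate of $C_{n_t^-(\lambda)}$ stays below it. The inactive coordinates are handled by a uniform margin coming from compactness of $\Lambda$ in $\Delta_I$.
\item It bounds $N-D_{\mathfrak N}\leq L_{\mathfrak N+1}$ by a dyadic Borel--Cantelli estimate on $\max_{j\leq c_\Lambda t}L_j/t$.
\item It transfers to a delayed law by working on $\{\sigma\leq\delta t,\ |B_\sigma|_1\leq\delta t\}$ and perturbing corners by $\delta t$. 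This yields only convergence in probability.
\end{itemize}

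You instead factor the vector inverse through scalar inverses, $\mathfrak N(k)=\min_r\mathfrak N^{[r]}(k_r)$. This is valid because each $C^{[r]}_n$ is nondecreasing, so the admissible index sets are initial segments. You then use that the minimum is $1$-Lipschitz for the sup norm. This gives a pathwise estimate that is uniform over all corners $k$ with $|k|_1\leq T$. Your remark that the cell structure plays no role at this scale is right. Even in the paper's scheme the margin is dispensable here, since $\lambda_r\geq\kappa(\lambda)m_r$ already gives a deficit of order $\varepsilon t m_r$.

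Your upper sandwich by $D_{\mathfrak N+1}$ controls the overshoot with the strong law for $D_n$ alone, because $L_{n+1}=D_{n+1}-D_n$. No separate maximal bound is needed. Your corner-uniform pathwise bound also turns the delayed case into a shift by the almost surely finite pair $(L_0,Y_0)$. The shifted corners stay in $\N^d$ eventually because $\min_r\lambda_r$ is bounded below on $\Lambda$. This gives almost sure convergence, which is stronger than the stated convergence in probability.

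What the paper's formulation buys is structural continuity. Its argument is the $\varepsilon t$ version of the same $n_t^{\pm}$ comparison that performs the inversion at the $\sqrt t$ scale in the functional inverse theorem. So the law of large numbers and the fluctuation result share one inversion scheme.
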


\smallskip
\noindent The proof is given in SM\@.
\smallskip

\smallskip
\noindent\textit{Directional logarithmic estimates.}
\smallskip

The functional inverse theorem describes fluctuations on the scale $\sqrt t$. Under exponential moment assumptions the same rectangular inverse has a large-deviation structure, obtained from Cram\'er's theorem in the standard form of \citet{DemboZeitouni1998}. The following result is stated for the regenerative cycle count. This is the basic inverse object; the transition count and reward versions require the same argument applied to the joint cycle vector.

Let
\begin{equation*}
\Lambda_Y(\theta)=\log \Ee_{\mathfrak a}\left[\exp\{\theta\cdot Y_1\}\right],\qquad \theta\in\R^d,
\end{equation*}
and assume here that $\Lambda_Y(\theta)<\infty$ for every $\theta\in\R^d$. Let $I_Y$ be its Cram\'er transform:
\begin{equation*}
I_Y(y)=\sup_{\theta\in\R^d}\{\theta\cdot y-\Lambda_Y(\theta)\},\qquad y\in\R^d.
\end{equation*}
For $a\in\R^d$, write
\begin{equation*}
B^<(a)=\{y\in\R^d:y_r<a_r,\ 1\leq r\leq d\},
\qquad
B^\leq(a)=\{y\in\R^d:y_r\leq a_r,\ 1\leq r\leq d\}.
\end{equation*}
The convention is that the infimum over the empty set is $+\infty$.

\begin{theorem}
Let $\lambda\in\Delta_d$, let $k_t\in\N^d$ satisfy $|k_t-t\lambda|_1=o(t)$, and let $r>0$. If $r>\kappa(\lambda)$, then
\begin{equation*}
- r\inf_{y\in B^<(\lambda/r)}I_Y(y)
\leq
\liminf_{t\to\infty}\frac{1}{t}\log \Pp_{\mathfrak a}(\mathfrak N(k_t)\geq rt)
\end{equation*}
and
\begin{equation*}
\limsup_{t\to\infty}\frac{1}{t}\log \Pp_{\mathfrak a}(\mathfrak N(k_t)\geq rt)
\leq
- r\inf_{y\in B^\leq(\lambda/r)}I_Y(y).
\end{equation*}
If $0<r<\kappa(\lambda)$, then
\begin{equation*}
- r\inf_{y\notin B^\leq(\lambda/r)}I_Y(y)
\leq
\liminf_{t\to\infty}\frac{1}{t}\log \Pp_{\mathfrak a}(\mathfrak N(k_t)< rt)
\end{equation*}
and
\begin{equation*}
\limsup_{t\to\infty}\frac{1}{t}\log \Pp_{\mathfrak a}(\mathfrak N(k_t)< rt)
\leq
- r\inf_{y\notin B^<(\lambda/r)}I_Y(y).
\end{equation*}
In particular, whenever the two infima in the corresponding upper and lower bounds coincide, the logarithmic limit exists.
\end{theorem}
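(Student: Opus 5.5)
The proof will turn the inverse events into events about the random walk $C_n$, and then apply Cram\'er's theorem in $\R^d$ (\citet{DemboZeitouni1998}) to $C_n/n$. Since $\Lambda_Y$ is finite everywhere, the empirical means $C_n/n$ of the i.i.d. cycle increments satisfy a full large deviation principle with good convex rate function $I_Y$.

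The key identity comes from the coordinatewise monotonicity of $C_n$, because $Y_j\in\N^d$. Hence $\{\mathfrak N(k)\ge n\}=\{C_n\leqd k\}$ and $\{\mathfrak N(k)<n\}=\{C_n\notleqd k\}$. Put $n_t=\lceil rt\rceil$. Then
\begin{equation*}
\Pp_{\mathfrak a}(\mathfrak N(k_t)\ge rt)=\Pp_{\mathfrak a}\bigl(C_{n_t}/n_t\in B^\leq(k_t/n_t)\bigr),
\end{equation*}
and similarly $\Pp_{\mathfrak a}(\mathfrak N(k_t)<rt)=\Pp_{\mathfrak a}\bigl(C_{n_t}/n_t\notin B^\leq(k_t/n_t)\bigr)$. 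Since $|k_t-t\lambda|_1=o(t)$ and $n_t/t\to r$, we have $k_t/n_t\to\lambda/r$. The deterministic corner therefore moves, and we handle this by sandwiching. For any $\delta>0$ and all large $t$,
\begin{equation*}
B^<(\lambda/r-\delta\one)\subseteq B^\leq(k_t/n_t)\subseteq B^\leq(\lambda/r+\delta\one).
\end{equation*}

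For the upper-tail bounds we argue as follows. The LDP upper bound on the closed set $B^\leq(\lambda/r+\delta\one)$ gives
\begin{equation*}
\limsup_t \tfrac1t\log\Pp\le -r\inf_{B^\leq(\lambda/r+\delta\one)}I_Y.
\end{equation*}
The factor $r$ comes from $n_t/t\to r$, using $\tfrac1t\log=\tfrac{n_t}{t}\cdot\tfrac1{n_t}\log$. The LDP lower bound on the open set $B^<(\lambda/r-\delta\one)$ gives the corresponding $\liminf$ bound. We then let $\delta\downarrow0$, which is where the one genuine step lies. On the open side, $B^<(\lambda/r)=\bigcup_\delta B^<(\lambda/r-\delta\one)$, so the infima decrease to $\inf_{B^<(\lambda/r)}I_Y$ with no regularity needed. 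On the closed side we need $\inf_{B^\leq(a+\delta\one)}I_Y\uparrow\inf_{B^\leq(a)}I_Y$. This follows from goodness of $I_Y$. Take near-minimizers $y_\delta$ in $B^\leq(a+\delta\one)$ with bounded values; they lie in a compact level set, so along a subsequence they converge to some $y\in B^\leq(a)$. Lower semicontinuity then gives $I_Y(y)\le\liminf I_Y(y_\delta)$.

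The lower-tail case, $r<\kappa(\lambda)$, is symmetric after taking complements. The event is $C_{n_t}/n_t\in B^\leq(k_t/n_t)^c$. This set contains the open set $B^\leq(\lambda/r+\delta\one)^c$ and is contained in the closed set $B^<(\lambda/r-\delta\one)^c$. The LDP bounds then yield
\begin{equation*}
-r\inf_{(B^\leq(\lambda/r+\delta\one))^c}I_Y \quad\text{and}\quad -r\inf_{(B^<(\lambda/r-\delta\one))^c}I_Y.
\end{equation*}
To pass to $\delta\downarrow0$, note that the open complements $(B^\leq(a+\delta\one))^c$ increase to $(B^\leq(a))^c$, so the lower bound passes to the limit directly. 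The closed complements $(B^<(a-\delta\one))^c$ decrease to $(B^<(a))^c$, and the same compactness and lower semicontinuity argument handles them.

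The conditions $r>\kappa(\lambda)$ and $r<\kappa(\lambda)$ are used only to note that the events are rare. Under them $m\notin B^\leq(\lambda/r)$, respectively $m\in B^<(\lambda/r)$. The bounds themselves hold regardless, and the final sentence of the theorem is immediate.

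The main obstacle I expect is this $\delta$-limit on the closed side, together with the bookkeeping $n_t=\lceil rt\rceil$ versus $rt$; both are handled as above. Infima over empty sets are $+\infty$, consistent with the convention, and this occurs for instance when $\lambda/r$ has a negative coordinate relative to the support.
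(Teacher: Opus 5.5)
Your proposal is correct and follows the paper's proof essentially step by step: the identity $\{\mathfrak N(k_t)\geq rt\}=\{C_{n_t}\leqd k_t\}$ with $n_t=\lceil rt\rceil$, a $\delta$-perturbation of the corner so that Cram\'er's upper bound is applied on a closed rectangle and the lower bound on an open one, the rescaling by $n_t/t\to r$, and the complementary sets for the lower deviation. Your handling of the closed-side limit $\delta\downarrow0$ uses goodness of $I_Y$, i.e.\ compact level sets, which is available because $\Lambda_Y$ is finite everywhere. This is more explicit than the paper's bare appeal to lower semicontinuity, and it is exactly the detail that step needs.
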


\begin{proof}
We establish the first pair of bounds; the second pair is the same argument applied to the complementary rectangle. Put $n_t=\lceil rt\rceil$. Since
\begin{equation*}
\{\mathfrak N(k_t)\geq rt\}=\{C_{n_t}\leqd k_t\},
\end{equation*}
Cram\'er's theorem for the independent vectors $Y_1,Y_2,\ldots$ gives, for every $\varepsilon>0$,
\begin{equation*}
\limsup_{t\to\infty}\frac{1}{n_t}\log \Pp_{\mathfrak a}\left(\frac{C_{n_t}}{n_t}\in B^\leq(\lambda/r+\varepsilon\one)\right)
\leq
-
\inf_{y\in B^\leq(\lambda/r+\varepsilon\one)}I_Y(y).
\end{equation*}
Letting $\varepsilon\downarrow0$, using lower semicontinuity of $I_Y$, and multiplying by $n_t/t\to r$, gives the upper bound. For the lower bound, choose $\varepsilon>0$. For all sufficiently large $t$, the inclusion
\begin{equation*}
\left\{\frac{C_{n_t}}{n_t}\in B^<(\lambda/r-\varepsilon\one)\right\}
\subseteq
\{C_{n_t}\leqd k_t\}
\end{equation*}
holds. Cram\'er's lower bound for the open set $B^<(\lambda/r-\varepsilon\one)$ gives
\begin{equation*}
\liminf_{t\to\infty}\frac{1}{t}\log \Pp_{\mathfrak a}(\mathfrak N(k_t)\geq rt)
\geq
-r\inf_{y\in B^<(\lambda/r-\varepsilon\one)}I_Y(y).
\end{equation*}
Letting $\varepsilon\downarrow0$ gives the stated lower bound.

For the lower deviation, take $n_t=\lceil rt\rceil$. Then $\{\mathfrak N(k_t)< n_t\}=\{C_{n_t}\notleqd k_t\}$. The set $\R^d\setminus B^<(\lambda/r)$ is closed, while $\R^d\setminus B^\leq(\lambda/r)$ is open. Cram\'er's upper and lower bounds, with the same $\varepsilon$-enlargement argument, give the second pair of inequalities.

\end{proof}

\begin{remark}
The large-deviation statement is used below in its atom-started form. A transfer to a non-atomic initial law is possible, but it requires conditions which control the first entrance into the split atom on the same exponential scale. One sufficient formulation is superexponential negligibility of the entrance time and entrance displacement, together with continuity of the variational infima under coordinatewise perturbations of the boundary vector. Under these hypotheses, the atom-started bounds are applied to the post-entrance process with corners $k_t+o(t)$.
\end{remark}

\begin{remark}
The theorem is one-sided because the inverse event is rectangular. For $r>\kappa(\lambda)$ the atypical event is that many cycle sums remain inside a lower rectangle. For $r<\kappa(\lambda)$ the atypical event is that the partial sum has already left that rectangle. The two variational problems are therefore naturally expressed by a rectangle and by its complement. This is the large-deviation counterpart of the active-coordinate decomposition which appears in the central limit case.
\end{remark}

The next theorem is the principal inverse limit of the paper. It is stated as convergence in $\ell^\infty(\Lambda)$, because the prelimit functions need not be continuous in $\lambda$ when the lattice points $k_t(\lambda)$ are obtained by integer rounding. Throughout this theorem and the following critical-interface theorem, the maps $k_t$ are assumed to be Borel. Since the approximation $k_t(\lambda)=t\lambda+o(\sqrt t)$ restricts their values to a finite subset of $\N^d$ for fixed $t$, the corresponding random elements of $\ell^\infty$ are separable. The limiting processes have continuous sample paths.

For a transition reward $g$, the absolute cycle reward $R_1^*(g)$ is the quantity defined in Section~3. Set
\begin{equation*}
\mathcal R_g(k)=
\sum_{n=0}^{N(k)-1}g(J_n,J_{n+1},X_{n+1}).
\end{equation*}
The transition which exits the lower rectangle is not included in $\mathcal R_g(k)$.

\begin{theorem}
Let $\Lambda$ be a compact subset of $\Delta_I$, for some non-empty $I\subset\{1,\ldots,d\}$, and let $k_t:\Lambda\to\N^d$ be Borel and satisfy
\begin{equation*}
\sup_{\lambda\in\Lambda}|k_t(\lambda)-t\lambda|_1=o(\sqrt t).
\end{equation*}
Let $g$ be a transition reward and assume
\begin{equation*}
\Ee_{\mathfrak a}\left[L_1^2+|Y_1|_1^2+(R_1^*(g))^2\right]<\infty.
\end{equation*}
Let $a_g=\Ee_{\mathfrak a}[R_1(g)]$ and $\bar g=a_g/\ell$. Let
\begin{equation*}
W=(W_Y,W_L,W_R)
\end{equation*}
be a centered Brownian motion in $\R^{d+2}$ with covariance matrix equal to the covariance matrix of $(Y_1,L_1,R_1(g))$ under $\Pp_{\mathfrak a}$. For $\lambda\in\Lambda$, define
\begin{equation*}
\zeta_I(\lambda)=
\min_{r\in I}\left\{-\frac{W_{Y,r}(\kappa(\lambda))}{m_r}\right\}.
\end{equation*}
Then, in $\ell^\infty(\Lambda)$,
\begin{equation*}
\left(
\frac{\mathfrak N(k_t(\lambda))-t\kappa(\lambda)}{\sqrt t}
\right)_{\lambda\in\Lambda}
\xrightarrow[t\to\infty]{d}
\left(\zeta_I(\lambda)\right)_{\lambda\in\Lambda},
\end{equation*}
\begin{equation*}
\left(
\frac{N(k_t(\lambda))-t\ell\kappa(\lambda)}{\sqrt t}
\right)_{\lambda\in\Lambda}
\xrightarrow[t\to\infty]{d}
\left(W_L(\kappa(\lambda))+\ell\zeta_I(\lambda)\right)_{\lambda\in\Lambda},
\end{equation*}
and
\begin{equation*}
\left(
\frac{\mathcal R_g(k_t(\lambda))-t a_g\kappa(\lambda)}{\sqrt t}
\right)_{\lambda\in\Lambda}
\xrightarrow[t\to\infty]{d}
\left(W_R(\kappa(\lambda))+a_g\zeta_I(\lambda)\right)_{\lambda\in\Lambda}.
\end{equation*}
Moreover, the random-count centered reward satisfies
\begin{equation*}
\left(
\frac{\mathcal R_g(k_t(\lambda))-\bar g N(k_t(\lambda))}{\sqrt t}
\right)_{\lambda\in\Lambda}
\xrightarrow[t\to\infty]{d}
\left(W_R(\kappa(\lambda))-\bar g W_L(\kappa(\lambda))\right)_{\lambda\in\Lambda}.
\end{equation*}
The last limit is Gaussian for every cell $\Delta_I$.
\end{theorem}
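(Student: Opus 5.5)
Our approach is a functional central limit theorem for the i.i.d. cycle vectors, followed by an inversion of the rectangular first-passage map that is uniform over $\Lambda$. We work under $\Pp_{\mathfrak a}$ and transfer to other initial laws at the end, since the entrance cycle is $O_{\Pp}(1)$.

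\textbf{Step 1 (invariance principle).} Put $\xi_j=(Y_j,L_j,R_j(g))$. By the regeneration theorem of Section~3 these vectors are i.i.d. with finite second moments, so Donsker's theorem in $D([0,T],\R^{d+2})$ gives
\begin{equation*}
\Bigl(\frac{C_{\lfloor ts\rfloor}-tsm}{\sqrt t},\frac{D_{\lfloor ts\rfloor}-ts\ell}{\sqrt t},\frac{\sum_{j\leq ts}R_j(g)-tsa_g}{\sqrt t}\Bigr)_{s\in[0,T]}\xrightarrow{d}(W_Y,W_L,W_R).
\end{equation*}
Here $T$ exceeds $\sup_\Lambda\kappa$. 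By Skorokhod representation we may assume almost sure uniform convergence. Since the limit is continuous, random time changes $s_t(\lambda)\to s(\lambda)$ that converge uniformly in $\lambda$ are then harmless.

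\textbf{Step 2 (inversion of the cycle count).} Write $\mathfrak N_t(\lambda)=\mathfrak N(k_t(\lambda))$. By the uniform law of large numbers (the Proposition above), $\sup_\Lambda|\mathfrak N_t/t-\kappa|\to0$. By definition $C_{\mathfrak N_t}\leqd k_t<_{\text{some }r}C_{\mathfrak N_t+1}$, and a single increment satisfies $\max_{j\leq Tt}|Y_j|_1=o_{\Pp}(\sqrt t)$ because $\Ee|Y_1|_1^2<\infty$. For each coordinate $r$ this gives
\begin{equation*}
\mathfrak N_t m_r+\sqrt t\,W^t_{Y,r}(\mathfrak N_t/t)\leq t\lambda_r+o(\sqrt t),
\end{equation*}
with equality up to $o(\sqrt t)$ for at least one $r$. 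Write $\mathfrak N_t=t\kappa+\sqrt t Z_t$, so that $\lambda_r=\kappa m_r$ for $r\in I$. For $r\in I$ the inequality becomes $Z_t\leq -W_{Y,r}(\kappa)/m_r+o(1)$. For $r\notin I$ the margin $\lambda_r-\kappa m_r\geq c>0$, which is uniform on the compact set $\Lambda$ by the deterministic observation preceding the Lemma. The constraint for $r\notin I$ therefore cannot be the active one for large $t$, uniformly in $\lambda$, once $Z_t$ is tight. Tightness of $\sup_\Lambda|Z_t|$ follows from the same inequalities. Hence the active coordinate lies in $I$, and $Z_t=\zeta_I(\lambda)+o(1)$ uniformly on $\Lambda$. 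This is the first convergence. The main care lies here: the $o(\sqrt t)$ errors must be uniform in $\lambda$, which uses equicontinuity of the limit paths on $[0,T]$ and $\sup_\Lambda|k_t-t\lambda|_1=o(\sqrt t)$.

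\textbf{Step 3 (transition count and rewards).} By the Lemma, $N(k_t)=D_{\mathfrak N_t}+o_{\Pp}(\sqrt t)$ uniformly. Evaluating the $L$-component of Step~1 at the random time $\mathfrak N_t/t$ gives
\begin{equation*}
D_{\mathfrak N_t}=\ell\mathfrak N_t+\sqrt t\,W_L(\kappa)+o(\sqrt t).
\end{equation*}
Substituting Step~2 yields $W_L(\kappa)+\ell\zeta_I$. The reward statement is identical: the boundary-cycle reward is $o_{\Pp}(\sqrt t)$ by the Lemma, and the complete-cycle sum contributes $W_R(\kappa)+a_g\zeta_I$. For the random-count centered reward, subtract $\bar g$ times the $N$ expansion. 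Since $\bar g\ell=a_g$, the $\zeta_I$ terms cancel and leave $W_R(\kappa)-\bar gW_L(\kappa)$. This is a Gaussian process because it is a linear image of Brownian motion at the continuous time $\kappa(\lambda)$.

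Because all four limits come from the same almost sure representation, the convergences hold jointly in $\ell^\infty(\Lambda)$. The measurability issues are handled by the remark that $k_t$ takes finitely many values for each fixed $t$. We expect the main obstacle to be the uniform exclusion of the non-active coordinates together with the uniform random-time substitution in Step~2.
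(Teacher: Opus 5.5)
Your proposal is correct. Everything except the inversion step coincides with the paper's proof: the joint invariance principle for the i.i.d.\ cycle vectors with Skorokhod representation, the random-time substitution for $D_{\mathfrak N}$ and for the cycle-reward sums, the incomplete-cycle lemma, and the cancellation $a_g-\bar g\ell=0$.

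\textbf{How the paper inverts.} The paper does not work at the random index $\mathfrak N(k_t(\lambda))$. It builds bracketing indices $n_t^\pm(\lambda)=\lfloor t\kappa(\lambda)+\sqrt t\{\zeta_{t,I}(\lambda)\pm\varepsilon\}\rfloor$ from the prelimit field. It then checks that some active coordinate of $C_{n_t^+}$ exceeds $k_t$, and that $C_{n_t^-}\leqd k_t$, the inactive coordinates having a margin of order $t$. It concludes by monotonicity of $n\mapsto C_n$. Since $n_t^\pm/t\to\kappa$ by construction, this inversion needs only the modulus of continuity of the partial-sum process on the scale $t^{-1/2}$. It uses neither the uniform law of large numbers (Proposition~5.2) nor a separate overshoot bound.

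\textbf{How you invert.} You use the first-passage sandwich $C_{\mathfrak N_t}\leqd k_t$, $C_{\mathfrak N_t+1}\notleqd k_t$. This yields both inequalities from one relation. The price is two extra inputs: Proposition~5.2, to justify $W^t_Y(\mathfrak N_t/t)\approx W_Y(\kappa)$ uniformly in $\lambda$, and the bound $\max_{j\leq Tt}|Y_j|_1=o_{\Pp}(\sqrt t)$ to control the overshoot. In return, excluding the inactive coordinates is immediate.

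\textbf{A small simplification.} No tightness of $Z_t$ is needed for that exclusion. For $r\notin I$, near-equality would force $Z_t\geq c\sqrt t-O(1)$. This contradicts $Z_t=o(\sqrt t)$ from the law of large numbers, and also the bound $Z_t\leq\zeta_I(\lambda)+o(1)$ you already get from the coordinates in $I$. So the apparent circularity between ``once $Z_t$ is tight'' and ``tightness follows from the same inequalities'' disappears.

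The transfer to other initial laws is harmless but not required, since the paper proves the statement under $\Pp_{\mathfrak a}$.
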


\begin{proof}
The cycles are independent under $\Pp_{\mathfrak a}$. The invariance principle for sums of independent random vectors \citep{Billingsley1999} gives, for every $T<\infty$,
\begin{equation*}
\left(
\frac{1}{\sqrt t}\sum_{j=1}^{\lfloor ts\rfloor}
(Y_j-m,L_j-\ell,R_j(g)-a_g)
\right)_{0\leq s\leq T}
\xrightarrow[t\to\infty]{d}
(W(s))_{0\leq s\leq T}
\end{equation*}
in the Skorokhod space. Since the limit is continuous, the convergence may be used with the uniform topology on compact intervals. In the sequel we use this convergence only through consequences which are invariant under subsequences. Thus, after passing to an arbitrary subsequence and applying the Skorokhod representation theorem, the partial-sum processes may be assumed to converge almost surely and uniformly on compact time intervals. The bounds below are then deterministic on the high-probability sets on which this uniform convergence and the corresponding moduli of continuity hold; returning to the original probability space gives the asserted convergence in distribution and in probability.

We first invert the cycle-displacement process. Let
\begin{equation*}
W_{t,Y}(s)=\frac{1}{\sqrt t}\sum_{j=1}^{\lfloor ts\rfloor}(Y_j-m),
\qquad s\geq0.
\end{equation*}
For $M>0$, set
\begin{equation*}
\Omega_{t,M}=\left\{\sup_{\lambda\in\Lambda}
\max_{r\in I}\left|\frac{W_{t,Y,r}(\kappa(\lambda))}{m_r}\right|
\leq M\right\}.
\end{equation*}
For large $M$, the probabilities of $\Omega_{t,M}$ are arbitrarily close to one, uniformly in large $t$. On this event the random shifts considered below are bounded by $M\sqrt t$.

Let
\begin{equation*}
\zeta_{t,I}(\lambda)=
\min_{r\in I}\left\{-\frac{W_{t,Y,r}(\kappa(\lambda))}{m_r}\right\}.
\end{equation*}
We establish that, for every $\varepsilon>0$,
\begin{equation*}
\sup_{\lambda\in\Lambda}
\left|
\frac{\mathfrak N(k_t(\lambda))-t\kappa(\lambda)}{\sqrt t}
-\zeta_{t,I}(\lambda)
\right|
\xrightarrow[t\to\infty]{\mathbb P}0.
\end{equation*}
Fix $\varepsilon>0$ and put
\begin{equation*}
n_t^+(\lambda)=\left\lfloor t\kappa(\lambda)+\sqrt t\{\zeta_{t,I}(\lambda)+\varepsilon\}\right\rfloor.
\end{equation*}
On $\Omega_{t,M}$, the difference $n_t^+(\lambda)/t-\kappa(\lambda)$ is bounded by $(M+\varepsilon)t^{-1/2}$, uniformly in $\lambda$. The tightness of $W_{t,Y}$ and the continuity of the Brownian limit imply that the modulus of continuity of $W_{t,Y}$ on this scale tends to zero in probability. Hence, uniformly in $\lambda\in\Lambda$ and $r\in I$,
\begin{equation*}
W_{t,Y,r}(n_t^+(\lambda)/t)=W_{t,Y,r}(\kappa(\lambda))+o_{\Pp}(1).
\end{equation*}
For $r\in I$,
\begin{equation*}
\frac{C_{n_t^+(\lambda),r}-k_{t,r}(\lambda)}{\sqrt t\,m_r}
=\zeta_{t,I}(\lambda)+\varepsilon+
\frac{W_{t,Y,r}(\kappa(\lambda))}{m_r}+o_{\Pp}(1),
\end{equation*}
uniformly in $\lambda$ and $r$. Hence, using the finiteness of $I$,
\begin{equation*}
\sup_{\lambda\in\Lambda}
\left|
\max_{r\in I}
\frac{C_{n_t^+(\lambda),r}-k_{t,r}(\lambda)}{\sqrt t\,m_r}
-\varepsilon
\right|\xrightarrow[t\to\infty]{\mathbb P}0.
\end{equation*}
Since $\min_{r\in I}m_r>0$, some active coordinate exceeds the boundary for every $\lambda\in\Lambda$, with probability tending to one. Thus
\begin{equation*}
\mathfrak N(k_t(\lambda))<n_t^+(\lambda)
\end{equation*}
for all $\lambda\in\Lambda$, with probability tending to one.

Similarly, define
\begin{equation*}
n_t^-(\lambda)=\left\lfloor t\kappa(\lambda)+\sqrt t\{\zeta_{t,I}(\lambda)-\varepsilon\}\right\rfloor.
\end{equation*}
For $r\in I$,
\begin{equation*}
\frac{C_{n_t^-(\lambda),r}-k_{t,r}(\lambda)}{\sqrt t\,m_r}
=\zeta_{t,I}(\lambda)-\varepsilon+
\frac{W_{t,Y,r}(\kappa(\lambda))}{m_r}+o_{\Pp}(1).
\end{equation*}
Therefore
\begin{equation*}
\sup_{\lambda\in\Lambda}
\max_{r\in I}
\frac{C_{n_t^-(\lambda),r}-k_{t,r}(\lambda)}{\sqrt t\,m_r}
\leq -\varepsilon+o_{\Pp}(1).
\end{equation*}
For $r\notin I$, compactness of $\Lambda\subset\Delta_I$ gives
\begin{equation*}
\inf_{\lambda\in\Lambda}\left\{\frac{\lambda_r}{m_r}-\kappa(\lambda)\right\}>0.
\end{equation*}
Therefore the deterministic margin in coordinate $r$ is of order $t$, uniformly over $\Lambda$, and the corresponding coordinate inequality holds with probability tending to one. Hence
\begin{equation*}
C_{n_t^-(\lambda)}\leqd k_t(\lambda)
\end{equation*}
for all $\lambda\in\Lambda$, with probability tending to one. The two bounds prove the uniform approximation of $\mathfrak N$ asserted above.

The functional central limit theorem and the continuous mapping
\begin{equation*}
f\mapsto\left(\min_{r\in I}\left\{-\frac{f_r(\kappa(\lambda))}{m_r}\right\}\right)_{\lambda\in\Lambda}
\end{equation*}
from continuous functions on a compact time interval to $\ell^\infty(\Lambda)$ now give the first convergence.

For the transition count, write
\begin{equation*}
D_{\mathfrak N(k_t(\lambda))}
=\ell\mathfrak N(k_t(\lambda))
+\sqrt t\, W_{t,L}\left(\frac{\mathfrak N(k_t(\lambda))}{t}\right)+o_{\Pp}(\sqrt t),
\end{equation*}
uniformly over $\Lambda$. This is again a consequence of the modulus of continuity of the partial-sum process and of the uniform convergence $\mathfrak N(k_t(\lambda))/t\to\kappa(\lambda)$. The incomplete-cycle lemma replaces $D_{\mathfrak N}$ by $N$. This proves the second convergence.

For the reward, the same argument applied to the cycle reward partial sums gives
\begin{equation*}
\sum_{j=1}^{\mathfrak N(k_t(\lambda))}R_j(g)
=a_g\mathfrak N(k_t(\lambda))
+\sqrt t\,W_{t,R}\left(\frac{\mathfrak N(k_t(\lambda))}{t}\right)+o_{\Pp}(\sqrt t),
\end{equation*}
uniformly over $\Lambda$. The incomplete-cycle reward is $o_{\Pp}(\sqrt t)$ uniformly over $\Lambda$. This proves the deterministic-centering reward limit. Finally,
\begin{equation*}
\mathcal R_g(k_t(\lambda))-\bar g N(k_t(\lambda))
=\sum_{j=1}^{\mathfrak N(k_t(\lambda))}\{R_j(g)-\bar gL_j\}+o_{\Pp}(\sqrt t),
\end{equation*}
uniformly over $\Lambda$. Since $a_g-\bar g\ell=0$, the inverse term cancels and the random-count centered limit is the Gaussian process stated in the theorem.
\end{proof}

\smallskip
\noindent\textit{Critical interfaces of the rate-determining partition.}
\smallskip

The theorem above is stated inside one open rate cell. The next theorem treats the scale at which one sees a face of the rate-determining partition. At such a face the deterministic inverse map is not differentiable. The limit is therefore not obtained by choosing one of the adjacent one-coordinate Gaussian limits; it is a drifted minimum over the coordinates which meet at the face.

\begin{theorem}
Let $\lambda^0\in\Delta_d$ and put $I=\mathcal I(\lambda^0)$ and $\kappa_0=\kappa(\lambda^0)$. Let $\mathcal H$ be a compact subset of $\R^d$ and let $k_t:\mathcal H\to\N^d$ be Borel and satisfy
\begin{equation*}
\sup_{h\in\mathcal H}|k_t(h)-t\lambda^0-\sqrt t\,h|_1=o(\sqrt t).
\end{equation*}
Let $g$ be a transition reward and assume
\begin{equation*}
\Ee_{\mathfrak a}\left[L_1^2+|Y_1|_1^2+(R_1^*(g))^2\right]<\infty.
\end{equation*}
Define
\begin{equation*}
\zeta_{\lambda^0}(h)=
\min_{r\in I}\left\{\frac{h_r-W_{Y,r}(\kappa_0)}{m_r}\right\},
\qquad h\in\mathcal H.
\end{equation*}
Then, in $\ell^\infty(\mathcal H)$,
\begin{equation*}
\left(
\frac{\mathfrak N(k_t(h))-t\kappa_0}{\sqrt t}
\right)_{h\in\mathcal H}
\xrightarrow[t\to\infty]{d}
\left(\zeta_{\lambda^0}(h)\right)_{h\in\mathcal H},
\end{equation*}
\begin{equation*}
\left(
\frac{N(k_t(h))-t\ell\kappa_0}{\sqrt t}
\right)_{h\in\mathcal H}
\xrightarrow[t\to\infty]{d}
\left(W_L(\kappa_0)+\ell\zeta_{\lambda^0}(h)\right)_{h\in\mathcal H},
\end{equation*}
and, if $a_g=\Ee_{\mathfrak a}[R_1(g)]$ and $\bar g=a_g/\ell$,
\begin{equation*}
\left(
\frac{\mathcal R_g(k_t(h))-t a_g\kappa_0}{\sqrt t}
\right)_{h\in\mathcal H}
\xrightarrow[t\to\infty]{d}
\left(W_R(\kappa_0)+a_g\zeta_{\lambda^0}(h)\right)_{h\in\mathcal H}.
\end{equation*}
Moreover,
\begin{equation*}
\left(
\frac{\mathcal R_g(k_t(h))-\bar g N(k_t(h))}{\sqrt t}
\right)_{h\in\mathcal H}
\xrightarrow[t\to\infty]{d}
\left(W_R(\kappa_0)-\bar g W_L(\kappa_0)\right)_{h\in\mathcal H}.
\end{equation*}
\end{theorem}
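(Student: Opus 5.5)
The plan is to repeat the proof of the functional inverse theorem, with the direction $\lambda$ replaced by the local parameter $h$ and the time point $\kappa(\lambda)$ replaced by the single point $\kappa_0$. The steps are as follows.

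\textbf{Step 1 (coupling).} Invoke the invariance principle for the i.i.d.\ cycle vectors $(Y_j-m,L_j-\ell,R_j(g)-a_g)$ under $\Pp_{\mathfrak a}$. Pass to an arbitrary subsequence and use Skorokhod representation, so that the partial-sum processes $W_t=(W_{t,Y},W_{t,L},W_{t,R})$ converge almost surely and uniformly on $[0,\kappa_0+1]$ to $W$.

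\textbf{Step 2 (prelimit field).} Define
\begin{equation*}
\zeta_{t}(h)=\min_{r\in I}\left\{\frac{h_r-W_{t,Y,r}(\kappa_0)}{m_r}\right\}.
\end{equation*}
It is bounded uniformly over the compact $\mathcal H$ on events of probability close to one.

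\textbf{Step 3 (two-sided bracketing).} Fix $\varepsilon>0$ and set
\begin{equation*}
n_t^\pm(h)=\left\lfloor t\kappa_0+\sqrt t\{\zeta_t(h)\pm\varepsilon\}\right\rfloor .
\end{equation*}
Then $n_t^\pm(h)/t=\kappa_0+O_{\Pp}(t^{-1/2})$ uniformly in $h$, so the modulus of continuity gives
\begin{equation*}
W_{t,Y,r}(n_t^\pm/t)=W_{t,Y,r}(\kappa_0)+o_{\Pp}(1).
\end{equation*}
For $r\in I$ we have $t\lambda^0_r=t\kappa_0m_r$. The only change from the interior case is that the boundary now carries the extra drift $\sqrt t\,h_r$, which gives
\begin{equation*}
\frac{C_{n_t^\pm(h),r}-k_{t,r}(h)}{\sqrt t\,m_r}=\zeta_t(h)\pm\varepsilon-\frac{h_r-W_{t,Y,r}(\kappa_0)}{m_r}+o_{\Pp}(1),
\end{equation*}
uniformly in $h\in\mathcal H$ and $r\in I$. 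Taking the maximum over $r\in I$:
\begin{itemize}
\item for $n_t^+$, some active coordinate overshoots by about $\varepsilon\min_r m_r\sqrt t$, so $\mathfrak N(k_t(h))<n_t^+(h)$;
\item for $n_t^-$, every active coordinate is below its bound.
\end{itemize}

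\textbf{Step 4 (inactive coordinates).} This is the main point needing care, since $\lambda^0$ is now fixed and there is no compact cell. For $r\notin I$ we have $\lambda^0_r/m_r-\kappa_0=\delta_r>0$, a fixed margin. Since $h$ ranges over a compact set, the perturbation $\sqrt t\,h_r$ is $O(\sqrt t)$ and cannot destroy the order-$t$ margin $t\delta_r m_r$. The fluctuation of $C_{n,r}$ is also $O_{\Pp}(\sqrt t)$. Hence these coordinates satisfy the inequality at $n_t^-$ with probability tending to one, uniformly in $h$, and we obtain $C_{n_t^-(h)}\leqd k_t(h)$. Since $\varepsilon$ is arbitrary,
\begin{equation*}
\sup_{h\in\mathcal H}\left|\frac{\mathfrak N(k_t(h))-t\kappa_0}{\sqrt t}-\zeta_t(h)\right|\to0
\end{equation*}
in probability. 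The map $f\mapsto\bigl(\min_{r\in I}\{(h_r-f_r(\kappa_0))/m_r\}\bigr)_{h\in\mathcal H}$ is continuous from $C[0,T]$ into $\ell^\infty(\mathcal H)$ (it is $1/\min_r m_r$-Lipschitz), so the first convergence follows by continuous mapping.

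\textbf{Step 5 (transition count and rewards).} As in the preceding proof, write
\begin{equation*}
D_{\mathfrak N}=\ell\mathfrak N+\sqrt t\,W_{t,L}(\mathfrak N/t)+o_{\Pp}(\sqrt t),
\end{equation*}
and the analogous expansion for the reward sums. Since $\mathfrak N/t\to\kappa_0$ uniformly in $h$ (this follows from Step 4), we get $W_{t,L}(\mathfrak N/t)=W_{t,L}(\kappa_0)+o_{\Pp}(1)$.

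The incomplete-cycle lemma was stated for $\Lambda\subset\Delta_I$. Its proof only uses $\max_{j\leq Ct}L_j=o_{\Pp}(\sqrt t)$, and likewise for $R_j^*$, which follows from the second-moment assumptions. It therefore applies verbatim to the corners $k_t(h)$, all of which lie within $O(\sqrt t)$ of $t\lambda^0$. This replaces $D_{\mathfrak N}$ by $N$ and handles the boundary reward.

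Joint convergence of $(W_{t,L}(\kappa_0),W_{t,R}(\kappa_0),\zeta_t)$ comes from the same coupling and gives the second and third limits. Finally, $a_g-\bar g\ell=0$ cancels the $\zeta$ term and yields the last, constant-in-$h$, Gaussian limit.
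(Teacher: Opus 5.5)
Your proposal is correct and follows the paper's proof essentially step by step. It uses the same Skorokhod coupling and the same bracketing indices $n_t^\pm(h)$ around $t\kappa_0+\sqrt t\,\zeta_t(h)$, keeping the drift $\sqrt t\,h_r$ in the active coordinates. It then relies on the same order-$t$ margin for inactive coordinates, continuous mapping at the fixed time $\kappa_0$, the random-index and incomplete-cycle estimates, and the cancellation $a_g-\bar g\ell=0$.

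You also make explicit a point the paper leaves implicit: the incomplete-cycle lemma only needs $\mathfrak N(k_t(h))\leq|k_t(h)|_1\leq ct$ and the second-moment bounds, so it applies directly to the corners $k_t(h)$.
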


\begin{proof}
The proof follows the inverse comparison in the previous theorem, with the deterministic displacement $\sqrt t\,h$ retained. We give the argument. Put
\begin{equation*}
W_{t,Y}(s)=\frac{1}{\sqrt t}\sum_{j=1}^{\lfloor ts\rfloor}(Y_j-m),
\qquad s\geq0.
\end{equation*}
By the same subsequence and Skorokhod representation argument as before, it is enough to prove the comparison on sets where the partial-sum processes converge uniformly on compact intervals and have their limiting moduli of continuity. Define
\begin{equation*}
\zeta_{t,\lambda^0}(h)=
\min_{r\in I}\left\{\frac{h_r-W_{t,Y,r}(\kappa_0)}{m_r}\right\}.
\end{equation*}
We establish
\begin{equation*}
\sup_{h\in\mathcal H}
\left|
\frac{\mathfrak N(k_t(h))-t\kappa_0}{\sqrt t}-\zeta_{t,\lambda^0}(h)
\right|
\xrightarrow[t\to\infty]{\mathbb P}0.
\end{equation*}
The compactness of $\mathcal H$ and the tightness of $W_{t,Y}(\kappa_0)$ imply that the random shifts considered below are bounded by $M\sqrt t$ with probability arbitrarily close to one, for a sufficiently large $M$.

Fix $\varepsilon>0$ and set
\begin{equation*}
n_t^+(h)=\left\lfloor t\kappa_0+\sqrt t\{\zeta_{t,\lambda^0}(h)+\varepsilon\}\right\rfloor.
\end{equation*}
The modulus of continuity of $W_{t,Y}$ on intervals of length $O(t^{-1/2})$ gives, uniformly in $h\in\mathcal H$ and $r\in I$,
\begin{equation*}
\frac{C_{n_t^+(h),r}-k_{t,r}(h)}{\sqrt t\,m_r}
=\zeta_{t,\lambda^0}(h)+\varepsilon+
\frac{W_{t,Y,r}(\kappa_0)-h_r}{m_r}+o_{\Pp}(1).
\end{equation*}
Consequently,
\begin{equation*}
\sup_{h\in\mathcal H}
\left|
\max_{r\in I}
\frac{C_{n_t^+(h),r}-k_{t,r}(h)}{\sqrt t\,m_r}
-\varepsilon
\right|\xrightarrow[t\to\infty]{\mathbb P}0.
\end{equation*}
Hence, uniformly in $h$, at least one active coordinate exceeds the boundary with probability tending to one. Thus $\mathfrak N(k_t(h))<n_t^+(h)$ uniformly in $h$ with probability tending to one.

Similarly define
\begin{equation*}
n_t^-(h)=\left\lfloor t\kappa_0+\sqrt t\{\zeta_{t,\lambda^0}(h)-\varepsilon\}\right\rfloor.
\end{equation*}
For every $r\in I$,
\begin{equation*}
\frac{C_{n_t^-(h),r}-k_{t,r}(h)}{\sqrt t\,m_r}
=\zeta_{t,\lambda^0}(h)-\varepsilon+
\frac{W_{t,Y,r}(\kappa_0)-h_r}{m_r}+o_{\Pp}(1),
\end{equation*}
and therefore
\begin{equation*}
\sup_{h\in\mathcal H}
\max_{r\in I}
\frac{C_{n_t^-(h),r}-k_{t,r}(h)}{\sqrt t\,m_r}
\leq -\varepsilon+o_{\Pp}(1).
\end{equation*}
uniformly in $h\in\mathcal H$. If $r\notin I$, then $\lambda^0_r/m_r>\kappa_0$. This yields a deterministic margin of order $t$, whereas $\sqrt t\,h_r$ is uniformly of order $\sqrt t$. Thus the inactive coordinates satisfy their inequalities with probability tending to one. Hence $C_{n_t^-(h)}\leqd k_t(h)$ uniformly in $h$ with probability tending to one. The comparison proves the inverse approximation.

The convergence of $\zeta_{t,\lambda^0}$ follows from the Brownian limit at the fixed time $\kappa_0$. The transition-count and reward limits follow by the same random-index argument and incomplete-cycle estimates as in the previous theorem. The random-count centered reward has no inverse term because $a_g-\bar g\ell=0$.
\end{proof}

\begin{corollary}
Fix $\lambda\in\Delta_d$ and put $I=\mathcal I(\lambda)$. Let $k_t\in\N^d$ satisfy $|k_t-t\lambda|_1=o(\sqrt t)$. Then
\begin{equation*}
\frac{\mathfrak N(k_t)-t\kappa(\lambda)}{\sqrt t}
\xrightarrow[t\to\infty]{d}
\min_{r\in I}\left\{-\frac{W_{Y,r}(\kappa(\lambda))}{m_r}\right\}.
\end{equation*}
If $I=\{b\}$, the limit has law
\begin{equation*}
\Normal\left(0,\frac{\kappa(\lambda)}{m_b^2}\,\Vv_{\mathfrak a}(Y_1^{[b]})\right).
\end{equation*}
If $|I|\geq2$, the limit is in general not Gaussian; for instance this is the case when the variables $W_{Y,r}(\kappa(\lambda))/m_r$, $r\in I$, are independent and non-degenerate. The corresponding pointwise limits for $N(k_t)$ and $\mathcal R_g(k_t)$ are obtained by evaluating the functional limits of the theorem at $\lambda$.
\end{corollary}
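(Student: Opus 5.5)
The corollary follows by specializing the functional inverse theorem to a single direction. I will take $\Lambda=\{\lambda\}$. This set is compact and lies in $\Delta_I$ with $I=\mathcal I(\lambda)$, and the constant map $k_t(\lambda)=k_t$ is Borel. The hypothesis $|k_t-t\lambda|_1=o(\sqrt t)$ is exactly the approximation condition of the theorem. Convergence in $\ell^\infty(\{\lambda\})$ is ordinary convergence in distribution of real random variables. The first display of the theorem therefore gives the stated limit $\min_{r\in I}\{-W_{Y,r}(\kappa(\lambda))/m_r\}$ directly. The second and third displays, evaluated at $\lambda$, give the pointwise limits for $N(k_t)$ and $\mathcal R_g(k_t)$, under the same moment hypotheses as the theorem. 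Equivalently, one could invoke the critical-interface theorem with $\lambda^0=\lambda$ and $\mathcal H=\{0\}$; this yields the same formula and avoids relative-topology issues when $|I|>1$.

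For $I=\{b\}$, the minimum reduces to $-W_{Y,b}(\kappa(\lambda))/m_b$. Since $W_Y$ is a centered Brownian motion with covariance $\Vv_{\mathfrak a}(Y_1)$ per unit time, $W_{Y,b}(s)$ is $\Normal(0,s\,\Vv_{\mathfrak a}(Y_1^{[b]}))$. Scaling by $-1/m_b$ gives variance $\kappa(\lambda)\Vv_{\mathfrak a}(Y_1^{[b]})/m_b^2$, which is the claimed normal law.

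For $|I|\ge2$, I need one instance in which the limit is non-Gaussian. Take independent non-degenerate $X_r=-W_{Y,r}(\kappa(\lambda))/m_r$ for $r\in I$, and fix two indices $r_1,r_2\in I$. I would argue by contradiction. Condition on the remaining coordinates, or note that $\min_{r\in I}X_r=\min(\min(X_{r_1},X_{r_2}),\,\cdot\,)$, and then use a tail argument. If $Z=\min_r X_r$ were Gaussian $\Normal(a,s^2)$, its upper tail would satisfy $\Pp(Z>x)=\prod_{r\in I}\Pp(X_r>x)$. This product decays like $\exp(-x^2\sum_r 1/(2\sigma_r^2))$ up to polynomial factors, whereas a Gaussian with variance $s^2$ has lower tail $\Pp(Z<-x)\ge\max_r\Pp(X_r<-x)$, which decays like $\exp(-x^2/(2\max_r\sigma_r^2))$. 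For a Gaussian the two tails must have the same exponential rate $1/(2s^2)$. That would require $\sum_{r\in I}1/\sigma_r^2=1/\max_r\sigma_r^2$, which is impossible when $|I|\ge2$ and every $\sigma_r>0$. Hence $Z$ is not Gaussian.

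This tail comparison is the only step that goes beyond specialization. Its main subtlety is justifying the rate of the lower tail: $\Pp(Z<-x)\le\sum_r\Pp(X_r<-x)$ gives the rate $1/(2\max\sigma_r^2)$ exactly. Together with the product formula for the upper tail, this suffices.
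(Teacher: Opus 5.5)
Your proposal is correct and follows the paper's own proof. The paper likewise applies the functional inverse theorem with $\Lambda=\{\lambda\}$, reads off the variance when $I=\{b\}$, and simply asserts that a minimum of independent non-degenerate Gaussians is not Gaussian. Your tail-rate comparison (the upper tail of the minimum has rate $\sum_{r\in I}\sigma_r^{-2}/2$, while the lower tail has rate at most $(2\max_{r}\sigma_r^2)^{-1}$) is a valid proof of that last assertion, which the paper leaves unproved. The only small addition needed is that a degenerate limit $s=0$ is excluded trivially, since both tails of the minimum are positive for every $x$.
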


\begin{proof}
Apply the functional inverse theorem with the compact set $\Lambda=\{\lambda\}$. The assertion on Gaussianity is immediate when the active set has one element. When the active set has at least two elements, the minimum of non-degenerate independent Gaussian variables cannot be Gaussian, which gives the stated example.
\end{proof}

The preceding theorems are the structural inverse results for lower-rectangle observation. The functional theorem gives a random field of inverse fluctuations inside a directional cell. The critical-interface theorem describes the $\sqrt t$-neighbourhood of a face where several cells meet. Thus the non-Gaussian limits organize the transition between the Gaussian cases associated with neighbouring cells. In applications this field may be used to describe random fluctuations of renewal fronts over a set of directions.

\smallskip
\noindent\textit{The renewal function.}
\smallskip

Let $f:\X\to\R$ be measurable. Define the renewal occupation functional
\begin{equation*}
V_f(k)=\sum_{n\geq0}f(J_n)\one_{\{S_n\leqd k\}}.
\end{equation*}
When $f=\one_A$, its expectation is the Markov renewal function $\Psi_k(x,A)$. Put
\begin{equation*}
R_1(f)=\sum_{n=T_0}^{T_1-1}f(J_n),
\qquad
R_1^*(f)=\sum_{n=T_0}^{T_1-1}|f(J_n)|.
\end{equation*}
Assume $\Ee_{\mathfrak a}[R_1^*(f)]<\infty$. Kac's formula gives
\begin{equation*}
\Ee_{\mathfrak a}[R_1(f)]=\ell\pi(f).
\end{equation*}

\begin{theorem}
Let $\Lambda$ be a compact subset of $\Delta_I$ and let $k_t:\Lambda\to\N^d$ satisfy $\sup_{\lambda\in\Lambda}|k_t(\lambda)-t\lambda|_1=o(t)$. If $\Ee_{\mathfrak a}[R_1^*(f)]<\infty$, then, under $\Pp_{\mathfrak a}$,
\begin{equation*}
\sup_{\lambda\in\Lambda}
\left|\frac{V_f(k_t(\lambda))}{t}-\ell\kappa(\lambda)\pi(f)\right|
\xrightarrow[t\to\infty]{}0
\end{equation*}
almost surely and in $L^1$. The same conclusion holds under every initial law for which the first entrance time into the atom, the entrance displacement and the reward accumulated before this entrance are integrable. In particular, for $A\in\calX$,
\begin{equation*}
\frac{\Psi_{k_t(\lambda)}(x,A)}{t}
\xrightarrow[t\to\infty]{} \ell\kappa(\lambda)\pi(A)
\end{equation*}
locally uniformly in $\lambda$ on every compact subset of a directional cell, whenever the preceding entrance condition holds for the starting point $x$.
\end{theorem}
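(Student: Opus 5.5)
We decompose $V_f(k_t(\lambda))$ along regenerative cycles, as in the proof of the functional inverse theorem, but only at the law-of-large-numbers scale. Under $\Pp_{\mathfrak a}$, the renewal epochs $S_n$ with $n<T_{\mathfrak N(k)}$ all satisfy $S_n\leqd C_{\mathfrak N(k)}\leqd k$. Epochs with $n\geq T_{\mathfrak N(k)+1}$ satisfy $S_n\notleqd k$, since $S$ is coordinatewise non-decreasing. Hence
\begin{equation*}
V_f(k)=\sum_{j=1}^{\mathfrak N(k)}R_j(f)+E(k),
\qquad
|E(k)|\leq R^*_{\mathfrak N(k)+1}(f).
\end{equation*}
Here $E(k)$ collects the epochs in the incomplete cycle that meets the boundary. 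The cycles $(R_j(f),R_j^*(f))$ are i.i.d. by the regeneration theorem of Section~3, and $\Ee_{\mathfrak a}[R_1(f)]=\ell\pi(f)$ by Kac's formula.

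\emph{Step 1: uniform control of the count.} We use the proposition preceding the large-deviation theorem, namely $\sup_{\lambda\in\Lambda}|\mathfrak N(k_t(\lambda))/t-\kappa(\lambda)|\to0$ almost surely. That proposition is stated under an $o(\sqrt t)$ approximation, but its proof only uses the strong law for $C_n$ together with the deterministic margin on coordinates outside $I$, so it extends verbatim to $o(t)$ perturbations. We re-derive it in that form. The strong law gives $C_n/n\to m$ almost surely. Monotonicity of $n\mapsto C_n$ then sandwiches $\mathfrak N(k)$ between $\lfloor t(\kappa(\lambda)\mp\delta)\rfloor$, uniformly over the compact $\Lambda$, because $\kappa$ is continuous and bounded away from $0$ on $\Lambda$.

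\emph{Step 2: the main sum.} The strong law gives $n^{-1}\sum_{j\leq n}R_j(f)\to\ell\pi(f)$ almost surely. Combined with Step 1, this yields
\begin{equation*}
\sup_{\lambda\in\Lambda}\left|t^{-1}\sum_{j\leq\mathfrak N(k_t(\lambda))}R_j(f)-\ell\kappa(\lambda)\pi(f)\right|\to0.
\end{equation*}
The uniformity holds because the indices $\mathfrak N(k_t(\lambda))$ all lie in $[0,Ct]$, and a convergent sequence of averages converges uniformly over indices $n\geq\delta t$. The set $\{n<\delta t\}$ contributes at most $\delta\cdot\sup_n n^{-1}\sum_{j\leq n}R_j^*(f)$.

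\emph{Step 3: the boundary cycle.} Since $\Ee_{\mathfrak a}[R_1^*(f)]<\infty$, we have $R_n^*(f)/n\to0$ almost surely. Therefore $\max_{n\leq Ct+1}R_n^*(f)/t\to0$ almost surely, which disposes of $E(k_t(\lambda))$ uniformly in $\lambda$. This gives almost-sure convergence.

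\emph{Step 4: $L^1$ convergence.} This step is the main obstacle, because a uniform integrability argument is needed. We dominate
\begin{equation*}
\sup_\lambda |V_f(k_t(\lambda))|/t\leq t^{-1}\sum_{j\leq\mathfrak N^*_t+1}R_j^*(f),
\qquad
\mathfrak N^*_t=\mathfrak N(\bar k_t),
\end{equation*}
where $\bar k_t$ is the coordinatewise maximum of the $k_t(\lambda)$. Then $\mathfrak N^*_t+1$ is a stopping time for the cycle filtration. Wald's identity and Step 1 then bound the expectation. To obtain uniform integrability we follow the classical renewal route: $\mathfrak N^*_t+1\leq\mathfrak N^{[1]}(\bar k_{t,1})+1$, and the one-dimensional renewal theorem (elementary form) gives $\Ee[\mathfrak N^{[1]}(s)]/s\to1/m_1$. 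Consequently the dominating family $t^{-1}\sum_{j\leq\mathfrak N^*_t+1}R_j^*(f)$ has expectations converging to the expectation of its almost-sure limit. Scheffé's lemma then makes it uniformly integrable, and dominated convergence transfers $L^1$ convergence to the supremum.

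\emph{Step 5: general initial laws.} For a non-atomic start we shift by the pre-entrance segment. We apply the atom-started result to the post-entrance process at corners $k_t(\lambda)-S_{\tau}$, which is again an $o(t)$ perturbation because the entrance displacement is finite. We add the integrable pre-entrance reward divided by $t$. Taking expectations with $f=\one_A$, and using $\Ee_x[V_{\one_A}(k)]=\Psi_k(x,A)$, gives the final statement. Local uniformity follows from the uniform $L^1$ bound.
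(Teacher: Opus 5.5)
Your proof is correct in substance, and apart from the $L^1$ step it follows the paper's proof. Shared with the paper are:
\begin{itemize}
\item the cycle decomposition, with the boundary cycle bounded by $R^*_{\mathfrak N(k)+1}(f)$;
\item the uniform law of large numbers for $\mathfrak N(k_t(\lambda))/t$ in its $o(t)$ form, which the paper also uses without comment;
\item the boundary maximum, handled through $R_n^*(f)/n\to0$;
\item the transfer to other initial laws, by conditioning on the countably many values of the entrance displacement and using that the post-entrance process has law $\Pp_{\mathfrak a}$ and is independent of the pre-entrance $\sigma$-field.
\end{itemize}

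For the $L^1$ step, the paper proves $L^1$ convergence of each piece separately. It uses the deterministic cap $\mathfrak N(k_t(\lambda))\le|k_t(\lambda)|_1\le c_\Lambda t$, which comes from strictness ($|Y_j|_1\ge1$). It then combines the $L^1$ strong law for $t^{-1}\max_{n\le c_\Lambda t}|\sum_{j\le n}(R_j(f)-\ell\pi(f))|$, bounded convergence for $\mathfrak N/t$, and the tail-integral bound $\Ee[\max_{j\le n}R_j^*(f)]/n\to0$. You instead dominate the whole supremum by one random sum and get uniform integrability from Wald's identity, the elementary renewal theorem and Scheff\'e's lemma. What this buys is a single dominating family that can be reused after the entrance shift, because $k_t(\lambda)-S_\sigma\leqd\bar k_t$. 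Note that for $L^1$ convergence under a general initial law you need uniform integrability of the post-entrance term, not just the ``uniform $L^1$ bound'' you mention; this domination is what gives it.

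Step~4 as written contains a slip. Almost surely $\mathfrak N^*_t/t\to\min_r\bar\lambda_r/m_r$, where $\bar\lambda_r=\max_{\lambda\in\Lambda}\lambda_r$. Comparison with the first coordinate only yields $\limsup_t\Ee[\mathfrak N^*_t]/t\le\bar\lambda_1/m_1$. This is strictly larger unless coordinate $1$ attains the minimum; for example, take $m=(1,1)$ and $\Lambda=\{(0.9,0.1)\}$. So the claimed convergence of the means of your dominating family to the mean of its limit does not follow.

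Any of the following repairs works:
\begin{itemize}
\item Use a coordinate that attains $\min_r\bar\lambda_r/m_r$ instead of coordinate $1$.
\item Apply Scheff\'e to the larger family $t^{-1}\sum_{j\le\mathfrak N^{[1]}(\bar k_{t,1})+1}R_j^*(f)$, whose means do converge to the mean of its limit, and deduce uniform integrability of the smaller family.
\item Most simply, use strictness: $\mathfrak N^*_t\le|\bar k_t|_1\le Ct$, so the supremum is dominated by $t^{-1}\sum_{j\le Ct+1}R_j^*(f)$. This is an average of a deterministic number of i.i.d.\ integrable variables, so it is uniformly integrable with no Wald identity or renewal theory.
\end{itemize}
Strictness is also what actually bounds $\Ee[\mathfrak N^*_t]$ in your Wald step, since Step~1 is only an almost-sure statement.
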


\smallskip
\noindent The proof is given in SM\@.
\smallskip

This is the rectangular renewal theorem for cumulative visits. The summation over exact-time lattice classes removes the restriction to one admissible class; the arithmetic obstruction reappears in exact-time local asymptotics.

\section{An operator-theoretic local theorem for exact-time potentials}

The preceding section concerns lower rectangles and inverse counts. Exact-time potentials require a different argument. A single lattice point does not average over the arithmetic classes of the additive component, and therefore the local theorem must see both the spectral behaviour of the Markov-additive kernel and the lattice on which renewal mass can live. The main result of this section is stated in an operator-theoretic form. It is a local theorem for Fourier--Laplace perturbations of the original kernel, not for the split chain. This places the exact-time potential in the same analytic framework as Markov-additive local limit theory. The regenerative periodic form is then recorded, with proof in SM\@, because it gives the explicit arithmetic constants and the finite-state framework.

For $z\in\mathbb C^d$ define the Fourier--Laplace kernel
\begin{equation*}
Q_z f(x)=\sum_{k\in\N^d}e^{z\cdot k}\int_\X f(y)q_k(x,\dd y),
\end{equation*}
whenever the sum is meaningful. In particular, $Q_{\mathrm i\theta}$ is the characteristic kernel of the additive increment. The following assumption is the operator version of the usual non-lattice, finite-exponential-moment hypotheses in the Nagaev--Guivarc'h method.

\begin{assumption}\label{ass:operator-spectral}
There is a complex Banach space $\mathcal B$ of measurable functions on $\X$ satisfying the following properties.

\begin{enumerate}[label=\textup{(S\arabic*)},leftmargin=*]
\item The constant function $\one$ belongs to $\mathcal B$, every finite measure $\alpha$ considered below acts continuously on $\mathcal B$, and $Q_0$ acts boundedly on $\mathcal B$.
\item The operator $Q_0$ has a simple dominant eigenvalue equal to one. More precisely,
\begin{equation*}
Q_0=\Pi+N,
\qquad
\Pi f=\pi(f)\one,
\qquad
\Pi N=N\Pi=0,
\end{equation*}
and there are $C<\infty$ and $r<1$ such that $\|N^n\|_{\mathcal B}\leq Cr^n$ for every $n\geq0$.
\item There exists $\eta>0$ such that $z\mapsto Q_z$ is analytic as a bounded-operator-valued map on the strip $\{z\in\mathbb C^d: |\operatorname{Re}z|_1<\eta\}$.
\item There are $\delta>0$, $C<\infty$ and $r<1$ such that, for all sufficiently small $\zeta\in\R^d$,
\begin{equation*}
\sup_{\theta\in[-\pi,\pi]^d:\ |\theta|_1\geq\delta}
\|Q_{\zeta+\mathrm i\theta}^n\|_{\mathcal B}
\leq C r^n |\varrho(\zeta)|^n,
\qquad n\geq0,
\end{equation*}
where $\varrho(\zeta)$ denotes the analytic continuation of the dominant eigenvalue near zero. This is the tilted aperiodicity condition. In particular, taking $\zeta=0$ gives spectral radius strictly smaller than one for $Q_{\mathrm i\theta}$ away from the origin.
\item If $\varrho(z)$ denotes the analytic eigenvalue of $Q_z$ which satisfies $\varrho(0)=1$, and if $\Lambda(z)=\log\varrho(z)$ is the branch with $\Lambda(0)=0$, then, as $z\to0$ in the strip,
\begin{equation*}
\Lambda(z)=m^Tz+\frac{1}{2}z^T\Gamma z+o(|z|_1^2),
\end{equation*}
where $m\in(0,\infty)^d$ and $\Gamma$ is symmetric positive definite.
\end{enumerate}
\end{assumption}

The preceding hypotheses are stated as operator assumptions because the local theorem is used at the level of the Markov-additive kernel. The next two statements record two sets of sufficient conditions: the finite-state Perron--Frobenius case and a weighted-supremum case based on geometric ergodicity and exponential moments.

\begin{proposition}\label{prop:finite-state-spectral}
Let $\X=\{1,\ldots,s\}$ and suppose that
\begin{equation*}
Q_z(i,j)=\sum_{k\in\N^d}e^{z\cdot k}q_{ij}(k)
\end{equation*}
is finite for all $|\operatorname{Re}z|_1<\eta$ and all $i,j$. Assume that the embedded matrix $Q_0$ is irreducible and aperiodic. Assume also that, for every $\theta\in[-\pi,\pi]^d\setminus\{0\}$, there are no complex numbers $a_i$ with $|a_i|=1$ and no $\omega$ with $|\omega|=1$ such that
\begin{equation*}
e^{\mathrm i\theta\cdot k}a_j=\omega a_i
\end{equation*}
for every triple $(i,j,k)$ with $q_{ij}(k)>0$. If the Markov-additive covariance matrix is symmetric positive definite, then Assumption~\ref{ass:operator-spectral} holds on $\mathcal B=\mathbb C^s$.
\end{proposition}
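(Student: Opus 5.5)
Each condition (S1)--(S5) is checked in turn on $\mathcal B=\mathbb C^s$ with any norm. (S1) and (S3) are immediate. Each entry $Q_z(i,j)$ is a power series in $e^{z_1},\dots,e^{z_d}$ with non-negative coefficients, so finiteness on the strip $|\operatorname{Re}z|_1<\eta$ gives absolute convergence. Uniform convergence then holds on compact subsets of the strip, and analyticity of each entry follows. Hence $z\mapsto Q_z$ is analytic as a matrix-valued map. For (S2), $Q_0=P$ is stochastic, irreducible and aperiodic. Perron--Frobenius gives that $1$ is a simple eigenvalue with right eigenvector $\one$ and left eigenvector the invariant law $\pi$, and that all other eigenvalues have modulus $<1$. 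Define $\Pi=\one\pi^T$ and $N=Q_0-\Pi$. Then $\Pi N=N\Pi=0$, the spectral radius of $N$ is $<1$, and Gelfand's formula gives $\|N^n\|\le Cr^n$.

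For (S5), analytic perturbation theory (Kato) applies to the simple isolated eigenvalue $1$ of $Q_0$. It yields, for $z$ in a neighbourhood of $0$, an analytic simple eigenvalue $\varrho(z)$ with analytic projection $\Pi_z$, and a remainder $N_z$ with spectral radius bounded by some $r'<1$. Differentiating $Q_z v_z=\varrho(z)v_z$ at $0$ and pairing with $\pi$ gives
\[
\nabla\Lambda(0)=\sum_{i,j,k}\pi_i\,k\,q_{ij}(k)=\mu .
\]
Since Assumption~\ref{ass:harris-positive} applies, this vector lies in $(0,\infty)^d$; this is the vector called $m$ in (S5). The Hessian of $\Lambda$ at $0$ is by definition the Markov-additive covariance matrix $\Gamma$. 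Second-order Taylor expansion then gives (S5), with positive definiteness supplied by the hypothesis.

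The main work is (S4). First take $\zeta=0$. The plan is the classical Wielandt argument. $|Q_{\mathrm i\theta}|\le Q_0$ entrywise, so the spectral radius of $Q_{\mathrm i\theta}$ is at most $1$. Suppose $Q_{\mathrm i\theta}v=\omega v$ with $|\omega|=1$. Then $|v|\le Q_0|v|$, and irreducibility forces $|v|$ to be a constant multiple of $\one$. Normalize $|v_i|=1$ and write $a_i=v_i$. Equality in the triangle inequality
\[
\sum_{j,k}e^{\mathrm i\theta\cdot k}q_{ij}(k)a_j=\omega a_i
\]
then forces $e^{\mathrm i\theta\cdot k}a_j=\omega a_i$ for every triple with $q_{ij}(k)>0$, which the hypothesis excludes. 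Hence the spectral radius is $<1$ for each $\theta\ne0$. Continuity in $\theta$ and compactness of $\{\theta\in[-\pi,\pi]^d:|\theta|_1\ge\delta\}$ give a uniform bound: the spectral radius of $Q_{\mathrm i\theta}$ is at most $r_0<1$ on this set. A uniform version of Gelfand's formula then yields $\|Q_{\mathrm i\theta}^n\|\le Cr_1^n$ with $r_0<r_1<1$. For the uniform version I will use a resolvent-integral (Dunford) representation on a fixed circle of radius $r_1$, whose resolvent norm is bounded uniformly by compactness and continuity.

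For small real $\zeta$, the map $(\zeta,\theta)\mapsto Q_{\zeta+\mathrm i\theta}$ is jointly continuous and $\varrho(\zeta)\to1$. So the same Dunford bound on the compact parameter set $\{|\zeta|\le\zeta_0\}\times\{|\theta|_1\ge\delta\}$ gives spectral radius at most $r_1<1$. Shrinking $\zeta_0$ so that $r_1/\varrho(\zeta)\le r<1$ produces the required estimate $Cr^n|\varrho(\zeta)|^n$. This uniformity step is the only delicate point. To avoid any gap I will phrase it as upper semicontinuity of the spectral radius together with compactness, rather than a pointwise argument.
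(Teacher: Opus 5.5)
Your proposal is correct and follows the paper's proof essentially step for step. The shared steps are entrywise analyticity from the exponential strip, Perron--Frobenius at $z=0$, analytic perturbation of the simple eigenvalue, the Wielandt equality case excluded by the coboundary hypothesis, and compactness plus continuity to reach small real $\zeta$. Your Dunford-integral argument supplies the justification that the paper compresses into the remark that, in finite dimension, ``a uniform spectral-radius bound yields constants.''

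One small repair is needed. Positivity of $m$ should not be taken from Assumption~\ref{ass:harris-positive}, which is not a hypothesis of this proposition, but it follows directly. Since $\pi_i>0$ and $k_r\geq0$, $m_r=0$ would force $k_r=0$ on the whole support. Then $\Lambda$ would not depend on $z_r$, so $\Gamma_{rr}=0$, contradicting positive definiteness.
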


\smallskip
\noindent The proof is given in SM\@.
\smallskip

\begin{proposition}\label{prop:weighted-supremum-spectral}
Let $V:\X\to[1,\infty)$ be measurable and let $\mathcal B_V$ be the complex Banach space of measurable functions satisfying
\begin{equation*}
\|f\|_V=\sup_{x\in\X}\frac{|f(x)|}{V(x)}<\infty.
\end{equation*}
When this space is used in Assumption~\ref{ass:operator-spectral}, finite signed measures are admitted as continuous linear functionals only when their $V$-moment is finite. Assume that $Q_0$ is $V$-geometrically ergodic, meaning that $Q_0\one=\one$, $Q_0$ has an invariant probability $\pi$ with $\pi(V)<\infty$, and there exist constants $M<\infty$ and $r<1$ such that
\begin{equation*}
\|Q_0^n f-\pi(f)\one\|_V\leq Mr^n\|f\|_V,
\qquad n\geq0,
\end{equation*}
for every $f\in\mathcal B_V$. Assume also that there exists $\eta>0$ such that
\begin{equation*}
\sup_{x\in\X}\frac{1}{V(x)}
\sum_{k\in\N^d}e^{\eta |k|_1}
\int_\X V(y)q_k(x,\dd y)<\infty.
\end{equation*}
The preceding assumptions give, for $\zeta$ near zero, an analytic dominant eigenvalue $\varrho(\zeta)$ of $Q_\zeta$. Assume also the following strong Fourier aperiodicity condition: for every $\delta>0$ there exist $\eta_\delta>0$, $C_\delta<\infty$ and $r_\delta<1$ such that, for all real $\zeta$ with $|\zeta|_1\leq\eta_\delta$,
\begin{equation*}
\sup_{\theta\in[-\pi,\pi]^d:\ |\theta|_1\geq\delta}
\|Q_{\zeta+\mathrm i\theta}^n\|_V
\leq C_\delta r_\delta^n |\varrho(\zeta)|^n,
\qquad n\geq0,
\end{equation*}
where $\varrho$ is this analytic dominant eigenvalue. If the covariance matrix obtained from the second derivative of $\log\varrho$ at zero is symmetric positive definite, then Assumption~\ref{ass:operator-spectral} holds on $\mathcal B=\mathcal B_V$, after reducing the complex strip if necessary.
\end{proposition}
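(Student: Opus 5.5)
We need to verify (S1)–(S5) on $\mathcal B_V$, using the Kato–Nagaev perturbation method. The plan is to establish boundedness and analyticity of $z\mapsto Q_z$ first, then transfer the spectral gap of $Q_0$ to nearby $Q_z$, and finally read off the remaining conditions.

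\emph{Step 1: (S1) and (S3).} The constant $\one$ has $\|\one\|_V\leq1$ because $V\geq1$. A finite measure $\alpha$ with $\alpha(V)<\infty$ satisfies $|\alpha(f)|\leq\alpha(V)\|f\|_V$, and this is exactly the admissibility convention in the statement. For $|\operatorname{Re}z|_1<\eta$ and $f\in\mathcal B_V$ we have
\begin{equation*}
|Q_zf(x)|\leq\|f\|_V\sum_k e^{|\operatorname{Re}z|_1|k|_1}\int V(y)q_k(x,\dd y)\leq K V(x)\|f\|_V,
\end{equation*}
where $K$ is the weighted exponential supremum in the hypothesis. Hence $Q_z$ is bounded, and in particular $Q_0$ is bounded.

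For analyticity, expand $e^{z\cdot k}$ in a power series around any $z_0$ in the strip. After shrinking the strip to $|\operatorname{Re}z|_1<\eta'<\eta$, the $n$-th term is dominated in operator norm by $|z-z_0|_1^n\,\sup_x V(x)^{-1}\sum_k |k|_1^n e^{\eta'|k|_1}\int V\,\dd q_k/n!$. This is summable for $|z-z_0|_1<\eta-\eta'$, using $|k|_1^n e^{\eta'|k|_1}\leq n!(\eta-\eta')^{-n}e^{\eta|k|_1}$. So $Q_z$ is given by a norm-convergent operator-valued power series, which proves (S3) on the reduced strip.

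\emph{Step 2: (S2).} Set $\Pi f=\pi(f)\one$, which is bounded since $\pi(V)<\infty$, and set $N=Q_0-\Pi$. Invariance of $\pi$ and $Q_0\one=\one$ give $\Pi Q_0=Q_0\Pi=\Pi=\Pi^2$, hence $\Pi N=N\Pi=0$. Consequently $N^n=Q_0^n-\Pi$ for $n\geq1$, and $V$-geometric ergodicity yields $\|N^n\|_V\leq Mr^n$. This bound also shows that the eigenvalue $1$ is simple and isolated, with the rest of the spectrum contained in the disc of radius $r$.

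\emph{Step 3: perturbation.} By analytic perturbation theory (Kato), applied to the analytic family of Step 1 on the isolated simple eigenvalue $1$, there is a neighbourhood $|z|_1<\eta_0$ on which:
\begin{itemize}
\item $Q_z=\varrho(z)\Pi_z+N_z$, with $\varrho$ and the rank-one projection $\Pi_z$ analytic;
\item $\Pi_zN_z=N_z\Pi_z=0$;
\item $\|N_z^n\|_V\leq C r_1^n$ for some $r<r_1<1$;
\item $\varrho(0)=1$.
\end{itemize}
This gives the analytic eigenvalue named in (S4) and (S5), and $\Lambda=\log\varrho$ is analytic near $0$.

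\emph{Step 4: (S5) and (S4).} Differentiate the eigen-identity $Q_z h_z=\varrho(z)h_z$ with $h_0=\one$, and pair it with $\pi$, to get
\begin{equation*}
\nabla\varrho(0)=\pi(Q'\one)=\int\sum_k k\,q_k(x,\X)\pi(\dd x)=\mu.
\end{equation*}
This lies in $(0,\infty)^d$ by Assumption~\ref{ass:harris-positive}. Taylor's theorem then gives the stated expansion, with $m=\nabla\Lambda(0)$ and $\Gamma=\nabla^2\Lambda(0)$. Here $\Gamma$ is symmetric as a Hessian and positive definite by hypothesis.

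Condition (S4) is literally the assumed strong Fourier aperiodicity, taken with a fixed $\delta$. The eigenvalue $\varrho$ appearing there coincides with the one from Step 3 on the common small real neighbourhood, so the conditions match. Choosing a single final strip width $\eta_*\leq\min(\eta',\eta_0,\eta_\delta)$ completes the verification.

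\emph{Main obstacle.} The delicate point is Step 1's operator-norm analyticity on $\mathcal B_V$. Pointwise convergence of the series is not enough; the argument needs the uniform weighted exponential moment, and this is why the strip must be reduced. A second, milder issue is ensuring that the eigenvalue branch in the aperiodicity hypothesis is the Kato branch. This follows from simplicity of the eigenvalue and uniqueness of the analytic continuation from $\varrho(0)=1$.
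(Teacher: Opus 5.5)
Your proposal is correct and follows essentially the same route as the paper's proof in the supplement: boundedness and operator-norm analyticity of $z\mapsto Q_z$ from the weighted exponential moment, the decomposition $Q_0=\Pi+N$ with $\|N^n\|_V\leq Mr^n$ from $V$-geometric ergodicity, Kato perturbation of the simple isolated eigenvalue $1$, and (S4) read off directly from the strong Fourier aperiodicity hypothesis.

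One small remark: your appeal to Assumption~\ref{ass:harris-positive} for $m\in(0,\infty)^d$ goes beyond the proposition's stated hypotheses, and the paper's proof is silent on this point. The conclusion is nevertheless recoverable from those hypotheses alone: with $e_r$ the $r$-th coordinate vector, $m_r=\mu_r=0$ would give $\pi Q_{te_r}=\pi$ for small real $t$, hence $\varrho(te_r)\equiv1$ and $e_r^T\Gamma e_r=0$, contradicting positive definiteness.
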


\smallskip
\noindent The proof is given in SM\@.
\smallskip

The next proposition records the local estimates which follow from the preceding spectral assumptions. It is included in the main text in order to make the exact-time theorem independent of an implicit appeal to a local limit theorem.

\begin{proposition}\label{prop:operator-local-estimates}
Under the preceding spectral assumptions, let $\alpha$ be a continuous linear functional on $\mathcal B$ and let $f\in\mathcal B$. Put
\begin{equation*}
g_n(k)=(2\pi)^{-d/2}\det(\Gamma)^{-1/2}
\exp\left\{-\frac{1}{2n}(k-nm)^T\Gamma^{-1}(k-nm)\right\}.
\end{equation*}
For every $M<\infty$,
\begin{equation*}
\sup_{|k-nm|_1\leq M\sqrt n}
\left|n^{d/2}\alpha(q_k^{(n)}f)-\alpha(\one)\pi(f)g_n(k)\right|
\xrightarrow[n\to\infty]{}0.
\end{equation*}
Moreover, there exist finite positive constants $C$ and $c$, depending on $\alpha$ and $f$, such that
\begin{equation*}
|\alpha(q_k^{(n)}f)|
\leq Cn^{-d/2}\exp\left\{-c\frac{|k-nm|_1^2}{n}\right\}+Ce^{-cn},
\qquad n\geq1,
\quad k\in\N^d.
\end{equation*}
\end{proposition}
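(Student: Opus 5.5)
We prove Proposition~\ref{prop:operator-local-estimates} by Fourier inversion on the torus combined with the spectral decomposition of the perturbed kernel $Q_z$, in the standard Nagaev--Guivarc'h way. The starting point is the identity
\begin{equation*}
\alpha(q^{(n)}_kf)=(2\pi)^{-d}\int_{[-\pi,\pi]^d}e^{-\mathrm i\theta\cdot k}\alpha(Q_{\mathrm i\theta}^nf)\,\dd\theta,
\end{equation*}
valid because $Q_{\mathrm i\theta}^n f=\sum_k e^{\mathrm i\theta\cdot k}q^{(n)}_kf$, with convergence in $\mathcal B$ guaranteed by (S3). For the Gaussian tail bound we use a tilted version. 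With $\zeta\in\R^d$ small,
\begin{equation*}
\alpha(q^{(n)}_kf)=(2\pi)^{-d}e^{-\zeta\cdot k}\int e^{-\mathrm i\theta\cdot k}\alpha(Q_{\zeta+\mathrm i\theta}^nf)\,\dd\theta,
\end{equation*}
which amounts to shifting the contour inside the analyticity strip.

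\emph{Spectral splitting near the origin.} By (S2), (S3) and analytic perturbation theory, there is a neighbourhood $U$ of $0$ in $\mathbb C^d$ on which
\begin{equation*}
Q_z=\varrho(z)\Pi_z+N_z,
\end{equation*}
with $\Pi_z$ a rank-one projection, $\Pi_zN_z=N_z\Pi_z=0$, and $\|N_z^n\|\leq C r_1^n$ for some $r_1<1$ uniformly on $U$. All of $\varrho,\Pi_z,N_z$ are analytic in $z$, and $\Pi_0=\Pi$. Hence
\begin{equation*}
\alpha(Q_z^nf)=\varrho(z)^n\alpha(\Pi_zf)+O(r_1^n),
\end{equation*}
where $\alpha(\Pi_zf)\to\alpha(\one)\pi(f)$ as $z\to0$.

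\emph{Local limit part.} Split $[-\pi,\pi]^d$ into $\{|\theta|_1<\delta\}$ and its complement. On the complement, (S4) with $\zeta=0$ bounds the contribution by $Cr^n$, which is $o(n^{-d/2})$. On the small ball, replace $\alpha(Q^n_{\mathrm i\theta}f)$ by $\varrho(\mathrm i\theta)^n\alpha(\Pi_{\mathrm i\theta}f)$ and substitute $\theta=u/\sqrt n$. By (S5),
\begin{equation*}
\varrho(\mathrm i u/\sqrt n)^n e^{-\mathrm i u\cdot nm/\sqrt n}\to e^{-u^T\Gamma u/2}.
\end{equation*}
Choosing $\delta$ small enough that $\operatorname{Re}\Lambda(\mathrm i\theta)\leq-c|\theta|^2$ gives the integrable dominating function $e^{-c|u|^2}$. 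Dominated convergence, uniformly in $x=(k-nm)/\sqrt n$ with $|x|_1\leq M$ (the phase $e^{-\mathrm iu\cdot x}$ is bounded by one), then yields
\begin{equation*}
n^{d/2}\alpha(q^{(n)}_kf)-\alpha(\one)\pi(f)(2\pi)^{-d}\int e^{-\mathrm iu\cdot x-u^T\Gamma u/2}\,\dd u\to0.
\end{equation*}
The last integral equals $g_n(k)$, by the Gaussian Fourier transform.

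\emph{Gaussian tail bound.} This is the main obstacle. Given $k$, put $x=(k-nm)/n$. If $|x|_1\leq\eta_0$ with $\eta_0$ small, choose the real tilt
\begin{equation*}
\zeta=\Gamma^{-1}x\,c_0,
\end{equation*}
which is small. Using the tilted inversion, the same splitting applies: away from the origin (S4) gives $Cr^n|\varrho(\zeta)|^n$, and near the origin $|\varrho(\zeta+\mathrm i\theta)|^n\leq|\varrho(\zeta)|^ne^{-cn|\theta|^2}$ after possibly shrinking $\delta$ and $\eta_0$; this uses the positive definiteness of $\Gamma$ and continuity of second derivatives of $\Lambda$. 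Integrating over $\theta$ gives
\begin{equation*}
|\alpha(q^{(n)}_kf)|\leq Cn^{-d/2}\exp\{n(\Lambda(\zeta)-\zeta\cdot k/n)\}.
\end{equation*}
By (S5), $\Lambda(\zeta)-\zeta\cdot(m+x)\leq -c|x|^2$ for this choice of $\zeta$ once $c_0$ is small, which gives the Gaussian term $\exp\{-c|k-nm|_1^2/n\}$. If $|x|_1>\eta_0$, tilt instead with a fixed small $\zeta$ of norm $\eta'$ in the sign direction of the largest coordinate of $x$. Crude bounds, $\|Q_\zeta^n\|\leq C|\varrho(\zeta)|^n$ and $\Lambda(\zeta)-\zeta\cdot(m+x)\leq-c$, then give $Ce^{-cn}$. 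The delicate points are the uniformity of the perturbation constants over the tilt parameter and the compatibility of the tilt with the sign constraints coming from $k\in\N^d$. Both are handled by shrinking all neighbourhoods once, at the start of the argument.
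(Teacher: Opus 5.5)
Your proposal is correct and follows essentially the same route as the paper: Fourier inversion on the torus, the analytic splitting $Q_z^n=\varrho(z)^n\Pi_z+N_z^n$ with the rescaling $\theta=u/\sqrt n$ and dominated convergence in the central range, and real exponential tilts controlled by the tilted aperiodicity condition (S4) for the Gaussian and $e^{-cn}$ bounds. The only difference is minor: you take the linearized tilt $\zeta=c_0\Gamma^{-1}(k/n-m)$ where the paper solves $\nabla\Lambda(\xi)=k/n$ exactly by the inverse function theorem, and this suffices because only an upper bound on the modulus is needed.
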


\begin{proof}
Fourier inversion gives
\begin{equation*}
\alpha(q_k^{(n)}f)=(2\pi)^{-d}\int_{[-\pi,\pi]^d}e^{-\mathrm i\theta\cdot k}\alpha(Q_{\mathrm i\theta}^{n}f)\dd\theta.
\end{equation*}
Choose $\delta>0$ so small that the analytic eigenvalue, eigenprojection and remainder exist on $\{z:|z|_1<2\delta\}$. On this neighbourhood,
\begin{equation*}
Q_z^n=\varrho^n(z)\Pi_z+N_z^n,
\end{equation*}
with $\|N_z^n\|\leq C r^n$ for some $r<1$. On $[-\pi,\pi]^d\setminus(-\delta,\delta)^d$, the aperiodicity condition with $\zeta=0$ gives an exponentially small contribution.

Put $\theta=u/\sqrt n$. The Taylor expansion of $\Lambda(z)=\log\varrho(z)$ gives, uniformly for $u$ in compact sets,
\begin{equation*}
\varrho^n(\mathrm iu/\sqrt n)
=\exp\left\{\mathrm i\sqrt n\,m^Tu-\frac{1}{2}u^T\Gamma u+O(|u|_1^3n^{-1/2})\right\},
\end{equation*}
and $\alpha(\Pi_{\mathrm iu/\sqrt n}f)=\alpha(\one)\pi(f)+O(|u|_1n^{-1/2})$. Since $\Gamma$ is symmetric positive definite, there are $c,C>0$ such that, for $|u|_1\leq\delta\sqrt n$,
\begin{equation*}
|\varrho(\mathrm iu/\sqrt n)|^n\leq C\exp\{-c|u|_1^2\}.
\end{equation*}
The preceding bound, the exponential estimate on the complement of the small-frequency neighbourhood, and dominated convergence imply the central local approximation uniformly for $|k-nm|_1\leq M\sqrt n$.

It remains to prove the uniform estimate. In the range $|k-nm|_1\leq a n$, with $a>0$ small, the real equation
\begin{equation*}
\nabla\Lambda(\xi)=k/n
\end{equation*}
has a solution $\xi=\xi(k,n)$ in the analytic strip and $|\xi|_1\leq C|k/n-m|_1$. This follows from the inverse function theorem applied to $\nabla\Lambda$ at zero, whose derivative is $\Gamma$. Shift the contour in the small-frequency integral from $\theta$ to $\theta-\mathrm i\xi$. The tilted aperiodicity condition controls the part outside the shifted small neighbourhood, and the spectral remainder is exponentially small there. At the saddle point,
\begin{equation*}
\Lambda(\xi)-\xi\cdot k/n\leq -c|k/n-m|_1^2
\end{equation*}
for $a$ sufficiently small. The Gaussian integral on the shifted contour gives
\begin{equation*}
|\alpha(q_k^{(n)}f)|
\leq Cn^{-d/2}\exp\left\{-c\frac{|k-nm|_1^2}{n}\right\}
\end{equation*}
in the moderate-deviation range.

If $|k/n-m|_1>a$, choose a real vector $\xi$ with $|\xi|_1$ smaller than the strip width and with $\xi\cdot(k/n-m)\geq c_1|k/n-m|_1$. Fourier inversion for the tilted kernel yields
\begin{equation*}
|\alpha(q_k^{(n)}f)|
\leq C\exp\{-\xi\cdot k+n\Lambda(\xi)\}+Ce^{-cn}.
\end{equation*}
Since $\Lambda(\xi)=m^T\xi+O(|\xi|_1^2)$, the radius of the ball for $\xi$ can be chosen so that the first exponent is at most $-cn$. Combining the central, moderate and separated ranges gives the stated bound.
\end{proof}

The spectral-gap hypothesis is the intrinsic counterpart of positive recurrence. The aperiodicity condition is imposed on the Fourier kernels, not on the embedded state chain alone. This is necessary because local exact-time asymptotics depend on the arithmetic of the additive component.

For a continuous linear functional $\alpha$ on $\mathcal B$ and $f\in\mathcal B$, put
\begin{equation*}
U_f^\alpha(k)=\sum_{n\geq0}\alpha(q_k^{(n)}f),
\qquad k\in\N^d.
\end{equation*}
When $\alpha$ is an initial law and $f=\one_A$ with $\one_A\in\mathcal B$, this is the exact-time potential $\psi_k(\alpha,A)$. Define
\begin{equation*}
\Pi_{m,\Gamma}=\Gamma^{-1}-
\frac{\Gamma^{-1}mm^T\Gamma^{-1}}{m^T\Gamma^{-1}m}.
\end{equation*}
For $w\in\R^d$, set
\begin{equation*}
\phi^\perp_{m,\Gamma}(w)=
(2\pi)^{-(d-1)/2}
\det(\Gamma)^{-1/2}
(m^T\Gamma^{-1}m)^{-1/2}
\exp\left\{-\frac{1}{2} w^T\Pi_{m,\Gamma}w\right\}.
\end{equation*}
When $d=1$, the convention is $\phi^\perp_{m,\Gamma}=1/m$.

\begin{theorem}
Assume the spectral hypotheses above. Let $\alpha$ be a continuous linear functional on $\mathcal B$ and let $f\in\mathcal B$. If $K\subset\R^d$ is compact and
\begin{equation*}
\mathcal W_a(K)=\left\{k\in\N^d:\frac{k-am}{\sqrt a}\in K\right\},
\end{equation*}
then
\begin{equation*}
\sup_{k\in\mathcal W_a(K)}
\left|
 a^{(d-1)/2}U_f^\alpha(k)
 -\alpha(\one)\pi(f)\phi^\perp_{m,\Gamma}\left(\frac{k-am}{\sqrt a}\right)
\right|
\xrightarrow[a\to\infty]{}0.
\end{equation*}
\end{theorem}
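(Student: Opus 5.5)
Since $U_f^\alpha(k)=\sum_{n\geq0}\alpha(q_k^{(n)}f)$, the plan is to sum the local estimates of Proposition~\ref{prop:operator-local-estimates} over $n$ and identify the limit as a Riemann sum of Gaussian densities. Fix $K$ compact and $k\in\mathcal W_a(K)$, and write $k=am+\sqrt a\,w$ with $w\in K$. The mass of $q^{(n)}_k$ is concentrated on $n$ with $|k-nm|_1=O(\sqrt n)$, that is $n=a+O(\sqrt a)$. Parametrize $n=a+s\sqrt a$ and split the sum into three ranges:
\begin{itemize}
\item the central window $|s|\leq M$;
\item the intermediate range $M<|s|\leq \delta\sqrt a$;
\item the far range $|n-a|>\delta a$.
\end{itemize}

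In the far range, $|k-nm|_1\geq c|n-a|\geq c'\max(n,a)$ because $m\in(0,\infty)^d$. The uniform bound $Cn^{-d/2}e^{-c|k-nm|_1^2/n}+Ce^{-cn}$ is then at most $Ce^{-c''\max(n,a)}$, up to the term $Ce^{-cn}$ for small $n$. This last term needs care: for small $n$, $q^{(n)}_k$ with $|k|_1\approx a|m|_1$ must be controlled. I would use the Gaussian part, which gives $e^{-c|k|_1^2/n}$, and handle $n\leq \delta a$ by a Chernoff estimate with a positive tilt from (S3). Tilting gives $\alpha(q^{(n)}_kf)\leq Ce^{-\xi\cdot k}\varrho(\xi)^n\leq e^{-c a}$ for $n\leq\delta a$, provided $\delta$ is small relative to $\xi\cdot m$. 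So the far range contributes $o(a^{-(d-1)/2})$.

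In the intermediate range, $|k-nm|_1^2/n\geq c s^2$, so the terms are bounded by $Ca^{-d/2}e^{-cs^2}$. Summing over $n$ with spacing $1=\sqrt a\,\dd s$ gives at most $Ca^{-(d-1)/2}\int_{|s|>M}e^{-cs^2}\dd s$, which is small uniformly in $w\in K$ once $M$ is large. In the central window, $|k-nm|_1\leq M'\sqrt n$, so the first part of Proposition~\ref{prop:operator-local-estimates} gives $\alpha(q_k^{(n)}f)=n^{-d/2}[\alpha(\one)\pi(f)g_n(k)+o(1)]$ uniformly. Here $n^{-d/2}=a^{-d/2}(1+o(1))$ and $(k-nm)/\sqrt n=w-sm+o(1)$. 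Hence
\begin{equation*}
a^{(d-1)/2}\sum_{|s|\leq M}\alpha(q_k^{(n)}f)=\alpha(\one)\pi(f)\,\frac{1}{\sqrt a}\sum_{|s|\leq M}\varphi_\Gamma(w-sm)+o(1),
\end{equation*}
where $\varphi_\Gamma$ is the centered Gaussian density with covariance $\Gamma$. The expression on the right is a Riemann sum with mesh $a^{-1/2}$, and it converges uniformly in $w\in K$ to $\int_{-M}^M\varphi_\Gamma(w-sm)\dd s$, by equicontinuity of the integrand.

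To conclude, let $M\to\infty$ and compute $\int_\R\varphi_\Gamma(w-sm)\dd s$. Completing the square in $s$, the exponent is $-\frac12[(m^T\Gamma^{-1}m)s^2-2s\,m^T\Gamma^{-1}w+w^T\Gamma^{-1}w]$. Integrating in $s$ gives the factor $\sqrt{2\pi}(m^T\Gamma^{-1}m)^{-1/2}$ and leaves exponent $-\frac12 w^T\Pi_{m,\Gamma}w$. This is exactly $\phi^\perp_{m,\Gamma}(w)$, and for $d=1$ it reduces to $1/m$.

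The main obstacle is the far range, and specifically the small-$n$ terms where the $Ce^{-cn}$ remainder is not small in $a$. I would redo the tilted Fourier inversion of the proof of Proposition~\ref{prop:operator-local-estimates} with $\xi$ chosen along $m$. This yields the bound $C\exp\{-\xi\cdot k+n\Lambda(\xi)\}$ directly, without any additive $e^{-cn}$ term. The tilted aperiodicity condition (S4), with $\zeta=\xi$, controls the high frequencies relative to $|\varrho(\xi)|^n$.
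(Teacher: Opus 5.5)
Your proposal is correct and follows the paper's proof. The paper treats the window $|n-a|\le M\sqrt a$ with the central approximation of Proposition~\ref{prop:operator-local-estimates}, as a Riemann sum of mesh $a^{-1/2}$ converging to the Gaussian line integral $\phi^\perp_{m,\Gamma}$. It handles the complement with the off-central bound, then lets $a\to\infty$ and afterwards $M\to\infty$.

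Your extra tilted estimate $|\alpha(q^{(n)}_kf)|\le C\exp\{-\xi\cdot k+n\Lambda(\xi)\}$ for $n\le\delta a$ is a genuine improvement. The paper sums the additive $Ce^{-cn}$ remainder over all $|n-a|>M\sqrt a$, and for bounded $n$ that sum is $O(1)$ rather than $o(a^{-(d-1)/2})$. One small point: in that tilted inversion, (S4) handles only the high frequencies. The low frequencies need (S5), which bounds $|\varrho(\xi+\mathrm i\theta)|$ by $\varrho(\xi)$ up to a factor $e^{o(|\xi|_1^2)}$; this factor is harmless for $n\le\delta a$.
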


\begin{proof}
By Proposition~\ref{prop:operator-local-estimates}, the coefficients of the Markov-additive kernel satisfy the required central local approximation and the off-central exponential bound. Fix a compact set $K$ and put $w_k=(k-am)/\sqrt a$. Split the sum defining $U_f^\alpha(k)$ according to $|n-a|\leq M\sqrt a$ and its complement. On the central part, write $n=a+s\sqrt a$. Uniformly for $w_k\in K$ and bounded $s$, the local estimate gives
\begin{equation*}
\alpha(q_k^{(n)}f)=\alpha(\one)\pi(f)(2\pi a)^{-d/2}\det(\Gamma)^{-1/2}
\exp\left\{-\frac{1}{2}(w_k-sm)^T\Gamma^{-1}(w_k-sm)\right\}+o(a^{-d/2}).
\end{equation*}
Summation over $n$ is a Riemann sum with mesh $a^{-1/2}$. Hence the central contribution, multiplied by $a^{(d-1)/2}$, converges uniformly to
\begin{equation*}
\alpha(\one)\pi(f)(2\pi)^{-d/2}\det(\Gamma)^{-1/2}
\int_\R
\exp\left\{-\frac{1}{2}(w_k-sm)^T\Gamma^{-1}(w_k-sm)\right\}\dd s.
\end{equation*}
The one-dimensional Gaussian integral is equal to $\phi^\perp_{m,\Gamma}(w_k)$.

It remains to remove the restriction $|n-a|\leq M\sqrt a$. Since $K$ is compact and all coordinates of $m$ are positive, there are constants $b_K,B_K>0$ such that
\begin{equation*}
|k-nm|_1\geq b_K|n-a|-B_K\sqrt a
\end{equation*}
for every $k\in\mathcal W_a(K)$. The off-central bound therefore gives
\begin{equation*}
\sup_{k\in\mathcal W_a(K)}
\sum_{|n-a|>M\sqrt a}|\alpha(q_k^{(n)}f)|
\leq C a^{-(d-1)/2}e^{-cM^2}+o(a^{-(d-1)/2}).
\end{equation*}
Letting first $a\to\infty$ and then $M\to\infty$ completes the proof.
\end{proof}

\begin{remark}
The theorem is stated in the renewal-aperiodic case. If the Fourier aperiodicity in the fourth spectral condition holds only modulo a full-rank renewal lattice, the same proof is carried out on the corresponding torus. The leading term is then multiplied by the covolume of the renewal lattice and is present only on the admissible lattice classes. The regenerative form below records this periodic correction explicitly.
\end{remark}

\smallskip
\noindent\textit{Regenerative periodic form.}
\smallskip

Assume now that the split chain of Section~3 is used and that the complete-cycle increment $Y_1$ satisfies the regenerative exact-time assumptions: it has an exponential moment, positive definite covariance matrix $\Gamma$, mean vector $m\in(0,\infty)^d$, and support contained in a coset $y_0+\mathcal L$ of a full-rank subgroup $\mathcal L\subset\Z^d$, with the usual lattice aperiodicity on that coset. Let
\begin{equation*}
\mathcal L_* = \mathcal L+\Z y_0,
\qquad
\mathcal G_* = \Z^d/\mathcal L_*,
\qquad
[k]\in\mathcal G_*,
\end{equation*}
and let $h_*$ be the covolume of $\mathcal L_*$ in $\Z^d$. Define the periodic transverse density by
\begin{equation*}
\phi^{\perp,*}_{m,\Gamma}(w)=h_*\phi^\perp_{m,\Gamma}(w),\qquad w\in\R^d.
\end{equation*}

Let
\begin{equation*}
U_C(k)=\sum_{n\geq0}\Pp_{\mathfrak a}(C_n=k),
\qquad k\in\N^d,
\end{equation*}
where $C_0=\zerod$. If $r$ is an exponentially summable finite signed measure on $\N^d$, define
\begin{equation*}
U_r(k)=\sum_{u\in\N^d}U_C(k-u)r(u),
\qquad
r(g)=\sum_{u\in\N^d:[u]=g}r(u),
\end{equation*}
where $U_C(v)=0$ for $v\notin\N^d$.

\begin{theorem}
Under the regenerative exact-time assumptions, for every exponentially summable finite signed measure $r$ on $\N^d$ and every compact set $K\subset\R^d$,
\begin{equation*}
\sup_{k\in\mathcal W_a(K)}
\left|
 a^{(d-1)/2}U_r(k)
-r([k])\phi^{\perp,*}_{m,\Gamma}\left(\frac{k-am}{\sqrt a}\right)
\right|
\xrightarrow[a\to\infty]{}0.
\end{equation*}
\end{theorem}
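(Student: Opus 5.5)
The plan is to reduce the statement to a lattice local limit theorem for the i.i.d. sums $C_n=Y_1+\cdots+Y_n$ under $\Pp_{\mathfrak a}$. I would then sum over $n$ along the arithmetic progression of admissible indices, exactly as in the proof of the operator-theoretic theorem, and finally convolve with $r$.

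\emph{Step 1 (lattice local limit theorem with Gaussian bounds).} By the proposition on $C_n\in ny_0+\mathcal L$, $\Pp_{\mathfrak a}(C_n=k)=0$ unless $k\in ny_0+\mathcal L$. Assume aperiodicity on the coset, i.e.\ $\mathcal L$ is the minimal lattice carrying $Y_1-y_0$. Then the characteristic function $\Ee_{\mathfrak a}e^{\mathrm i\theta\cdot(Y_1-y_0)}$ has modulus one exactly on the dual lattice $\mathcal L^\sharp=\{\theta:\theta\cdot v\in2\pi\Z,\ v\in\mathcal L\}$. Fourier inversion over a fundamental domain of $\mathcal L^\sharp$, whose volume relative to $[-\pi,\pi]^d$ is $h_{\mathcal L}^{-1}$, then gives
\[
\sup_{k\in ny_0+\mathcal L,\ |k-nm|_1\leq M\sqrt n}\Bigl|n^{d/2}\Pp_{\mathfrak a}(C_n=k)-h_{\mathcal L}\,g_n(k)\Bigr|\to0 .
\]
Here $g_n$ is the Gaussian density of Proposition~\ref{prop:operator-local-estimates}. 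This is the scalar (one-state) instance of that proposition, carried out on the torus $\R^d/\mathcal L^\sharp$.

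For the off-central estimate I would repeat the saddle-point tilting from that proof with $\Lambda_Y(\xi)=\log\Ee_{\mathfrak a}e^{\xi\cdot Y_1}$, which is finite near $0$ by the exponential moment. Tilting preserves the support, and hence the lattice aperiodicity. This yields
\[
\Pp_{\mathfrak a}(C_n=k)\leq Cn^{-d/2}e^{-c|k-nm|_1^2/n}+Ce^{-cn}.
\]
I expect this step to be the main obstacle. The difficulty is not the central estimate but the uniformity of the aperiodicity bound on the shifted contours. For that I would use compactness of the fundamental domain minus a neighbourhood of $\mathcal L^\sharp$, together with continuity of the tilted characteristic function in $\xi$.

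\emph{Step 2 (summation over $n$).} Let $p$ be the order of $y_0$ in $\Z^d/\mathcal L$, so that $[\mathcal L_*:\mathcal L]=p$ and $h_{\mathcal L}=p\,h_*$. For fixed $k$, the admissible $n$ (those with $k-ny_0\in\mathcal L$) form either the empty set or a residue class modulo $p$. The set is non-empty iff $k\in\mathcal L_*$, i.e.\ iff $[k]=0$ in $\mathcal G_*$. I would split $|n-a|\leq M\sqrt a$ from its complement as in the preceding proof. On the central part the sum is a Riemann sum with mesh $p/\sqrt a$, weighted by $h_{\mathcal L}$, so $h_{\mathcal L}/p=h_*$ appears in front of the same one-dimensional Gaussian integral. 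The tail is controlled by the Gaussian bound and the inequality $|k-nm|_1\geq b_K|n-a|-B_K\sqrt a$. This gives, uniformly on $\mathcal W_a(K)$,
\[
a^{(d-1)/2}U_C(k)=\one_{\{[k]=0\}}\,h_*\,\phi^\perp_{m,\Gamma}\bigl((k-am)/\sqrt a\bigr)+o(1).
\]
The same bound also shows that $a^{(d-1)/2}U_C(k-u)$ is bounded uniformly over $k\in\mathcal W_a(K)$ and $|u|_1\leq\sqrt a$.

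\emph{Step 3 (convolution with $r$).} Split $U_r(k)=\sum_{|u|_1\leq A}+\sum_{|u|_1>A}$ with $A$ fixed.
\begin{itemize}
\item For $|u|_1\leq A$, $(k-u-am)/\sqrt a=w_k+O(a^{-1/2})$, and $\phi^\perp_{m,\Gamma}$ is uniformly continuous. Step~2 applied to the compact $K+\bar B(0,1)$ then gives the contribution $h_*\phi^\perp_{m,\Gamma}(w_k)\sum_{|u|_1\leq A,[u]=[k]}r(u)+o(1)$.
\item For the remainder I would use the crude bound $U_C(v)\leq|v|_1+1$. This holds because the kernel is strict, so $L_1\geq1$ forces $Y_1\neq\zerod$, hence $|C_n|_1\geq n$ and at most $|v|_1+1$ indices contribute. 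It gives $a^{(d-1)/2}\sum_{|u|_1>A}|r(u)|U_C(k-u)\leq C a^{(d-1)/2}(a+1)\sum_{|u|_1>A}|r(u)|$.
\end{itemize}
Exponential summability makes the remainder small if I take $A=A_a=C'\log a$ in place of a fixed $A$. For $u$ in the range $A_a<|u|_1\leq\sqrt a$ I would instead use the uniform bound from Step~2 against $\sum_{|u|_1>A}|r(u)|\to0$. Letting $a\to\infty$ and then $A\to\infty$ gives $r([k])\phi^{\perp,*}_{m,\Gamma}(w_k)$.
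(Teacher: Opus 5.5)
Your proposal follows the paper's route. Step~1 is the paper's cycle local limit lemma: Fourier inversion on a fundamental domain of the dual of $\mathcal L$, prefactor $h_{\mathcal L}$, and tilting for the off-central bound. Step~2 is its renewal lemma: a Riemann sum with mesh $p/\sqrt a$ over the admissible residue class, so that $h_{\mathcal L}/p=h_*$.

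Step~3 differs slightly. The paper first proves a domination $a^{(d-1)/2}U_C(k-u)\le C_b e^{b|u|_1}$ for all $u$ and then truncates $r$. You instead take boundedness for $|u|_1\le\sqrt a$ from the Step~2 asymptotic on the enlarged window $K+\bar B(0,1)$. For $|u|_1>\sqrt a$ you use the counting bound $U_C(v)\le|v|_1+1$, valid by strictness, against $a^{(d+1)/2}e^{-\eta\sqrt a}$. This is a clean substitute and needs no $\log a$ cutoff. Your ranges in the last bullet are mislabelled, though. The uniform bound should cover $A<|u|_1\le\sqrt a$ and the crude bound $|u|_1>\sqrt a$; with that split the argument closes.

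One step fails as written. The repair is short, and the paper's tail sum has the same weakness. The estimate $\Pp_{\mathfrak a}(C_n=k)\le Cn^{-d/2}e^{-c|k-nm|_1^2/n}+Ce^{-cn}$ cannot control the tail sum in Step~2. Over $n\le a/2$ the terms $Ce^{-cn}$ add up to a constant, which is not $o(a^{-(d-1)/2})$.

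For $n\le a/2$ and $k\in\mathcal W_a(K)$, use instead the Chernoff bound $\Pp_{\mathfrak a}(C_n=k)\le e^{-\theta|k|_1}\bigl(\Ee_{\mathfrak a}e^{\theta|Y_1|_1}\bigr)^n$ with $\theta>0$ small. Since $|k|_1\ge a|m|_1-O(\sqrt a)$ and $\log\Ee_{\mathfrak a}e^{\theta|Y_1|_1}=\theta|m|_1+O(\theta^2)$, each such term is at most $e^{-ca}$. Equivalently, your own tilting with $\xi$ aligned with the sign pattern of $k-nm$ gives $e^{-c|k-nm|_1}$ in the separated range. For $n>a/2$ the $e^{-cn}$ term is harmless. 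With this change the tail estimate of Step~2 holds, and so does the uniform boundedness you rely on in Step~3.
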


\smallskip
\noindent The proof is given in SM\@. It is based on Fourier inversion for the i.i.d.\ cycle increments, summation over the admissible residue class of cycle numbers, and exponential domination for the convolution with $r$.
\smallskip

For a bounded measurable function $f$ on $\X$, define the one-cycle reward measure
\begin{equation*}
r_f(u)=\Ee_{\mathfrak a}\left[
\sum_{n=T_0}^{T_1-1}f(J_n)\one_{\{S_n-S_{T_0}=u\}}
\right],
\qquad u\in\N^d.
\end{equation*}
The exponential moment of $L_1+|Y_1|_1$ implies that $r_f$ is exponentially summable. Put
\begin{equation*}
U_f^{\mathfrak a}(k)=\Ee_{\mathfrak a}\left[\sum_{n\geq0}f(J_n)\one_{\{S_n=k\}}\right].
\end{equation*}
Then the preceding theorem gives, uniformly for $k\in\mathcal W_a(K)$,
\begin{equation*}
 a^{(d-1)/2}U_f^{\mathfrak a}(k)
-r_f([k])\phi^{\perp,*}_{m,\Gamma}\left(\frac{k-am}{\sqrt a}\right)
\xrightarrow[a\to\infty]{}0.
\end{equation*}
If the periodic group is trivial, this reduces to
\begin{equation*}
\sup_{k\in\mathcal W_a(K)}
\left|
 a^{(d-1)/2}U_f^\alpha(k)
 -\ell\pi(f)\phi^\perp_{m,\Gamma}\left(\frac{k-am}{\sqrt a}\right)
\right|
\xrightarrow[a\to\infty]{}0,
\end{equation*}
provided the initial law has an exponentially integrable entrance into the split atom. In particular, for $f=\one_A$, this is an exact-time local theorem for the Markov renewal potential $\psi_k(\alpha,A)$.

The two local formulations have distinct roles. The spectral formulation is intrinsic to the Markov-additive kernel. The regenerative formulation, under minorization, makes the arithmetic classes explicit and yields the finite-state version. Hence exact-time localization is not tied to the split-chain construction; the regenerative argument gives an independent probabilistic proof of the same local constants.

\section{The associated multi-time Markov chain and Markov reductions}

We now identify the Markov object which lives on the observation lattice. This object is not, in general, the state field $Z_k=J_{N(k)}$. The missing information is the backward recurrence vector. Once this vector is adjoined, the process becomes a homogeneous Markov chain indexed by the semigroup $\N^d$. The exact criterion for suppressing the backward vector is formulated as a strong lumpability condition.

Throughout the section the semi-Markov kernel is assumed strict. If zero-time transitions are present, the statements apply first to the batch-free kernel $q^\sharp$ of Section 2. The potential of the original kernel is then recovered by the zero-time factor already established there.

\subsection{The augmented semigroup}

Let $(\mathcal Y,\mathcal B(\mathcal Y))$ be a standard Borel space. A homogeneous Markov chain indexed by $\N^d$ on $\mathcal Y$ is determined by a family $(R_h)_{h\in\N^d}$ of Markov kernels satisfying
\begin{equation*}
R_{\zerod}=I,
\qquad
R_{h+r}=R_hR_r,
\qquad h,r\in\N^d.
\end{equation*}
If $Q_i=R_{e_i}$, where $e_i$ is the $i$-th coordinate vector, then the Chapman--Kolmogorov equations are equivalent to
\begin{equation*}
Q_iQ_j=Q_jQ_i,
\qquad 1\leq i,j\leq d,
\end{equation*}
and
\begin{equation*}
R_h=Q_1^{h_1}\cdots Q_d^{h_d},
\qquad h=(h_1,\ldots,h_d).
\end{equation*}
Thus the coordinate kernels must commute. This is the semigroup form of the multiparameter Markov property used in this paper. It differs from the Markov random-field condition based on conditional independence across separating sets.

\smallskip
\noindent\textit{Residual laws.}
\smallskip

For $u\in\N^d$, define the rectangular survival of the next increment by
\begin{equation*}
\overline H_{u}(x)=\sum_{v\notleqd u}q_v(x,\X),
\qquad x\in\X.
\end{equation*}
The admissible age set is
\begin{equation}\label{eq:augmented-admissible-set}
\mathcal E=\{(x,u)\in\X\times\N^d:\overline H_{u}(x)>0\}.
\end{equation}
When $Y_k=(Z_k,U_k)=(x,u)$, the next increment has not entered the rectangle with upper corner $u$. Hence, on $\mathcal E$, the residual kernel is
\begin{equation*}
q^{\langle u\rangle}_v(x,A)=
\frac{\one_{\{v\notleqd u\}}q_v(x,A)}{\overline H_{u}(x)},
\qquad v\in\N^d,
\quad A\in\calX.
\end{equation*}
Let
\begin{equation*}
Y_k=(Z_k,U_k),
\qquad k\in\N^d,
\end{equation*}
and put
\begin{equation*}
\mathcal G_k^Y=\sigma\{Y_m:m\leqd k\}.
\end{equation*}
For $t\in\N^d$, denote by $K_t$ the transition law of the augmented field when the process starts from a renewal epoch:
\begin{equation*}
K_t((y,\zerod),B)=\Pp_y\left((Z_t,U_t)\in B\right),
\qquad B\in\calX\otimes2^{\N^d}.
\end{equation*}
For $(x,u)\in\mathcal E$ and $h\in\N^d$, define
\begin{equation}\label{eq:augmented-transition-kernel}
R_h((x,u),B)=
\frac{\overline H_{u+h}(x)}{\overline H_{u}(x)}\one_B(x,u+h)
+
\frac{1}{\overline H_{u}(x)}
\sum_{\substack{v\leqd u+h\\ v\notleqd u}}
\int_\X q_v(x,\dd y)K_{u+h-v}((y,\zerod),B).
\end{equation}
The first term corresponds to the event that the next renewal is still outside the enlarged rectangle. The second term corresponds to the first renewal whose increment belongs to $[0,u+h]_d\setminus[0,u]_d$. On states outside $\mathcal E$ the kernels may be defined arbitrarily; those states are not reached from admissible initial conditions.

\begin{theorem}\label{thm:augmented-semigroup}
Assume that the Markov renewal kernel is strict, and let $\mathcal E$ and $(R_h)_{h\in\N^d}$ be defined by \eqref{eq:augmented-admissible-set} and \eqref{eq:augmented-transition-kernel}. For every initial law concentrated on $\mathcal E$, the augmented field $Y=(Z_k,U_k)_{k\in\N^d}$ satisfies
\begin{equation}\label{eq:augmented-markov-property}
\Ee\left[f(Y_{k+h})\mid \mathcal G_k^Y\right]=R_hf(Y_k),
\qquad k,h\in\N^d,
\end{equation}
for every bounded measurable function $f$ on $\mathcal E$. Moreover, $(R_h)_{h\in\N^d}$ is a Markov semigroup:
\begin{equation}\label{eq:augmented-semigroup-property}
R_{\zerod}=I,
\qquad
R_{h+r}=R_hR_r,
\qquad h,r\in\N^d.
\end{equation}
In particular, the coordinate transition kernels $R_{e_1},\ldots,R_{e_d}$ commute.
\end{theorem}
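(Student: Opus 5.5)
The proof will rest on a single identity: a regeneration formula at a lower-rectangle time, valid under $\Pp_x$ started at a renewal epoch and then extended to an arbitrary admissible initial age. Before starting, I will fix the convention that an initial law on $\mathcal E$ at $(x,u)$ means that the first increment is drawn from the residual kernel $q^{\langle u\rangle}(x,\cdot)$ and its clock is shifted back by $u$. With this convention, $Y_{\zerod}=(x,u)$, and the first renewal inside $[0,h]_d$ occurs at $v-u$ with $v\notleqd u$. Starting from $(x,u)$, the kernel $R_h$ is then exactly the law of $Y_h$. Indeed, either no renewal falls in $[0,h]_d$, which is the event $v\notleqd u+h$ with probability $\overline H_{u+h}(x)/\overline H_u(x)$, and then $Y_h=(x,u+h)$; or the first renewal lands at $v-u\leqd h$, and the strong Markov property of $(J_n,S_n)$ at index $1$, given by the Markov chain proposition of Section~2, restarts the chain at $(y,\zerod)$ with remaining horizon $u+h-v$. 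This gives the second term of \eqref{eq:augmented-transition-kernel}. The identity $R_{\zerod}=I$ is immediate, since the sum is empty and the ratio equals one.

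For the Markov property \eqref{eq:augmented-markov-property}, I would fix $k$ and express the conditioning through the embedded chain. On the event $\{N(k)=n\}$, the $\sigma$-field $\mathcal G^Y_k$ is contained in the $\sigma$-field generated by $(J_0,S_0,\ldots,J_n,S_n)$ together with the event $\{X_{n+1}\notleqd k-S_n\}$. This holds because for $m\leqd k$ one has $N(m)\leq n$, so $Y_m$ is a function of the first $n$ steps. Given this information, the only remaining randomness that matters is the pair $(J_{n+1},X_{n+1})$ conditioned on $X_{n+1}\notleqd U_k$, whose law is $q^{\langle U_k\rangle}(J_n,\cdot)$, followed by the fresh chain after it. Hence the conditional law of $(Y_{k+h})_h$ given $\mathcal G^Y_k$ is the law of the field started from $(Z_k,U_k)$ in the sense of the convention above. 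Summing over $n$ (a finite sum by strictness, since $N(k)\leq|k|_1$) yields $\Ee[f(Y_{k+h})\mid\mathcal G^Y_k]=R_hf(Y_k)$. I would make this rigorous by checking the identity against indicators of cylinder events $\{N(k)=n, (J_j,S_j)_{j\le n}\in F\}$ and applying a monotone class argument. The admissibility $Y_k\in\mathcal E$ holds almost surely because $\overline H_{U_k}(Z_k)>0$ on the event that $X_{N(k)+1}\notleqd U_k$.

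The semigroup property \eqref{eq:augmented-semigroup-property} will then follow from the Markov property rather than from a direct computation with the kernels. Starting from $(x,u)\in\mathcal E$, the Markov property at time $h$ together with the tower property gives $R_{h+r}f(x,u)=\Ee_{(x,u)}[f(Y_{h+r})]=\Ee_{(x,u)}[R_rf(Y_h)]=R_hR_rf(x,u)$. Commutation of the kernels $R_{e_i}$ follows from $R_{e_i+e_j}=R_{e_i}R_{e_j}=R_{e_j}R_{e_i}$.

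I expect the main obstacle to be the conditioning step. The $\sigma$-field $\mathcal G^Y_k$ is generated by the whole rectangle $\{Y_m:m\leqd k\}$, so it is not a stopping $\sigma$-field for the embedded chain in the usual one-parameter sense. I must verify that it carries no information about $X_{N(k)+1}$ beyond the event $X_{N(k)+1}\notleqd U_k$. My first step would be to show that each $Y_m$ with $m\leqd k$ is determined by $(J_j,S_j)_{j\leq N(k)}$ together with $m$. This works because $N(m)\le N(k)$, and because $N(m)=N(k)$ exactly when $S_{N(k)}\leqd m$, a condition visible from the first $N(k)$ steps. I would then argue that the decomposition over $\{N(k)=n\}$ reduces everything to an ordinary conditioning on the first $n$ steps plus one survival event.
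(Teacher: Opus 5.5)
Your proposal is correct and follows essentially the same route as the paper: identify the conditional law of the next state--increment pair after the last renewal epoch as the residual kernel $q^{\langle u\rangle}$, split according to whether that increment falls in $[0,u+h]_d\setminus[0,u]_d$, and obtain the semigroup law from the Markov property together with the tower property. Your decomposition over $\{N(k)=n\}$ with a cylinder/monotone-class check makes rigorous two points the paper's proof only asserts. The first is that $N(k)$ is not a stopping time, yet $\mathcal G^Y_k$ reveals about $X_{N(k)+1}$ only the survival event $\{X_{N(k)+1}\notleqd U_k\}$; the second, handled by your explicit construction of the field started from a non-zero admissible age, is what ``initial law concentrated on $\mathcal E$'' means.
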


\begin{proof}
Fix $k,h\in\N^d$ and condition on the event $\{Y_k=(x,u)\}$ with $(x,u)\in\mathcal E$. The last renewal epoch before the observation point $k$ is $k-u$ and the current embedded state is $x$. The information contained in the lower rectangle with corner $k$ implies that the next renewal increment $v$ from this epoch satisfies $v\notleqd u$. This is also the only restriction on the next increment imposed by the past lower-rectangle observations: whenever $m\leqd k$ and $m$ lies after the last renewal epoch, one has $m-(k-u)\leqd u$, and therefore $v\leqd m-(k-u)$ would imply $v\leqd u$.

By the Markov renewal property at the last renewal epoch, the conditional law of the next state-increment pair, given $Y_k=(x,u)$, is the residual kernel $q^{\langle u\rangle}$. If $v\notleqd u+h$, then no renewal occurs before the observation point $k+h$ and the augmented state at that point is $(x,u+h)$. If $v\leqd u+h$ and $v\notleqd u$, the first renewal after $k$ occurs in the set-theoretic difference
\begin{equation*}
[0,u+h]_d\setminus[0,u]_d.
\end{equation*}
At this renewal the new embedded state is $y$, and the post-renewal process has the law of the original process started from the renewal state $y$. It is then observed at the remaining multi-time $u+h-v$. Hence the conditional law of $Y_{k+h}$ is exactly the kernel $R_h$ in \eqref{eq:augmented-transition-kernel}. This proves \eqref{eq:augmented-markov-property}.

The equality $R_{\zerod}=I$ follows from \eqref{eq:augmented-markov-property} with $h=\zerod$. Let $r,h\in\N^d$ and let $f$ be bounded and measurable. Applying \eqref{eq:augmented-markov-property} twice and using the tower property gives
\begin{equation*}
\Ee\left[f(Y_{k+h+r})\mid \mathcal G_k^Y\right]=R_hR_rf(Y_k).
\end{equation*}
Applying \eqref{eq:augmented-markov-property} directly with increment $h+r$ gives the same conditional expectation equal to $R_{h+r}f(Y_k)$. Since the initial law may be concentrated at any point of $\mathcal E$, the two kernels coincide on $\mathcal E$. This proves \eqref{eq:augmented-semigroup-property}. Taking $h=e_i$ and $r=e_j$ yields $R_{e_i}R_{e_j}=R_{e_j}R_{e_i}$.
\end{proof}

The theorem is the multi-time analogue of the associated Markov chain of a one-dimensional semi-Markov process. In the classical case the additional variable is the backward recurrence time. This variable is also the basis of the exact likelihood parametrization in the discrete semi-Markov inference of \citet{TrevezasLimnios2011}. Here the elapsed time is a vector, and the transition kernel is determined by the set difference between two lower rectangles.

\subsection{Lumpability and memorylessness}

The state component alone has, conditionally on the present age, the transition kernel
\begin{equation*}
\Lambda_h((x,u),A)=R_h((x,u),A\times\N^d),
\qquad A\in\calX.
\end{equation*}
Thus, for $(x,u)\in\mathcal E$,
\begin{equation*}
\Lambda_h((x,u),A)=
\one_A(x)\frac{\overline H_{u+h}(x)}{\overline H_{u}(x)}
+
\frac{1}{\overline H_{u}(x)}
\sum_{\substack{v\leqd u+h\\ v\notleqd u}}
\int_\X q_v(x,\dd y)P_Z(u+h-v)(y,A).
\end{equation*}
This identity makes explicit why the field $Z$ is not generally Markov. Its future transition probabilities depend on the current backward recurrence vector unless the right-hand side is independent of $u$.

\begin{theorem}\label{thm:projection-lumpability}
Assume the hypotheses of Theorem~\ref{thm:augmented-semigroup}. Let $\rho:\mathcal E\to\X$ be the projection $\rho(x,u)=x$, and let
\begin{equation}\label{eq:projected-transition-from-age}
\Lambda_h((x,u),A)=R_h((x,u),A\times\N^d),
\qquad h\in\N^d,
\quad A\in\calX.
\end{equation}
The following assertions are equivalent.
\begin{enumerate}[label=\textup{(L\arabic*)},leftmargin=2em]
\item\label{item:lump-projected-markov} For every initial law concentrated on $\mathcal E$, the projected field $Z=\rho(Y)$ is a homogeneous $\N^d$-indexed Markov chain with respect to its lower-rectangle filtration
\begin{equation*}
\mathcal G_k^Z=\sigma\{Z_m:m\leqd k\}.
\end{equation*}
\item\label{item:lump-kernel-identity} For every $h\in\N^d$ there exists a Markov kernel $P_h^Z$ on $\X$ such that
\begin{equation}\label{eq:strong-lumpability-identity}
\Lambda_h((x,u),A)=P_h^Z(x,A)
\end{equation}
for every $A\in\calX$ and every $(x,u)\in\mathcal E$.
\end{enumerate}
When these equivalent conditions hold, $(P_h^Z)_{h\in\N^d}$ is a Markov semigroup on $\X$:
\begin{equation}\label{eq:projected-semigroup}
P^Z_{\zerod}=I,
\qquad
P^Z_{h+r}=P^Z_hP^Z_r,
\qquad h,r\in\N^d,
\end{equation}
and the coordinate kernels $P^Z_{e_1},\ldots,P^Z_{e_d}$ commute.
\end{theorem}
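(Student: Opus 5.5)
We prove the two implications separately. In both directions the only input is Theorem~\ref{thm:augmented-semigroup}, through the Markov property \eqref{eq:augmented-markov-property} and the semigroup property \eqref{eq:augmented-semigroup-property}. The semigroup property of $P^Z$ is then read off from that of $R$.

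\emph{(L2) $\Rightarrow$ (L1).} Assume \eqref{eq:strong-lumpability-identity}. For bounded measurable $f$ on $\X$, put $\tilde f=f\circ\rho$. By \eqref{eq:augmented-markov-property},
\begin{equation*}
\Ee[f(Z_{k+h})\mid\mathcal G_k^Y]=R_h\tilde f(Y_k)=\int f(y)\Lambda_h(Y_k,\dd y)=P_h^Zf(Z_k).
\end{equation*}
Since $\mathcal G_k^Z\subseteq\mathcal G_k^Y$ and the right-hand side is $\mathcal G_k^Z$-measurable, the tower property gives $\Ee[f(Z_{k+h})\mid\mathcal G_k^Z]=P_h^Zf(Z_k)$. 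Hence $Z$ is a homogeneous $\N^d$-indexed Markov chain with kernels $P_h^Z$.

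The semigroup identity \eqref{eq:projected-semigroup} comes from $R_{h+r}=R_hR_r$. Indeed,
\begin{equation*}
P^Z_{h+r}f(x)=R_{h+r}\tilde f(x,u)=R_h(R_r\tilde f)(x,u).
\end{equation*}
Here $R_r\tilde f(x',u')=P_r^Zf(x')$ is again a function of the state only. Applying \eqref{eq:strong-lumpability-identity} once more gives $R_h(R_r\tilde f)(x,u)=P_h^ZP_r^Zf(x)$. Also $P_{\zerod}^Z=I$, because $R_{\zerod}=I$. Commutation of $P^Z_{e_i}$ and $P^Z_{e_j}$ is then automatic.

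\emph{(L1) $\Rightarrow$ (L2).} Assume that for every initial law $\nu$ on $\mathcal E$ there are kernels $P_h^{Z,\nu}$ with $\Ee_\nu[f(Z_{k+h})\mid\mathcal G_k^Z]=P_h^{Z,\nu}f(Z_k)$. Take $k=\zerod$ and $\nu=\delta_{(x,u)}$. Then $\mathcal G_{\zerod}^Z$ is trivial and $Y_{\zerod}=(x,u)$, so \eqref{eq:augmented-markov-property} gives
\begin{equation*}
\Ee_{(x,u)}[f(Z_h)]=\Lambda_hf(x,u).
\end{equation*}
It remains to show that $\Lambda_h(\cdot,A)$ does not depend on $u$. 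We use initial laws mixing several ages at the same state. For $(x,u),(x,u')\in\mathcal E$, let $\nu=\tfrac12(\delta_{(x,u)}+\delta_{(x,u')})$. Under $\nu$ one has $Z_{\zerod}=x$ almost surely, so
\begin{equation*}
P_h^{Z,\nu}f(x)=\tfrac12\bigl(\Lambda_hf(x,u)+\Lambda_hf(x,u')\bigr).
\end{equation*}
This identity alone does not force $\Lambda_h(x,u)=\Lambda_h(x,u')$, so the argument must use $k\neq\zerod$.

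\emph{Main obstacle.} The hard step is exactly this: homogeneity must hold for every admissible initial law, and one has to extract age-independence from it. The plan is to compare two initial laws at time $k=\zerod$ with the same state $x$ but different ages. Homogeneity is read in the paper's sense: a single family $P_h^Z$ is common to all initial laws, as \ref{item:lump-projected-markov} says ``homogeneous'' for every initial law. Under that reading, $P_h^Z(x,A)=\Ee_{\delta_{(x,u)}}[\one_A(Z_h)]=\Lambda_h((x,u),A)$ for every admissible $u$, which is \eqref{eq:strong-lumpability-identity}.

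If the kernels are allowed to depend on $\nu$, we would argue more carefully instead. We start from $\delta_{(x,\zerod)}$ and use the Markov property at times $k$ where the age $U_k$ takes different values with positive probability while $Z_k=x$. Since conditioning on $\mathcal G_k^Z$ cannot separate such ages when no renewal is observed, $\Lambda_h$ must agree on reachable ages. Unreachable ages would then be covered by point-mass initial laws. The clean version of the proof states the common-kernel interpretation explicitly and proceeds as above.
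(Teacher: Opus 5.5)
Your proposal is correct and follows the paper's proof. The steps match: (L2)$\Rightarrow$(L1) by conditioning on $\mathcal G_k^Y$ and applying the tower property; (L1)$\Rightarrow$(L2) by evaluating the Markov property at $k=\zerod$ under the point masses $\delta_{(x,u)}$ and $\delta_{(x,u')}$, with one kernel family common to all initial laws, which is exactly the reading of homogeneity the paper uses; and the projected semigroup identity inherited from $R_{h+r}=R_hR_r$. Since under that reading the common kernel is already a measurable Markov kernel, you can skip the paper's measurable-selector step, and the sketched $\nu$-dependent variant in your last paragraph is not needed.
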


\begin{proof}
Assume \ref{item:lump-kernel-identity}. Let $f$ be bounded and measurable on $\X$. By Theorem~\ref{thm:augmented-semigroup},
\begin{equation*}
\Ee\left[f(Z_{k+h})\mid \mathcal G_k^Y\right]
=\int_\X f(y)\Lambda_h(Y_k,\dd y)
=\int_\X f(y)P_h^Z(Z_k,\dd y).
\end{equation*}
The last expression is $\mathcal G_k^Z$-measurable. Taking conditional expectation with respect to $\mathcal G_k^Z$ gives
\begin{equation*}
\Ee\left[f(Z_{k+h})\mid \mathcal G_k^Z\right]
=\int_\X f(y)P_h^Z(Z_k,\dd y),
\end{equation*}
which is the $\N^d$-indexed Markov property for $Z$.

Conversely, assume \ref{item:lump-projected-markov}. Fix $h\in\N^d$, $A\in\calX$, and two points $(x,u),(x,u')\in\mathcal E$ with the same state coordinate $x$. Take the initial law of the augmented field to be the point mass at $(x,u)$ and then at $(x,u')$. Under the Markov property of the projected field, the transition probability from the present state $x$ to $A$ after the increment $h$ is determined by $x$ alone. Hence
\begin{equation*}
\Lambda_h((x,u),A)=\Lambda_h((x,u'),A).
\end{equation*}
Therefore $P_h^Z(x,A)$ is well defined by the common value in \eqref{eq:strong-lumpability-identity}. Since $\mathcal E$ is a Borel subset of $\X\times\N^d$ and the second coordinate is countable, the map which assigns to $x$ the lexicographically first $u$ such that $(x,u)\in\mathcal E$ is measurable on the set of states having a non-empty admissible fibre. Defining $P_h^Z(x,A)$ through this selector, and arbitrarily outside this set, gives a measurable kernel on $\X$. This proves \ref{item:lump-kernel-identity}.

It remains to identify the semigroup. Under \ref{item:lump-kernel-identity}, summing the semigroup identity \eqref{eq:augmented-semigroup-property} over the age coordinate gives, for bounded measurable $f$ on $\X$,
\begin{equation*}
P^Z_{h+r}f(x)=P^Z_hP^Z_rf(x),
\qquad x\in\X.
\end{equation*}
Thus \eqref{eq:projected-semigroup} holds. Taking $h=e_i$ and $r=e_j$ gives the commutation of the coordinate kernels.
\end{proof}

This is the strong lumpability criterion for the projection $(x,u)\mapsto x$. It is not a stationarity condition and it is not a condition on one-dimensional marginals. It requires equality of the full transition probabilities from all ages belonging to the same state fibre.

The condition may be checked on the coordinate moves.

\begin{proposition}
The augmented semigroup is strongly lumpable with respect to $(x,u)\mapsto x$ if and only if each coordinate kernel $R_{e_i}$ is strongly lumpable with respect to the same projection. In that case the coordinate kernels of $Y$ induce commuting kernels $P_{e_i}^Z$ on $\X$, and
\begin{equation*}
P_h^Z=(P_{e_1}^Z)^{h_1}\cdots(P_{e_d}^Z)^{h_d},
\qquad h=(h_1,\ldots,h_d)\in\N^d.
\end{equation*}
\end{proposition}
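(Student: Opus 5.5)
The plan is to use the semigroup identity \eqref{eq:augmented-semigroup-property}, $R_h=R_{e_1}^{h_1}\cdots R_{e_d}^{h_d}$, together with the standard fact that strong lumpability is stable under composition. Strong lumpability of a kernel $R$ on $\mathcal E$ with respect to $\rho(x,u)=x$ will mean that $R((x,u),\rho^{-1}(A))=R((x,u),A\times\N^d)$ depends only on $x$, for every $A\in\calX$ and every $(x,u)\in\mathcal E$. In the language of Theorem~\ref{thm:projection-lumpability} this is condition \ref{item:lump-kernel-identity} for that kernel. Equivalently, there is a Markov kernel $P$ on $\X$ with
\begin{equation*}
R(f\circ\rho)=(Pf)\circ\rho
\end{equation*}
on $\mathcal E$, for bounded measurable $f$ on $\X$. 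The measurable choice of $P$ uses the lexicographic selector from the proof of Theorem~\ref{thm:projection-lumpability}.

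For the \emph{only if} direction, the augmented semigroup is lumpable, meaning that every $R_h$ satisfies \eqref{eq:strong-lumpability-identity}. Specializing to $h=e_i$ gives lumpability of each $R_{e_i}$, and the induced kernels are $P^Z_{e_i}$. Nothing further is needed.

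For the \emph{if} direction, suppose $R_{e_i}(f\circ\rho)=(P_if)\circ\rho$ on $\mathcal E$ for each $i$. I will prove by induction on the number of factors that a product $R_{e_{i_1}}\cdots R_{e_{i_n}}$ is lumpable with induced kernel $P_{i_1}\cdots P_{i_n}$. The inductive step is the computation
\begin{equation*}
R_{e_i}\bigl(R'(f\circ\rho)\bigr)=R_{e_i}\bigl((P'f)\circ\rho\bigr)=(P_iP'f)\circ\rho .
\end{equation*}
One point needs care: $R_{e_i}((x,u),\cdot)$ must be carried by $\mathcal E$, so that the identity for $R'$, which is known only on $\mathcal E$, can be applied inside the integral. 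This holds because, by \eqref{eq:augmented-markov-property}, $R_h$ is the conditional law of $Y_{k+h}$, and admissible states lead only to admissible states. This is the remark after \eqref{eq:augmented-transition-kernel}. I would verify it directly as well. The first term of $R_h$ charges $(x,u+h)$ only when $\overline H_{u+h}(x)>0$. The second term charges states reached from a renewal start $(y,\zerod)$, and these lie in $\mathcal E$ almost surely, since $\Pp(\overline H_{U_t}(Z_t)=0)=0$ by the survival interpretation of $\overline H$.

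Applying the induction to the factorization $R_h=R_{e_1}^{h_1}\cdots R_{e_d}^{h_d}$ gives lumpability of every $R_h$, with $P^Z_h=P_1^{h_1}\cdots P_d^{h_d}$. Theorem~\ref{thm:projection-lumpability} then yields \ref{item:lump-projected-markov}.

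Commutation of the $P_i$ comes from the commutation $R_{e_i}R_{e_j}=R_{e_j}R_{e_i}$ in Theorem~\ref{thm:augmented-semigroup}. Both orders of the product are lumpable, with induced kernels $P_iP_j$ and $P_jP_i$, so the identity
\begin{equation*}
(P_iP_jf)\circ\rho=(P_jP_if)\circ\rho \quad \text{on } \mathcal E
\end{equation*}
follows. This gives $P_iP_j=P_jP_i$ on $\rho(\mathcal E)$, the set of states with non-empty admissible fibre. Outside $\rho(\mathcal E)$ the kernels are arbitrary, and I would fix them to be the identity so that commutation holds everywhere. Alternatively, I would state the commutation on $\rho(\mathcal E)$, which is all that matters, since this set carries every projected state reached from admissible starts. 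I expect this bookkeeping to be the main obstacle: keeping the kernels off $\rho(\mathcal E)$ consistent, and checking that $\mathcal E$ is preserved.
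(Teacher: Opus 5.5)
Your proposal is correct and follows essentially the same route as the paper's proof. Like the paper, you factor $R_h$ into coordinate kernels by the semigroup property, apply coordinate lumpability successively, and read off commutation of the $P^Z_{e_i}$ from that of the $R_{e_i}$; your check that $R_{e_i}$ is carried by $\mathcal E$ makes explicit a point the paper leaves implicit. The bookkeeping you flag as the main obstacle disappears: strictness gives $\overline H_{\zerod}(x)=1$, so $(x,\zerod)\in\mathcal E$ for every $x$, $\rho(\mathcal E)=\X$, and $P^Z_h(x,A)=R_h((x,\zerod),A\times\N^d)$ is a measurable choice that needs no selector.
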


\smallskip
\noindent The proof is given in SM\@.
\smallskip

This is the natural connection with homogeneous multi-time Markov chains: the projected state process is obtained only when the commuting coordinate transitions of the augmented chain are compatible with the partition into physical states.

\smallskip
\noindent\textit{The one-dimensional geometric corner.}
\smallskip

The criterion recovers the usual Markov chain as the geometric corner of the one-dimensional semi-Markov model. Suppose $d=1$ and assume the standard semi-Markov convention that renewal epochs correspond to state changes, so that
\begin{equation*}
q_n(x,\{x\})=0,
\qquad n\geq1.
\end{equation*}
Put
\begin{equation*}
\overline H_{n}(x)=\sum_{r>n}q_r(x,\X),
\qquad n\in\N.
\end{equation*}
For a one-step move, the state remains $x$ precisely when no renewal occurs during the next unit of time. Hence
\begin{equation*}
\Lambda_1((x,u),A)=
\frac{\overline H_{u+1}(x)}{\overline H_{u}(x)}\one_A(x)
+
\frac{q_{u+1}(x,A)}{\overline H_{u}(x)}.
\end{equation*}

\begin{proposition}
Under the preceding assumptions, the state projection is Markov for all initial age laws if and only if there exist a measurable function $a:\X\to[0,1)$ and a Markov kernel $K$ on $\X$ such that $K(x,\{x\})=0$ and
\begin{equation*}
q_{u+1}(x,A)=a^u(x)\bigl(1-a(x)\bigr)K(x,A),
\qquad u\in\N,
\quad A\in\calX.
\end{equation*}
In that case
\begin{equation*}
\overline H_{u}(x)=a^u(x)
\end{equation*}
and the projected one-step kernel is
\begin{equation*}
P_1^Z(x,A)=a(x)\one_A(x)+\bigl(1-a(x)\bigr)K(x,A).
\end{equation*}
\end{proposition}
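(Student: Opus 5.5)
The plan is to use the lumpability criterion, Theorem~\ref{thm:projection-lumpability}, together with the preceding proposition. These reduce Markovianity of the projection for all initial age laws to the requirement that $\Lambda_1((x,u),\cdot)$ be independent of $u$ on the admissible fibre $\{u:(x,u)\in\mathcal E\}$. In $d=1$ the only coordinate move is $h=1$, and $P_h^Z=(P_1^Z)^h$. So everything rests on analysing the explicit formula
\begin{equation*}
\Lambda_1((x,u),A)=\frac{\overline H_{u+1}(x)}{\overline H_u(x)}\one_A(x)+\frac{q_{u+1}(x,A)}{\overline H_u(x)}.
\end{equation*}

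\emph{Sufficiency.} Assume the geometric form. Summing the series gives $\overline H_u(x)=\sum_{r>u}a^{r-1}(1-a)=a^u(x)$, so $\mathcal E=\{(x,u):a(x)^u>0\}$, using the convention $0^0=1$. On this set,
\begin{equation*}
\Lambda_1((x,u),A)=a(x)\one_A(x)+(1-a(x))K(x,A),
\end{equation*}
which does not depend on $u$. The lumpability theorem then gives the Markov property and identifies $P_1^Z$.

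\emph{Necessity.} Assume $\Lambda_1((x,u),A)=P_1^Z(x,A)$ for every $(x,u)\in\mathcal E$. Take $A=\{x\}$. Since $q_{u+1}(x,\{x\})=0$, this gives $\overline H_{u+1}(x)/\overline H_u(x)=P_1^Z(x,\{x\})=:a(x)$ for all admissible $u$. Starting from $\overline H_0(x)=1$ (strictness) and iterating gives $\overline H_u(x)=a(x)^u$. This holds as long as the ratio is defined. Once $\overline H_u(x)=0$, all later values vanish and the formula still holds with $a(x)=0$ at that point, because then only $u=0$ is admissible and $\overline H_1(x)=0=a(x)$.

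Next I show $a(x)<1$. By Assumption~\ref{ass:harris-positive}, or simply because the kernel is a probability, $\sum_k q_k(x,\X)=1$ forces $\overline H_u(x)\to0$; hence $a(x)<1$.

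Now take $A$ with $x\notin A$. Then $q_{u+1}(x,A)=a^u(x)P_1^Z(x,A)$. Define $K(x,A)=P_1^Z(x,A\setminus\{x\})/(1-a(x))$. This is a Markov kernel with $K(x,\{x\})=0$, because $P_1^Z(x,\X\setminus\{x\})=1-a(x)$. Hence $q_{u+1}(x,A)=a^u(x)(1-a(x))K(x,A)$ for every $A$: the $\{x\}$ part contributes zero on both sides. For inadmissible $u$ (where $a^u=0$), we still have $q_{u+1}(x,\X)\le\overline H_u(x)=0$, so the identity remains true there.

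Measurability of $a$ and $K$ follows because $P_1^Z$ is a measurable kernel, as constructed in the lumpability theorem, and $x\mapsto P_1^Z(x,\{x\})$ is measurable since the diagonal is Borel in the standard Borel space.

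The main obstacle I expect is bookkeeping at the boundary of $\mathcal E$, namely the case $a(x)=0$ where only age $0$ is admissible. There the lumpability identity carries no constraint beyond $u=0$, and one must check that the geometric form is still forced. It is, because $\overline H_1(x)=0$ kills all higher $q_{u+1}$. I would handle this case first and separately.
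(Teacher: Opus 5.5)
Your proposal is correct and follows essentially the same route as the paper's proof. You apply the lumpability criterion to the one-step kernel $\Lambda_1$, read off $a(x)$ from $A=\{x\}$ and the post-jump kernel from sets $A$ with $x\notin A$, and for the converse check directly that $\Lambda_1((x,u),\cdot)$ does not depend on the age. You are also more careful than the paper at the boundary of $\mathcal E$: you treat the case $a(x)=0$ and inadmissible ages, build $K$ explicitly, and derive $a(x)<1$ from $\overline H_u(x)\to0$, where the paper only says the holding law is proper.
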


\smallskip
\noindent The proof is given in SM\@.
\smallskip

Thus the ordinary Markov case is obtained exactly when the one-dimensional sojourn time is geometrically memoryless, with a state-dependent parameter and a state-dependent post-jump kernel.

\smallskip
\noindent\textit{Active coordinate sets.}
\smallskip

For $d\geq2$, the preceding geometric argument does not extend by replacing the scalar geometric law by an arbitrary multivariate geometric law. The event which keeps the present sojourn alive is
\begin{equation*}
v\notleqd u.
\end{equation*}
This is the complement of a lower rectangle. By contrast, multivariate geometric memoryless laws are usually formulated through upper-orthant survivals. The two notions coincide only in dimension one; see, for example, \citet{MarshallOlkin1995}, \citet{Roy2002}, \citet{MaiSchererShenkman2013} and \citet{Shenkman2023}.

For a fixed state $x$, define
\begin{equation*}
\overline H_x(u)=\overline H_{u}(x).
\end{equation*}
For an increment $v$ and an age $u$, set
\begin{equation*}
A(u,v)=\{i:v_i>u_i\}.
\end{equation*}
This active set is non-empty exactly on the no-renewal event. For non-empty $B\subseteq\{1,\ldots,d\}$, put
\begin{equation*}
W_B(u)(x)=\sum_{\{v:A(u,v)=B\}}q_v(x,\X).
\end{equation*}
Then
\begin{equation*}
\overline H_x(u)=\sum_{\emptyset\neq B\subseteq\{1,\ldots,d\}}W_B(u)(x).
\end{equation*}
When $\overline H_x(u)>0$, define
\begin{equation*}
\omega_B(u)(x)=\frac{W_B(u)(x)}{\overline H_x(u)}.
\end{equation*}
Also define
\begin{equation*}
\eta_B(u,h)(x)=
\frac{\sum_{\{v:A(u,v)=B,\ v\notleqd u+h\}}q_v(x,\X)}{W_B(u)(x)},
\end{equation*}
with the convention that the quotient is zero when $W_B(u)(x)=0$. Then the forward no-renewal probability has the decomposition
\begin{equation*}
\frac{\overline H_x(u+h)}{\overline H_x(u)}=
\sum_{\emptyset\neq B\subseteq\{1,\ldots,d\}}
\omega_B(u)(x)\eta_B(u,h)(x).
\end{equation*}
Thus the forward residual law is a posterior mixture over active coordinate sets. Even if the residual mechanism inside each active set is simple, the weights $\omega_B(u)(x)$ usually depend on the whole vector $u$. This dependence is a source of memory specific to the multi-time setting.

The following proposition records the obstruction created by a direct rectangular memoryless property in a non-degenerate two-clock model.

\begin{proposition}
Let $d=2$ and fix $x\in\X$. Assume that $\overline H_x(\zerod)=1$ and that
\begin{equation*}
\overline H_x(u+v)=\overline H_x(u)\overline H_x(v),
\qquad u,v\in\N^2.
\end{equation*}
Then
\begin{equation*}
\overline H_x(u_1,u_2)=a_1^{u_1}a_2^{u_2},
\qquad a_i=\overline H_x(e_i).
\end{equation*}
If $0<a_1<1$ and $0<a_2<1$, this function cannot be the rectangular no-renewal survival of a probability distribution on $\N^2$.
\end{proposition}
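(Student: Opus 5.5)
The plan has two parts. First, I use the multiplicative functional equation to identify $\overline H_x$ as a product of powers. Then I show that this product cannot be a rectangular no-renewal survival by recovering the point masses through inclusion--exclusion and finding one that is negative.

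For the first step, put $a_i=\overline H_x(e_i)$. Iterating the identity along coordinate steps gives
\begin{equation*}
\overline H_x(u_1e_1)=a_1^{u_1},\qquad \overline H_x(u_2e_2)=a_2^{u_2}.
\end{equation*}
Applying the identity once more to $(u_1,0)+(0,u_2)$ then yields $\overline H_x(u_1,u_2)=a_1^{u_1}a_2^{u_2}$. This is a routine induction; the normalisation $\overline H_x(\zerod)=1$ is only needed as the base case.

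For the second step, suppose that $p(v)=q_v(x,\X)$ is a probability on $\N^2$ with $\overline H_x(u)=1-F(u)$, where $F(u)=\sum_{v\leqd u}p(v)$ is the lower-rectangle distribution function. Then $F(u)=1-a_1^{u_1}a_2^{u_2}$. The masses are recovered by the second difference
\begin{equation*}
p(u)=F(u)-F(u-e_1)-F(u-e_2)+F(u-e_1-e_2),
\end{equation*}
with the convention $F(v)=0$ when $v\notin\N^2$.

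Evaluating this at $u=(1,1)$ gives
\begin{equation*}
p(1,1)=-a_1a_2+a_1+a_2-1=-(1-a_1)(1-a_2)<0
\end{equation*}
whenever $0<a_1<1$ and $0<a_2<1$. A negative point mass is impossible. In general, for $u_1,u_2\geq1$, every such mass equals $-a_1^{u_1-1}a_2^{u_2-1}(1-a_1)(1-a_2)<0$.

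There are two cross-checks worth recording. The boundary masses come out as $p(0,0)=0$ and $p(u_1,0)=a_1^{u_1-1}(1-a_1)\,\one_{\{u_1\geq1\}}$, which is positive. The total mass is nonetheless forced to be $\lim F=1$, because $F(u)\to1$ as $u\to\infty$ coordinatewise. So the contradiction comes specifically from the interior masses. In particular, every such $F$ fails to be $2$-increasing.

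I expect no real obstacle here. The only care needed is the boundary convention in the inclusion--exclusion, and the observation that rectangular survival equals $1-F$ rather than an upper-orthant function. The point to stress is that the product form is multiplicative in the complement of the lower rectangle, which forces a negative rectangle mass.
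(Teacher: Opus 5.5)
Your proof is correct and follows essentially the same route as the paper: you get the product form by induction along coordinate steps, and then the mixed second difference of $F(u)=1-a_1^{u_1}a_2^{u_2}$ gives the negative interior mass $-(1-a_1)(1-a_2)a_1^{u_1-1}a_2^{u_2-1}$. Your explicit evaluation at $(1,1)$ and the boundary-mass cross-check add detail to the same argument without changing it.
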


\begin{proof}
The multiplicative property gives the product form by induction. The corresponding lower-rectangle distribution function would be
\begin{equation*}
F(u_1,u_2)=1-a_1^{u_1}a_2^{u_2}.
\end{equation*}
For $u_1,u_2\geq1$, the probability mass at $(u_1,u_2)$ would be the mixed finite difference
\begin{equation*}
F(u_1,u_2)-F(u_1-1,u_2)-F(u_1,u_2-1)+F(u_1-1,u_2-1).
\end{equation*}
Substitution gives
\begin{equation*}
-(1-a_1)(1-a_2)a_1^{u_1-1}a_2^{u_2-1},
\end{equation*}
which is negative. Hence no probability distribution has this rectangular survival.
\end{proof}

The obstruction explains why the backward recurrence vector cannot generally be removed by imposing an upper-orthant multivariate geometric law. The correct Markov reduction is the lumpability of the augmented semigroup. The active-set formula indicates possible finite reductions, but these reductions enlarge the state space by recording which time coordinates remain active; they do not reduce the model to the physical state space alone.

\section{Denumerable and finite-state specializations}

The preceding results apply on a standard Borel state space. The forms used in many semi-Markov applications are obtained by reading the same kernels with respect to counting measures, and then, in the finite case, as matrix-valued coefficient sequences. We record the corresponding forms in order to make explicit the objects used for computation and reliability.

\subsection{Denumerable kernels and finite matrices}

Assume first that $\X$ is denumerable. We write $\X=\{1,2,\ldots\}$; when $\X$ is finite, the sums below terminate. Put
\begin{equation*}
q_{ij}(k)=q_k(i,\{j\}),\qquad i,j\in\X,\qquad k\in\N^d.
\end{equation*}
Then $q=(q_{ij}(k))$ is a non-negative matrix-valued sequence with countably many rows and columns and
\begin{equation*}
\sum_{j\in\X}\sum_{k\in\N^d}q_{ij}(k)=1,
\qquad i\in\X.
\end{equation*}
For two non-negative denumerable matrix-valued sequences $A$ and $B$, define
\begin{equation*}
(A*B)_{ij}(k)=
\sum_{\ell\leqd k}\sum_{r\in\X}A_{ir}(\ell)B_{rj}(k-\ell),
\qquad i,j\in\X.
\end{equation*}
All sums are understood in $[0,\infty]$. When the entries are signed, the same formula is used under the corresponding absolute convergence assumptions. For non-negative Markov renewal kernels, this convention is the natural one, because all potentials are obtained as monotone limits.

Let $q^{(0)}=e_0$ and $q^{(n)}=q*q^{(n-1)}$ for $n\geq1$. Then
\begin{equation*}
q^{(n)}_{ij}(k)=\Pp_i\left(J_n=j,
\ S_n=k\right),
\qquad n\geq0.
\end{equation*}
The denumerable Markov renewal potential is therefore
\begin{equation*}
\psi_{ij}(k)=\sum_{n\geq0}q^{(n)}_{ij}(k)
=\Ee_i\left[\sum_{n\geq0}\one_{\{J_n=j,
\ S_n=k\}}\right].
\end{equation*}
If $q_{\zerod}=0$, then, for fixed $k$, the preceding sum is finite, since a path reaching $k$ can contain at most $|k|_1$ non-zero increments. If $q_{\zerod}\neq0$, the same coefficient may contain infinitely many instantaneous transitions; in that case the transient zero-time condition of Section 2 is the right assumption.

The projected kernels remain denumerable Markov renewal kernels. However, projection of clocks may create instantaneous transitions. Indeed, an increment which is positive only in a coordinate removed by the projection becomes a zero increment in the projected model. Consequently, even if the original kernel satisfies $q_{\zerod}=0$, a projected kernel may need the batch-free reduction of Section 2.

\begin{proposition}
Let $\X$ be denumerable and let $q$ be a non-negative multi-time Markov renewal kernel. For every non-negative function $G=(G_i(k))_{i\in\X,k\in\N^d}$, the function
\begin{equation*}
L_i(k)=\sum_{j\in\X}\sum_{\ell\leqd k}\psi_{ij}(\ell)G_j(k-\ell)
\end{equation*}
is the minimal non-negative solution of
\begin{equation*}
L_i(k)=G_i(k)+\sum_{j\in\X}\sum_{x\leqd k}q_{ij}(x)L_j(k-x).
\end{equation*}
\end{proposition}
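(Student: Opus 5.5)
The plan is to reduce the statement to the general-state potential proposition of Section~2 by reading the denumerable kernel through counting measures. I will not simply quote that result, though: it was stated for strict kernels with finite coefficient sums, and here $q$ is only assumed non-negative, with values allowed in $[0,\infty]$. So I will give the minimality argument directly in coordinates, working throughout in $[0,\infty]$. That way no strictness or transience of $q_{\zerod}$ is needed, and Tonelli justifies every interchange of sums.

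\emph{Step 1: $L$ is a solution.} Since $q^{(0)}=e_0$ and $q^{(n)}=q*q^{(n-1)}$, I will write
\begin{equation*}
\psi=\sum_{n\geq0}q^{(n)}=e_0+\sum_{n\geq1}q*q^{(n-1)}=e_0+q*\psi.
\end{equation*}
Interchanging the sum over $n$ with the finite rectangle sums and the countable state sums is allowed by Tonelli, since all terms are non-negative. Convolving this identity with $G$ and using associativity of the convolution (the Section~2 semiring proposition, read coordinatewise, again valid in $[0,\infty]$ by Tonelli) gives
\begin{equation*}
L=\psi*G=G+q*(\psi*G)=G+q*L,
\end{equation*}
which is the displayed equation coordinatewise.

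\emph{Step 2: minimality.} Let $\widetilde L\geq0$ be any solution. Iterating the equation $n$ times gives
\begin{equation*}
\widetilde L=\sum_{m=0}^{n}q^{(m)}*G+q^{(n+1)}*\widetilde L\geq\sum_{m=0}^{n}q^{(m)}*G.
\end{equation*}
This step uses associativity to identify $q*(q^{(m)}*G)$ with $q^{(m+1)}*G$, and it uses that the remainder term is non-negative. Letting $n\to\infty$ and applying monotone convergence gives $\widetilde L\geq\psi*G=L$.

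\emph{Expected obstacle.} The only delicate point is the bookkeeping with infinite values and countable state sums. I will check that associativity and distributivity hold for $[0,\infty]$-valued sequences: each side is a countable sum of the same non-negative products, indexed by $(r,\ell)$ with $r+\ell\leqd k$ and by intermediate states, so the two sides agree by Tonelli. I will also use the convention $0\cdot\infty=0$ where needed. As a side remark, when $q$ is strict the sum defining $\psi$ is finite coefficientwise, by the Section~2 lemma on vanishing powers, and the result coincides with the general-state proposition.
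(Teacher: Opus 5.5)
Your proof is correct and follows essentially the same route as the paper's proof: the potential expression solves the equation once the $n=0$ term is split off, and minimality follows by iterating the equation, discarding the non-negative remainder $q^{(n+1)}*\widetilde L$, and passing to the limit by monotone convergence. Your explicit bookkeeping in $[0,\infty]$ (Tonelli, associativity, the convention $0\cdot\infty=0$) only spells out what the paper leaves implicit.
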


\smallskip
\noindent The proof is given in SM\@.
\smallskip

Define the rectangular no-renewal survival by
\begin{equation*}
\overline H_i(k)=\Pp_i\left(X_1\notleqd k\right)
=\sum_{j\in\X}\sum_{x\notleqd k}q_{ij}(x).
\end{equation*}
Let $\widetilde H_k$ be the diagonal kernel
\begin{equation*}
\widetilde H_{ij}(k)=\one_{\{i=j\}}\overline H_i(k).
\end{equation*}
Then the transition probabilities of the semi-Markov field are
\begin{equation*}
P_Z(i,j;k)=\sum_{r\in\X}\sum_{\ell\leqd k}\psi_{ir}(\ell)\widetilde H_{rj}(k-\ell).
\end{equation*}
Equivalently, for $A\subseteq\X$,
\begin{equation*}
\Pp_i\left(Z_k\in A\right)=\sum_{r\in\X}\sum_{\ell\leqd k}\psi_{ir}(\ell)\overline H_r(k-\ell)\one_A(r).
\end{equation*}
This formula is meaningful even when the state space is infinite, because it is a non-negative sum bounded by one.

\smallskip
\noindent\textit{Finite-state form.}
\smallskip

Assume now that $\X=E=\{1,\ldots,s\}$. The kernel is a sequence in the matrix space
\begin{equation*}
\mathcal M_s^{\N^d}=\{A:\N^d\to\R^{s\times s}\}.
\end{equation*}
The convolution product is
\begin{equation*}
(A*B)_{k}=\sum_{\ell\leqd k}A(\ell)B(k-\ell),
\qquad k\in\N^d.
\end{equation*}
The matrix-valued potential, the Markov renewal function and the semi-Markov transition surface are
\begin{equation*}
\psi=(e_0-q)^{(-1)},
\qquad
\Psi=s_0*\psi,
\qquad
P_Z=\psi*\widetilde H.
\end{equation*}
Here $s_0(k)=I_s$ for every $k\in\N^d$, and $\widetilde H$ is the diagonal matrix-valued sequence with diagonal entries $\overline H_i(k)$.

If $q_{\zerod}\neq0$, the finite-state condition for eliminating instantaneous transitions is
\begin{equation*}
\rho(q_{\zerod})<1.
\end{equation*}
Then
\begin{equation*}
C_0=(I_s-q_{\zerod})^{-1}=\sum_{n\geq0}q_{\zerod}^n
\end{equation*}
and
\begin{equation*}
q^\sharp_{\zerod}=0,
\qquad
q^\sharp_k=C_0q_k,\quad k\neq\zerod.
\end{equation*}
The corresponding potentials satisfy
\begin{equation*}
\psi=\psi^\sharp*\delta_{C_0},
\qquad
\psi^\sharp=(e_0-q^\sharp)^{(-1)}.
\end{equation*}
The factor $\delta_{C_0}$ is the delta-sequence concentrated at zero with coefficient $C_0$. Thus $\psi^\sharp*\delta_{C_0}$ means right multiplication by the matrix $C_0$ at every coefficient; it is not a new convolution with a constant matrix.

The finite-state model is therefore the matrix realization of the general potential. Conversely, every finite matrix kernel satisfying the stochastic row-sum condition defines a general-state kernel by taking $\X=E$ with the discrete sigma-field.

\smallskip
\noindent\textit{Finite-horizon triangularity.}
\smallskip

Let $N=(N_1,\ldots,N_d)\in\N^d$ and write
\begin{equation*}
B_N=\{k\in\N^d:0\leq k_r\leq N_r,
\ r=1,\ldots,d\}.
\end{equation*}
The coefficient $\psi_k$ for $k\in B_N$ depends only on $q_{\ell}$ with $\ell\leqd k$. This triangular property is often more important than the formal inverse itself.

\begin{proposition}
Assume that $I_s-q_{\zerod}$ is invertible. The coefficients of $\psi=(e_0-q)^{(-1)}$ are determined recursively by
\begin{equation*}
\psi_{\zerod}=(I_s-q_{\zerod})^{-1}
\end{equation*}
and, for $k\neq\zerod$,
\begin{equation*}
\psi_k=(I_s-q_{\zerod})^{-1}
\sum_{0_d<\ell\leqd k}q_{\ell}\psi_{k-\ell}.
\end{equation*}
Consequently, if two kernels coincide on all coefficients $\ell\leqd k$, then their potentials coincide at $k$.
\end{proposition}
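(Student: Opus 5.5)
The plan is to work directly from the defining identity $\psi*(e_0-q)=e$ together with the triangular structure of the convolution on $\N^d$. First I will check that $\psi=(e_0-q)^{(-1)}$ is a two-sided convolution inverse. In the finite-state case this follows from the semiring proposition of Section~2 together with the zero-time reduction $\psi=\psi^\sharp*\delta_{C_0}$. Alternatively, I will show directly that a sequence $A$ has a convolution inverse whenever $A_{\zerod}$ is invertible, by solving $(A*B)_k=\delta_{k,\zerod}I_s$ by induction over $k$ ordered by $|k|_1$. Since $e_0-q$ has coefficient $I_s-q_{\zerod}$ at $\zerod$ and $-q_\ell$ at $\ell\neq\zerod$, this is the relevant case.

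Next I will write out the identity $(e_0-q)*\psi=e$ coefficientwise. At $k=\zerod$ the only term is $(I_s-q_{\zerod})\psi_{\zerod}=I_s$, which gives $\psi_{\zerod}=(I_s-q_{\zerod})^{-1}$. For $k\neq\zerod$, separating the term $\ell=\zerod$ gives
\begin{equation*}
(I_s-q_{\zerod})\psi_k-\sum_{0_d<\ell\leqd k}q_\ell\psi_{k-\ell}=0 .
\end{equation*}
Multiplying on the left by $(I_s-q_{\zerod})^{-1}$ yields the stated recursion.

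The only point that needs care is that the recursion is well founded. For $\zerod<\ell\leqd k$ we have $|k-\ell|_1<|k|_1$. Hence induction on $|k|_1$ determines every $\psi_k$ uniquely from $q_{\zerod}$ and from values $\psi_{k'}$ with $k'\leqd k$, $k'\neq k$. The recursion for those $\psi_{k'}$ in turn involves only $q_{\ell'}$ with $\ell'\leqd k'\leqd k$.

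For the consequence, let $q$ and $\tilde q$ agree on all $\ell\leqd k$. I will prove by strong induction on $|k'|_1$ that $\psi_{k'}=\tilde\psi_{k'}$ for every $k'\leqd k$. The base case uses $q_{\zerod}=\tilde q_{\zerod}$. In the inductive step, the recursion at $k'$ uses only $q_\ell$ with $\ell\leqd k'\leqd k$ and values $\psi_{k'-\ell}$ with $k'-\ell\leqd k$ of strictly smaller norm. Taking $k'=k$ gives the claim.

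I expect no real obstacle here. The only subtle step is making sure the left inverse obtained by the recursion coincides with $\psi$, i.e.\ with the right inverse. This follows from associativity of the convolution: if $B*A=e$ and $A*C=e$, then $B=B*(A*C)=(B*A)*C=C$. An alternative route is to compare with the potential series and the zero-time factor $C_0$.
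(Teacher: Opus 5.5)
Your proposal is correct and follows the paper's proof: both read off the coefficients of $(e_0-q)*\psi=e_0$ at $\zerod$ and at $k\neq\zerod$, multiply by $(I_s-q_{\zerod})^{-1}$, and use induction on $|k|_1$ to show that the recursion is well founded and involves only the $q_\ell$ with $\ell\leqd k$. Your additional check that $\psi$ is a genuine two-sided inverse, via invertibility of the zeroth coefficient and associativity, is a step the paper takes for granted when it says the potential is characterized by this identity.
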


\smallskip
\noindent The proof is given in SM\@.
\smallskip

Thus the determination of $\psi_k$ is local with respect to the lower order: once the coefficients $q_\ell$, $\ell\leqd k$, are specified, the coefficient $\psi_k$ is fixed. No coefficient outside the lower set $[0,k]_d$ can enter the recursion. This is an algebraic triangularity property of the potential, independent of any truncation of the state space. Tail information enters elsewhere, for instance in the survival sequence $\widetilde H$, because $\overline H_i(k)$ contains the probability of increments outside the lower rectangle.

\subsection{Periodicity and classical one-clock specializations}

We next give the finite-state form of the arithmetic structure used in local renewal limits. Assume in this subsection that instantaneous transitions have already been removed and that the embedded chain with transition matrix
\begin{equation*}
p_{ij}=\sum_{k\in\N^d}q_{ij}(k)
\end{equation*}
is irreducible on $E$.

For $i\in E$, let
\begin{equation*}
\mathcal R_i=\{k\in\N^d:\sum_{n\geq1}q^{(n)}_{ii}(k)>0\}
\end{equation*}
be the set of possible return increments to $i$. Define $\mathcal L_i$ as the subgroup of $\Z^d$ generated by all differences $a-b$ with $a,b\in\mathcal R_i$.

\begin{proposition}
For an irreducible finite embedded chain, the subgroup $\mathcal L_i$ does not depend on $i$. We denote the common subgroup by $\mathcal L$.
\end{proposition}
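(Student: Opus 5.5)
The plan is to show $\mathcal L_i\subseteq\mathcal L_j$ for every pair $i,j\in E$; symmetry then gives equality. The tool is path concatenation in the convolution formalism. Since $q^{(n+m)}=q^{(n)}*q^{(m)}$ entrywise,
\begin{equation*}
q^{(n+m)}_{ij}(a+b)\geq q^{(n)}_{ir}(a)\,q^{(m)}_{rj}(b)
\end{equation*}
for every $r\in E$ and all $a,b\in\N^d$. Consequently, if $a$ is a possible $n$-step increment from $i$ to $r$ and $b$ is a possible $m$-step increment from $r$ to $j$, then $a+b$ is a possible $(n+m)$-step increment from $i$ to $j$.

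Fix $i\neq j$. By irreducibility of $(p_{ij})$ there are $n,m\geq1$ with $p^{(n)}_{ij}>0$ and $p^{(m)}_{ji}>0$. Since $p^{(n)}_{ij}=\sum_k q^{(n)}_{ij}(k)$, we can choose increments $c\in\N^d$ with $q^{(n)}_{ij}(c)>0$ and $c'\in\N^d$ with $q^{(m)}_{ji}(c')>0$. Concatenation gives $c+c'\in\mathcal R_i$.

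Now take any $a\in\mathcal R_j$, realized by some $q^{(\ell)}_{jj}(a)>0$ with $\ell\geq1$. Concatenating the path from $i$ to $j$, the loop at $j$, and the path from $j$ back to $i$ gives $c+a+c'\in\mathcal R_i$. Hence
\begin{equation*}
a=(c+a+c')-(c+c')\in\mathcal L_i .
\end{equation*}
For $a,b\in\mathcal R_j$ the difference $a-b=(c+a+c')-(c+b+c')$ is then a difference of two elements of $\mathcal R_i$, so it lies in $\mathcal L_i$. Since $\mathcal L_j$ is generated by such differences, $\mathcal L_j\subseteq\mathcal L_i$. Exchanging the roles of $i$ and $j$ gives the reverse inclusion, so $\mathcal L_i=\mathcal L_j$.

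The argument has no real obstacle. The only point to check is that $\mathcal R_i$ and $\mathcal R_j$ are non-empty, so that the generating sets make sense. This follows from irreducibility on the finite set $E$, which provides a return path to each state. The construction also shows that every return increment to $j$ lies in $\mathcal L_i$, which is slightly more than the lemma requires. It is this stronger fact that identifies $\mathcal L$ with the closed-path lattice described earlier in Section~3.
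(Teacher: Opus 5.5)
Your proof is correct and follows the paper's argument. Concatenating a path $i\to j$, a return loop at $j$ and a path $j\to i$ makes the connecting increments cancel in differences, which gives $\mathcal L_j\subseteq\mathcal L_i$, and symmetry finishes. (Your closing ``stronger fact'' holds automatically: $\mathcal R_i$ is closed under addition, so $a=2a-a\in\mathcal L_i$ for every $a\in\mathcal R_i$.)
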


\smallskip
\noindent The proof is given in SM\@.
\smallskip

The subgroup $\mathcal L$ is the period lattice of the multi-time Markov renewal class. The chain is called full aperiodic when
\begin{equation*}
\mathcal L=\Z^d.
\end{equation*}
If $\mathcal L$ has finite index, its covolume is the number of cosets of $\mathcal L$ in $\Z^d$.

The support of the potential between two states lies in a single coset of this lattice.

\begin{proposition}
For every $i,j\in E$ there exists a coset $a_{ij}+\mathcal L$ such that
\begin{equation*}
\psi_{ij}(k)>0\quad\Longrightarrow\quad k\in a_{ij}+\mathcal L.
\end{equation*}
Equivalently, outside the admissible coset the exact-time Markov renewal mass is zero.
\end{proposition}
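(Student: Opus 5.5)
Since $\psi_{ij}(k)=\sum_{n\ge0}q^{(n)}_{ij}(k)$ and $q^{(n)}_{ij}(k)>0$ exactly when there is a path $i=i_0\to i_1\to\cdots\to i_n=j$ with increments $x_1,\ldots,x_n$ satisfying $q_{i_{m-1}i_m}(x_m)>0$ and $x_1+\cdots+x_n=k$, it suffices to show that any two such path-sums $k,k'$ from $i$ to $j$ satisfy $k-k'\in\mathcal L$. The coset $a_{ij}+\mathcal L$ is then determined by any one admissible path sum $a_{ij}$. If no path from $i$ to $j$ exists, $\psi_{ij}\equiv0$ and any coset will do. For $i=j$ the path of length zero contributes $k=\zerod$, and this case has to be included.

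The key step uses irreducibility. Fix admissible path sums $k$ and $k'$ from $i$ to $j$. By irreducibility of the embedded chain on the finite set $E$, there is a path from $j$ back to $i$ with positive probability. Choosing increments in the supports of the corresponding $q_{\cdot\cdot}$ gives a positive-probability increment sum $b$ for this return path. Concatenation then gives closed paths at $i$ with total increments $k+b$ and $k'+b$. Each closed path has length at least one, because the return path together with the forward path has positive length unless $i=j$ and both pieces are trivial. The degenerate case is treated separately below. Hence $k+b$ and $k'+b$ lie in $\mathcal R_i$, so
\[
k-k'=(k+b)-(k'+b)\in\mathcal L_i=\mathcal L
\]
by the preceding proposition on independence of $\mathcal L_i$ from $i$.

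The only delicate point is the degenerate case $i=j$ in which one of the paths has length zero, say $k'=\zerod$, and the return path from $j=i$ to $i$ is also trivial, so the closed path $k'+b$ would have length zero and need not belong to $\mathcal R_i$. To avoid this, I will always choose the return piece to be a genuine closed loop of positive length. Irreducibility on $E$ gives at least one return to $i$ with positive probability, so for $i=j$ I take $b\in\mathcal R_i$ to be the sum along such a loop. Then $k+b\in\mathcal R_i$ whenever $k$ is a positive-length path sum, and $\zerod+b=b\in\mathcal R_i$ as well. So in every case both $k+b$ and $k'+b$ belong to $\mathcal R_i$, and the difference argument goes through.

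Equivalently, $\mathcal L$ can be written as $\langle\mathcal R_i-\mathcal R_i\rangle$, so the $\zerod$ return sum is absorbed. This gives the claim together with the reformulation that the exact-time mass $\psi_{ij}(k)$ vanishes outside the coset $a_{ij}+\mathcal L$.
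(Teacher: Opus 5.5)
Your proof is correct and follows essentially the same argument as the paper's: fix one admissible path sum $a_{ij}$, append a positive-probability return path $j\to i$ with increment $b$, note that both concatenations are return increments to $i$, and conclude from the preceding proposition ($\mathcal L_i$ does not depend on $i$) that the difference lies in $\mathcal L$. Your explicit handling of the length-zero term $q^{(0)}_{ii}(\zerod)$ when $i=j$, where you take $b\in\mathcal R_i$ of positive length so that $\zerod+b\in\mathcal R_i$, makes precise a point that the paper leaves implicit.
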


\smallskip
\noindent The proof is given in SM\@.
\smallskip

Thus a local Markov renewal theorem cannot have a positive density on all of $\Z^d$ unless the period lattice is full. In the periodic case, exact-time asymptotics are stated on the admissible coset and carry the usual covolume correction. The rectangular quantities, such as $\Psi_k$ and occupation sums, are sums of exact-time masses over lower rectangles and therefore are not confined to one admissible coset.

\smallskip
\noindent\textit{Finite-state reliability quantities.}
\smallskip

Let $E=U\cup D$ be a partition into operating and failed states. Let $C=E\setminus D$ when $D$ is a target set, and write the kernel in block form. The killed kernel on $C$ is $q_{CC}$ and the crossing kernel from $C$ to $D$ is $q_{CD}$. Define
\begin{equation*}
\psi^C=(e_0-q_{CC})^{(-1)}.
\end{equation*}
The first entrance mass into $D$ is
\begin{equation*}
g_D=\psi^C*q_{CD}.
\end{equation*}
For an initial row vector $\alpha_C$ on $C$, the first-entrance distribution over the lower rectangle with upper corner $k$ is
\begin{equation*}
F_D(k)=\alpha_C\sum_{\ell\leqd k}g_D(\ell)\one_D.
\end{equation*}
The reliability surface is
\begin{equation*}
R_D(k)=1-F_D(k).
\end{equation*}
The point availability of the operating set is
\begin{equation*}
A_U(k)=\alpha P_Z(k)\one_U=\alpha(\psi*\widetilde H)_{k}\one_U.
\end{equation*}
The expected number of operating-to-failed transitions up to $k$ is
\begin{equation*}
M_{UD}(k)=\alpha\{s_0*(\psi*q_{UD})\}(k)\one_D.
\end{equation*}
If a cost $c_{ij}(x)$ is attached to a transition $i\to j$ with increment $x$, define
\begin{equation*}
q^c_{ij}(x)=c_{ij}(x)q_{ij}(x)\one_{\{i\in U,
\ j\in D\}}.
\end{equation*}
For a warranty region $W\subseteq\N^d$, the expected warranty cost is
\begin{equation*}
C_W=\sum_{k\in W}\alpha(\psi*q^c)_{k}\one_D.
\end{equation*}
These identities are finite-state forms of the killed-potential and reward identities of Section 4. They also show which quantities require an exact-time potential and which require its rectangular accumulation.

\smallskip
\noindent\textit{The one-clock specialization.}
\smallskip

When $d=1$, the augmented process reduces to the usual semi-Markov chain with backward recurrence time. For a finite state space and no instantaneous transitions, put
\begin{equation*}
\overline H_i(u)=\sum_{j\in E}\sum_{n>u}q_{ij}(n).
\end{equation*}
On the set where $\overline H_i(u)>0$, the transition matrix of the associated chain $(Z,U)$ on $E\times\N$ is
\begin{equation*}
P_Y((i,u),(i,u+1))=
\frac{\overline H_i(u+1)}{\overline H_i(u)}
\end{equation*}
and, for $j\neq i$,
\begin{equation*}
P_Y((i,u),(j,0))=
\frac{q_{ij}(u+1)}{\overline H_i(u)}.
\end{equation*}
This is the standard backward-recurrence Markovization used in discrete semi-Markov inference and reliability; see, for example, \citet{BarbuLimnios2008} and \citet{TrevezasLimnios2011}. If the survival ratios above are independent of $u$, the sojourn time distribution is geometric and the physical state process is Markov. For $d\geq2$, the same conclusion is replaced by the lumpability criterion of Section 6. The obstruction is that the event of no renewal is rectangular, $X\notleqd u$, and not an upper tail in a linearly ordered time.

\section{Reliability, warranty rewards and finite-state identities}

This section derives the reliability and warranty quantities generated by the multi-time Markov renewal potential. The state space remains general. Two-dimensional warranty regions have long been used to describe age-usage policies; see \citet{MurthyIskandarWilson1995}. In the present setting the renewal point process is replaced by a Markov renewal mechanism with state-dependent two-clock increments, so that failure, repair, degradation and reward quantities are governed by the same killed and unkilled potentials.

\subsection{Killing, availability and warranty rewards}

Let $D\in\calX$ be a measurable set of failed states and put $C=\X\setminus D$. Assume that the initial law $\nu$ is concentrated on $C$. Define the killed kernel and the crossing kernel by
\begin{equation*}
q^C_k(x,A)=\one_C(x)q_k(x,A\cap C),
\end{equation*}
and
\begin{equation*}
q^{C,D}_k(x,A)=\one_C(x)q_k(x,A\cap D),
\end{equation*}
for $A\in\calX$ and $k\in\N^d$. Let
\begin{equation*}
\psi^C=\sum_{n\geq0}(q^C)^{(n)}
\end{equation*}
be the killed Markov renewal potential. If the process starts from $x\in C$, the first entrance index into $D$ is
\begin{equation*}
\tau_D=\inf\{n\geq 1:J_n\in D\}.
\end{equation*}
The corresponding entrance epoch is $S_{\tau_D}$, with the convention that it is undefined on $\{\tau_D=\infty\}$.

\begin{proposition}
For $x\in C$, the first-entrance kernel into $D$ is
\begin{equation*}
g_D=\psi^C*q^{C,D}.
\end{equation*}
In particular,
\begin{equation*}
g_D(k)(x,D)=\Pp_x\left(\tau_D<\infty,\ S_{\tau_D}=k\right),
\qquad k\in\N^d.
\end{equation*}
Consequently, for an initial law $\nu$ on $C$, the distribution of the first entrance over the lower rectangle with upper corner $k$ is
\begin{equation*}
F_{\nu,D}(k)=\int_C\sum_{r\leqd k}g_D(r)(x,D)\nu(\dd x),
\end{equation*}
and the reliability surface is
\begin{equation*}
R_{\nu,D}(k)=1-F_{\nu,D}(k).
\end{equation*}
\end{proposition}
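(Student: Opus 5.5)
The plan is to identify $g_D$ with a sum over paths that stay in $C$ before the crossing, and then read off the convolution structure. The first step is a path expansion of the killed potential. By induction on $n$, using the Markov renewal property exactly as in the proposition $\Pp_x(J_n\in A,S_n=k)=q^{(n)}_k(x,A)$ of Section~2, we have for $x\in C$ and $A\subseteq C$
\begin{equation*}
(q^C)^{(n)}_k(x,A)=\Pp_x\left(J_1,\ldots,J_n\in C,\ J_n\in A,\ S_n=k\right).
\end{equation*}
The base case $n=0$ is trivial. The inductive step conditions on $(J_{n},S_{n})$ and uses the fact that $q^C_k(x,\cdot)=q_k(x,\cdot\cap C)$ for $x\in C$; the factor $\one_C(x)$ only matters off $C$.

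Next I decompose the event $\{\tau_D=n+1,\ S_{\tau_D}=k,\ J_{\tau_D}\in B\}$ for $B\subseteq D$. It is the event that $J_1,\ldots,J_n\in C$ with $S_n=r\leqd k$, followed by a last transition into $B$ with increment $k-r$. Conditioning on $\mathcal F_n$ and applying the Markov renewal property gives
\begin{equation*}
\Pp_x\left(\tau_D=n+1,\ S_{\tau_D}=k,\ J_{\tau_D}\in B\right)=\sum_{r\leqd k}\int_C (q^C)^{(n)}_r(x,\dd y)\,q^{C,D}_{k-r}(y,B)=\left((q^C)^{(n)}*q^{C,D}\right)_k(x,B).
\end{equation*}
Since $x\in C$ and $\tau_D\geq1$, the events for different $n\geq0$ are disjoint and their union is $\{\tau_D<\infty,\ S_{\tau_D}=k,\ J_{\tau_D}\in B\}$. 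Summing over $n$ by monotone convergence, and noting that the sum is finite for fixed $k$ because the kernel is strict, yields $g_D=\psi^C*q^{C,D}$. Taking $B=D$ gives the displayed identity for $g_D(k)(x,D)$. This is essentially the argument behind the corresponding proposition of Section~4, so one could also simply invoke that result, after noting that there $\tau_D$ started at $n=0$ but coincides with the present definition for $x\in C$.

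Finally, summing over $r\leqd k$ and integrating against $\nu$ (Tonelli, since everything is non-negative) gives $F_{\nu,D}(k)=\Pp_\nu(\tau_D<\infty,\ S_{\tau_D}\leqd k)$. The reliability surface is then its complement by definition; the convention that $S_{\tau_D}$ is undefined on $\{\tau_D=\infty\}$ means that this event counts as survival. The only delicate point is measurability and the disjoint-decomposition bookkeeping in the second step, in particular making sure that the last step uses the crossing kernel while all earlier steps use the killed kernel. I expect this to be routine, and there is no genuine obstacle.
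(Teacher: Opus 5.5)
Your proposal is correct and follows essentially the paper's own argument: the paper (in the supplementary proofs of this proposition and of its Section~4 counterpart) uses the same induction identifying $(q^C)^{(n)}$ with paths confined to $C$. It then uses the same decomposition of $\{\tau_D=n+1,\ S_{\tau_D}=k,\ J_{\tau_D}\in B\}$ into a killed path followed by one crossing transition, sums over $n$, and obtains $F_{\nu,D}$ and $R_{\nu,D}$ by summing over the lower rectangle and taking the complement.
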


\smallskip
\noindent The proof is given in SM\@.
\smallskip

The event in the reliability surface is
\begin{equation*}
\{S_{\tau_D}\notleqd k\}.
\end{equation*}
It is the complement of a lower-rectangle event. It is an upper-orthant survival only when $d=1$.

\smallskip
\noindent\textit{Availability and occupation.}
\smallskip

First-entrance reliability and point availability coincide only in special non-repairable models. In the present framework repairs may be allowed and the process may leave $D$ after a failure. Point availability of a measurable operating set $U\in\calX$ is
\begin{equation*}
A_{\nu,U}(k)=\int_\X P_Z(k)(x,U)\nu(\dd x).
\end{equation*}
Using the semi-Markov transition equation of Section 4,
\begin{equation*}
A_{\nu,U}(k)=\int_\X(\psi*\widetilde H)_{k}(x,U)\nu(\dd x).
\end{equation*}
For a finite observation region $W\subset\N^d$, the occupation of a measurable set $B\in\calX$ is
\begin{equation*}
O_{\nu,B}(W)=\sum_{k\in W}\int_\X P_Z(k)(x,B)\nu(\dd x).
\end{equation*}
If $W$ is the lower rectangle with upper corner $m$, this becomes
\begin{equation*}
O_{\nu,B}([0,m]_d)=\int_\X(s_0*P_Z)_{m}(x,B)\nu(\dd x).
\end{equation*}
Thus availability is an exact-time quantity of the semi-Markov field, while occupation is its lower-rectangle accumulation.

The killed transition kernel gives another interpretation of reliability. If $P_Z^C$ denotes the semi-Markov transition kernel killed at entrance into $D$, then
\begin{equation*}
P_Z^C=\psi^C*\widetilde H^C
\end{equation*}
and, for $x\in C$,
\begin{equation*}
P_Z^C(k)(x,C)=R_{x,D}(k).
\end{equation*}
Thus first-failure reliability is the availability of the non-failed set in the killed model.

\smallskip
\noindent\textit{Warranty rewards.}
\smallskip

Let $U\in\calX$ be the operating set and let $D\in\calX$ be the failed set. Define the operating-to-failed kernel
\begin{equation*}
q^{U,D}_k(x,A)=\one_U(x)q_k(x,A\cap D).
\end{equation*}
The expected number of operating-to-failed transitions whose terminal renewal epoch is exactly $k$ is
\begin{equation*}
M_{\nu,U,D}^{\circ}(k)=\int_\X(\psi*q^{U,D})_{k}(x,D)\nu(\dd x).
\end{equation*}
The cumulative number over the lower rectangle is
\begin{equation*}
M_{\nu,U,D}(m)=\sum_{k\leqd m}M_{\nu,U,D}^{\circ}(k).
\end{equation*}
Equivalently,
\begin{equation*}
M_{\nu,U,D}(m)=\int_\X\{s_0*(\psi*q^{U,D})\}(m)(x,D)\nu(\dd x).
\end{equation*}
This is the Markov renewal analogue of the renewal claim count.

Let $c:\X\times\X\times\N^d\to[0,\infty)$ be a transition cost. Define the cost kernel
\begin{equation*}
b_c(k)(x,A)=\one_U(x)\int_{A\cap D}c(x,y,k)q_k(x,\dd y).
\end{equation*}
For a warranty region $W\subset\N^d$, the expected cost of all covered failures in a repairable system is
\begin{equation*}
C_{\nu}^{\mathrm{rep}}(W)=\sum_{k\in W}\int_\X(\psi*b_c)_{k}(x,D)\nu(\dd x).
\end{equation*}
If the warranty terminates at the first failure, the full potential has to be replaced by the killed potential:
\begin{equation*}
C_{\nu}^{\mathrm{first}}(W)=\sum_{k\in W}\int_C(\psi^C*b^C_c)_{k}(x,D)\nu(\dd x),
\end{equation*}
where
\begin{equation*}
b^C_{c,k}(x,A)=\one_C(x)\int_{A\cap D}c(x,y,k)q_k(x,\dd y).
\end{equation*}
The distinction between $\psi$ and $\psi^C$ is the distinction between repeated claims and the first covered claim.

For $d=2$, a rectangular age-usage warranty has region
\begin{equation*}
W_{T,L}=\{(k_1,k_2):k_1\leq T,
\ k_2\leq L\}.
\end{equation*}
The identities above remain valid for triangular, stepped or arbitrary policy regions. The summation is simply taken over the corresponding set $W$.

\subsection{Directional warranty asymptotics and finite models}

The regenerative limit theory of Section~5 gives the asymptotic order of these costs when the warranty region expands through lower rectangles. Let
\begin{equation*}
G_c(x,y,a)=\one_U(x)\one_D(y)c(x,y,a)
\end{equation*}
and define the completed-transition reward observed before $k$ by
\begin{equation*}
\mathcal R_c(k)=\sum_{n=0}^{N(k)-1}G_c(J_n,J_{n+1},X_{n+1}).
\end{equation*}
Let $R_1(c)$ and $R_1^*(c)$ be the corresponding signed and absolute rewards accumulated in one regenerative cycle. Assume the moment hypotheses of Section~5 and suppose that
\begin{equation*}
\Ee_{\mathfrak a}\left[L_1^2+|Y_1|_1^2+(R_1^*(c))^2\right]<\infty.
\end{equation*}
Set
\begin{equation*}
a_c=\Ee_{\mathfrak a}[R_1(c)],
\qquad
\overline c=\frac{a_c}{\ell},
\end{equation*}
where $\ell=\Ee_{\mathfrak a}[L_1]$ is the mean length of one regenerative cycle in renewal steps.

\begin{corollary}
Assume the regenerative hypotheses of Section~5 and let $k_t\in\N^d$ satisfy $|k_t-t\lambda|_1=o(t)$ for some $\lambda\in\Delta_d$. Then
\begin{equation*}
\frac{\mathcal R_c(k_t)}{t}\xrightarrow[t\to\infty]{} a_c\kappa(\lambda)=\overline c\rho_\lambda
\end{equation*}
in probability, and almost surely under the regenerative construction. If, in addition, $|k_t-t\lambda|_1=o(\sqrt t)$ and the second-moment assumptions of the central limit theorem in Section 5 hold, let $W=(W_Y,W_L,W_R)$ be the Brownian motion associated with the cycle vector $(Y_1,L_1,R_1(c))$ and put $I=\mathcal I(\lambda)$. Then
\begin{equation*}
\frac{\mathcal R_c(k_t)-a_c\kappa(\lambda)t}{\sqrt t}
\xrightarrow[t\to\infty]{d}
W_R(\kappa(\lambda))+
a_c\min_{r\in I}\left\{-\frac{W_{Y,r}(\kappa(\lambda))}{m_r}\right\}.
\end{equation*}
Moreover,
\begin{equation*}
\frac{\mathcal R_c(k_t)-\overline c N(k_t)}{\sqrt t}
\xrightarrow[t\to\infty]{d}
\Normal\left(0,\kappa(\lambda)\Vv_{\mathfrak a}\left[R_1(c)-\overline cL_1\right]\right).
\end{equation*}
\end{corollary}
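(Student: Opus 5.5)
The corollary is a specialization of the functional inverse theorem of Section~5 to the transition reward $g=G_c$, evaluated at the single direction $\lambda$, supplemented by a strong law. We apply that theorem with $\Lambda=\{\lambda\}$ and $I=\mathcal I(\lambda)$. A singleton is a compact subset of the cell $\Delta_I$, trivially and in the relative topology of its face. Since $G_c$ is a transition reward with $\Ee_{\mathfrak a}[(R_1^*(c))^2]<\infty$, all hypotheses of that theorem hold. Its third display, evaluated at $\lambda$, is exactly the stated non-Gaussian limit $W_R(\kappa(\lambda))+a_c\zeta_I(\lambda)$. Its fourth display gives $W_R(\kappa(\lambda))-\overline c\,W_L(\kappa(\lambda))$. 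This is a centered Gaussian variable. Its variance is $\kappa(\lambda)$ times the variance of the increment $R_1(c)-\overline c L_1$ under $\Pp_{\mathfrak a}$, because $W$ has the cycle covariance. This identifies the stated normal law.

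For the law of large numbers, only $|k_t-t\lambda|_1=o(t)$ is assumed, so the CLT statement cannot be used. We argue directly from the regenerative decomposition
\begin{equation*}
\mathcal R_c(k_t)=\sum_{j=1}^{\mathfrak N(k_t)}R_j(c)+E_t,
\end{equation*}
where $E_t$ collects the pre-atom and incomplete-cycle contributions.

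\begin{enumerate}
\item The proposition on uniform directional laws of large numbers in Section~5 needs only the $o(t)$ part of its approximation condition for the almost sure conclusion. It gives $\mathfrak N(k_t)/t\to\kappa(\lambda)$ almost surely. If that proposition's hypothesis is read as $o(\sqrt t)$, we instead use the pathwise sandwich $C_{n}\leqd k_t$ together with the strong law $C_n/n\to m$ coordinatewise. This yields $\mathfrak N(k_t)/t\to\min_r\lambda_r/m_r=\kappa(\lambda)$ directly.
\item The strong law for the i.i.d.\ cycle rewards gives $\frac1n\sum_{j\le n}R_j(c)\to a_c$ almost surely. A random-index argument then yields the limit $a_c\kappa(\lambda)$.
\item The identity $a_c\kappa(\lambda)=\overline c\,\ell\kappa(\lambda)=\overline c\rho_\lambda$ follows from $\rho(\lambda)=\ell\kappa(\lambda)=\rho_\lambda$, as recorded in Section~5.
\end{enumerate}

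The main obstacle is the remainder $E_t$. The incomplete cycle's absolute reward is bounded by $R^*_{\mathfrak N(k_t)+1}(c)$. Because $R^*_j(c)$ are i.i.d.\ and integrable, $R^*_n(c)/n\to0$ almost surely by Borel--Cantelli, so $E_t/t\to0$ almost surely. The pre-atom cycle is finite almost surely and contributes $o(t)$. Under a general initial law we obtain convergence in probability by conditioning on the first entrance into the atom and applying the atom-started result to the shifted corner $k_t-S_{T_0}$, which still satisfies $o(t)$ (respectively $o(\sqrt t)$) since $S_{T_0}$ is almost surely finite. The same shift argument transfers the two distributional limits.
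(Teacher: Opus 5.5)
Your proposal is correct. For the two distributional limits you do exactly what the paper does. You apply the functional inverse theorem with $\Lambda=\{\lambda\}$ and read off the deterministic-centering reward limit. You then use the cancellation $a_c-\overline c\ell=0$ to identify the random-count centered limit as the Gaussian variable $W_R(\kappa(\lambda))-\overline c W_L(\kappa(\lambda))$.

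The difference lies in the law of large numbers. The paper gets it in one line by invoking the renewal-function theorem at the end of Section~5, whose hypothesis is already $|k_t-t\lambda|_1=o(t)$, applied to the reward $G_c$. You instead rebuild it directly from the cycle decomposition:
\begin{itemize}
\item $\mathfrak N(k_t)/t\to\kappa(\lambda)$, from the sandwich argument or from the proof of the uniform directional law of large numbers, which, as you note, uses only $o(t)$;
\item the strong law for the $R_j(c)$ with a random index;
\item the single-cycle bound $|E_t|\le R^*_{\mathfrak N(k_t)+1}(c)$ together with $R^*_n(c)/n\to0$ almost surely.
\end{itemize}

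Your route is more self-contained, and in one respect more accurate. The cited theorem is stated for state functionals $V_f(k)=\sum_{n}f(J_n)\one_{\{S_n\leqd k\}}$, not for transition rewards summed over completed transitions $n<N(k)$. The paper's citation therefore implicitly relies on rerunning that proof for $G_c$, which is essentially what you do. For a single direction, your remainder bound via $\mathfrak N(k_t)\to\infty$ also avoids the dyadic maximal argument that the paper needs for uniformity over $\Lambda$. Your transfer to a general initial law needs only almost sure finiteness of the entrance quantities, rather than their integrability.

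What the paper's route buys in addition is $L^1$ convergence and local uniformity over compact sets of directions. The corollary claims neither.
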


\smallskip
\noindent The proof is given in SM\@.
\smallskip

This corollary gives the corresponding warranty interpretation. With deterministic centering, several active clocks may produce a non-Gaussian fluctuation through the minimum of correlated Gaussian coordinates. With centering by the observed renewal count, the contribution of the renewal count is cancelled and the remaining cycle-reward fluctuation is Gaussian.

\smallskip
\noindent\textit{A finite two-clock model.}
\smallskip

Let $E=\{1,\ldots,s\}$ and let $d=2$. Then the kernel is the matrix-valued sequence
\begin{equation*}
q_k=\big(q_{ij}(k)\big)_{i,j\in E},
\qquad k\in\N^2.
\end{equation*}
A convenient parametrization is
\begin{equation*}
q_{ij}(k)=p_{ij}f_{ij}(k),
\end{equation*}
for admissible transitions, where $P=(p_{ij})$ is the embedded transition matrix and $f_{ij}$ is a two-clock duration distribution. The theory does not require this factorization.

A minimal repairable model has three states,
\begin{equation*}
E=\{1,2,3\},
\qquad
C=\{1,2\},
\qquad
D=\{3\}.
\end{equation*}
State $1$ represents normal operation, state $2$ degraded operation and state $3$ failure. The admissible transitions may be
\begin{equation*}
1\to2,
\quad
1\to3,
\quad
2\to1,
\quad
2\to3,
\quad
3\to1.
\end{equation*}
The transition $2\to1$ represents preventive maintenance and $3\to1$ represents repair or replacement. The transitions into state $3$ are failures.

In this finite model the principal quantities are
\begin{equation*}
\psi=(e_0-q)^{(-1)},
\qquad
P_Z=\psi*\widetilde H,
\qquad
\psi^C=(e_0-q_{CC})^{(-1)},
\qquad
g_D=\psi^C*q_{CD}.
\end{equation*}
For an initial row vector $\alpha$ concentrated on $C$, the first-failure reliability is
\begin{equation*}
R_D(k)=1-\alpha\sum_{r\leqd k}g_D(r)\one_D.
\end{equation*}
The point availability is
\begin{equation*}
A_C(k)=\alpha P_Z(k)\one_C.
\end{equation*}
Let $q_{UD}$ denote the kernel which retains every transition from an operating state to a failed state in the recurrent system. In the present three-state example $U=C$ and $D=\{3\}$, so $q_{UD}=q_{CD}$. The expected number of failures up to $k$ is
\begin{equation*}
M_F(k)=\alpha\{s_0*(\psi*q_{UD})\}(k)\one_D.
\end{equation*}
The first-failure counterpart is obtained by replacing $\psi$ with $(e_0-q_{CC})^{(-1)}$.

For the rectangular warranty $W_{T,L}$ and constant claim cost $c_0$, the recurrent expected cost is
\begin{equation*}
C_{T,L}=c_0\alpha\{s_0*(\psi*q_{UD})\}(T,L)\one_D.
\end{equation*}
This expression corresponds to a repairable system whose repairs restore the item and permit further covered claims. If only the first covered failure is paid, the killed potential must be used instead.

\smallskip
\noindent\textit{Finite-state coefficient identities.}
\smallskip

For a finite state space and a finite lower set $B\subset\N^d$, the coefficients $\psi_k$, $k\in B$, depend only on the coefficients $q_r$ with $r\leqd k$. Hence the identities below are coefficient identities on $B$ whenever the model kernel is specified on the same lower set. This is simply the finite-state coefficient form of the same convolution identities.

The finite-state identities are
\begin{equation*}
(e_0-q)*\psi=e_0,
\qquad
\psi*(e_0-q)=e_0,
\end{equation*}
\begin{equation*}
P_Z=\widetilde H+q*P_Z,
\qquad
P_Z=\psi*\widetilde H,
\end{equation*}
and
\begin{equation*}
g_D=q_{CD}+q_{CC}*g_D,
\qquad
g_D=\psi^C*q_{CD}.
\end{equation*}
The supplementary material records the coefficient recursions and the finite-state reductions leading to these identities.
\begin{supplement}
\stitle{Supplementary Material for ``Multi-time Markov renewal chains and stratified renewal theorems''}
\sdescription{The supplementary material for this article contains algebraic proofs, measurable-construction details, regenerative estimates and finite-state reductions used in the main manuscript. The numbering in the headings follows the numbering of the statements in the main text. The main paper states and proves the operator-theoretic local theorem for exact-time potentials. The regenerative proof of the periodic local form is kept here, together with the auxiliary estimates and finite-state coefficient identities.}
\end{supplement}

\begin{acks}[Acknowledgments]
The second author acknowledges the association with MICS Laboratory, CentraleSup\'elec, Universit\'e Paris-Saclay.
\end{acks}

\bibliographystyle{imsart-nameyear-issue}

\bibliography{arxiv_refs}

\clearpage

\section*{Supplementary Material}

\setcounter{section}{0}
\renewcommand{\thesection}{S\arabic{section}}

\setcounter{subsection}{0}
\renewcommand{\thesubsection}{S\arabic{section}.\arabic{subsection}}

\setcounter{theorem}{0}
\renewcommand{\thetheorem}{S\arabic{theorem}}

\setcounter{equation}{0}
\renewcommand{\theequation}{S.\arabic{equation}}

\section*{S1. Proof of Proposition 2.2}
\begin{proof}
Let $A,B,C\in\mathcal K_{\X}^{\N^d}$, let $k\in\N^d$, and let $D\in\calX$. By definition,
\begin{equation*}
((A*B)*C)_k(x,D)
=
\sum_{u\leqd k}\int_{\X}(A*B)_u(x,\dd y)C_{k-u}(y,D).
\end{equation*}
Expanding the first factor gives
\begin{equation*}
((A*B)*C)_k(x,D)
=
\sum_{u\leqd k}\sum_{r\leqd u}
\int_{\X}\int_{\X}A_r(x,\dd z)B_{u-r}(z,\dd y)C_{k-u}(y,D).
\end{equation*}
Put $s=u-r$ and $t=k-u$. Then $r,s,t\in\N^d$ and $r+s+t=k$. Conversely every triple with this property is obtained in this way. Hence
\begin{equation*}
((A*B)*C)_k(x,D)
=
\sum_{r+s+t=k}\int_{\X}\int_{\X}A_r(x,\dd z)B_s(z,\dd y)C_t(y,D).
\end{equation*}
The same expansion of $A*(B*C)$ gives the identical finite sum. This proves associativity. Additivity in each argument follows from linearity of the finite sums and of kernel composition. The identity sequence satisfies
\begin{equation*}
(e*A)_k(x,D)
=
\sum_{r\leqd k}\int_{\X}e_r(x,\dd y)A_{k-r}(y,D)
=A_k(x,D),
\end{equation*}
because only $r=\zerod$ contributes, and similarly $A*e=A$.

Finally, if $M$ is a kernel and $\delta_M$ is concentrated at $\zerod$, then
\begin{equation*}
(\delta_M*A)_k(x,D)
=
\int_{\X}M(x,\dd y)A_k(y,D)=(MA)_k(x,D),
\end{equation*}
while
\begin{equation*}
(A*\delta_M)_k(x,D)
=
\int_{\X}A_k(x,\dd y)M(y,D)=(AM)_k(x,D).
\end{equation*}
This also shows explicitly why coefficientwise composition by a fixed kernel is not a new convolution.
\end{proof}

\section*{S2. Proof of Lemma 2.3}
\begin{proof}
For $n=1$ the assertion is immediate from the assumption $A_{\zerod}=0$. Let $n\geq2$. By the definition of convolution powers, $A^{(n)}_k$ is the finite sum over all decompositions
\begin{equation*}
k=r_1+\cdots+r_n,
\qquad r_j\in\N^d,
\end{equation*}
of the kernel products
\begin{equation*}
A_{r_1}A_{r_2}\cdots A_{r_n}.
\end{equation*}
If every $r_j$ were different from $\zerod$, then $|r_j|_1\geq1$ for every $j$ and consequently
\begin{equation*}
|k|_1=|r_1|_1+\cdots+|r_n|_1\geq n.
\end{equation*}
Thus, when $n>|k|_1$, every admissible decomposition contains at least one zero vector, say $r_j=\zerod$. The corresponding product of kernels contains the factor $A_{\zerod}$ and therefore is the zero kernel. Since there are only finitely many decompositions of $k$ into $n$ non-negative multi-indices, the whole coefficient $A^{(n)}_k$ is zero. This proves the claim.
\end{proof}

\section*{S3. Proof of Proposition 2.6}
\begin{proof}
Let $J$ and $S$ be the process constructed from the Ionescu--Tulcea kernels in the main text. For $A\in\calX$ and $k\in\N^d$, the one-step conditional law is
\begin{equation*}
\Pp_x(J_{n+1}\in A,X_{n+1}=k\mid J_0,S_0,\ldots,J_n,S_n)
=q_k(J_n,A).
\end{equation*}
Since $S_{n+1}=S_n+X_{n+1}$, this is equivalent to
\begin{equation*}
\Pp_x(J_{n+1}\in A,S_{n+1}=s+k\mid J_n=y,S_n=s)
=q_k(y,A).
\end{equation*}
Thus $(J,S)$ is a Markov chain on $\X\times\N^d$ with the stated transition kernel.

Summing the first equation over $k\in\N^d$ gives, for the embedded state chain,
\begin{equation*}
\Pp_x(J_{n+1}\in A\mid J_0,S_0,\ldots,J_n,S_n)
=Q(J_n,A),
\qquad
Q(y,A)=\sum_{k\in\N^d}q_k(y,A).
\end{equation*}
The right-hand side depends on the past only through $J_n$, and therefore $J$ is a Markov chain with transition kernel $Q$.
\end{proof}

\section*{S4. Proof of Proposition 2.7}
\begin{proof}
For $n=0$, the identity is
\begin{equation*}
q^{(0)}_k(x,A)=e_k(x,A)=\one_{\{k=\zerod\}}\one_A(x),
\end{equation*}
which is exactly $\Pp_x(J_0\in A,S_0=k)$. Suppose now that the assertion holds for some $n$. Conditioning on $(J_n,S_n)$ and using the Markov renewal property gives
\begin{equation*}
\Pp_x(J_{n+1}\in A,S_{n+1}=k)
=
\sum_{r\leqd k}\int_{\X}
\Pp_x(J_n\in\dd y,S_n=r)q_{k-r}(y,A).
\end{equation*}
By the induction hypothesis,
\begin{equation*}
\Pp_x(J_n\in\dd y,S_n=r)=q^{(n)}_r(x,\dd y),
\end{equation*}
and therefore
\begin{equation*}
\Pp_x(J_{n+1}\in A,S_{n+1}=k)
=
\sum_{r\leqd k}\int_{\X}q^{(n)}_r(x,\dd y)q_{k-r}(y,A)
=(q^{(n)}*q)_k(x,A).
\end{equation*}
This is $q^{(n+1)}_k(x,A)$ by the definition of convolution powers.
\end{proof}

\section*{S5. Proof of Proposition 2.9}
\begin{proof}
Let $G$ be non-negative. By Proposition 2.7 and Tonelli's theorem,
\begin{equation*}
\Ee_x\left[\sum_{n\geq0}\one_{\{S_n\leqd k\}}G_{k-S_n}(J_n)\right]
=
\sum_{n\geq0}\sum_{r\leqd k}\int_{\X}q^{(n)}_r(x,\dd y)G_{k-r}(y).
\end{equation*}
The right-hand side is $(\psi*G)_k(x)$, because $\psi_k=\sum_{n\geq0}q^{(n)}_k$. This proves the representation.

To verify the renewal equation, separate the term $n=0$ in the preceding sum. The term $n=0$ is $G_k(x)$. The remaining terms have $n\geq1$ and may be written as
\begin{equation*}
\sum_{n\geq1}q^{(n)}*G
=
q*\sum_{m\geq0}q^{(m)}*G
=q*L.
\end{equation*}
Hence $L=G+q*L$.

Let $L'$ be any non-negative solution. Iterating once gives $L'=G+q*L'\geq G$. Iterating $m$ times and using positivity of $q$ yields
\begin{equation*}
L'\geq\sum_{n=0}^{m}q^{(n)}*G.
\end{equation*}
Letting $m\to\infty$ and applying monotone convergence gives $L'\geq L$. Thus $L$ is the minimal non-negative solution. If the convolution inverse exists in the algebra under consideration and if $L_1,L_2$ are two algebraic solutions, then
\begin{equation*}
(e-q)*(L_1-L_2)=0.
\end{equation*}
Multiplication by $(e-q)^{(-1)}$ gives $L_1=L_2$.
\end{proof}

\section*{S6. Proof of Proposition 2.10}
\begin{proof}
Let $D_0\subset\{1,\ldots,d\}$ and write $D_0^c$ for the complementary set of coordinates. For $k_{D_0}\in\N^{|D_0|}$ and $C\in\calX$,
\begin{equation*}
\Pi_{D_0}(A*B)_{k_{D_0}}(x,C)
=
\sum_{m_{D_0^c}}\sum_{r\leqd (k_{D_0},m_{D_0^c})}
\int_{\X}A_r(x,\dd y)B_{(k_{D_0},m_{D_0^c})-r}(y,C).
\end{equation*}
Write $r=(r_{D_0},r_{D_0^c})$. The remaining part of the total index is
\begin{equation*}
(k_{D_0},m_{D_0^c})-r
=(k_{D_0}-r_{D_0},m_{D_0^c}-r_{D_0^c}).
\end{equation*}
Summing first over $m_{D_0^c}$ is the same as summing independently over $r_{D_0^c}$ and over the complementary part of the second index. Tonelli's theorem applies because the kernels are non-negative. Thus the last expression is
\begin{equation*}
\sum_{r_{D_0}\leq k_{D_0}}
\int_{\X}(\Pi_{D_0}A)_{r_{D_0}}(x,\dd y)
(\Pi_{D_0}B)_{k_{D_0}-r_{D_0}}(y,C),
\end{equation*}
which is $((\Pi_{D_0}A)*(\Pi_{D_0}B))_{k_{D_0}}(x,C)$.

Applying this identity to convolution powers gives
\begin{equation*}
\Pi_{D_0}(q^{(n)})=(\Pi_{D_0}q)^{(n)},
\qquad n\geq0.
\end{equation*}
Summing over $n$ gives $\Pi_{D_0}\psi=\psi^{[D_0]}$ whenever the potential is finite coefficientwise.
\end{proof}

\section*{S7. Proof of Proposition 2.12}
\begin{proof}
Put $q_0=q_{\zerod}$ and recall that
\begin{equation*}
R_0=\sum_{a\geq0}q_0^a .
\end{equation*}
The assumption $R_0\one<\infty$ implies that $R_0$ is a finite kernel at every initial state. Moreover, by monotone convergence,
\begin{equation*}
(I-q_0)R_0=I=R_0(I-q_0).
\end{equation*}
Indeed, for $N\geq0$,
\begin{equation*}
(I-q_0)\sum_{a=0}^{N}q_0^a=I-q_0^{N+1}
=\left(\sum_{a=0}^{N}q_0^a\right)(I-q_0),
\end{equation*}
and $q_0^{N+1}\one(x)\downarrow0$ for every $x\in\X$.

For $v\neq\zerod$ the batch-free kernel is
\begin{equation*}
q_v^\sharp=R_0q_v,
\qquad q^\sharp_{\zerod}=0.
\end{equation*}
Thus a non-zero increment $v$ in the reduced chain represents a cluster of zero-time transitions, followed by one transition with time increment $v$. We now write this identity at the level of potential coefficients.

Fix $k\in\N^d$ and $C\in\calX$. A path contributing to $\psi_k(x,C)$ has a finite number $m$ of transitions with non-zero time increment. If these increments are $v_1,\ldots,v_m\in\N^d\setminus\{\zerod\}$, then
\begin{equation*}
v_1+\cdots+v_m=k.
\end{equation*}
Between two consecutive non-zero increments, and also before the first and after the last one, the path may contain an arbitrary number of zero-time transitions. Hence the contribution of all paths with the fixed non-zero increment sequence $(v_1,\ldots,v_m)$ is the kernel
\begin{equation*}
\left(\sum_{a_0\geq0}q_0^{a_0}\right)q_{v_1}
\left(\sum_{a_1\geq0}q_0^{a_1}\right)q_{v_2}
\cdots
q_{v_m}
\left(\sum_{a_m\geq0}q_0^{a_m}\right).
\end{equation*}
By the definition of $R_0$, this is
\begin{equation*}
R_0q_{v_1}R_0q_{v_2}\cdots R_0q_{v_m}R_0.
\end{equation*}
For $m=0$ the only possible displacement is $k=\zerod$, and the contribution is $R_0$.

Since all kernels are non-negative, Tonelli's theorem permits summation over the zero-time cluster lengths. Consequently, for every $x\in\X$ and $C\in\calX$,
\begin{equation*}
\psi_k(x,C)
=
\sum_{m=0}^{|k|_1}
\sum_{\substack{v_1,\ldots,v_m\in\N^d\setminus\{\zerod\}\\
v_1+\cdots+v_m=k}}
\left(R_0q_{v_1}R_0q_{v_2}\cdots R_0q_{v_m}R_0\right)(x,C).
\end{equation*}
The upper bound $m\leq |k|_1$ follows from $|v_r|_1\geq1$ for every non-zero increment $v_r$. The last formula is precisely
\begin{equation*}
\psi_k(x,C)
=
\sum_{m=0}^{|k|_1}
\left((q^\sharp)^{(m)}_kR_0\right)(x,C)
=
\left(\psi^\sharp_kR_0\right)(x,C),
\end{equation*}
because $q^\sharp$ is strict and therefore $(q^\sharp)^{(m)}_k=0$ for $m>|k|_1$.

Equivalently, since multiplication by a kernel at the coefficient level is convolution with a delta-sequence concentrated at $\zerod$,
\begin{equation*}
\psi=\psi^\sharp*\delta_{R_0}.
\end{equation*}
The same conclusion is also verified by the factorization
\begin{equation*}
e-q=\delta_{I-q_0}*(e-q^\sharp).
\end{equation*}
Indeed, the coefficient at $\zerod$ is $I-q_0$, and for $k\neq\zerod$ it is
\begin{equation*}
-(I-q_0)q_k^\sharp=-(I-q_0)R_0q_k=-q_k.
\end{equation*}
Using $(I-q_0)R_0=R_0(I-q_0)=I$ and $(e-q^\sharp)*\psi^\sharp=e=\psi^\sharp*(e-q^\sharp)$ gives
\begin{equation*}
(e-q)*(\psi^\sharp*\delta_{R_0})=e,
\qquad
(\psi^\sharp*\delta_{R_0})*(e-q)=e.
\end{equation*}
This agrees with the coefficientwise path decomposition above and completes the proof.
\end{proof}

\section*{S8. Proof of Proposition 3.2}
\begin{proof}
Let
\begin{equation*}
\mathsf Y=\X\times\X\times\N^d,
\qquad
\mathcal Y=\calX\otimes\calX\otimes 2^{\N^d}.
\end{equation*}
On $\mathsf Y$ define the transition kernel $\mathsf K$ by
\begin{equation*}
\mathsf K((x,y,k),C)
=
\sum_{\ell\in\N^d}\int_{\X}
\one_C(y,z,\ell)q_\ell(y,\dd z),
\qquad C\in\mathcal Y.
\end{equation*}
If
\begin{equation*}
Y_m=(J_m,J_{m+1},X_{m+1}),
\qquad m\geq0,
\end{equation*}
then $(Y_m)_{m\geq0}$ is a Markov chain with transition kernel $\mathsf K$. Moreover $\Pi_Q$ is invariant for $\mathsf K$. Indeed, for $C\in\mathcal Y$,
\begin{align*}
\int_{\mathsf Y}\mathsf K((x,y,k),C)\Pi_Q(\dd x,\dd y,k)
&=
\int_{\X}\sum_{k\in\N^d}\int_{\X}
\pi(\dd x)q_k(x,\dd y)
\sum_{\ell\in\N^d}\int_{\X}\one_C(y,z,\ell)q_\ell(y,\dd z) \\
&=
\int_{\X}\sum_{\ell\in\N^d}\int_{\X}
\one_C(y,z,\ell)q_\ell(y,\dd z)
\pi(\dd y) \\
&=\Pi_Q(C),
\end{align*}
because $\pi P=\pi$ and $P(x,\dd y)=\sum_kq_k(x,\dd y)$. The kernel $\mathsf K$ is the transition extension of $P$ and is positive Harris recurrent on its Harris class. For completeness we recall the argument. If $C\in\mathcal Y$ and $\Pi_Q(C)>0$, put
\begin{equation*}
h_C(x)=
\sum_{\ell\in\N^d}\int_{\X}\one_C(x,z,\ell)q_\ell(x,\dd z),
\qquad x\in\X.
\end{equation*}
Then $\Pi_Q(C)=\int h_C(x)\pi(\dd x)>0$, so there exists $\eta>0$ such that
\begin{equation*}
A_{C,\eta}=\{x\in\X:h_C(x)>\eta\}
\end{equation*}
has positive $\pi$-measure. From every point of a full Harris set of $P$, the chain $(J_m)$ visits $A_{C,\eta}$ infinitely often. At each such visit, conditionally on the past, $Y_m\in C$ with probability at least $\eta$. The conditional Borel--Cantelli lemma gives recurrent visits of $(Y_m)$ to $C$. Since $\Pi_Q$ is an invariant probability, $\mathsf K$ is positive Harris recurrent and its invariant probability is $\Pi_Q$.

Let $g\in L^1(\Pi_Q)$. By the ergodic theorem for positive Harris recurrent chains, there exists a $\mathsf K$-full set $G_g\in\mathcal Y$ such that, for every $w\in G_g$,
\begin{equation*}
\frac{1}{n}\sum_{m=0}^{n-1}g(Y_m)
\xrightarrow[n\to\infty]{\mathrm{a.s.}}
\int_{\mathsf Y}g\,\dd\Pi_Q.
\end{equation*}
The last integral is
\begin{equation*}
\int_{\mathsf Y}g\,\dd\Pi_Q
=
\int_{\X}\sum_{k\in\N^d}\int_{\X}
g(x,y,k)q_k(x,\dd y)\pi(\dd x)
=\bar g.
\end{equation*}
Set
\begin{equation*}
H_g=
\left\{x\in\X:
\sum_{k\in\N^d}\int_{\X}\one_{G_g}(x,y,k)q_k(x,\dd y)=1
\right\}.
\end{equation*}
Since $\Pi_Q(G_g)=1$, it follows that $\pi(H_g)=1$. By the invariance of $\pi$, $H_g$ contains a $P$-absorbing full subset; replacing $H_g$ by this subset, we take $H_g$ to be a full Harris set. If $J_0=x\in H_g$, then $Y_0\in G_g$ $\Pp_x$-a.s.; therefore
\begin{equation*}
\frac{1}{n}\sum_{m=0}^{n-1}g(J_m,J_{m+1},X_{m+1})
\xrightarrow[n\to\infty]{\mathrm{a.s.}}
\bar g,
\qquad \Pp_x\hbox{-a.s.}
\end{equation*}
for every $x\in H_g$.

For the last assertion, apply the preceding result to
\begin{equation*}
g_r(x,y,k)=k_r,
\qquad r=1,\ldots,d.
\end{equation*}
By the moment assumption, $g_r\in L^1(\Pi_Q)$ and
\begin{equation*}
\int_{\mathsf Y}g_r\,\dd\Pi_Q
=
\int_{\X}\sum_{k\in\N^d}k_rq_k(x,\X)\pi(\dd x)
=\mu_r.
\end{equation*}
Taking the finite intersection of the corresponding full Harris sets gives a full Harris set $H$. For $x\in H$,
\begin{equation*}
\frac{S_n^{[r]}}{n}
=
\frac{1}{n}\sum_{m=0}^{n-1}X_{m+1}^{[r]}
\xrightarrow[n\to\infty]{\mathrm{a.s.}}
\mu_r,
\qquad r=1,\ldots,d.
\end{equation*}
Thus $S_n/n\to\mu$ coordinatewise, $\Pp_x$-almost surely.
\end{proof}

\section*{S9. Proof of Proposition 3.4}
\begin{proof}
Let $b_x$ be the probability measure on $\{0,1\}$ defined by
\begin{equation*}
b_x(\{1\})=\varepsilon\one_C(x),
\qquad
b_x(\{0\})=1-\varepsilon\one_C(x).
\end{equation*}
Thus a point outside $C$ has split coordinate zero, whereas a point of $C$ has split coordinate one with probability $\varepsilon$. For $x\in C$ put
\begin{equation*}
Q_1(x,\cdot)=\vartheta(\cdot),
\qquad
Q_0(x,\cdot)=Q^0(x,\cdot)\quad(\varepsilon<1),
\end{equation*}
and for $x\notin C$ put $Q_0(x,\cdot)=Q(x,\cdot)$. If $\varepsilon=1$, the value of $Q_0$ on $C$ may be chosen as $Q$; this case is not reached from the split construction. The transition kernel of the split Markov renewal chain is then, for measurable $A\subseteq\X$, $B\subseteq\N^d$ and $r\in\{0,1\}$,
\begin{equation*}
\widehat Q((x,i),A\times B\times\{r\})
=\int_{A\times B} b_y(\{r\}) Q_i(x,\dd y,k),
\end{equation*}
where $i=1$ is used only when $x\in C$. The set $\mathfrak a=C\times\{1\}$ is an atom because, for every $x\in C$,
\begin{equation*}
\widehat Q((x,1),A\times B\times\{r\})
=\int_{A\times B} b_y(\{r\})\vartheta(\dd y,k),
\end{equation*}
which is independent of the point $x$ of the atom.

We first check the projection. Suppose that, conditionally on $J_n=x$, the split coordinate has law $b_x$. If $x\in C$, then for every measurable $D\subseteq\X\times\N^d$,
\begin{equation*}
\Pp\{(J_{n+1},X_{n+1})\in D\mid J_n=x\}
=\varepsilon\vartheta(D)+(1-\varepsilon)Q^0(x,D)=Q(x,D).
\end{equation*}
If $x\notin C$, the same conditional law is directly $Q(x,D)$. Moreover, after the transition to $J_{n+1}=y$, the new split coordinate is sampled according to $b_y$. Hence the above conditional split-coordinate relation is propagated from time $n$ to time $n+1$. By induction, whenever the initial split coordinate is chosen according to $b_{J_0}$, the finite-dimensional distributions of the projected process $(J_n,S_n)_{n\geq0}$ coincide with those of the original Markov renewal chain.

We next verify the invariant probability. Define
\begin{equation*}
\widehat\pi(\dd x,1)=\varepsilon\one_C(x)\pi(\dd x),
\qquad
\widehat\pi(\dd x,0)=\one_{C^c}(x)\pi(\dd x)+(1-\varepsilon)\one_C(x)\pi(\dd x).
\end{equation*}
Equivalently, $\widehat\pi(\dd x,r)=\pi(\dd x)b_x(\{r\})$. Let $f$ be a bounded measurable function on $\widehat\X$ and set
\begin{equation*}
\bar f(y)=\sum_{r=0}^1 f(y,r)b_y(\{r\}).
\end{equation*}
With the notation
\begin{equation*}
Q\bar f(x)=\sum_{k\in\N^d}\int_{\X}\bar f(y)Q(x,\dd y,k),
\qquad
\vartheta\bar f=\sum_{k\in\N^d}\int_{\X}\bar f(y)\vartheta(\dd y,k),
\end{equation*}
one has
\begin{align*}
\int_{\widehat\X}\widehat P f(x,r)\widehat\pi(\dd x,r)
&=\int_{C^c}Q\bar f(x)\pi(\dd x)
+\int_C\{\varepsilon\vartheta\bar f+(1-\varepsilon)Q^0\bar f(x)\}\pi(\dd x) \\
&=\int_{\X}Q\bar f(x)\pi(\dd x)
=\int_{\X}\bar f(y)\pi(\dd y) \\
&=\int_{\widehat\X}f(y,r)\widehat\pi(\dd y,r).
\end{align*}
The second equality uses the identity
\begin{equation*}
\varepsilon\vartheta(\cdot)+(1-\varepsilon)Q^0(x,\cdot)=Q(x,\cdot),
\qquad x\in C,
\end{equation*}
with the convention that the term involving $Q^0$ is absent when $\varepsilon=1$; in that case $Q(x,\cdot)=\vartheta(\cdot)$ on $C$. The third equality uses the invariance of $\pi$ for the marginal embedded kernel of $Q$. Therefore $\widehat\pi$ is invariant.

The split chain is Harris recurrent. Indeed, let $A\in\widehat{\calX}$ satisfy $\widehat\pi(A)>0$, and write
\begin{equation*}
A_0=\{x:(x,0)\in A\},
\qquad
A_1=\{x\in C:(x,1)\in A\}.
\end{equation*}
At least one of the sets $A_0\cap C^c$, $A_0\cap C$ with $1-\varepsilon>0$, or $A_1$ has positive $\pi$-measure. The projected positive Harris chain visits that set infinitely often. Conditionally on the projected path, the split coordinates attached to these visits are independent and have success probabilities $1$, $1-\varepsilon$, or $\varepsilon$, respectively. Hence, by the conditional Borel--Cantelli lemma, the split chain visits $A$ infinitely often, apart from the usual null exceptional set. Since it has the invariant probability $\widehat\pi$, it is positive Harris recurrent.

Finally,
\begin{equation*}
\widehat\pi(\mathfrak a)=\widehat\pi(C\times\{1\})=\varepsilon\pi(C).
\end{equation*}
Kac's formula for the positive recurrent atom $\mathfrak a$, started with initial law $\widehat\pi(\cdot\mid\mathfrak a)$, gives
\begin{equation*}
\Ee_{\mathfrak a}[\tau_{\mathfrak a}]
=\frac{1}{\widehat\pi(\mathfrak a)}
=\frac{1}{\varepsilon\pi(C)}.
\end{equation*}
This proves the proposition.
\end{proof}

\section*{S10. Proof of Theorem 3.5}
\begin{proof}
We work with the split chain after collapsing the atom $\mathfrak a$ to one state. Equivalently, whenever a cycle reward uses the current state at a regeneration time, its value is read with the same atom convention. This removes the only possible dependence on the representative point of $C\times\{1\}$.

Let
\begin{equation*}
\widehat{\mathcal F}_n
=
\sigma\{(\widehat J_r,S_r):0\leq r\leq n\},
\qquad n\geq0,
\end{equation*}
and let $\theta_m$ denote the time-shift
\begin{equation*}
\theta_m(\widehat J_r,S_r)_{r\geq0}
=
(\widehat J_{m+r},S_{m+r}-S_m)_{r\geq0}.
\end{equation*}
Put
\begin{equation*}
\tau=\inf\{n\geq1:\widehat J_n\in\mathfrak a\}.
\end{equation*}
For a non-negative reward $g$, define the excursion functional
\begin{equation*}
\Phi_g
=
\left(
\tau,
S_\tau,
\sum_{n=0}^{\tau-1}g(J_n,J_{n+1},X_{n+1})
\right),
\end{equation*}
with values in $\N\times\N^d\times[0,\infty]$. Since the split chain is positive Harris recurrent and $\widehat\pi(\mathfrak a)>0$, the return time $\tau$ is finite $\Pp_{\mathfrak a}$-a.s.; hence all the following cycle variables are well defined, possibly with an infinite reward in the non-negative case.

The atom property gives the following identity. If $\hat x\in\mathfrak a$ and $h$ is a bounded measurable function on $\N\times\N^d\times[0,\infty]$, then
\begin{equation*}
\Ee_{\hat x}\{h(\Phi_g)\}
=
\Ee_{\mathfrak a}\{h(\Phi_g)\}.
\end{equation*}
Indeed, after time zero the joint law of the next split state and the next displacement is the same for all points of the collapsed atom, and the subsequent evolution is governed by the same transition kernel.

For $m\geq1$,
\begin{equation*}
(L_m,Y_m,R_m(g))
=
\Phi_g\circ\theta_{T_{m-1}}.
\end{equation*}
Let $H$ be a bounded $\widehat{\mathcal F}_{T_{m-1}}$-measurable random variable. By the strong Markov property at $T_{m-1}$ and by the preceding atom identity,
\begin{equation*}
\Ee_{\mathfrak a}
\left[
Hh(L_m,Y_m,R_m(g))
\right]
=
\Ee_{\mathfrak a}
\left[
H\,\Ee_{\widehat J_{T_{m-1}}}\{h(\Phi_g)\}
\right]
=
\Ee_{\mathfrak a}[H]\,\Ee_{\mathfrak a}\{h(\Phi_g)\}.
\end{equation*}
Thus $(L_m,Y_m,R_m(g))$ is independent of $\widehat{\mathcal F}_{T_{m-1}}$ and has the same law as $(L_1,Y_1,R_1(g))$. Applying this with
\begin{equation*}
H=\prod_{r=1}^{m-1}h_r(L_r,Y_r,R_r(g))
\end{equation*}
and arguing by induction gives, for bounded measurable $h_1,\ldots,h_m$,
\begin{equation*}
\Ee_{\mathfrak a}
\left[
\prod_{r=1}^{m}h_r(L_r,Y_r,R_r(g))
\right]
=
\prod_{r=1}^{m}\Ee_{\mathfrak a}\{h_r(L_1,Y_1,R_1(g))\}.
\end{equation*}
This proves that the cycle sequence is independent and identically distributed for every non-negative reward $g$.

If $g$ is signed and integrable, apply the preceding argument to the vector of non-negative rewards $(g^+,g^-)$. Then
\begin{equation*}
R_m(g)=R_m(g^+)-R_m(g^-),
\end{equation*}
and the conclusion follows by the measurable mapping theorem.

It remains to consider a general initial law. Let
\begin{equation*}
\sigma_0=\inf\{n\geq0:\widehat J_n\in\mathfrak a\},
\qquad
\sigma_{r+1}=\inf\{n>\sigma_r:\widehat J_n\in\mathfrak a\},
\quad r\geq0.
\end{equation*}
On the event $\{\sigma_0<\infty\}$, the strong Markov property at $\sigma_0$ and the atom identity imply that the shifted process
\begin{equation*}
(\widehat J_{\sigma_0+n},S_{\sigma_0+n}-S_{\sigma_0})_{n\geq0}
\end{equation*}
has, conditionally on $\widehat{\mathcal F}_{\sigma_0}$, the atom-started law $\Pp_{\mathfrak a}$. Therefore the cycles after the first entrance into $\mathfrak a$ are independent and identically distributed with the same distribution as the cycles under $\Pp_{\mathfrak a}$. For initial laws supported by the full Harris set, $\Pp(\sigma_0<\infty)=1$.
\end{proof}

\section*{S11. Proof of Proposition 3.6}
\begin{proof}
Fix $x$ in the full Harris set on which $S_n/n\to\mu$ coordinatewise, and work under $\Pp_x$. Let $k$ tend to infinity in the sense $k\to_\lambda\infty$, and put $K=|k|_1$. Then $k_r/K\to\lambda_r$ for every coordinate $r$.

Let $a<\rho_\lambda$. By the definition of $\rho_\lambda$, $a\mu_r<\lambda_r$ for every $r$. Choose $\varepsilon>0$ so small that
\begin{equation*}
a(\mu_r+\varepsilon)<\lambda_r-\varepsilon,
\qquad r=1,\ldots,d.
\end{equation*}
For all large $K$, $k_r/K\geq\lambda_r-\varepsilon$ for every $r$. Also, by $S_n/n\to\mu$, for $n=\lfloor aK\rfloor$ and all large $K$,
\begin{equation*}
S_n^{[r]}
\leq n(\mu_r+\varepsilon)
\leq aK(\mu_r+\varepsilon)
< K(\lambda_r-\varepsilon)
\leq k_r.
\end{equation*}
Thus $S_n\leqd k$ eventually, and consequently $N(k)\geq\lfloor aK\rfloor$ eventually.

Let $a>\rho_\lambda$. Then for some coordinate $r$ one has $a\mu_r>\lambda_r$. Choose $\varepsilon>0$ such that
\begin{equation*}
a(\mu_r-\varepsilon)>\lambda_r+\varepsilon.
\end{equation*}
For $n=\lceil aK\rceil$, the convergence of the $r$-th coordinate gives, for all large $K$,
\begin{equation*}
S_n^{[r]}
\geq n(\mu_r-\varepsilon)
\geq aK(\mu_r-\varepsilon)
> K(\lambda_r+\varepsilon)
\geq k_r.
\end{equation*}
Hence $S_n\notleqd k$ eventually, and $N(k)<\lceil aK\rceil$ eventually. Taking $a\uparrow\rho_\lambda$ in the lower bound and $a\downarrow\rho_\lambda$ in the upper bound proves the almost sure convergence.
\end{proof}

\section*{S12. Proof of Proposition 3.8}
\begin{proof}
By definition of $\mathcal S$ and of the difference group $\mathcal L$, every possible value of $Y_1$ belongs to the coset $y_0+\mathcal L$. The same is true for every $Y_m$, since the regenerative increments are identically distributed. Therefore, for $n\geq1$,
\begin{equation*}
C_n=Y_1+\cdots+Y_n
\in ny_0+\mathcal L
\end{equation*}
almost surely. This proves the first assertion.

The exact-time renewal mass of the complete cycles is
\begin{equation*}
U_C(k)=\sum_{n\geq0}\Pp_{\mathfrak a}(C_n=k).
\end{equation*}
For $n=0$ the only possible value is $C_0=\zerod$, which belongs to $\mathcal L_*$. For $n\geq1$, the preceding paragraph shows that $C_n$ belongs to $ny_0+\mathcal L$, and this coset is contained in
\begin{equation*}
\mathcal L_* = \mathcal L+\Z y_0.
\end{equation*}
Thus $U_C(k)=0$ whenever $k\notin\mathcal L_*$. If $\mathcal L_* = \Z^d$, there is no proper sublattice on which the renewal mass is forced to live, and hence no renewal-periodic obstruction remains.
\end{proof}


\section*{S13. Proof of Theorem 3.11}
\begin{proof}
Write
\begin{equation*}
Q(x,\dd y,\dd k)=q_k(x,\dd y),
\qquad
w(k)=1+|k|_1,
\end{equation*}
and, for $a\in[0,\theta]$,
\begin{equation*}
A_aV(x)=\int_{\X\times\N^d}e^{a w(k)}V(y)Q(x,\dd y,\dd k).
\end{equation*}
Since $a\leq\theta$, the drift condition gives
\begin{equation}\label{eq:S13-drift-revised}
A_aV(x)
\leq
A_\theta V(x)
\leq
\eta V(x)+b\one_C(x),
\qquad x\in\X.
\end{equation}
Put $\bar V_C=\sup_{x\in C}V(x)<\infty$ and $B_C=\eta\bar V_C+b$. Then
\begin{equation*}
\sup_{x\in C}A_\theta V(x)\leq B_C.
\end{equation*}
By the minorization, for $x\in C$,
\begin{equation*}
Q(x,\cdot)=\varepsilon\vartheta(\cdot)+(1-\varepsilon)Q^0(x,\cdot)
\end{equation*}
with the usual convention that the residual term is absent when $\varepsilon=1$. Hence
\begin{equation}\label{eq:S13-minorizing-moment-revised}
\int_{\X\times\N^d}e^{\theta w(k)}V(y)\vartheta(\dd y,\dd k)
\leq
\frac{B_C}{\varepsilon},
\end{equation}
and, if $\varepsilon<1$,
\begin{equation}\label{eq:S13-residual-moment-revised}
\sup_{x\in C}\int_{\X\times\N^d}e^{\theta w(k)}V(y)Q^0(x,\dd y,\dd k)
\leq
\frac{B_C}{1-\varepsilon}.
\end{equation}

Let
\begin{equation*}
\tau_C=\inf\{n\geq1:J_n\in C\},
\qquad
Z_C=\sum_{j=1}^{\tau_C}w(X_j).
\end{equation*}
For $x\notin C$, the process
\begin{equation*}
M_n=
\eta^{-(n\wedge\tau_C)}
\exp\left\{\theta\sum_{j=1}^{n\wedge\tau_C}w(X_j)\right\}
V(J_{n\wedge\tau_C})
\end{equation*}
is a non-negative supermartingale. Indeed, on $\{n<\tau_C\}$ we have $J_n\notin C$, and \eqref{eq:S13-drift-revised} gives
\begin{equation*}
\Ee\left[e^{\theta w(X_{n+1})}V(J_{n+1})\mid\mathcal F_n\right]
\leq
\eta V(J_n).
\end{equation*}
By Fatou's lemma applied to the stopped supermartingale,
\begin{equation}\label{eq:S13-hitC-outside-revised}
\Ee_x\left[e^{\theta Z_C};\tau_C<\infty\right]
\leq V(x),
\qquad x\notin C.
\end{equation}
Harris recurrence gives $\tau_C<\infty$ almost surely for the initial laws used below.

We next estimate one failed excursion from $C$ to the next visit to $C$. Let $\Ee_x^0$ denote the law which uses $Q^0(x,\cdot)$ for the first transition from $x\in C$ and then the original transition kernel until the next visit to $C$. From \eqref{eq:S13-hitC-outside-revised}, $V\geq1$, and \eqref{eq:S13-residual-moment-revised},
\begin{equation}\label{eq:S13-failed-excursion-theta-revised}
\sup_{x\in C}\Ee_x^0\left[e^{\theta Z_C}\right]
\leq
\sup_{x\in C}\int_{\X\times\N^d}e^{\theta w(k)}V(y)Q^0(x,\dd y,\dd k)
<\infty,
\end{equation}
when $\varepsilon<1$. If $\varepsilon=1$, there are no failed excursions. Similarly, if the chain leaves the atom, its first transition has law $\vartheta$; hence, by \eqref{eq:S13-hitC-outside-revised} and \eqref{eq:S13-minorizing-moment-revised},
\begin{equation*}
\Ee_{\mathfrak a}\left[e^{\theta Z_C}\right]
\leq
\int_{\X\times\N^d}e^{\theta w(k)}V(y)\vartheta(\dd y,\dd k)
<\infty.
\end{equation*}

For $0<a\leq\theta$, define
\begin{equation*}
M_0(a)=\sup_{x\in C}\Ee_x^0\left[e^{aZ_C}\right]
\end{equation*}
when $\varepsilon<1$, and put $M_0(a)=1$ when $\varepsilon=1$. By Jensen's inequality and \eqref{eq:S13-failed-excursion-theta-revised},
\begin{equation*}
M_0(a)
\leq
M_0^{a/\theta}(\theta),
\end{equation*}
and therefore $M_0(a)\downarrow1$ as $a\downarrow0$. Choose $\theta_1\in(0,\theta)$ such that
\begin{equation*}
(1-\varepsilon)M_0(\theta_1)<1
\end{equation*}
when $\varepsilon<1$; if $\varepsilon=1$ any sufficiently small $\theta_1\in(0,\theta)$ is admissible.

At each visit to $C$, the splitting coin succeeds with probability $\varepsilon$. Conditional on a failure, the contribution until the next visit to $C$ has exponential moment bounded by $M_0(\theta_1)$. Hence, by successive conditioning at visits to $C$,
\begin{equation*}
\Ee_{\mathfrak a}\left[e^{\theta_1(L_1+|Y_1|_1)}\right]
\leq
\Ee_{\mathfrak a}\left[e^{\theta_1 Z_C}\right]
\sum_{m=0}^{\infty}\varepsilon\left((1-\varepsilon)M_0(\theta_1)\right)^m
<\infty.
\end{equation*}
Here
\begin{equation*}
L_1+|Y_1|_1
=
\sum_{n=T_0}^{T_1-1}\bigl(1+|X_{n+1}|_1\bigr),
\end{equation*}
because the increments belong to $\N^d$. This proves the required exponential moment of the regenerative cycle. The polynomial moment assumptions follow at once, since $z^r\leq r!a^{-r}e^{az}$ for $z\geq0$ and $a>0$.

Let now $g$ satisfy $|g(x,y,k)|\leq c_g(1+|k|_1)$. Then
\begin{equation*}
R_1^*(g)
\leq
c_g\sum_{n=T_0}^{T_1-1}\bigl(1+|X_{n+1}|_1\bigr)
=
c_g(L_1+|Y_1|_1).
\end{equation*}
After reducing the exponent, this gives
\begin{equation*}
\Ee_{\mathfrak a}\left[e^{\theta_2R_1^*(g)}\right]<\infty
\end{equation*}
for some $\theta_2>0$.

Finally, for the one-cycle reward measure
\begin{equation*}
r_g(u)=
\Ee_{\mathfrak a}\left[
\sum_{n=T_0}^{T_1-1}
 g(J_n,J_{n+1},X_{n+1})
\one_{\{S_n-S_{T_0}=u\}}
\right],
\qquad u\in\N^d,
\end{equation*}
we have, for $0<\theta_2<\theta_3<\theta_1$,
\begin{align*}
\sum_{u\in\N^d}e^{\theta_2|u|_1}|r_g|(u)
&\leq
\Ee_{\mathfrak a}\left[
\sum_{n=T_0}^{T_1-1}
 e^{\theta_2|S_n-S_{T_0}|_1}
 |g(J_n,J_{n+1},X_{n+1})|
\right] \\
&\leq
c_g\Ee_{\mathfrak a}\left[
 e^{\theta_2|Y_1|_1}
\sum_{n=T_0}^{T_1-1}\bigl(1+|X_{n+1}|_1\bigr)
\right] \\
&\leq
c_g\Ee_{\mathfrak a}\left[
 e^{\theta_2(L_1+|Y_1|_1)}(L_1+|Y_1|_1)
\right] \\
&<\infty,
\end{align*}
because $z e^{\theta_2z}\leq C e^{\theta_3z}$ for $z\geq0$. Thus $r_g$ is exponentially summable.

If the minorization is imposed on an $m_0$-skeleton, the preceding proof is applied to the skeleton chain with block increments
\begin{equation*}
\widetilde X_r=S_{rm_0}-S_{(r-1)m_0}.
\end{equation*}
Let $\widetilde T_0<\widetilde T_1$ be two successive regeneration times of the split
$m_0$-skeleton and put
\begin{equation*}
\widetilde L_1=\widetilde T_1-\widetilde T_0,
\qquad
\widetilde Y_1=\sum_{r=\widetilde T_0}^{\widetilde T_1-1}\widetilde X_{r+1}.
\end{equation*}
The preceding argument, applied to the skeleton chain, yields a constant
$\bar\theta>0$ such that
\begin{equation}\label{eq:skeleton-cycle-exp}
\Ee_{\mathfrak a}
\left[
\exp\left\{\bar\theta\left(\widetilde L_1+|\widetilde Y_1|_1\right)\right\}
\right]
<\infty .
\end{equation}
Let $T_0,T_1$ be the corresponding regeneration epochs of the original chain. Then
\begin{equation*}
L_1=T_1-T_0\leq m_0\widetilde L_1,
\qquad
Y_1=S_{T_1}-S_{T_0}=\widetilde Y_1 .
\end{equation*}
Consequently, for every $0<a<\bar\theta/m_0$,
\begin{equation*}
\Ee_{\mathfrak a}
\left[
\exp\left\{a(L_1+|Y_1|_1)\right\}
\right]
\leq
\Ee_{\mathfrak a}
\left[
\exp\left\{am_0\left(\widetilde L_1+|\widetilde Y_1|_1\right)\right\}
\right]
<\infty .
\end{equation*}
This gives the exponential moment of the original regenerative cycle.

Let now $g$ be a transition reward such that
\begin{equation*}
|g(x,y,k)|\leq c_g(1+|k|_1),
\qquad (x,y,k)\in\X\times\X\times\N^d .
\end{equation*}
For the reward accumulated during one original cycle,
\begin{align*}
|R_1^*(g)|
&\leq
c_g\sum_{n=T_0}^{T_1-1}\bigl(1+|X_{n+1}|_1\bigr) \\
&\leq
c_g\sum_{r=\widetilde T_0}^{\widetilde T_1-1}
\left(m_0+|\widetilde X_{r+1}|_1\right) \\
&=
c_g\left(m_0\widetilde L_1+|\widetilde Y_1|_1\right) \\
&\leq
c_gm_0\left(\widetilde L_1+|\widetilde Y_1|_1\right).
\end{align*}
Hence, if $c_g>0$ and $0<a<\bar\theta/(c_gm_0)$, then
\begin{equation*}
\Ee_{\mathfrak a}
\left[
\exp\{a|R_1^*(g)|\}
\right]
<\infty ,
\end{equation*}
while the assertion is trivial when $c_g=0$.

Finally, for the one-cycle reward measure $r_g$, choose $0<a<\bar\theta$. Since the
partial displacement inside a cycle is bounded coordinatewise by the total displacement,
\begin{align*}
\sum_{u\in\N^d}e^{a|u|_1}|r_g|(u)
&\leq
\Ee_{\mathfrak a}
\left[
\sum_{n=T_0}^{T_1-1}
e^{a|S_n-S_{T_0}|_1}
|g(J_n,J_{n+1},X_{n+1})|
\right] \\
&\leq
c_g\Ee_{\mathfrak a}
\left[
e^{a|\widetilde Y_1|_1}
\sum_{n=T_0}^{T_1-1}\bigl(1+|X_{n+1}|_1\bigr)
\right] \\
&\leq
c_gm_0
\Ee_{\mathfrak a}
\left[
e^{a|\widetilde Y_1|_1}
\left(\widetilde L_1+|\widetilde Y_1|_1\right)
\right] \\
&<\infty ,
\end{align*}
after reducing $a$ if necessary, by \eqref{eq:skeleton-cycle-exp} and the elementary bound
$z e^{az}\leq C e^{a'z}$ for $z\geq0$ and $a<a'<\bar\theta$. Thus the exponential
summability conclusion also holds for the original chain.
\end{proof}

\section*{S14. Proof of Proposition 4.1}
\begin{proof}
Let $e^C$ denote the identity kernel sequence on $C$, concentrated at $\zerod$.
For $m\geq0$, $r\in\N^d$, $x\in C$ and $A\in\calX$ with $A\subseteq C$, the killed convolution power has the probabilistic interpretation
\begin{equation*}
(q^C)^{(m)}_r(x,A)
=
\Pp_x\left(
J_0,\ldots,J_m\in C,\ S_m=r,\ J_m\in A
\right).
\end{equation*}
Indeed, the assertion is immediate for $m=0$, because $(q^C)^{(0)}=e^C$. If it holds for $m$, then, by the Markov renewal property and by the definition of the killed kernel $q^C$,
\begin{equation*}
\begin{split}
(q^C)^{(m+1)}_r(x,A)
&=
\sum_{u\leqd r}\int_C
(q^C)^{(m)}_u(x,\dd y)q^C_{r-u}(y,A)\\
&=
\Pp_x\left(
J_0,\ldots,J_{m+1}\in C,\ S_{m+1}=r,\ J_{m+1}\in A
\right),
\end{split}
\end{equation*}
and the induction is complete.

Fix now $x\in C$, $B\in\calX$ with $B\subseteq D$, and $k\in\N^d$. For $m\geq0$, the event $\{\tau_D=m+1,\ S_{\tau_D}=k,\ J_{\tau_D}\in B\}$ is the disjoint union, over $r\leqd k$, of the events on which the trajectory remains in $C$ during the first $m$ transitions, reaches a state $y\in C$ at time $r$, and then makes one transition from $C$ to $D$ with increment $k-r$. Therefore,
\begin{equation*}
\Pp_x\left(
\tau_D=m+1,\ S_{\tau_D}=k,\ J_{\tau_D}\in B
\right)
=
\sum_{r\leqd k}
\int_C
(q^C)^{(m)}_r(x,\dd y)q^{C,D}_{k-r}(y,B).
\end{equation*}
Equivalently,
\begin{equation*}
\Pp_x\left(
\tau_D=m+1,\ S_{\tau_D}=k,\ J_{\tau_D}\in B
\right)
=
\big((q^C)^{(m)}*q^{C,D}\big)_k(x,B).
\end{equation*}
Since $x\in C$, the entrance time into $D$ cannot be equal to zero. Summing with respect to all possible values of $m$ gives
\begin{equation*}
g_D(k)(x,B)
=
\sum_{m\geq0}
\big((q^C)^{(m)}*q^{C,D}\big)_k(x,B).
\end{equation*}
The kernel $q$ is strict. Hence $q^C_{\zerod}=0$ and $q^{C,D}_{\zerod}=0$. Consequently, for fixed $k$, only finitely many terms in the preceding sum can be non-zero. It follows that
\begin{equation*}
g_D(k)(x,B)
=
\left(
\sum_{m\geq0}(q^C)^{(m)}*q^{C,D}
\right)_k(x,B)
=
(\psi^C*q^{C,D})_k(x,B).
\end{equation*}
Since $x$, $B$ and $k$ were arbitrary, this proves
\begin{equation*}
g_D=\psi^C*q^{C,D}.
\end{equation*}

It remains to prove the renewal equation and the minimality property. By the definition of the killed potential,
\begin{equation*}
\psi^C
=
e^C+q^C*\psi^C.
\end{equation*}
Multiplying this identity on the right by $q^{C,D}$ and using associativity of convolution, we obtain
\begin{equation*}
\begin{split}
g_D
&=\psi^C*q^{C,D}\\
&=q^{C,D}+q^C*\psi^C*q^{C,D}\\
&=q^{C,D}+q^C*g_D.
\end{split}
\end{equation*}
Thus $g_D$ is a non-negative solution of the renewal equation.

Let now $h$ be any non-negative kernel sequence from $C$ to $D$ satisfying
\begin{equation*}
h=q^{C,D}+q^C*h.
\end{equation*}
Iterating this identity $M+1$ times yields, for every $M\geq0$,
\begin{equation*}
h
=
\sum_{m=0}^{M}(q^C)^{(m)}*q^{C,D}
+
(q^C)^{(M+1)}*h.
\end{equation*}
The last term is non-negative. Hence
\begin{equation*}
h
\geq
\sum_{m=0}^{M}(q^C)^{(m)}*q^{C,D},
\qquad M\geq0.
\end{equation*}
Letting $M\to\infty$ and using monotone convergence coefficientwise, we get, for every $x\in C$, $B\subseteq D$ and $k\in\N^d$,
\begin{equation*}
h_k(x,B)
\geq
\sum_{m\geq0}
\big((q^C)^{(m)}*q^{C,D}\big)_k(x,B)
=
g_D(k)(x,B).
\end{equation*}
Therefore $g_D$ is the minimal non-negative solution of
\begin{equation*}
g_D=q^{C,D}+q^C*g_D.
\end{equation*}
\end{proof}

\section*{S15. Proof of Proposition 4.2}
\begin{proof}
Let $x\in C$, $A\in\calX$ and $k\in\N^d$. By the definition of the exact-time Markov renewal potential,
\begin{equation*}
\psi_k(x,A)
=
\sum_{n\geq0}\Pp_x\left(J_n\in A,\ S_n=k\right).
\end{equation*}
Since $x\in C$, for every $n\geq0$ the events $\{n<\tau_D\}$ and $\{\tau_D\leq n\}$ form a partition of $\Omega$. Therefore, by Tonelli's theorem,
\begin{equation*}
\begin{aligned}
\psi_k(x,A)
&=
\sum_{n\geq0}\Pp_x\left(n<\tau_D,\ J_n\in A,\ S_n=k\right) \\
&\quad+
\sum_{n\geq0}\Pp_x\left(\tau_D\leq n,\ J_n\in A,\ S_n=k\right).
\end{aligned}
\end{equation*}
On the event $\{n<\tau_D\}$ the states $J_0,\ldots,J_n$ belong to $C$. Hence the first term is exactly the potential of the chain killed on entering $D$, and
\begin{equation*}
\sum_{n\geq0}\Pp_x\left(n<\tau_D,\ J_n\in A,\ S_n=k\right)
=
\sum_{n\geq0}(q^C)^{(n)}_k(x,A\cap C)
=
\psi^C_k(x,A\cap C).
\end{equation*}
We now treat the contribution after the first entrance into $D$. Put $\ell=n-\tau_D$. Since $\tau_D$ is a stopping time for the natural filtration of the Markov renewal chain, the strong Markov property at $\tau_D$ gives, on $\{\tau_D<\infty,\ S_{\tau_D}=r,\ J_{\tau_D}=y\}$, a post-entrance Markov renewal chain started from $y$, with the additive component shifted by $r$. Thus, again by Tonelli's theorem,
\begin{equation*}
\begin{aligned}
&\sum_{n\geq0}\Pp_x\left(\tau_D\leq n,\ J_n\in A,\ S_n=k\right) \\
&\quad=
\sum_{\ell\geq0}\sum_{r\leqd k}
\int_D
\Pp_x\left(\tau_D<\infty,\ S_{\tau_D}=r,\ J_{\tau_D}\in\dd y\right)
\Pp_y\left(J_\ell\in A,\ S_\ell=k-r\right).
\end{aligned}
\end{equation*}
By the definition of the first-entrance kernel $g_D$ and of the potential $\psi$, the last expression is
\begin{equation*}
\sum_{r\leqd k}
\int_D g_D(r)(x,\dd y)
\sum_{\ell\geq0}\Pp_y\left(J_\ell\in A,\ S_\ell=k-r\right)
=
\sum_{r\leqd k}
\int_D g_D(r)(x,\dd y)\psi_{k-r}(y,A).
\end{equation*}
Combining the two contributions yields
\begin{equation*}
\psi_k(x,A)
=
\psi^C_k(x,A\cap C)
+
\sum_{r\leqd k}
\int_D g_D(r)(x,\dd y)\psi_{k-r}(y,A).
\end{equation*}
It remains to pass from exact-time potentials to rectangular potentials. Summing the preceding identity over $u\leqd k$, we obtain
\begin{equation*}
\Psi_k(x,A)
=
\sum_{u\leqd k}\psi^C_u(x,A\cap C)
+
\sum_{u\leqd k}\sum_{r\leqd u}
\int_D g_D(r)(x,\dd y)\psi_{u-r}(y,A).
\end{equation*}
The first sum is $\Psi^C_k(x,A\cap C)$. In the second sum, set $v=u-r$. Then $r\leqd k$ and $v\leqd k-r$. Since all terms are non-negative, Tonelli's theorem gives
\begin{equation*}
\begin{aligned}
\sum_{u\leqd k}\sum_{r\leqd u}
\int_D g_D(r)(x,\dd y)\psi_{u-r}(y,A)
&=
\sum_{r\leqd k}
\int_D g_D(r)(x,\dd y)
\sum_{v\leqd k-r}\psi_v(y,A) \\
&=
\sum_{r\leqd k}
\int_D g_D(r)(x,\dd y)\Psi_{k-r}(y,A).
\end{aligned}
\end{equation*}
Consequently,
\begin{equation*}
\Psi_k(x,A)
=
\Psi^C_k(x,A\cap C)
+
\sum_{r\leqd k}
\int_D g_D(r)(x,\dd y)\Psi_{k-r}(y,A),
\end{equation*}
where $\Psi^C=s_0*\psi^C$. This completes the proof.
\end{proof}

\section*{S16. Proof of Proposition 4.3}
\begin{proof}
Fix $x\in C$, $A\in\calX$ with $A\subseteq C$, and $k\in\N^d$. We decompose the event defining $P_Z^C(k)(x,A)$ according to the first renewal epoch. If no renewal epoch belongs to the lower rectangle $[0,k]_d$, then $Z_k=x$ and no entrance into $D$ has occurred inside the rectangle. This contribution is
\begin{equation*}
\one_A(x)\Pp_x(S_1\notleqd k)
=\one_A(x)\left(1-\sum_{r\leqd k}q_r(x,\X)\right)
=\widetilde H^C_k(x,A).
\end{equation*}
On the complementary event, the first renewal epoch has the form $S_1=r\leqd k$. If $J_1\in D$, then $\tau_D=1$ and $S_{\tau_D}=r\leqd k$, which is incompatible with the condition $S_{\tau_D}\notleqd k$. Hence only first transitions into $C$ can contribute. By the Markov renewal property, conditionally on $(J_1,S_1)=(y,r)$ with $y\in C$ and $r\leqd k$, the remaining process is a new Markov renewal chain started from $y$ and observed in the residual rectangle with corner $k-r$. Therefore
\begin{equation*}
\begin{split}
P_Z^C(k)(x,A)
&=\widetilde H^C_k(x,A)
+
\sum_{r\leqd k}\int_C q_r(x,\dd y)
\Pp_y\left(Z_{k-r}\in A,\ S_{\tau_D}\notleqd k-r\right)  \\
&=\widetilde H^C_k(x,A)
+
\sum_{r\leqd k}\int_C q^C_r(x,\dd y)P_Z^C(k-r)(y,A).
\end{split}
\end{equation*}
This is precisely the renewal equation
\begin{equation*}
P_Z^C=\widetilde H^C+q^C*P_Z^C.
\end{equation*}
Iterating the preceding equation gives, for every $m\geq0$,
\begin{equation*}
P_Z^C
=
\sum_{n=0}^{m}(q^C)^{(n)}*\widetilde H^C
+
(q^C)^{(m+1)}*P_Z^C.
\end{equation*}
Since the kernel is strict, every transition contributing to $(q^C)^{(n)}_r$ with $r\leqd k$ increases at least one coordinate, and therefore the coefficient of order $k$ in the remainder vanishes as soon as $m+1>|k|_1$. Hence, coefficientwise,
\begin{equation*}
P_Z^C
=
\sum_{n\geq0}(q^C)^{(n)}*\widetilde H^C
=
\psi^C*\widetilde H^C.
\end{equation*}
It remains to identify the reliability probability. By definition,
\begin{equation*}
P_Z^C(k)(x,C)
=
\Pp_x\left(Z_k\in C,\ S_{\tau_D}\notleqd k\right).
\end{equation*}
On the event $\{S_{\tau_D}\notleqd k\}$ one has $N(k)<\tau_D$. Indeed, if $N(k)\geq\tau_D$, then
\begin{equation*}
S_{\tau_D}\leqd S_{N(k)}\leqd k,
\end{equation*}
which is impossible. Thus $J_{N(k)}\in C$, and consequently $Z_k\in C$. Hence
\begin{equation*}
P_Z^C(k)(x,C)
=
\Pp_x\left(S_{\tau_D}\notleqd k\right)
=R_D(k)(x).
\end{equation*}
The proof is complete.
\end{proof}

\section*{S17. Proof of Lemma 5.1}
\begin{proof}
For a lower rectangle with upper corner $k$, the first $\mathfrak N(k)$ regenerative cycles are complete before the boundary is crossed. The next cycle is the first regenerative cycle whose terminal displacement is not contained in the same rectangle. Since $D_j=T_j$ is the embedded-time endpoint of the $j$-th complete cycle, this gives
\begin{equation*}
D_{\mathfrak N(k)}\leq N(k)<D_{\mathfrak N(k)+1}.
\end{equation*}
Hence
\begin{equation}\label{eq:S17-cycle-count-error}
0\leq N(k)-D_{\mathfrak N(k)}\leq L_{\mathfrak N(k)+1}.
\end{equation}
Let $k_t:\Lambda\to\N^d$ satisfy the approximation condition in Lemma~5.1 of the main text. The strictness of the kernel implies that every embedded transition increases at least one coordinate. Therefore $|Y_j|_1\geq1$ for every regenerative cycle. Since $\Lambda$ is compact and $\sup_{\lambda\in\Lambda}|k_t(\lambda)-t\lambda|_1=o(\sqrt t)$, there is a finite constant $c_\Lambda$ such that, for all large $t$,
\begin{equation}\label{eq:S17-cycle-index-bound}
\mathfrak N(k_t(\lambda))\leq |k_t(\lambda)|_1\leq c_\Lambda t,
\qquad \lambda\in\Lambda.
\end{equation}
Combining \eqref{eq:S17-cycle-count-error} and \eqref{eq:S17-cycle-index-bound},
\begin{equation}\label{eq:S17-count-max-bound}
\sup_{\lambda\in\Lambda}|N(k_t(\lambda))-D_{\mathfrak N(k_t(\lambda))}|
\leq
\max_{1\leq j\leq \lfloor c_\Lambda t\rfloor+1}L_j.
\end{equation}
For every $\varepsilon>0$,
\begin{equation}\label{eq:S17-union-bound-length}
\Pp_{\mathfrak a}\left(\max_{1\leq j\leq \lfloor c_\Lambda t\rfloor+1}L_j>\varepsilon\sqrt t\right)
\leq
(\lfloor c_\Lambda t\rfloor+1)\Pp_{\mathfrak a}(L_1>\varepsilon\sqrt t).
\end{equation}
If $\Ee_{\mathfrak a}[L_1^2]<\infty$, then
\begin{equation}\label{eq:S17-tail-second-moment}
t\Pp_{\mathfrak a}(L_1>\varepsilon\sqrt t)
\leq
\varepsilon^{-2}\Ee_{\mathfrak a}\left[L_1^2\one_{\{L_1>\varepsilon\sqrt t\}}\right]
\xrightarrow[t\to\infty]{}0.
\end{equation}
Equations \eqref{eq:S17-count-max-bound}--\eqref{eq:S17-tail-second-moment} prove
\begin{equation*}
\sup_{\lambda\in\Lambda}
\frac{|N(k_t(\lambda))-D_{\mathfrak N(k_t(\lambda))}|}{\sqrt t}
\xrightarrow[t\to\infty]{\mathbb P}0.
\end{equation*}
For a transition reward $g$, denote by $R_j^*(g)$ the absolute reward accumulated during the $j$-th regenerative cycle. The reward produced after $D_{\mathfrak N(k)}$ and before $N(k)$ is contained in the incomplete cycle which follows the first $\mathfrak N(k)$ complete cycles. Hence, for all large $t$,
\begin{equation*}
\sup_{\lambda\in\Lambda}|\hbox{incomplete reward at }k_t(\lambda)|
\leq
\max_{1\leq j\leq \lfloor c_\Lambda t\rfloor+1}R_j^*(g).
\end{equation*}
The variables $R_j^*(g)$ are independent and identically distributed under $\Pp_{\mathfrak a}$. Replacing $L_1$ by $R_1^*(g)$ in \eqref{eq:S17-union-bound-length} and \eqref{eq:S17-tail-second-moment} gives
\begin{equation*}
\sup_{\lambda\in\Lambda}
\frac{|\hbox{incomplete reward at }k_t(\lambda)|}{\sqrt t}
\xrightarrow[t\to\infty]{\mathbb P}0,
\end{equation*}
provided $\Ee_{\mathfrak a}[(R_1^*(g))^2]<\infty$. This is the second assertion.
\end{proof}

\section*{S18. Proof of Proposition 5.2}
\begin{proof}
We first work under $\Pp_{\mathfrak a}$. Put
\begin{equation*}
\kappa(\lambda)=\min_{1\leq r\leq d}\frac{\lambda_r}{m_r},
\qquad \lambda\in\Delta_d.
\end{equation*}
Let $\Lambda$ be a compact subset of $\Delta_I$. By the definition of $\Delta_I$, the inactive coordinates have a uniform margin: there is $\eta>0$ such that
\begin{equation}\label{eq:S18-inactive-margin}
\frac{\lambda_r}{m_r}\geq \kappa(\lambda)+\eta,
\qquad \lambda\in\Lambda,
\quad r\notin I.
\end{equation}
The strong law for the independent cycle increments $(Y_j)_{j\geq1}$ gives, for each $M<\infty$,
\begin{equation}\label{eq:S18-uniform-slln-cycles}
\max_{0\leq n\leq Mt}\frac{|C_n-nm|_1}{t}
\xrightarrow[t\to\infty]{\mathrm{a.s.}}0.
\end{equation}
Indeed, this is the uniform convergence on compact intervals of the polygonal interpolation of the partial sums $C_n-nm$, divided by $t$.

Fix $\varepsilon>0$ with
\begin{equation*}
0<\varepsilon<\min\left\{\frac{\eta}{2},\inf_{\lambda\in\Lambda}\kappa(\lambda)\right\}.
\end{equation*}
The last infimum is positive because $\Lambda$ is compact and $m_r>0$. Define
\begin{equation*}
n_t^+(\lambda)=\lfloor t\{\kappa(\lambda)+\varepsilon\}\rfloor,
\qquad
n_t^-(\lambda)=\max\{0,\lfloor t\{\kappa(\lambda)-\varepsilon\}\rfloor\}.
\end{equation*}
For $r\in I$, $\lambda_r=\kappa(\lambda)m_r$. By \eqref{eq:S18-uniform-slln-cycles} and the approximation $\sup_{\lambda\in\Lambda}|k_t(\lambda)-t\lambda|_1=o(t)$,
\begin{equation}\label{eq:S18-upper-active}
C_{n_t^+(\lambda),r}-k_{t,r}(\lambda)
=t\varepsilon m_r+o(t)>0,
\end{equation}
uniformly in $\lambda\in\Lambda$. Consequently $C_{n_t^+(\lambda)}\notleqd k_t(\lambda)$ and
\begin{equation}\label{eq:S18-upper-bound-cycle-count}
\mathfrak N(k_t(\lambda))<n_t^+(\lambda),
\qquad \lambda\in\Lambda,
\end{equation}
for all sufficiently large $t$ on the event of \eqref{eq:S18-uniform-slln-cycles}.

For $r\in I$, the same argument gives
\begin{equation*}
C_{n_t^-(\lambda),r}-k_{t,r}(\lambda)
=-t\varepsilon m_r+o(t)<0
\end{equation*}
uniformly in $\lambda$. For $r\notin I$, by \eqref{eq:S18-inactive-margin},
\begin{equation}\label{eq:S18-lower-inactive}
C_{n_t^-(\lambda),r}-k_{t,r}(\lambda)
\leq t\{\kappa(\lambda)-\varepsilon\}m_r-t\lambda_r+o(t)
\leq -\frac{\eta}{2}tm_r+o(t)<0,
\end{equation}
uniformly in $\lambda$. Thus $C_{n_t^-(\lambda)}\leqd k_t(\lambda)$ uniformly in $\lambda$ for all sufficiently large $t$, and hence
\begin{equation}\label{eq:S18-lower-bound-cycle-count}
n_t^-(\lambda)\leq \mathfrak N(k_t(\lambda)),
\qquad \lambda\in\Lambda.
\end{equation}
Equations \eqref{eq:S18-upper-bound-cycle-count} and \eqref{eq:S18-lower-bound-cycle-count} imply
\begin{equation*}
\sup_{\lambda\in\Lambda}
\left|\frac{\mathfrak N(k_t(\lambda))}{t}-\kappa(\lambda)\right|
\leq \varepsilon+o(1)
\end{equation*}
almost surely. Since $\varepsilon$ is arbitrary,
\begin{equation}\label{eq:S18-cycle-count-rate}
\sup_{\lambda\in\Lambda}
\left|\frac{\mathfrak N(k_t(\lambda))}{t}-\kappa(\lambda)\right|
\xrightarrow[t\to\infty]{\mathrm{a.s.}}0.
\end{equation}
This proves the first assertion under $\Pp_{\mathfrak a}$.

Assume now that $\Ee_{\mathfrak a}[L_1]<\infty$. The strong law for the cycle lengths gives, for every $M<\infty$,
\begin{equation}\label{eq:S18-uniform-slln-lengths}
\max_{0\leq n\leq Mt}\frac{|D_n-\ell n|}{t}
\xrightarrow[t\to\infty]{\mathrm{a.s.}}0.
\end{equation}
Since $\kappa$ is bounded on $\Lambda$, \eqref{eq:S18-cycle-count-rate} and \eqref{eq:S18-uniform-slln-lengths} yield
\begin{equation}\label{eq:S18-complete-transition-rate}
\sup_{\lambda\in\Lambda}
\left|\frac{D_{\mathfrak N(k_t(\lambda))}}{t}-\ell\kappa(\lambda)\right|
\xrightarrow[t\to\infty]{\mathrm{a.s.}}0.
\end{equation}
It remains to replace $D_{\mathfrak N}$ by $N$. From \eqref{eq:S17-count-max-bound}, it suffices to prove
\begin{equation}\label{eq:S18-max-length-linear}
\frac{1}{t}\max_{1\leq j\leq \lfloor c_\Lambda t\rfloor+1}L_j
\xrightarrow[t\to\infty]{\mathrm{a.s.}}0.
\end{equation}
Let $t\in[2^m,2^{m+1})$. Then the probability that the left-hand side of \eqref{eq:S18-max-length-linear} exceeds $\varepsilon$ somewhere in this dyadic block is bounded by
\begin{equation}\label{eq:S18-dyadic-bound}
(\lfloor c_\Lambda2^{m+1}\rfloor+1)\Pp_{\mathfrak a}(L_1>\varepsilon2^m).
\end{equation}
Since $\Ee_{\mathfrak a}[L_1]<\infty$,
\begin{equation}\label{eq:S18-dyadic-summability}
\sum_{m\geq1}2^m\Pp_{\mathfrak a}(L_1>\varepsilon2^m)<\infty.
\end{equation}
The Borel--Cantelli lemma applied to \eqref{eq:S18-dyadic-bound} proves \eqref{eq:S18-max-length-linear}. Combining \eqref{eq:S17-count-max-bound}, \eqref{eq:S18-complete-transition-rate} and \eqref{eq:S18-max-length-linear},
\begin{equation*}
\sup_{\lambda\in\Lambda}
\left|\frac{N(k_t(\lambda))}{t}-\ell\kappa(\lambda)\right|
\xrightarrow[t\to\infty]{\mathrm{a.s.}}0.
\end{equation*}
This is the second assertion under $\Pp_{\mathfrak a}$.

We finally transfer the result to the initial laws specified in Proposition~5.2 of the main text. Let $\sigma$ be the first entrance time into the split atom and let $B_\sigma$ be the entrance displacement. By assumption these random variables are finite almost surely. Hence $\sigma/t\to0$ and $|B_\sigma|_1/t\to0$ in probability. On the event
\begin{equation*}
\{\sigma\leq\delta t,
\ |B_\sigma|_1\leq\delta t\},
\end{equation*}
the post-entrance process is an atom-started process observed in lower rectangles with corners differing from $k_t(\lambda)$ by at most $\delta t$ in $\ell^1$ norm. The bounds \eqref{eq:S18-upper-active}--\eqref{eq:S18-lower-inactive} are stable under such a perturbation after first fixing $\varepsilon$ and then taking $\delta>0$ sufficiently small. Letting $t\to\infty$ and then $\delta\downarrow0$ gives the stated convergence in probability for the general initial law.
\end{proof}

\section*{S19. Proof of Theorem 5.9}
\begin{proof}
We begin with the process started from the split atom. For a measurable function $f$, put
\begin{equation*}
H_j(f)=\sum_{n=T_{j-1}}^{T_j-1}f(J_n),
\qquad
H_j^*(f)=\sum_{n=T_{j-1}}^{T_j-1}|f(J_n)|.
\end{equation*}
Under $\Pp_{\mathfrak a}$, the sequence $(H_j(f),H_j^*(f),Y_j,L_j)_{j\geq1}$ is independent and identically distributed, and $\Ee_{\mathfrak a}[H_1^*(f)]<\infty$. The rectangular occupation functional decomposes as
\begin{equation}\label{eq:S19-occupation-decomposition}
V_f(k)=\sum_{j=1}^{\mathfrak N(k)}H_j(f)+B_f(k),
\end{equation}
where $B_f(k)$ is the contribution of the incomplete cycle intersecting the boundary. By \eqref{eq:S17-cycle-index-bound}, for all large $t$,
\begin{equation}\label{eq:S19-boundary-reward-bound}
\sup_{\lambda\in\Lambda}|B_f(k_t(\lambda))|
\leq
\max_{1\leq j\leq \lfloor c_\Lambda t\rfloor+1}H_j^*(f).
\end{equation}
The integrability of $H_1^*(f)$ implies
\begin{equation*}
\frac{1}{t}\max_{1\leq j\leq \lfloor c_\Lambda t\rfloor+1}H_j^*(f)
\xrightarrow[t\to\infty]{\mathrm{a.s.}}0.
\end{equation*}
This follows from the dyadic Borel--Cantelli argument used in \eqref{eq:S18-dyadic-bound}--\eqref{eq:S18-dyadic-summability}. It also converges to zero in $L^1$. Indeed, for $M_n=\max_{1\leq j\leq n}H_j^*(f)$,
\begin{equation}\label{eq:S19-max-l1-start}
\frac{\Ee[M_n]}{n}
=\int_0^\infty \Pp(M_n>nu)\dd u
\leq \varepsilon+
\int_\varepsilon^\infty n\Pp(H_1^*(f)>nu)\dd u.
\end{equation}
Changing variables in the last integral gives
\begin{equation}\label{eq:S19-max-l1-tail}
\int_\varepsilon^\infty n\Pp(H_1^*(f)>nu)\dd u
=
\int_{\varepsilon n}^\infty \Pp(H_1^*(f)>v)\dd v
\xrightarrow[n\to\infty]{}0.
\end{equation}
Letting $\varepsilon\downarrow0$ proves $\Ee[M_n]/n\to0$.

For each $M<\infty$, the strong law for the cycle rewards gives
\begin{equation*}
\max_{0\leq n\leq Mt}
\left|\frac{1}{t}\sum_{j=1}^{n}\{H_j(f)-\Ee_{\mathfrak a}[H_1(f)]\}\right|
\xrightarrow[t\to\infty]{\mathrm{a.s.}}0.
\end{equation*}
Together with \eqref{eq:S18-cycle-count-rate}, \eqref{eq:S19-occupation-decomposition} and \eqref{eq:S19-boundary-reward-bound}, this yields
\begin{equation}\label{eq:S19-atom-uniform-limit}
\sup_{\lambda\in\Lambda}
\left|\frac{V_f(k_t(\lambda))}{t}-\kappa(\lambda)\Ee_{\mathfrak a}[H_1(f)]\right|
\xrightarrow[t\to\infty]{\mathrm{a.s.}}0.
\end{equation}
The same convergence holds in $L^1$ by \eqref{eq:S19-max-l1-start}--\eqref{eq:S19-max-l1-tail} and the $L^1$ form of the strong law for integrable independent variables on compact time intervals. Kac's formula gives
\begin{equation*}
\Ee_{\mathfrak a}[H_1(f)]=\ell\pi(f).
\end{equation*}
Thus \eqref{eq:S19-atom-uniform-limit} is the desired atom-started result.

We next consider a general initial law satisfying the entrance assumptions of Theorem~5.9 in the main text. Let $\sigma$ be the first entrance time into the split atom, $S_\sigma$ the entrance displacement, and $B_0(f)$ the reward accumulated before this entrance. By assumption, $\sigma$, $|S_\sigma|_1$ and $|B_0(f)|$ are integrable. Since $S_\sigma$ takes values in the countable lattice, the atom-started almost-sure convergence just proved may be intersected over all deterministic shifts $r\in\N^d$; hence it holds simultaneously for the shifted rectangles $k_t(\lambda)-r$ whenever these rectangles are non-negative. The pre-entrance reward divided by $t$ converges to zero almost surely, and the shifted post-entrance term gives the same limit as in \eqref{eq:S19-atom-uniform-limit}. This proves the almost-sure convergence.

For the $L^1$ convergence, fix $M<\infty$ and restrict to the event
\begin{equation*}
A_M=\{\sigma+|S_\sigma|_1+|B_0(f)|\leq M\}.
\end{equation*}
On $A_M$, only finitely many deterministic entrance shifts are possible, and the atom-started $L^1$ convergence applies uniformly over these shifts. On $A_M^c$, the pre-entrance contribution is bounded by $|B_0(f)|$, whereas the post-entrance absolute reward is bounded by the atom-started reward for $|f|$ in the unshifted rectangle. The atom-started $L^1$ convergence for $|f|$ implies that, for some $t_0$,
\begin{equation}\label{eq:S19-uniform-l1-bound-absolute}
\sup_{t\geq t_0}\Ee_{\mathfrak a}
\left[
\sup_{\lambda\in\Lambda}\frac{V_{|f|}(k_t(\lambda))}{t}
\right]<\infty.
\end{equation}
Uniform integrability follows from \eqref{eq:S19-uniform-l1-bound-absolute} and from the integrability of $|B_0(f)|$. Letting $t\to\infty$ and then $M\to\infty$ proves the asserted $L^1$ convergence for the general initial law.

Finally, for $f=\one_A$,
\begin{equation}\label{eq:S19-renewal-function-expectation}
\Ee_x[V_{\one_A}(k)]=\Psi_k(x,A).
\end{equation}
Taking expectations in the $L^1$ convergence and using \eqref{eq:S19-renewal-function-expectation} gives the Markov renewal function asymptotic.
\end{proof}

\section*{S20. Proof of Proposition 6.2}\label{sm:proof-prop-finite-state-spectral}

We prove that the finite-state hypotheses imply Assumption~6.1. Let $\mathcal B=\mathbb C^s$, with any norm on this finite-dimensional space. Since the state space is finite, the boundedness of the Fourier--Laplace kernel is a statement about the entries of the matrix
\begin{equation*}
Q_z(i,j)=\sum_{k\in\N^d}e^{z\cdot k}q_{ij}(k).
\end{equation*}
The assumed exponential strip of convergence implies uniform absolute convergence on every smaller closed strip $|\operatorname{Re}z|_1\leq\eta'<\eta$. Hence $z\mapsto Q_z$ is analytic as a matrix-valued function on $|\operatorname{Re}z|_1<\eta$.

At $z=0$, the matrix $Q_0$ is the transition matrix of the embedded chain. Irreducibility and aperiodicity give, by Perron--Frobenius theory, a simple eigenvalue equal to one, with right eigenvector $\one$ and invariant left eigenvector $\pi$. All other eigenvalues have modulus strictly smaller than one. Thus, with
\begin{equation*}
\Pi f=\pi(f)\one,
\qquad
N=Q_0-\Pi,
\end{equation*}
one has $\Pi N=N\Pi=0$ and there exist $C<\infty$ and $r<1$ such that
\begin{equation*}
\|N^n\|\leq Cr^n,
\qquad n\geq0.
\end{equation*}
This proves the spectral-gap part of Assumption~6.1.

Analytic perturbation theory for a simple isolated eigenvalue gives a neighbourhood of the origin on which
\begin{equation*}
Q_z=\varrho(z)\Pi_z+N_z,
\qquad
\Pi_zN_z=N_z\Pi_z=0,
\end{equation*}
where $\varrho(0)=1$, and $\varrho$, $\Pi_z$ and $N_z$ are analytic. Since $Q_t$ is a positive matrix for real $t$ near zero, $\varrho(t)>0$ there, and the branch
\begin{equation*}
\Lambda(z)=\log\varrho(z),
\qquad \Lambda(0)=0,
\end{equation*}
is analytic near the origin. The first derivative of $\Lambda$ gives the stationary mean increment $m$, and the second derivative gives the Markov-additive covariance matrix. The proposition assumes that this covariance matrix is symmetric positive definite; hence the quadratic expansion required in Assumption~6.1 holds.

It remains to verify the Fourier aperiodicity away from the origin. Suppose, to the contrary, that for some $\theta\in[-\pi,\pi]^d\setminus\{0\}$ the spectral radius of $Q_{\mathrm i\theta}$ is one. Since $|Q_{\mathrm i\theta}(i,j)|\leq Q_0(i,j)$ and $Q_0$ is irreducible, equality in the Perron--Frobenius modulus comparison implies the existence of a unit-modulus eigenvalue $\omega$ and complex numbers $a_i$, $|a_i|=1$, such that
\begin{equation*}
e^{\mathrm i\theta\cdot k}a_j=\omega a_i
\end{equation*}
for every triple $(i,j,k)$ with $q_{ij}(k)>0$. This is precisely the excluded coboundary relation. Therefore the spectral radius of $Q_{\mathrm i\theta}$ is strictly smaller than one for every non-zero $\theta$.

By compactness, for each $\delta>0$ the spectral radii of $Q_{\mathrm i\theta}$ are bounded by a number strictly smaller than one uniformly on $\{\theta\in[-\pi,\pi]^d:|\theta|_1\geq\delta\}$. Continuity of the entries gives the same strict bound for $Q_{\zeta+\mathrm i\theta}$ when $|\zeta|_1$ is small. In finite dimension, a uniform spectral-radius bound yields constants $C_\delta<\infty$ and $r_\delta<1$ such that
\begin{equation*}
\sup_{|\theta|_1\geq\delta}\|Q_{\zeta+\mathrm i\theta}^n\|
\leq C_\delta r_\delta^n |\varrho(\zeta)|^n,
\qquad n\geq0,
\end{equation*}
after reducing the neighbourhood of zero if necessary. This is the tilted aperiodicity condition. The five parts of Assumption~6.1 follow.

\section*{S21. Proof of Proposition 6.3}\label{sm:proof-prop-weighted-supremum-spectral}

Let $\mathcal B_V$ be equipped with the norm
\begin{equation*}
\|f\|_V=\sup_{x\in\X}\frac{|f(x)|}{V(x)}.
\end{equation*}
A finite signed measure $\alpha$ acts continuously on this space whenever
\begin{equation*}
\int_\X V(x)|\alpha|(\dd x)<\infty,
\end{equation*}
because then $|\alpha(f)|\leq \|f\|_V\int V\,\dd|\alpha|$. In particular, the invariant probability $\pi$ is continuous on $\mathcal B_V$ by the assumption $\pi(V)<\infty$, and $\one\in\mathcal B_V$.

The exponential moment bound implies that, for every $\eta'<\eta$,
\begin{equation*}
\sup_{x\in\X}\frac{1}{V(x)}
\sum_{k\in\N^d}e^{\eta'|k|_1}\int_\X V(y)q_k(x,\dd y)<\infty.
\end{equation*}
Consequently, for $|\operatorname{Re}z|_1\leq\eta'$,
\begin{equation*}
\|Q_z f\|_V
\leq
\|f\|_V
\sup_{x\in\X}\frac{1}{V(x)}
\sum_{k\in\N^d}e^{\eta'|k|_1}\int_\X V(y)q_k(x,\dd y),
\end{equation*}
and $Q_z$ is a bounded operator on $\mathcal B_V$. The same domination, with powers of $|k|_1$ absorbed by a smaller exponential margin, gives norm convergence of the derivative series. Hence $z\mapsto Q_z$ is analytic as a bounded-operator-valued function on a possibly smaller strip.

The $V$-geometric ergodicity assumption gives
\begin{equation*}
Q_0^n=\Pi+N^n,
\qquad
\Pi f=\pi(f)\one,
\qquad
\|N^n\|_V\leq Mr^n,
\end{equation*}
with $\Pi N=N\Pi=0$. Thus the eigenvalue one of $Q_0$ is simple and isolated, and the remainder of the spectrum lies in a disc of radius strictly smaller than one. Since the perturbation $z\mapsto Q_z$ is analytic in operator norm, analytic perturbation theory for isolated spectral values yields an analytic eigenvalue $\varrho(z)$, an analytic rank-one projection $\Pi_z$, and an analytic remainder $N_z$ in a neighbourhood of zero. For real $\zeta$ close to zero, the positivity of $Q_\zeta$ gives $\varrho(\zeta)>0$, and hence
\begin{equation*}
\Lambda(z)=\log\varrho(z),
\qquad \Lambda(0)=0,
\end{equation*}
is well defined after choosing the analytic branch near zero. Its first derivative gives the mean vector $m$, and the assumed positive definiteness of the second derivative gives the covariance matrix $\Gamma$ required in Assumption~6.1.

The strong Fourier aperiodicity condition in the proposition is exactly the fourth part of Assumption~6.1, written in the $V$-norm and with the same dominant eigenvalue $\varrho(\zeta)$. Combining this condition with the spectral decomposition at zero, the analyticity of $Q_z$, and the positive-definite quadratic expansion of $\log\varrho$ verifies all parts of Assumption~6.1 on $\mathcal B=\mathcal B_V$.

\section*{S22. Proof of Theorem 6.7 and regenerative lattice local estimates}
This section gives the regenerative proof of the periodic exact-time local theorem stated in Section~6 of the main text. The assumptions are those of the regenerative exact-time theorem. Thus the complete-cycle increment $Y_1$ has an exponential moment, a positive definite covariance matrix $\Gamma$, mean $m\in(0,\infty)^d$, and support contained in the coset $y_0+\mathcal L$ of a full-rank subgroup $\mathcal L\subset\Z^d$. The law is aperiodic on this coset. We write $h_{\mathcal L}=|\Z^d/\mathcal L|$, $\mathcal L_* =\mathcal L+\Z y_0$, $q_* = |\mathcal L_*/\mathcal L|$, and $h_* = |\Z^d/\mathcal L_*|$. Hence
\begin{equation}\label{eq:S22-covolume-relation}
h_{\mathcal L}=q_*h_*.
\end{equation}

\begin{lemma}\label{lem:S22-cycle-local}
For $n\geq1$ and $k\in\Z^d$, put
\begin{equation*}
\Phi_n(k)=(2\pi)^{-d/2}\det(\Gamma)^{-1/2}
\exp\left\{-\frac{1}{2n}(k-nm)^T\Gamma^{-1}(k-nm)\right\}.
\end{equation*}
For every compact set $K\subset\R^d$ and every $M<\infty$,
\begin{equation}\label{eq:S22-cycle-local}
\sup_{\substack{|n-a|\leq M\sqrt a\\ k\in\mathcal W_a(K),\ k\in ny_0+\mathcal L}}
\left|n^{d/2}\Pp_{\mathfrak a}(C_n=k)-h_{\mathcal L}\Phi_n(k)\right|
\xrightarrow[a\to\infty]{}0.
\end{equation}
Moreover, there are constants $c,C\in(0,\infty)$ such that, for all large $n$ and all $k\in\Z^d$,
\begin{equation}\label{eq:S22-cycle-gaussian-bound}
\Pp_{\mathfrak a}(C_n=k)
\leq Cn^{-d/2}\exp\left\{-c\frac{|k-nm|_1^2}{n}\right\}+Ce^{-cn},
\end{equation}
where the probability is understood as zero if $k\notin ny_0+\mathcal L$.
\end{lemma}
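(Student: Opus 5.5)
This is the classical lattice local limit theorem for i.i.d. sums supported on a coset, proved by Fourier inversion on the dual torus of $\mathcal L$, followed by a Cramér tilt for the uniform bound. The plan has three steps: reduce to an aperiodic walk on $\mathcal L$, prove the local approximation \eqref{eq:S22-cycle-local}, and then prove the Gaussian bound \eqref{eq:S22-cycle-gaussian-bound}.

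\emph{Reduction.} Write $Y_1=y_0+Y_1'$ with $Y_1'\in\mathcal L$. Then $C_n-ny_0=C_n'$ is a sum of i.i.d.\ $\mathcal L$-valued vectors. Choose a basis $A\in\Z^{d\times d}$ of $\mathcal L$, so that $|\det A|=h_{\mathcal L}$, and set $Y_1'=AW_1$ with $W_1\in\Z^d$. I read "aperiodic on the coset" as saying that $W_1-W_1'$ (for an independent copy $W_1'$) generates $\Z^d$. Equivalently, the characteristic function $\varphi_W(\theta)=\Ee_{\mathfrak a}[e^{\mathrm i\theta\cdot W_1}]$ satisfies $|\varphi_W(\theta)|<1$ for $\theta\in[-\pi,\pi]^d\setminus\{0\}$. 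This is exactly what the construction of $\mathcal L$ as the difference group gives.

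\emph{Local approximation.} Apply Fourier inversion to the walk $W$:
\[
\Pp_{\mathfrak a}(C_n=k)=(2\pi)^{-d}\int_{[-\pi,\pi]^d}e^{-\mathrm i\theta\cdot w}\varphi_W(\theta)^n\,\dd\theta,\qquad k=ny_0+Aw .
\]
I then run the standard argument in three ranges.
\begin{enumerate}
\item For $|\theta|_1\geq\delta$, compactness and aperiodicity give $|\varphi_W|\leq 1-c$, so this part contributes $O(e^{-cn})$.
\item For $|\theta|_1<\delta$, substitute $\theta=u/\sqrt n$ and use the second-order expansion of $\log\varphi_W$. The covariance of $W_1$ is $A^{-1}\Gamma A^{-T}$, which is positive definite. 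The bound $|\varphi_W(u/\sqrt n)|^n\leq e^{-c|u|^2}$ then allows dominated convergence.
\item The limiting Gaussian integral evaluates to $(2\pi n)^{-d/2}\det(A^{-1}\Gamma A^{-T})^{-1/2}\exp\{\cdots\}$. Rewriting it in the $k$-variables gives $n^{-d/2}|\det A|\,\Phi_n(k)=n^{-d/2}h_{\mathcal L}\Phi_n(k)$.
\end{enumerate}
Uniformity over $|n-a|\leq M\sqrt a$ and $k\in\mathcal W_a(K)$ is automatic, because the error is uniform in $w$; this is the classical uniform lattice LLT. Since $\Phi_n$ is bounded, the rescaling by $n^{d/2}$ is harmless.

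\emph{Uniform Gaussian bound.} This is the main obstacle, because it must hold for all $k$, not only in the central range. I will use exponential tilting. Let $\Lambda_Y(\xi)=\log\Ee e^{\xi\cdot Y_1}$, which is finite near $0$ by the exponential moment assumption. For $|k/n-m|_1\leq a_0$, solve $\nabla\Lambda_Y(\xi)=k/n$, which has a solution $\xi=O(|k/n-m|)$ by the inverse function theorem since $\nabla^2\Lambda_Y(0)=\Gamma>0$. Then write
\[
\Pp(C_n=k)=e^{-\xi\cdot k+n\Lambda_Y(\xi)}\,\Pp^{\xi}(C_n=k).
\]
Under the tilted law, the increment has the same support coset and lattice, so it stays aperiodic, with covariance close to $\Gamma$. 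The characteristic-function argument above then applies uniformly in small $\xi$; the uniformity of the bound $|\varphi^\xi|\leq1-c$ away from $0$ follows from continuity in $\xi$ and compactness. This yields $\Pp^\xi(C_n=k)\leq Cn^{-d/2}$. Moreover $\xi\cdot k-n\Lambda_Y(\xi)=nI_Y(k/n)\geq cn|k/n-m|^2$, which gives the Gaussian factor. For $|k/n-m|_1>a_0$, Chebyshev's exponential inequality with a fixed small $\xi$ aligned with $k/n-m$ gives the bound $Ce^{-cn}$. Combining the two ranges proves \eqref{eq:S22-cycle-gaussian-bound} for all large $n$.
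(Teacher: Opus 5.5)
Your proposal is correct and follows essentially the paper's route. Both arguments use Fourier inversion for the coset-supported cycle sums: you pass to $\Z^d$ through a basis $A$ of $\mathcal L$, so $h_{\mathcal L}$ appears as $|\det A|$, while the paper integrates directly over a fundamental domain of the dual torus of $\mathcal L$. Both then use strict contraction of the characteristic function away from the origin by aperiodicity, Gaussian rescaling near the origin, and an exponential change of measure for the uniform bound.

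In the moderate range $\sqrt n\ll|k-nm|_1\leq a_0n$, your saddle-point tilt solving $\nabla\Lambda_Y(\xi)=k/n$ is the step that actually produces the Gaussian factor. The untilted quadratic bound on $|\varphi_0|^n$ that the supplementary sketch invokes only gives $Cn^{-d/2}$ there. Your tilt coincides with the contour shift the paper uses in proving Proposition~\ref{prop:operator-local-estimates}.
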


\begin{proof}
Let $\mathcal F$ be a fundamental domain of the dual torus of $\mathcal L$. For $k\in ny_0+\mathcal L$, Fourier inversion on the lattice gives
\begin{equation}\label{eq:S22-fourier-inversion}
\Pp_{\mathfrak a}(C_n=k)=h_{\mathcal L}(2\pi)^{-d}
\int_{\mathcal F}\exp\{-\mathrm i\theta\cdot(k-nm)\}\varphi_0^n(\theta)\dd\theta,
\end{equation}
where
\begin{equation*}
\varphi_0(\theta)=\Ee_{\mathfrak a}\exp\{\mathrm i\theta\cdot(Y_1-m)\}.
\end{equation*}
The exponential moment makes $\varphi_0$ analytic in a complex neighbourhood of the origin. Since $\Gamma$ is the covariance matrix of $Y_1$, as $\theta\to0$,
\begin{equation}\label{eq:S22-characteristic-expansion}
\varphi_0(\theta)=1-\frac12\theta^T\Gamma\theta+O(|\theta|_1^3).
\end{equation}
The lattice aperiodicity on $y_0+\mathcal L$ implies that $|\varphi_0(\theta)|$ is bounded away from one on the complement of a sufficiently small neighbourhood of zero in the dual torus. The contribution of that complement to \eqref{eq:S22-fourier-inversion} is therefore exponentially small.

Inside the small neighbourhood put $\theta=n^{-1/2}z$. In the central range of \eqref{eq:S22-cycle-local}, $|k-nm|_1=O(\sqrt n)$ uniformly. From \eqref{eq:S22-characteristic-expansion},
\begin{equation*}
\varphi_0^n(z/\sqrt n)
\xrightarrow[n\to\infty]{}
\exp\left\{-\frac12 z^T\Gamma z\right\}
\end{equation*}
uniformly for $z$ in compact sets, and the real part of the exponent gives an integrable Gaussian domination on expanding balls. Substitution in \eqref{eq:S22-fourier-inversion} yields
\begin{equation*}
n^{d/2}\Pp_{\mathfrak a}(C_n=k)
\to
h_{\mathcal L}(2\pi)^{-d}\int_{\R^d}
\exp\left\{-\mathrm iz\cdot x-\frac12z^T\Gamma z\right\}\dd z,
\end{equation*}
uniformly for $x=(k-nm)/\sqrt n$ in compact sets. Fourier inversion of the Gaussian density identifies the last integral with $(2\pi)^{d/2}\Phi_n(k)$. This proves \eqref{eq:S22-cycle-local}.

For \eqref{eq:S22-cycle-gaussian-bound}, the same torus decomposition is used. In the small neighbourhood, the quadratic real part in \eqref{eq:S22-characteristic-expansion} gives the Gaussian term when $|k-nm|_1\leq An$. If $|k-nm|_1>An$, an exponential change of contour in the direction of $k-nm$ is allowed by the exponential moment and gives an exponentially small bound. The complement of the small neighbourhood contributes $O(e^{-cn})$. Adjusting constants proves \eqref{eq:S22-cycle-gaussian-bound}.
\end{proof}

\begin{lemma}\label{lem:S22-renewal-local}
For every compact set $K\subset\R^d$,
\begin{equation}\label{eq:S22-renewal-local}
\sup_{k\in\mathcal W_a(K)}
\left|
 a^{(d-1)/2}U_C(k)-\one_{\{k\in\mathcal L_*\}}
\phi^{\perp,*}_{m,\Gamma}\left(\frac{k-am}{\sqrt a}\right)
\right|
\xrightarrow[a\to\infty]{}0.
\end{equation}
\end{lemma}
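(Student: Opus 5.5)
We plan to sum the cycle local estimates of Lemma~\ref{lem:S22-cycle-local} over the number of cycles $n$, exactly as in the proof of the operator-theoretic local theorem, but with the extra step of keeping track of which $n$ are admissible for a given $k$. Write $U_C(k)=\sum_{n\geq0}\Pp_{\mathfrak a}(C_n=k)$. Since $C_n\in ny_0+\mathcal L$, the term with index $n$ vanishes unless $k\in ny_0+\mathcal L$. If $k\notin\mathcal L_*$, then no $n$ qualifies and $U_C(k)=0$, which matches the indicator. If $k\in\mathcal L_*$, the set of admissible $n$ is a single residue class modulo $q_*=|\mathcal L_*/\mathcal L|$. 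This is because $ny_0\in\mathcal L$ iff $q_*\mid n$: the cyclic group $\mathcal L_*/\mathcal L$ is generated by the class of $y_0$ and has order $q_*$. Hence $k-ny_0\in\mathcal L$ holds for exactly one class $n\equiv n_k \pmod{q_*}$.

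Next we split the sum into the central range $|n-a|\leq M\sqrt a$ and its complement. In the central range, write $n=a+s\sqrt a$ and $w_k=(k-am)/\sqrt a\in K$. Lemma~\ref{lem:S22-cycle-local} gives
\[
\Pp_{\mathfrak a}(C_n=k)=h_{\mathcal L}(2\pi a)^{-d/2}\det(\Gamma)^{-1/2}\exp\Bigl\{-\tfrac12(w_k-sm)^T\Gamma^{-1}(w_k-sm)\Bigr\}+o(a^{-d/2})
\]
uniformly, where we have also replaced $n^{-d/2}$ by $a^{-d/2}$ and used $(k-nm)/\sqrt n=(w_k-sm)(1+O(a^{-1/2}))$. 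The admissible $n$ form an arithmetic progression of step $q_*$, so the sum over them is a Riemann sum in $s$ with mesh $q_*a^{-1/2}$. After multiplying by $a^{(d-1)/2}$, this sum converges uniformly to
\[
\frac{h_{\mathcal L}}{q_*}(2\pi)^{-d/2}\det(\Gamma)^{-1/2}\int_{|s|\leq M}\exp\Bigl\{-\tfrac12(w-sm)^T\Gamma^{-1}(w-sm)\Bigr\}\dd s .
\]
By \eqref{eq:S22-covolume-relation}, $h_{\mathcal L}/q_*=h_*$. Extending the $s$-integral to all of $\R$ gives $\phi^\perp_{m,\Gamma}(w)$: this is the one-dimensional Gaussian integral already computed in the operator-theoretic proof, obtained by completing the square in $s$ with $s^*=m^T\Gamma^{-1}w/(m^T\Gamma^{-1}m)$. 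The limit is therefore $h_*\phi^\perp_{m,\Gamma}(w_k)=\phi^{\perp,*}_{m,\Gamma}(w_k)$. The tail $|s|>M$ of the integral is $O(e^{-cM^2})$ uniformly for $w\in K$.

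For the noncentral range we use \eqref{eq:S22-cycle-gaussian-bound} together with the geometric bound $|k-nm|_1\geq b_K|n-a|-B_K\sqrt a$ for $k\in\mathcal W_a(K)$, which holds since $m\in(0,\infty)^d$. This shows that the noncentral terms contribute at most $Ca^{-(d-1)/2}e^{-cM^2}+o(a^{-(d-1)/2})$. Small $n$, where the lemma is stated only for large $n$, are handled separately: for $n\leq a/2$ we have $|k-nm|_1\gtrsim a$, so large-deviation (exponential Chernoff) bounds from the exponential moment make these terms exponentially small in $a$. We conclude by letting $a\to\infty$ and then $M\to\infty$.

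The main obstacle we anticipate is the arithmetic bookkeeping. We must verify that the admissible $n$ form exactly one residue class modulo $q_*$, and that the Riemann sum along this progression gives the factor $1/q_*$ uniformly in $k$, including for $k$ whose class $n_k$ varies with $a$. Uniformity is harmless here, since a shift of the progression by less than $q_*$ changes the Riemann sum by $o(1)$ given the uniform continuity of the integrand on compacts.
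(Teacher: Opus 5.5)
Your proposal is correct and follows the paper's proof essentially step for step. Both arguments treat the trivial case $k\notin\mathcal L_*$ first, then use the single admissible residue class modulo $q_*$, a central Riemann sum with mesh $q_*a^{-1/2}$ combined with $h_{\mathcal L}=q_*h_*$, and a non-central Gaussian tail obtained from the geometric bound $|k-nm|_1\geq b_K|n-a|-B_K\sqrt a$, letting $a\to\infty$ and then $M\to\infty$. Your additions fill in details the paper leaves implicit: the cyclic-group argument for why the admissible $n$ form exactly one class, the Chernoff treatment of $n\le a/2$ (where the Gaussian bound is only asserted for large $n$), and the uniformity of the Riemann sum in the offset $n_k$.
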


\begin{proof}
If $k\notin\mathcal L_*$, then no complete-cycle sum can be equal to $k$, and $U_C(k)=0$. Let $k\in\mathcal L_*$. The admissible values of $n$ for which $k\in ny_0+\mathcal L$ form one residue class modulo $q_*$. Put
\begin{equation*}
w_k=\frac{k-am}{\sqrt a}.
\end{equation*}
For $|n-a|\leq M\sqrt a$, write $n=a+s\sqrt a$. Lemma~\ref{lem:S22-cycle-local} gives, uniformly for $w_k\in K$ and bounded $s$,
\begin{equation*}
\Pp_{\mathfrak a}(C_n=k)=h_{\mathcal L}(2\pi a)^{-d/2}\det(\Gamma)^{-1/2}
\exp\left\{-\frac12(w_k-sm)^T\Gamma^{-1}(w_k-sm)\right\}+o(a^{-d/2}).
\end{equation*}
Summation over the admissible residue class is a Riemann sum with mesh $q_*/\sqrt a$ in the $s$-variable. Using \eqref{eq:S22-covolume-relation}, the central part of $a^{(d-1)/2}U_C(k)$ converges uniformly to
\begin{equation}\label{eq:S22-transverse-integral}
h_*(2\pi)^{-d/2}\det(\Gamma)^{-1/2}
\int_{\R}\exp\left\{-\frac12(w_k-sm)^T\Gamma^{-1}(w_k-sm)\right\}\dd s.
\end{equation}
The integral in \eqref{eq:S22-transverse-integral} is exactly $\phi^{\perp,*}_{m,\Gamma}(w_k)$ by the definition in the main text, with the convention $\phi^\perp_{m,\Gamma}=1/m$ when $d=1$.

It remains to show that the terms with $|n-a|>M\sqrt a$ are negligible uniformly on $\mathcal W_a(K)$. Since $K$ is compact and $m\in(0,\infty)^d$, there are constants $c_K,C_K>0$ such that
\begin{equation}\label{eq:S22-distance-from-window}
|k-nm|_1\geq c_K|n-a|-C_K\sqrt a,
\qquad k\in\mathcal W_a(K).
\end{equation}
The bound \eqref{eq:S22-cycle-gaussian-bound} and \eqref{eq:S22-distance-from-window} give
\begin{equation*}
\sup_{k\in\mathcal W_a(K)}
\sum_{|n-a|>M\sqrt a}\Pp_{\mathfrak a}(C_n=k)
\leq C a^{-(d-1)/2}e^{-cM^2}+o(a^{-(d-1)/2}).
\end{equation*}
Letting first $a\to\infty$ and then $M\to\infty$ completes the proof of \eqref{eq:S22-renewal-local}.
\end{proof}

\begin{lemma}\label{lem:S22-tail-truncation}
Let $r$ be an exponentially summable finite signed measure on $\N^d$. For every compact set $K\subset\R^d$,
\begin{equation}\label{eq:S22-tail-truncation}
\sup_{k\in\mathcal W_a(K)}
 a^{(d-1)/2}
\left|\sum_{|u|_1>R}U_C(k-u)r(u)\right|
\xrightarrow[R\to\infty]{}0
\end{equation}
uniformly for all sufficiently large $a$, where $U_C(v)=0$ if $v\notin\N^d$. Consequently, the asymptotic for $U_r$ follows from \eqref{eq:S22-renewal-local} by finite truncation.
\end{lemma}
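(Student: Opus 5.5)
The plan is to bound the tail sum by a uniform bound on $U_C$ multiplied by the exponential weight of $r$. The key input is a uniform estimate
\begin{equation*}
\sup_{v\in\Z^d}U_C(v)\leq C_0\, |v|_1^{-(d-1)/2}\vee C_0,
\end{equation*}
or, more usefully, a bound of the form $\sup_{v\in\N^d}(1+|v|_1)^{(d-1)/2}U_C(v)\le C_0$ away from the mean ray, combined with a crude bound near it. Rather than aim for sharp statements, I would prove the simpler fact
\begin{equation*}
U_C(v)\leq C_1\,(1+|v|_1)^{-(d-1)/2},\qquad v\in\N^d,
\end{equation*}
for $d\ge1$; when $d=1$ this just says $U_C$ is bounded.

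To obtain this, I would sum the bound \eqref{eq:S22-cycle-gaussian-bound} over $n$. Since each $Y_j$ is strict-positive, $|Y_j|_1\ge1$, so $C_n=v$ forces $n\le|v|_1$ and the exponential terms $Ce^{-cn}$ contribute only finitely many small-$n$ terms, which are bounded by $1$ each; more carefully, for $n$ below a fixed threshold use $\Pp(C_n=v)\le \Pp(|C_n|_1\ge |v|_1)$ with the exponential moment, which decays exponentially in $|v|_1$. For $n$ large, write $|v-nm|_1\ge c'|n-a_v|$, where $a_v=|v|_1/|m|_1$, using positivity of $m$ as in \eqref{eq:S22-distance-from-window}. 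Summing $n^{-d/2}\exp\{-c(n-a_v)^2/n\}$ over $n$ then gives order $a_v^{-d/2}\sqrt{a_v}=a_v^{-(d-1)/2}$, while the range $n\le a_v/2$ or $n\ge 2a_v$ gives an exponentially small amount. The remaining $Ce^{-cn}$ terms are summed only over $n$ comparable to $|v|_1$: by the above, $n\ge |v|_1/\max_j$ is not available in general, so I would instead restrict them using $n\geq c''|v|_1$ on the event $C_n=v$ after a large-deviation truncation. This is the main obstacle: the additive $e^{-cn}$ term is not localized in $v$, and I would handle it by replacing it with the Cram\'er-type bound $\Pp(C_n=v)\le e^{-c|v|_1}$ whenever $n\le \delta|v|_1$, using $\Lambda_Y$ finite near zero.

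With this in hand, for $k\in\mathcal W_a(K)$ and $|u|_1\le|k|_1/2$ one has $|k-u|_1\ge c a$, so $a^{(d-1)/2}U_C(k-u)\le C$ uniformly. For $|u|_1>|k|_1/2$ I would use the trivial bound $U_C\le C_1$ together with $|u|_1\ge ca$. This gives
\begin{equation*}
a^{(d-1)/2}\Bigl|\sum_{|u|_1>R}U_C(k-u)r(u)\Bigr|\le C\sum_{|u|_1>R}|r|(u)+C_1a^{(d-1)/2}\sum_{|u|_1>ca}|r|(u),
\end{equation*}
and exponential summability makes the second term at most $C a^{(d-1)/2}e^{-\beta c a}$, uniformly small, while the first tends to $0$ as $R\to\infty$ independently of $a$.

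For the consequence, fix $R$. For the finitely many $u$ with $|u|_1\le R$, the point $k-u$ lies in $\mathcal W_a(K')$ for a slightly larger compact set $K'$, and $(k-u-am)/\sqrt a=w_k+O(R/\sqrt a)$. Moreover, $\one_{\{k-u\in\mathcal L_*\}}$ selects exactly those $u$ with $[u]=[k]$. Applying \eqref{eq:S22-renewal-local} together with the uniform continuity of $\phi^{\perp,*}$ on compact sets therefore gives $a^{(d-1)/2}\sum_{|u|_1\le R}U_C(k-u)r(u)\to\sum_{|u|_1\le R,[u]=[k]}r(u)\,\phi^{\perp,*}(w_k)$ uniformly. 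Letting $R\to\infty$ and using the tail estimate recovers $r([k])$.
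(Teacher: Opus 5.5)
Your proof is correct, and it takes a different route from the paper's.

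The paper never bounds $U_C$ globally. It proves a window-local domination $a^{(d-1)/2}U_C(k-u)\le C_b e^{b|u|_1}$ for $k\in\mathcal W_a(K)$ and every $b>0$. It does this by splitting the cycle-index sum into $|n-a|\le 2|u|_1+M\sqrt a$ and its complement. It then absorbs $e^{b|u|_1}$ into the exponential summability of $r$, with $b$ below the summability exponent, which gives the tail bound $C\sum_{|u|_1>R}e^{b|u|_1}|r|(u)$ uniformly in $a$.

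You instead prove the window-free estimate $U_C(v)\le C_1(1+|v|_1)^{-(d-1)/2}$ on all of $\N^d$. This uses $|v-nm|_1\ge |m|_1\,|n-a_v|$ together with a Cram\'er cut-off for $n\le\delta|v|_1$. You then split the shifts according to whether $|u|_1\le |k|_1/2$ or $|u|_1>|k|_1/2$.

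What your route buys:
\begin{itemize}
\item In the near range only the total mass of $|r|$ enters. In the far range $a^{(d-1)/2}\le C|u|_1^{(d-1)/2}$. So a moment of order $(d-1)/2$ of $|r|$ would already suffice; exponential summability is more than this lemma needs.
\item It deals explicitly with the additive remainder $Ce^{-cn}$, which is not localized in $v$. Summed over small $n$ it is $O(1)$, which after multiplication by $a^{(d-1)/2}$ is unbounded. The paper's domination sketch passes over this. It also asserts that the retained summands are $O(a^{-d/2})$, which fails for small $n$ when $|u|_1$ is of order $a$. There the claimed bound survives only because $U_C$ is bounded and $e^{b|u|_1}$ dominates $a^{(d-1)/2}$. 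Your Cram\'er truncation is exactly the missing ingredient.
\end{itemize}
The paper's route is shorter and never leaves the central window, at the price of the cruder exponential weight.

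One small repair: in your final display, keep the constraint $|u|_1>R$ in the far-range term, that is, sum over $|u|_1>\max(R,ca)$. Since $\max(R,ca)\ge (R+ca)/2$, this term is at most $Ce^{-\beta R/2}$ uniformly in $a$. That is the uniformity in all large $a$ the statement asserts. As written you only get smallness for $a\ge a_0(\varepsilon)$, though that already suffices for the truncation step.

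Your derivation of the consequence matches the paper's proof of the convolved local theorem.
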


\begin{proof}
We first record a domination of shifted renewal masses. For every $b>0$ and every compact $K$, there is a constant $C_b<\infty$ such that, for all sufficiently large $a$, all $k\in\mathcal W_a(K)$ and all $u\in\N^d$ with $k-u\in\N^d$,
\begin{equation}\label{eq:S22-shifted-renewal-domination}
 a^{(d-1)/2}U_C(k-u)\leq C_b\exp\{b|u|_1\}.
\end{equation}
To prove \eqref{eq:S22-shifted-renewal-domination}, split the sum defining $U_C(k-u)$ into the indices satisfying $|n-a|\leq 2|u|_1+M\sqrt a$ and their complement. On the first set, \eqref{eq:S22-cycle-gaussian-bound} gives at most $Ca^{-d/2}$ per central summand, up to an exponentially small remainder, and the number of retained indices is $O(|u|_1+\sqrt a)$. Thus the corresponding contribution, after multiplication by $a^{(d-1)/2}$, is bounded by a polynomial in $1+|u|_1$, which is bounded by $C_b\exp\{b|u|_1\}$.

On the complement, the positivity of all coordinates of $m$ and the compactness of $K$ imply, after changing constants, that
\begin{equation*}
|k-u-nm|_1\geq c\{|n-a|+|u|_1\}-C\sqrt a.
\end{equation*}
Together with \eqref{eq:S22-cycle-gaussian-bound}, this gives a summable Gaussian tail bounded by $C_b\exp\{b|u|_1\}a^{-(d-1)/2}$. This proves \eqref{eq:S22-shifted-renewal-domination}.

Choose $b>0$ smaller than an exponent $\eta$ for which
\begin{equation}\label{eq:S22-exponential-summability-r}
\sum_{u\in\N^d}e^{\eta|u|_1}|r|(u)<\infty.
\end{equation}
By \eqref{eq:S22-shifted-renewal-domination},
\begin{equation*}
\sup_{a,k}a^{(d-1)/2}
\left|\sum_{|u|_1>R}U_C(k-u)r(u)\right|
\leq
C\sum_{|u|_1>R}e^{b|u|_1}|r|(u),
\end{equation*}
and the right-hand side tends to zero as $R\to\infty$ by \eqref{eq:S22-exponential-summability-r}. The finite-truncation assertion follows by applying Lemma~\ref{lem:S22-renewal-local} to every fixed shift $u$ in a finite support, using the uniform continuity of $\phi^{\perp,*}_{m,\Gamma}$ on compact sets, and then applying \eqref{eq:S22-tail-truncation}.
\end{proof}

\begin{proposition}\label{prop:S22-convolved-local}
Let $r$ be an exponentially summable finite signed measure on $\N^d$. Under the regenerative exact-time assumptions, for every compact set $K\subset\R^d$,
\begin{equation}\label{eq:S22-convolved-local}
\sup_{k\in\mathcal W_a(K)}
\left|
 a^{(d-1)/2}U_r(k)-r([k])\phi^{\perp,*}_{m,\Gamma}\left(\frac{k-am}{\sqrt a}\right)
\right|
\xrightarrow[a\to\infty]{}0.
\end{equation}
\end{proposition}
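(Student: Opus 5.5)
The proposition follows by assembling Lemmas~\ref{lem:S22-renewal-local} and \ref{lem:S22-tail-truncation} through a finite truncation of the measure $r$. Fix $\epsilon>0$. By Lemma~\ref{lem:S22-tail-truncation}, I would first choose $R$ so large that, uniformly for large $a$ and $k\in\mathcal W_a(K)$,
\begin{equation*}
a^{(d-1)/2}\Bigl|\sum_{|u|_1>R}U_C(k-u)r(u)\Bigr|\leq\epsilon .
\end{equation*}
Increasing $R$ if needed, exponential summability also gives $\sum_{|u|_1>R}|r|(u)\leq\epsilon$. This bounds the tail part of the target term as well, since
\begin{equation*}
\Bigl|r([k])-\sum_{|u|_1\leq R,\,[u]=[k]}r(u)\Bigr|\leq\epsilon ,
\end{equation*}
and $\phi^{\perp,*}_{m,\Gamma}$ is bounded.

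For the finite part, I would fix $u$ with $|u|_1\leq R$ and note that $k\in\mathcal W_a(K)$ implies $k-u\in\mathcal W_a(K')$ for a slightly enlarged compact set $K'$ (the $R$-neighbourhood of $K$), once $a$ is large. Applying Lemma~\ref{lem:S22-renewal-local} on $K'$ gives
\begin{equation*}
a^{(d-1)/2}U_C(k-u)=\one_{\{k-u\in\mathcal L_*\}}\,\phi^{\perp,*}_{m,\Gamma}\bigl((k-u-am)/\sqrt a\bigr)+o(1),
\end{equation*}
uniformly in $k$. Because $|u|_1\leq R$, the argument differs from $(k-am)/\sqrt a$ by at most $R/\sqrt a$. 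Uniform continuity of $\phi^{\perp,*}_{m,\Gamma}$ on compacts then lets me replace it by $\phi^{\perp,*}_{m,\Gamma}((k-am)/\sqrt a)$. The condition $k-u\in\mathcal L_*$ is exactly $[u]=[k]$ in $\mathcal G_*$. Summing over the finitely many $u$ with $|u|_1\leq R$ therefore yields
\begin{equation*}
\Bigl(\sum_{|u|_1\leq R,\,[u]=[k]}r(u)\Bigr)\,\phi^{\perp,*}_{m,\Gamma}\bigl((k-am)/\sqrt a\bigr)+o(1).
\end{equation*}
Boundary cases where $k-u\notin\N^d$ do not arise for large $a$, because $k$ has all coordinates of order $a$.

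Combining the two parts, the left side of \eqref{eq:S22-convolved-local} has $\limsup$ at most $C\epsilon$, and letting $\epsilon\downarrow0$ gives the claim. The only real care point is the uniformity of Lemma~\ref{lem:S22-renewal-local} on the enlarged window $K'$; this is harmless because that lemma holds for every compact set. The signed nature of $r$ is handled by linearity together with $|r|$-bounds in the tail.
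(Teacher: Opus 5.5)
Your proposal is correct and follows the paper's own proof essentially step by step. Like the paper, you truncate $r$ to $\{|u|_1\leq R\}$, apply Lemma~\ref{lem:S22-renewal-local} to each fixed shift on an enlarged window using uniform continuity of $\phi^{\perp,*}_{m,\Gamma}$, and control the tail with Lemma~\ref{lem:S22-tail-truncation}; you also make explicit the bound $|r([k])-r_R([k])|\leq\sum_{|u|_1>R}|r|(u)$, which the paper's ``then let $R\to\infty$'' leaves tacit.
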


\begin{proof}
The case $r=\delta_{\zerod}$ is Lemma~\ref{lem:S22-renewal-local}. Suppose first that $r$ has finite support. For each $u$ in this support, the point $k-u$ belongs to an enlarged central window whenever $k\in\mathcal W_a(K)$ and $a$ is large. Lemma~\ref{lem:S22-renewal-local} gives
\begin{equation}\label{eq:S22-fixed-shift-local}
 a^{(d-1)/2}U_C(k-u)
=
\one_{\{[u]=[k]\}}\phi^{\perp,*}_{m,\Gamma}\left(\frac{k-u-am}{\sqrt a}\right)+o(1)
\end{equation}
uniformly in $k\in\mathcal W_a(K)$. Since $u/\sqrt a\to0$ and $\phi^{\perp,*}_{m,\Gamma}$ is uniformly continuous on compact sets, summing \eqref{eq:S22-fixed-shift-local} over the finite support gives \eqref{eq:S22-convolved-local} with $r([k])$.

For general exponentially summable $r$, let $r_R=r\one_{\{|u|_1\leq R\}}$. Lemma~\ref{lem:S22-tail-truncation} gives
\begin{equation*}
\sup_{k\in\mathcal W_a(K)}a^{(d-1)/2}|U_r(k)-U_{r_R}(k)|
\xrightarrow[R\to\infty]{}0
\end{equation*}
uniformly for all sufficiently large $a$. Apply the finite-support result to $r_R$, then let $R\to\infty$. This proves Proposition~\ref{prop:S22-convolved-local}.
\end{proof}

\begin{corollary}\label{cor:S22-reward-local}
Let $f$ be bounded and measurable. Then the exact-time reward potential satisfies the regenerative periodic local asymptotic stated in Section~6 of the main text.
\end{corollary}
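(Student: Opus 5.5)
The strategy is to express the exact-time reward potential $U_f^{\mathfrak a}$ as a convolution $U_r$ of the complete-cycle renewal mass $U_C$ with the one-cycle reward measure $r_f$. Proposition~\ref{prop:S22-convolved-local} then applies with $r=r_f$.

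\emph{Regenerative decomposition.} Under $\Pp_{\mathfrak a}$, every $n\geq0$ lies in exactly one cycle $[T_{j-1},T_j)$, $j\geq1$. Split the sum defining $U_f^{\mathfrak a}(k)$ according to the cycle containing $n$:
\begin{equation*}
U_f^{\mathfrak a}(k)=\sum_{j\geq1}\Ee_{\mathfrak a}\Bigl[\sum_{n=T_{j-1}}^{T_j-1}f(J_n)\one_{\{S_n=k\}}\Bigr].
\end{equation*}
Write $S_n=C_{j-1}+(S_n-S_{T_{j-1}})$. By Theorem~3.5 and the strong Markov property at $T_{j-1}$, the $j$-th cycle is independent of $\widehat{\mathcal F}_{T_{j-1}}$ and has the law of the first cycle. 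Consequently the $j$-th term equals $\sum_u\Pp_{\mathfrak a}(C_{j-1}=k-u)r_f(u)$.

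Summing over $j$ gives $U_f^{\mathfrak a}=U_{r_f}$. This interchange is justified by Tonelli, applied separately to $f^+$ and $f^-$; the bound $|f|\leq\|f\|_\infty$ together with $\Ee_{\mathfrak a}[L_1]<\infty$ makes every sum finite. The atom convention of Section~3 fixes the value of $f(J_{T_{j-1}})$ at the regeneration time, so this value is the same for every cycle.

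\emph{Exponential summability.} Next I check that $r_f$ is an exponentially summable finite signed measure. For $b>0$,
\begin{equation*}
\sum_u e^{b|u|_1}|r_f|(u)\leq\|f\|_\infty\Ee_{\mathfrak a}\bigl[L_1e^{b|Y_1|_1}\bigr],
\end{equation*}
and the right-hand side is finite for small $b$ by the exponential moment of $L_1+|Y_1|_1$. This is exactly the computation in the proof of Theorem~3.11, with the constant reward bound.

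\emph{Conclusion.} Proposition~\ref{prop:S22-convolved-local} now gives the stated asymptotic with weight $r_f([k])$.

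\emph{Trivial periodic group and general initial laws.} When the periodic group is trivial, $[k]$ is the single class, so $r_f([k])=r_f(\N^d)=\Ee_{\mathfrak a}[R_1(f)]=\ell\pi(f)$ by Kac's formula. For a general initial law $\alpha$, decompose at the first entrance $\sigma$ into the atom:
\begin{equation*}
U_f^\alpha(k)=B(k)+\sum_v\Pp_\alpha(S_\sigma=v)\,U_f^{\mathfrak a}(k-v),
\end{equation*}
where $B(k)$ is the pre-entrance contribution. The sum is again a convolution, now with the exponentially summable law of $S_\sigma$, so it is handled by the same truncation argument as Lemma~\ref{lem:S22-tail-truncation}. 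For the pre-entrance term, $B(k)\leq\|f\|_\infty\Ee_\alpha[\sigma\,\one_{\{|S_\sigma|_1\geq|k|_1\}}]$. This is exponentially small in $a$ because $|k|_1\asymp a$ and $\sigma$, $S_\sigma$ are exponentially integrable, so $B(k)=o(a^{-(d-1)/2})$.

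\emph{Main obstacle.} I expect the hardest step to be justifying the domination needed to pass shifts through the local limit uniformly on $\mathcal W_a(K)$. This uniformity is supplied by the bound \eqref{eq:S22-shifted-renewal-domination}, so in the end it reduces to exponential summability.
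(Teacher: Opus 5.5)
Your proposal is correct and takes essentially the paper's route: the regenerative identity $U_f^{\mathfrak a}=U_{r_f}$, exponential summability of $r_f$ from the cycle exponential moment, Proposition~\ref{prop:S22-convolved-local} with $r=r_f$, and a decomposition at the first entrance into the atom for general initial laws, where you also supply details the paper only asserts (Kac's formula giving $r_f(\N^d)=\ell\pi(f)$, and an explicit tail bound for the pre-entrance term). Like the paper, you rely on the atom convention for the value $f(J_{T_{j-1}})$, which the exact identity $U_f^{\mathfrak a}=U_{r_f}$ needs because $J_{T_{j-1}}$ is correlated with $Y_{j-1}$; this is harmless for the asymptotic, because charging that term to the end of the previous cycle instead leaves every class mass $r_f([k])$ unchanged, since $Y_1\in y_0+\mathcal L\subseteq\mathcal L_*$ and $J_{T_1}$ has the same law $\widehat\pi(\cdot\mid\mathfrak a)$ as $J_{T_0}$.
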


\begin{proof}
The regenerative decomposition of the exact-time reward potential is
\begin{equation*}
U_f^{\mathfrak a}(k)=\sum_{u\in\N^d}U_C(k-u)r_f(u).
\end{equation*}
The drift--minorization moment condition used in the main text implies that $r_f$ is exponentially summable. Proposition~\ref{prop:S22-convolved-local}, applied with $r=r_f$, gives the atom-started statement. If the initial law has an exponentially integrable entrance into the split atom, decompose the path at that entrance. The pre-entrance exact-time contribution is negligible in every central window by the exponential entrance moment. The entrance displacement only convolves the class masses of $r_f$. This proves the initial-law form of the regenerative periodic local theorem and completes the proof of Theorem~6.7 in the form stated in the main text.
\end{proof}

\section*{S23. Proof of Proposition 7.3}
\begin{proof}
Let $F_A(x,u)=\one_A(x)$ be the indicator of the cylinder over $A\in\calX$. If the semigroup $(R_h)_{h\in\N^d}$ is strongly lumpable with respect to the projection $(x,u)\mapsto x$, then $R_hF_A(x,u)$ is independent of $u$ for every $h$. The common value defines the projected kernel $P_h^Z(x,A)$.

Conversely, assume that each coordinate kernel is strongly lumpable. For every coordinate vector $e_i$ there is a kernel $P_i^Z$ on $\X$ such that
\begin{equation}\label{eq:S23-coordinate-lumpability}
R_{e_i}F_A(x,u)=P_i^Z(x,A)
\end{equation}
for every admissible $(x,u)$. By the monotone-class theorem, \eqref{eq:S23-coordinate-lumpability} holds with $F_A$ replaced by every bounded measurable function of the physical state alone. Let $h=e_{i_1}+\cdots+e_{i_m}$. The semigroup property gives
\begin{equation}\label{eq:S23-coordinate-product}
R_hF_A=R_{e_{i_1}}\cdots R_{e_{i_m}}F_A.
\end{equation}
Applying the coordinate lumpability relation successively in \eqref{eq:S23-coordinate-product}, the result depends only on the initial state $x$ and equals
\begin{equation*}
P_{i_1}^Z\cdots P_{i_m}^Z(x,A).
\end{equation*}
The coordinate kernels $R_{e_i}$ commute because $(R_h)_{h\in\N^d}$ is a semigroup over the commutative semigroup $\N^d$. Thus the projected coordinate kernels commute as well, and the projected semigroup is
\begin{equation*}
P_h^Z=(P_1^Z)^{h_1}\cdots(P_d^Z)^{h_d},
\qquad h=(h_1,\ldots,h_d)\in\N^d.
\end{equation*}
Consequently the full augmented semigroup is strongly lumpable. This proves the equivalence and the displayed representation of $P_h^Z$.
\end{proof}

\section*{S24. Proof of Proposition 7.4}
\begin{proof}
Assume first that the state projection is Markov for every initial age law. By the lumpability criterion in Section~7 of the main text, the one-step probability of moving from $(x,u)$ to a cylinder over $A$ is independent of the admissible age $u$. Taking $A=\{x\}$ and using the convention $q_n(x,\{x\})=0$, $n\geq1$, gives
\begin{equation*}
\frac{\overline H_{u+1}(x)}{\overline H_u(x)}=a(x),
\qquad u\geq0,
\end{equation*}
for some measurable function $a$. Since the holding law is proper, $a(x)<1$ on the non-absorbing part; strictness gives $\overline H_0(x)=1$. Hence
\begin{equation}\label{eq:S24-geometric-survival}
\overline H_u(x)=a^u(x),
\qquad u\geq0.
\end{equation}
For $A$ with $x\notin A$, the same age-independence gives that
\begin{equation*}
\frac{q_{u+1}(x,A)}{\overline H_u(x)}
\end{equation*}
does not depend on $u$. The total probability of a renewal at the next step, given age $u$, is
\begin{equation*}
\frac{\overline H_u(x)-\overline H_{u+1}(x)}{\overline H_u(x)}=1-a(x).
\end{equation*}
Therefore there is a probability kernel $K$ on $\X$, with $K(x,\{x\})=0$, such that
\begin{equation}\label{eq:S24-geometric-kernel}
q_{u+1}(x,\dd y)=a^u(x)\bigl(1-a(x)\bigr)K(x,\dd y),
\qquad u\geq0.
\end{equation}
This is the asserted form.

Conversely, suppose \eqref{eq:S24-geometric-kernel} holds. Then \eqref{eq:S24-geometric-survival} follows by summing the tail of the geometric series. The projected one-step transition from $(x,u)$ is
\begin{equation}\label{eq:S24-projected-one-step}
\Lambda_1((x,u),A)=a(x)\one_A(x)+\bigl(1-a(x)\bigr)K(x,A),
\end{equation}
which is independent of $u$. Since $d=1$, the whole semigroup is generated by this one-step kernel. The lumpability criterion therefore gives that the physical state process is Markov, with one-step kernel \eqref{eq:S24-projected-one-step}.
\end{proof}

\section*{S25. Proof of Proposition 8.1}
\begin{proof}
In a denumerable state space, kernel composition is written as summation over the state index. Let $L$ be a non-negative solution of the coordinate renewal equation
\begin{equation}\label{eq:S25-renewal-equation-coordinate}
L_i(k)=G_i(k)+\sum_j\sum_{r\leqd k}q_{ij}(r)L_j(k-r).
\end{equation}
One substitution in \eqref{eq:S25-renewal-equation-coordinate} gives $L\geq G$. Repeating the substitution $m$ times yields
\begin{equation*}
L_i(k)
\geq
\sum_{n=0}^{m}\sum_j\sum_{r\leqd k}q^{(n)}_{ij}(r)G_j(k-r).
\end{equation*}
The right-hand side is increasing in $m$. By monotone convergence,
\begin{equation}\label{eq:S25-potential-minimal}
L_i(k)\geq
\sum_{n\geq0}\sum_j\sum_{r\leqd k}q^{(n)}_{ij}(r)G_j(k-r).
\end{equation}
Conversely, separating the term $n=0$ from the series on the right-hand side of \eqref{eq:S25-potential-minimal} shows that this potential expression satisfies \eqref{eq:S25-renewal-equation-coordinate}. Hence it is the minimal non-negative solution. In the finite-state case, the same identity is exactly the matrix-valued convolution formula.
\end{proof}

\section*{S26. Proof of Proposition 8.2}
\begin{proof}
The finite-state potential is characterized by
\begin{equation}\label{eq:S26-potential-inverse}
(e_0-q)*\psi=e_0.
\end{equation}
Taking coefficient $\zerod$ in \eqref{eq:S26-potential-inverse} gives
\begin{equation*}
(I_s-q_{\zerod})\psi_{\zerod}=I_s,
\end{equation*}
and therefore
\begin{equation*}
\psi_{\zerod}=(I_s-q_{\zerod})^{-1}.
\end{equation*}
For $k\neq\zerod$, the coefficient of order $k$ in \eqref{eq:S26-potential-inverse} is
\begin{equation}\label{eq:S26-k-coefficient}
(I_s-q_{\zerod})\psi_k
-
\sum_{\zerod<r\leqd k}q_r\psi_{k-r}=0.
\end{equation}
Multiplication by $(I_s-q_{\zerod})^{-1}$ gives the recursion stated in Proposition~8.2 of the main text. The right-hand side of \eqref{eq:S26-k-coefficient} contains only coefficients $\psi_{k-r}$ with $|k-r|_1<|k|_1$. Thus the coefficients are computed by induction on total degree. In particular, the computation of $\psi_k$ uses only kernel coefficients $q_r$ with $r\leqd k$.
\end{proof}

\section*{S27. Proof of Proposition 8.3}
\begin{proof}
Let $\mathcal R_i$ be the set of return increments to state $i$ with positive probability, and let $\mathcal L_i$ be the subgroup generated by all differences of elements of $\mathcal R_i$. Fix $i,j\in E$. By irreducibility of the embedded finite chain, there is a positive-probability path from $i$ to $j$ with total increment $a$, and a positive-probability path from $j$ to $i$ with total increment $b$.

Let $r,r'\in\mathcal R_j$. Concatenating the path $i\to j$, the return path at $j$ with increment $r$, and the path $j\to i$ gives a return path to $i$ with increment $a+r+b$. The same construction with $r'$ gives another return path to $i$ with increment $a+r'+b$. Hence
\begin{equation*}
r-r'=(a+r+b)-(a+r'+b)\in\mathcal L_i.
\end{equation*}
Thus $\mathcal L_j\subseteq\mathcal L_i$. Interchanging $i$ and $j$ gives the reverse inclusion. Therefore the period lattice is common to all states of the irreducible finite embedded chain.
\end{proof}

\section*{S28. Proof of Proposition 8.4}
\begin{proof}
Choose a fixed positive-probability path from $i$ to $j$ and denote its total increment by $a_{ij}$. Let $k$ be the total increment of any other positive-probability path from $i$ to $j$. By irreducibility, choose a positive-probability path from $j$ back to $i$ with total increment $b$. Then both $a_{ij}+b$ and $k+b$ are return increments to $i$. By Proposition~8.3 of the main text, the return-increment lattice is the common lattice $\mathcal L$. Therefore
\begin{equation*}
k-a_{ij}=(k+b)-(a_{ij}+b)\in\mathcal L.
\end{equation*}
All positive-probability path increments from $i$ to $j$ therefore belong to $a_{ij}+\mathcal L$. Since $\psi_{ij}(k)>0$ means that at least one such path has total increment $k$, the asserted support statement follows.
\end{proof}

\section*{S29. Proof of Proposition 9.1}
\begin{proof}
Let $C=\X\setminus D$. Before the first entrance into $D$, all embedded states belong to $C$. A path that first enters $D$ at multi-time $k$ has a unique decomposition: a killed path which remains in $C$ and ends at some multi-time $r\leqd k$, followed by one crossing transition from $C$ to $D$ with increment $k-r$. The killed part has kernel $\psi^C_r$, and the crossing transition has kernel $q^{C,D}_{k-r}$. Hence
\begin{equation*}
g_{D,k}=
\sum_{r\leqd k}\psi^C_r q^{C,D}_{k-r}
=(\psi^C*q^{C,D})_k.
\end{equation*}
Equivalently,
\begin{equation}\label{eq:S29-first-entrance-probability}
g_D(k)(x,D)=\Pp_x(\tau_D<\infty,
\ S_{\tau_D}=k),
\qquad x\in C.
\end{equation}
For an initial law $\nu$ supported by $C$, summing \eqref{eq:S29-first-entrance-probability} over the lower rectangle gives
\begin{equation*}
F_{\nu,D}(k)=\int_C\sum_{r\leqd k}g_D(r)(x,D)\nu(\dd x).
\end{equation*}
The reliability surface is the complement of this lower-rectangle entrance event,
\begin{equation*}
R_{\nu,D}(k)=1-F_{\nu,D}(k).
\end{equation*}
This proves Proposition~9.1 of the main text.
\end{proof}

\section*{S30. Proof of Corollary 9.2}
\begin{proof}
Let $R_1(c)$ be the signed reward accumulated during one regenerative cycle and $R_1^*(c)$ the corresponding absolute reward. The assumption on $(R_1^*(c))^2$ is precisely the second-moment condition required for the reward part of the functional inverse theorem in Section~5 of the main text.

If $|k_t-t\lambda|_1=o(t)$, Theorem~5.9 of the main text applied to the transition reward $G_c$ gives
\begin{equation}\label{eq:S30-law-large-numbers-cost}
\frac{\mathcal R_c(k_t)}{t}
\xrightarrow[t\to\infty]{}
\Ee_{\mathfrak a}[R_1(c)]\kappa(\lambda)
=a_c\kappa(\lambda)=\overline c\rho_\lambda.
\end{equation}
The convergence is in probability, and it is almost sure under the atom-started regenerative construction.

Assume now that $|k_t-t\lambda|_1=o(\sqrt t)$ and that the second-moment assumptions of the central limit theorem in Section~5 hold. Let $I=\mathcal I(\lambda)$. The deterministic-centering part of the functional inverse theorem gives
\begin{equation}\label{eq:S30-deterministic-centering-clt}
\frac{\mathcal R_c(k_t)-t a_c\kappa(\lambda)}{\sqrt t}
\xrightarrow[t\to\infty]{d}
W_R(\kappa(\lambda))+
a_c\min_{r\in I}\left\{-\frac{W_{Y,r}(\kappa(\lambda))}{m_r}\right\}.
\end{equation}
For the random-count centering, write
\begin{equation*}
\mathcal R_c(k_t)-\overline c N(k_t)
=
\sum_{j=1}^{\mathfrak N(k_t)}\{R_j(c)-\overline cL_j\}+o_{\Pp}(\sqrt t).
\end{equation*}
The inverse fluctuation cancels because $a_c-\overline c\ell=0$. Hence
\begin{equation}\label{eq:S30-random-count-clt-process}
\frac{\mathcal R_c(k_t)-\overline c N(k_t)}{\sqrt t}
\xrightarrow[t\to\infty]{d}
W_R(\kappa(\lambda))-\overline c W_L(\kappa(\lambda)).
\end{equation}
The right-hand side is centered Gaussian with variance
\begin{equation*}
\kappa(\lambda)\Vv_{\mathfrak a}\left[R_1(c)-\overline cL_1\right].
\end{equation*}
Equations \eqref{eq:S30-law-large-numbers-cost}, \eqref{eq:S30-deterministic-centering-clt} and \eqref{eq:S30-random-count-clt-process} are the assertions of Corollary~9.2.
\end{proof}

\begin{acks}[Acknowledgments]
The second author acknowledges the association with MICS Laboratory, CentraleSup\'elec, Universit\'e Paris-Saclay.
\end{acks}

\end{document}